\newtheorem{definition}{Definition}[section]
\newtheorem{proposition}[definition]{Proposition}
\newtheorem{theorem}[definition]{Theorem}
\newtheorem{corollary}[definition]{Corollary}
\newtheorem{lemma}[definition]{Lemma}
\newtheorem{conjecture}[definition]{Conjecture}
\numberwithin{equation}{section}
\newcommand{\comment}[1]{}
\newcommand{\N}{\mathbb N}
\newcommand{\es}{\emptyset}
\newcommand{\cS}{\mathcal{S}}
\newcommand{\cF}{\mathcal{F}}
\newcommand{\cT}{\mathcal{T}}
\newcommand{\cC}{\mathcal{C}}
\newcommand{\cD}{\mathcal{D}}
\newcommand{\cH}{\mathcal{H}}
\newcommand{\cM}{\mathcal{M}}
\newcommand{\cI}{\mathcal{I}}
\newcommand{\cE}{\mathcal{E}}
\newcommand{\I}{I}
\newcommand{\sF}{\mathscr{F}}
\newcommand{\sT}{\mathscr{T}}
\newcommand{\Pro}{\mathbb{P}}
\newcommand{\Exp}{\mathbb{E}}
\newcommand{\den}{{\rm den}}
\newcommand{\dom}{{\rm dom}}
\newcommand{\p}{{\rm par}}
\newcommand{\ori}{{\rm orig}}
\renewcommand{\epsilon}{\varepsilon}
\newcommand{\sm}{\setminus}
\newcommand{\sub}{\subseteq}
\newcommand{\COMMENT}[1]{}
\newcounter{step}
\newcommand{\step}[1]{\bigskip\refstepcounter{step}\textbf{Step \thestep}. \textbf{#1}.}
\title{Optimal packings of bounded degree trees}
\author{Felix Joos}
\address{School of Mathematics, University of Birmingham, 
Edgbaston, Birmingham, B15 2TT, United Kingdom}
\email{f.joos@bham.ac.uk, j.kim.3@bham.ac.uk, d.kuhn@bham.ac.uk, d.osthus@bham.ac.uk}
\author{Jaehoon Kim}
\author{Daniela K\"uhn}
\author{Deryk Osthus}
\thanks{The research leading to these results was partially supported by the EPSRC, grant no. EP/M009408/1 (F.~Joos, D.~K\"uhn and D.~Osthus), 
and by the Royal Society and the Wolfson Foundation (D.~K\"uhn).
The research was  also partially supported by the European Research Council under the European Union's Seventh Framework Programme (FP/2007--2013) / ERC Grant 306349 (J.~Kim and D.~Osthus). }
\date{\today}
\begin{document}

\begin{abstract}
We prove that if $T_1,\dots, T_n$ is a sequence of bounded degree trees so that $T_i$ has $i$ vertices, then $K_n$ has a decomposition into $T_1,\dots, T_n$. This shows that the tree packing conjecture of Gy\'arf\'as and Lehel from 1976 holds for all bounded degree trees 
(in fact, we can allow the first $o(n)$ trees to have arbitrary degrees). 
Similarly, we show that Ringel's conjecture from 1963 holds for all bounded degree trees.
We deduce these results from a more general theorem, which yields decompositions of dense quasi-random graphs into suitable families of bounded degree graphs. 
Our proofs involve Szemer\'{e}di's regularity lemma, results on Hamilton decompositions of robust expanders, random walks,
iterative absorption as well as a recent blow-up lemma for approximate decompositions.
\end{abstract}
\maketitle

\section{Introduction}
For a collection of graphs $\cH=\{H_1,\ldots,H_s\}$ and a graph $G$,
we say $\cH$ \emph{packs} into $G$ if there are pairwise edge-disjoint copies of $H_1,\dots, H_s$ in $G$. If every edge of $G$ lies in one of these copies, then we have a {\em decomposition} of $G$ into $\cH$. Packing and decomposition problems are central to combinatorics and related areas. 
Famous early instances of such problems involve Steiner triple systems as well as Hamilton decompositions:
Kirkman's theorem on the existence of Steiner triple systems translates into decompositions of cliques into triangles
(subject to divisibility conditions) and Walecki's construction provides an analogue of this for decompositions into Hamilton cycles.
A celebrated theorem of Wilson~\cite{Wil72a,Wil72b,Wil72c} generalizes Kirkman's theorem to decompositions of cliques into arbitrary subgraphs $F$
-- this forms one of the cornerstones of design theory. These results now are part of a major area, with connections to coding and information theory, as well as extremal combinatorics
and algorithms.

Classical results on packings and decompositions have often been limited to symmetric structures, 
as these allow for the exploitation of these symmetries or the use of algebraic techniques. 
Probabilistic approaches are having an increasing impact on the area, and enable the construction of packings in more complex or general settings.
In particular, in this paper we build on such approaches to obtain optimal results on packing suitable families of bounded degree graphs into quasi-random graphs. This provides the first instance of optimal packing and decomposition results involving general families of
large (but sparse) structures which are not necessarily symmetric.

\subsection{Packing trees into complete graphs}

The famous tree packing conjecture of Gy\'arf\'as and Lehel has driven a large amount of research in the area.

\begin{conjecture}[Gy\'arf\'as and Lehel~\cite{GL78}] \label{gyarfaslehel}
Given $n\in\N$ and trees $T_1\ldots,T_n$ with $|T_i|=i$,
the complete graph $K_n$ has a decomposition into copies of $T_1,\ldots, T_n$. 
\end{conjecture}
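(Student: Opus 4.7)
The plan is to reduce the general conjecture to the bounded-degree case that forms the main achievement of the paper. Fix a slowly growing function $\Delta=\Delta(n)$ with $\Delta\to\infty$ and $\Delta=o(\log n)$, and partition the trees into \emph{small} trees ($|T_i|\le n/\log n$), \emph{tame} large trees ($|T_i|>n/\log n$ and $\Delta(T_i)\le \Delta$), and \emph{wild} large trees (all others).

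First I would dispose of the small trees: their total edge count is at most $\sum_{i\le n/\log n}(i-1)=o(n^2)$, so a random greedy / nibble argument embeds them all edge-disjointly into a random sparse spanning subgraph $G_0$ of $K_n$, and $G_1:=K_n\sm G_0$ is quasi-random of near-full density. The tame large trees can then be packed into $G_1$ by the main packing theorem of the paper (which applies to bounded-degree graphs into quasi-random hosts), leaving a quasi-random residue $G_2$ that still carries almost all of the edges of $G_1$.

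The main obstacle, and the genuinely new step beyond the paper's bounded-degree result, is the wild trees. For each wild $T_i$ I would use the classical tree-structural dichotomy (every tree on $i$ vertices either has $\Omega(i/\Delta)$ leaves or contains a bare path of length $\Omega(\Delta)$) to write $T_i$ as a bounded-degree ``core'' $T_i^\circ$ together with a collection of pendant leaves and/or short bare paths attached at a controlled set of \emph{attachment points} of $T_i^\circ$. One then packs the cores $T_i^\circ$ alongside the tame large trees into $G_1$, and re-attaches the pendants using the remaining edges of $G_2$. The subtlety is that the vertices of $K_n$ chosen to host the attachment points of the wild cores must have enough residual degree in $G_2$ to accommodate all incoming pendants simultaneously: globally this is a bipartite $f$-factor / Gale--Ryser feasibility condition on $V(K_n)$, requiring that for every $v$, the total pendant-demand at $v$ does not exceed its residual degree $n-1-\deg_{G_0\cup(G_1\sm G_2)}(v)$.

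The principal difficulty is to arrange this global degree balancing already during the absorption phase of the bounded-degree proof: one must randomise the parent/attachment-point allocation inside each $T_i^\circ$ so that, with high probability, the induced pendant-demand on $V(K_n)$ matches the residual-degree profile produced by iterative absorption, and then convert this match into an exact pendant completion via Hall/Gale--Ryser. Coordinating this random attachment allocation with the absorption step of the paper's main theorem, so that the approximate packing of cores and the exact pendant-completion succeed simultaneously, is where an idea beyond the bounded-degree machinery of the paper is genuinely needed, and is the place I expect the proof to be most delicate.
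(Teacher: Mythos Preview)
The statement you are attempting is a \emph{conjecture} that the paper does not prove in full generality; it remains open. The paper's contribution is Theorem~\ref{thm:glboundeddegree}, which settles the bounded-degree case (with the first $\epsilon n$ trees allowed arbitrary degrees). There is therefore no proof of Conjecture~\ref{gyarfaslehel} in the paper against which to compare your proposal.

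Your proposal contains two genuine gaps. First, the paper's packing theorems require $\Delta$ to be an absolute constant: the hypothesis hierarchy $1/n \ll \epsilon \ll \cdots \ll 1/\Delta$ forces $n$ to be large as a function of $\Delta$, so taking $\Delta = \Delta(n) \to \infty$ and invoking Theorem~\ref{thm: main result} as a black box is not permitted. The dependencies run through Szemer\'edi's regularity lemma and the blow-up machinery of~\cite{KKOT16}, so even very slowly growing $\Delta$ is not a cosmetic adjustment but would require reproving essentially everything with effective bounds. Second, and more fundamentally, your treatment of the wild trees is not an argument but an identification of where the difficulty lies. The core-plus-pendants decomposition does not in general yield a useful bounded-degree core: for a star $K_{1,i-1}$ the core is a single vertex with $i-1$ pendants to reattach, and similar obstructions arise whenever a tree has several vertices of very high degree. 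Coordinating the resulting pendant-demand profile on $V(K_n)$ with the residual degree sequence left after the iterative absorption is precisely the unsolved heart of the problem --- as you yourself concede when you write that this step ``is where an idea beyond the bounded-degree machinery of the paper is genuinely needed.'' That idea is not supplied here, and supplying it would amount to resolving the conjecture.
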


The conjecture has been verified for several very special classes of trees (see e.g.~\cite{Dob02, GL78, HBK87, Rod88, Zak17}). 
Bollob\'{a}s~\cite{Bol83} showed that one can pack $T_1,\dots, T_{n/\sqrt{2}}$ into $K_n$
(and that a better bound would follow from the Erd\H os--S\'os conjecture). 
Balogh and Palmer~\cite{BP13} showed that one can pack $T_{n-n^{1/4}/10},\dots, T_{n-1}$ into $K_{n}$ for sufficiently large $n$. 
\.Zak~\cite{Zak17} showed (amongst others) that one can pack the final five trees into $K_n$. 
Fishburn~\cite{Fis83} verified that the set of degree sequences of the trees $T_1,\dots,T_n$ can be ``packed'' into the degree sequence of $K_n$. B\"ottcher, Hladk\'y, Piguet and Taraz~\cite{BHPT16} were able to prove that we can obtain a near-optimal packing if one restricts to bounded degree trees (again, provided $n$ is sufficiently large and this assumption also applies to all results we mention in the following). 
More generally, they showed that if $T_1,\dots, T_s$ have bounded degree, satisfy $|T_i|\leq (1-\epsilon)n$, and $\sum_{i=1}^{s} e(T_i) \leq (1-\epsilon) \binom{n}{2}$, then $T_1,\dots, T_s$ pack into $K_n$.
Messuti, R\"odl and Schacht~\cite{MRS16} generalized this to separable families of graphs
(which also includes minor-closed families of graphs). 
This was further improved by Ferber, Lee and Mousset~\cite{FLM17}, who obtained near-optimal packings of 
separable bounded degree graphs on $n$ vertices into $K_{n}$. 
Kim, K\"uhn, Osthus and Tyomkyn~\cite{KKOT16} proved that such a result in fact holds for all bounded degree graphs.
(Moreover, the results in~\cite{KKOT16} also hold for near-optimal packings into host graphs consisting of suitable
$\varepsilon$-regular pairs. As discussed below, this will be
crucial for the current paper.)
Very recently, Allen, B\"ottcher, Hladk\'y and Piguet \cite{ABHP17} proved the following result, which allows for packing trees of maximum degree up to $o(n/\log n)$:
if $H_1,\dots, H_s$ have bounded degeneracy, maximum degree $o(n/\log n)$, and $\sum_{i=1}^{s} e(H_i) \leq (1-\epsilon) \binom{n}{2}$,  
then $H_1,\dots, H_s$ pack into $K_n$.
Note that if we place no further restrictions on the $H_i$, then we cannot ask for an actual decomposition. 
However, in the setting of the tree packing conjecture, we do obtain such a decomposition in the current paper.

\begin{theorem}\label{thm:glboundeddegree}
For all $\Delta \in \N$, 
there are $N\in \N$ and $\epsilon>0$ such that for all $n\geq N$ the following holds. 
Suppose that for each $i\in [n]$, we have a tree $T_i$ with $|T_i|= i$ and suppose $\Delta(T_i)\leq \Delta$ for all $i> \epsilon n$. 
Then $K_n$ decomposes into $T_1,\dots, T_n$.
\end{theorem}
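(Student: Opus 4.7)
The plan is to split the sequence of trees into two parts --- the ``small'' trees $T_1,\dots,T_{\epsilon n}$ (which may have arbitrary maximum degree) and the ``large'' bounded-degree trees $T_{\epsilon n+1},\dots,T_n$ --- and to handle them in two phases. The key arithmetic observation is that $\sum_{i=1}^{n}(i-1)=\binom{n}{2}=e(K_n)$, so edge counts match automatically, and moreover $\sum_{i=1}^{\epsilon n} e(T_i) \le \binom{\epsilon n}{2} \le \epsilon^2 \binom{n}{2}$, so the small trees use only a tiny fraction of the edges of $K_n$.

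In Phase 1 I would pack the small trees $T_1,\dots,T_{\epsilon n}$ one-by-one into $K_n$ using \emph{random} embeddings: at step $i$, embed $T_i$ into the current residual graph $K_n\setminus(T_1\cup\dots\cup T_{i-1})$ by choosing a uniformly random (or nearly uniformly random) labelled copy. Since each such $T_i$ has fewer than $\epsilon n$ vertices and the residual graph is still of density $1-O(\epsilon^2)$, such a random embedding exists and can be produced by the standard greedy random tree-embedding procedure. Concentration (Azuma/Freedman applied to the sequence of embeddings) shows that the total number of Phase-1 edges incident to any given vertex $v$ is within $o(n)$ of its expectation $O(\epsilon^2 n)$, and similarly that joint co-degree statistics deviate little from their means. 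Consequently the residual graph $G := K_n \setminus \bigcup_{i\le\epsilon n}T_i$ is $(\epsilon^{1/2},d)$-quasi-random with $d = 1-O(\epsilon^2)$ and nearly $d(n-1)$-regular.

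In Phase 2 I would apply the paper's general decomposition theorem (the one advertised in the abstract, which decomposes dense quasi-random graphs into suitable bounded-degree families) to decompose $G$ into the remaining trees $T_{\epsilon n+1},\dots,T_n$. By construction $e(G)=\sum_{i=\epsilon n+1}^{n} e(T_i)$, the maximum degrees are bounded by $\Delta$, each tree has at most $n$ vertices, and $G$ is quasi-random and nearly regular, so the hypotheses should be met directly (possibly after absorbing tiny ``leftover'' discrepancies into the first few large trees, exploiting their flexibility and the leaves available in a tree on $\Theta(n)$ vertices with $\Delta(T_i)\le\Delta$).

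The main obstacle is Phase 1: the small trees have unrestricted degrees, so controlling the edge-distribution of the Phase-1 packing requires a careful concentration argument even though the total edge budget is small. One delicate point is that a single small tree $T_i$ might have a vertex of degree close to $i-1$, which can contribute disproportionately to a single vertex of $K_n$; averaging over random embeddings and invoking a martingale concentration bound should spread this out, but the bookkeeping has to be done uniformly over all $\epsilon n$ trees and all $n$ vertices. Once Phase 1 is controlled, Phase 2 is a direct invocation of the general decomposition theorem, which constitutes the bulk of the paper's technical work (regularity lemma, Hamilton decompositions of robust expanders, iterative absorption, and the approximate decomposition blow-up lemma).
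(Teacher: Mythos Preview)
Your two-phase strategy matches the paper's, and Phase~2 is exactly right: once the residual graph after removing the small trees is $(O(\epsilon),1)$-quasi-random, Theorem~\ref{thm: main result} applies directly with $\cT=\{T_{n/10},\dots,T_{9n/10}\}$ and $\cH$ the remaining bounded-degree trees, and no ad hoc absorption of discrepancies is needed.

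Where you diverge from the paper is Phase~1, and you have significantly overcomplicated it. Random embeddings, martingale concentration, and worrying about stars of degree $i-1$ are all unnecessary. The paper uses a short deterministic greedy argument: at step $t\le \epsilon n$, let $X_t$ be the set of vertices that have already lost at least $\epsilon n$ edges in $G_t:=K_n\setminus\bigcup_{j\le t}\tau(T_j)$. Since $\sum_{i\le \epsilon n}e(T_i)\le \binom{\epsilon n}{2}$, a counting argument gives $|X_t|\le \epsilon n$. Now simply embed $T_{t+1}$ anywhere inside $G_t-X_t$; this is possible because $|T_{t+1}|\le \epsilon n+1$ and $\delta(G_t-X_t)\ge (1-2\epsilon)n$, so a trivial greedy embedding of the tree works regardless of its maximum degree. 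By construction $\delta(G_{\epsilon n})\ge (1-2\epsilon)n$, and since we start from $K_n$ this alone forces all degrees and codegrees to be within $O(\epsilon n)$ of their maximum values, i.e.\ $G_{\epsilon n}$ is $(5\epsilon,1)$-quasi-random.

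So your proposal is not wrong, but you misidentify the main obstacle: Phase~1 is a three-line greedy argument, not a concentration argument. The real work is entirely inside Theorem~\ref{thm: main result}.
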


Note that this implies Conjecture~\ref{gyarfaslehel} for all bounded degree trees. 
(In fact, we do not require the first $\epsilon n$ trees to have bounded degree.)
We will deduce Theorem~\ref{thm:glboundeddegree} from a more general result (see Theorem~\ref{thm: main result}) in Section~\ref{sec:final}.

For bounded degree trees, 
we actually obtain the following more general result which has less restrictive assumptions on $|T_i|$.
For a family $\cH=\{H_1,\ldots,H_s\}$ of graphs we write $e(\cH):=\sum_{i=1}^se(H_i)$.

\begin{theorem}\label{thm:main simple}
For all $\Delta\in \N$ and $\delta>0$, there is $N\in \N$
such that for all $n\geq N$ the following holds.
Suppose that $\cT$ is a collection of trees such that
\begin{enumerate}[label=(\roman*)]
	\item $|T|\leq n$ and $\Delta(T)\leq \Delta$ for all $T\in \cT$,
	\item there are at least $(1/2 + \delta)n$ trees $T\in \cT$ such that $\delta n\leq |T|\leq (1-\delta)n$, and
	\item $e(\cT)= \binom{n}{2}$.
\end{enumerate}
Then $K_n$ decomposes into $\cT$.
\end{theorem}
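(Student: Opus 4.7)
My plan is to reduce Theorem~\ref{thm:main simple} to the main decomposition theorem (Theorem~\ref{thm: main result}) for quasi-random host graphs. The idea is to first pack the atypically-sized trees---those much smaller than $n$ and those nearly spanning---into a carefully chosen sub-structure of $K_n$, and then invoke the main theorem to decompose what remains using the medium-sized trees.

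\textbf{Step 1: Partition the trees.} Split $\cT$ into
\[
\cT_S := \{T \in \cT : |T| < \delta n\}, \quad \cT_M := \{T \in \cT : \delta n \leq |T| \leq (1-\delta)n\}, \quad \cT_L := \{T \in \cT : |T| > (1-\delta)n\}.
\]
Hypothesis~(ii) gives $|\cT_M| \geq (1/2+\delta)n$, and hence $|\cT_S| + |\cT_L| \leq (1/2-\delta)n$. In particular $\cT_M$ is precisely the family that fits into the ``good'' regime of sizes for which the main theorem is designed.

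\textbf{Step 2: Reserve a quasi-random absorber.} Choose a random subgraph $R \subseteq K_n$ by keeping each edge independently with probability $p$, where $p$ is calibrated so that $e(R)$ slightly exceeds $e(\cT_S) + e(\cT_L)$. Standard Chernoff-type concentration shows that both $R$ and $K_n \setminus R$ are quasi-random with the appropriate degree profile, so that $K_n \setminus R$ is a legitimate host for the main theorem.

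\textbf{Step 3: Pack the outlier trees into $R$.} Embed $\cT_S \cup \cT_L$ as pairwise edge-disjoint subgraphs of $R$. The small trees in $\cT_S$ contribute only a small fraction of the edges, so a standard random/greedy embedding suffices to place them while preserving quasi-randomness of what remains. The large trees in $\cT_L$ are handled with the iterative absorption plus approximate blow-up lemma machinery advertised in the abstract: each near-spanning tree is embedded so that the edges left behind, combined with $K_n \setminus R$, continue to form a quasi-random graph. We steer the process so that after all outliers are packed, the unused edges form a quasi-random graph $G^*$ with $e(G^*) = e(\cT_M)$.

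\textbf{Step 4: Apply the main theorem.} Since $G^*$ is quasi-random with exactly the right number of edges, and $\cT_M$ consists of bounded-degree graphs with $\delta n \leq |T| \leq (1-\delta)n$, Theorem~\ref{thm: main result} decomposes $G^*$ into $\cT_M$. Combining this decomposition with the packing from Step~3 yields the desired decomposition of $K_n$ into $\cT$.

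\textbf{Main obstacle.} The crux of the argument is Step~3, and specifically the packing of the near-spanning trees in $\cT_L$. Because each such tree uses almost every vertex, there is essentially no slack, and successive embeddings must be engineered so that the residual host graph remains quasi-random and leaves precisely the correct edge count and degree profile for Step~4. This is where the iterative absorption framework, random-walk based embedding, and the recent approximate-decomposition blow-up lemma (all listed in the abstract) must work in tandem; matching the exact divisibility of $e(\cT)=\binom{n}{2}$ at the end, rather than merely achieving a near-perfect packing as in \cite{KKOT16}, is the subtlety that forces us through a genuinely absorbing (rather than purely approximate) argument.
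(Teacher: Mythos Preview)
Your proposal overcomplicates matters and contains a genuine gap. The paper derives Theorem~\ref{thm:main simple} \emph{immediately} from Theorem~\ref{thm: main result}: take $G=K_n$ (which is $(\epsilon,1)$-quasi-random for every $\epsilon>0$), let the role of $\cT$ in Theorem~\ref{thm: main result} be played by your $\cT_M$, and let the role of $\cH$ be played by $\cT_S\cup\cT_L$. Conditions \ref{item:MT1}--\ref{item:MT4} of Theorem~\ref{thm: main result} are then exactly the hypotheses of Theorem~\ref{thm:main simple}, and the conclusion is the desired decomposition. The crucial point you missed is that $\cH$ in Theorem~\ref{thm: main result} may contain \emph{arbitrary} bounded-degree graphs on at most $n$ vertices, in particular the near-spanning trees in $\cT_L$; there is no need to embed them yourself beforehand.

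Your Step~3 is where the argument breaks down. You assert that the large trees in $\cT_L$ can be packed into $R$ so that the residual graph is quasi-random with exactly the right edge count, invoking ``the iterative absorption plus approximate blow-up lemma machinery advertised in the abstract.'' But this is precisely the content of the main theorem itself; you are essentially assuming what the paper spends Sections~\ref{sec: approx cycle decomp}--\ref{sec:final} proving. In particular, the approximate-decomposition result of~\cite{KKOT16} that you allude to gives only a near-perfect packing with no control over which edges remain, so it cannot by itself guarantee that the leftover is quasi-random, let alone that it has exactly $e(\cT_M)$ edges. Without a concrete mechanism for this (and none is sketched), Step~3 is not a proof.
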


Another beautiful open problem in the area of tree decompositions is Ringel's conjecture. 

\begin{conjecture}[Ringel~\cite{Rin63}] \label{ringel}
Given $n\in\N$ and a tree $T$ on $n+1$ vertices, the complete graph $K_{2n+1}$ has a decomposition into $2n+1$ copies of $T$. 
\end{conjecture}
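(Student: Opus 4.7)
The plan is to attack Ringel's conjecture by splitting trees $T$ on $n+1$ vertices according to $n$ and to the degree profile of $T$, since only one of the resulting regimes is directly handled by the techniques developed in this paper.

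For small $n$, up to some absolute constant $N_0$, the statement can be verified by a finite case analysis on trees of order at most $N_0+1$; many concrete small-order families (paths, caterpillars, spiders, stars) have already been treated in the literature cited above, and the remaining cases are finite in number. For large $n$ and trees with bounded maximum degree, I would apply Theorem~\ref{thm: main result} with host graph $K_{2n+1}$ and family $\cT$ consisting of $2n+1$ copies of $T$. The edge count $(2n+1)\cdot n = \binom{2n+1}{2}$ matches exactly, $K_{2n+1}$ is trivially quasi-random with density close to $1$, and $|T|=n+1 \leq (1-\delta)(2n+1)$, so the size hypotheses are easily met. This yields Ringel's conjecture for all bounded-degree trees and all sufficiently large $n$, matching the claim made for Ringel's conjecture in the abstract.

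The hard part is the regime where $\Delta(T)\to\infty$ with $n$. The blow-up lemma for approximate decompositions, the iterative absorbers built via random walks, and the regularity reductions underlying Theorem~\ref{thm: main result} all exploit a uniform degree bound in an essential way: embedding a vertex of degree $\Delta(T)$ via regularity requires reserving of order $\Delta(T)$ neighbourhood slots in each cluster, and absorbing gadgets designed around high-degree vertices cannot be chosen edge-disjointly across all $2n+1$ copies without consuming too large a fraction of $K_{2n+1}$. A natural two-stage strategy would be to first embed the high-degree \emph{core} of each copy of $T$, namely the subtree spanned by vertices of degree above some threshold, using rotational or algebraic constructions tailored to $K_{2n+1}$ in the spirit of graceful labellings; and then extend each partial embedding to all of $T$ by applying the bounded-degree machinery of this paper to the residual graph.

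The main obstacle is that after the cores are removed, the residual graph is no longer quasi-random: its density varies over the vertex set in ways dictated by the high-degree structure of the particular tree $T$, so the blow-up lemma and the regularity partition would have to be re-engineered to work on a host graph with prescribed structural constraints, and the cores themselves must be embedded coherently across all $2n+1$ copies simultaneously. Overcoming this reconciliation step seems to require genuinely new ingredients beyond the scope of the techniques here, and I would expect that only the bounded-degree regime (together with the finite small-$n$ verification) follows cleanly from this paper's methods, leaving the full conjecture open.
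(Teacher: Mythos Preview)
The statement you were asked to prove is stated in the paper as an open \emph{conjecture}, not as a theorem; the paper does not contain a proof of it. What the paper does establish is the bounded-degree case for sufficiently large $n$ (see the remark following Conjecture~\ref{ringel} and Corollary~\ref{BHPT 43}), deduced from Theorem~\ref{thm:main simple} or Theorem~\ref{thm: main result} exactly along the lines you sketch: take $\cT$ to be $2n+1$ copies of $T$, check the edge count and the size bound $|T|=n+1\leq (1-\delta)(2n+1)$, and apply the main theorem.

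Your proposal correctly identifies this scope. You recover the bounded-degree, large-$n$ case via Theorem~\ref{thm: main result}, and you correctly diagnose that the unbounded-degree regime lies outside the reach of the blow-up and absorption machinery developed here. Your discussion of embedding high-degree cores first and then filling in with the bounded-degree tools is a reasonable heuristic, but as you yourself conclude, it does not close the gap. The small-$n$ finite verification you mention is not carried out in the paper either; the paper's results are asymptotic. In short, there is no ``paper's proof'' to compare against, and your assessment of what the paper's methods yield and where they stop is accurate.
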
\COMMENT{
Here, $2n$ is best possible for stars. If we have $K_{m}$ with $m\leq 2n-1$, any packing leaves at least two vertices of the clique which is never in the image of a star centre. Then edge between them cannot be covered.

If graceful labelling conjecture is true, then for any tree $T$ on $n+1$ vertices, the complete graph $K_{2n}$ has a decomposition into $2n-1$ copies of $T$.
To see this, take a leaf $\ell$ of $T$ and a vertex $x$ which is the neighbour of $\ell$, and consider $T'=T-\ell$. Since $T'$ has $n$ vertices, by using graceful labelling conjecture, $K_{2n-1}$ has a decomposition of into $2n-1$ copies of $T'$. Moreover, we also have the following : 
when $V(K_{2n-1})= [2n-1]$, there are embeddings $\phi_i: T' \rightarrow [2n-1]$ for $i\in [2n-1]$ such that $\phi_i(x)= \phi_{j}(x) + j-i ~ (\mod 2n-1)$ and $\bigcup_{i=1}^{2n-1} E(\phi_i(T')) = E(K_{2n-1}).$ Since $\phi_i(x) \neq \phi_j(x)$ for $i\neq j$, for each $j'\in [2n-1]$ there exists unique $i\in [2n-1]$ such that $\phi_i(x) = j'$. Thus we let $\phi_i(\ell)= 2n$ for any $i$, then we get perfect decomposition of $K_{2n}$ into copies of $T$.
}

The conjecture has been verified for several very special classes of trees. 
A dynamic survey is maintained by Gallian~\cite{Gal98}.
Note the results in
~\cite{ABHP17} imply an approximate version of Conjecture~\ref{ringel} for
$(n+1)$-vertex trees of degree $o(n/\log n)$.
Recent progress on Conjecture~\ref{ringel} 
(and its bipartite analogue) for random trees was obtained in~\cite{DL14,Lla18}.
Snevily~\cite{Sne97} showed that Ringel's conjecture holds if we replace $K_{2n+1}$ by $K_m$, where $m=\Omega(n^3)$, which improved a bound of Yuster~\cite{Yus00}.
Recently Ringel's conjecture was generalized to allow for more than one tree by B\"ottcher, Hladk\'y, Piguet and Taraz.

\begin{conjecture}[\cite{BHPT16}]\label{BHPTconj43}
Given $n\in\N$, suppose $\cT$ is a collection of trees such that $|T|\leq n+1$ for all $T\in \cT$. 
If $e(\cT) \leq e(K_{2n+1})$, then $\cT$ packs into $K_{2n+1}$.
\end{conjecture}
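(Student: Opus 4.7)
The plan is to split $\cT$ into a ``high-degree'' part $\cT_{\rm hi}$ and a ``low-degree'' part $\cT_{\rm lo}$, pack these in two phases, and reduce the second phase to the machinery underlying Theorem~\ref{thm:main simple}. Fix a large constant $\Delta=\Delta(\epsilon)$ and place $T$ in $\cT_{\rm hi}$ if $\Delta(T) > \Delta$. A key point is that the host $K_{2n+1}$ has roughly twice as many vertices as any individual tree in $\cT$, which gives substantial slack for embedding high-degree vertices.

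\textbf{Phase 1 (high-degree pieces).} For each $T\in\cT_{\rm hi}$, root $T$ at a vertex $v_T$ of maximum degree and peel off pendant subtrees in order to expose the ``star-like core'' of $T$: the high-degree vertex together with its incident edges in $T$. Since each tree has at most $n$ edges and $e(\cT) \le n(2n+1)$, a double count bounds the global degree that must be placed in this phase. The main lemma is a \emph{hub-placement} result: one can choose an assignment $\phi(v_T) \in V(K_{2n+1})$ and prescribed neighbourhoods $N_T \sub V(K_{2n+1})\sm\{\phi(v_T)\}$ of the correct sizes so that the partial embeddings of all $T\in\cT_{\rm hi}$ are pairwise edge-disjoint and the residual subgraph of $K_{2n+1}$ remains dense and quasi-random. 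This is achieved by a balanced random assignment combined with a Hall/flow-type repair on an auxiliary bipartite graph, iterated through all vertices of each tree whose degree exceeds $\Delta$ (of which there are at most $2n/\Delta$ per tree).

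\textbf{Phase 2 (bounded-degree residual).} After Phase~1, each $T\in\cT_{\rm hi}$ carries a partial embedding of its high-degree vertices and their incident edges; what remains is a bounded-degree forest rooted at the pinned images. The trees in $\cT_{\rm lo}$ are bounded-degree from the start. Let $G$ be the residual subgraph of $K_{2n+1}$ left after Phase~1; by the control built into the hub-placement lemma, $G$ is quasi-random, almost-regular, and of density close to $1$, with $e(G)$ matching the total edge count of the residual pieces. Apply the tools from the proof of Theorem~\ref{thm:main simple} -- Szemer\'edi's regularity lemma, Hamilton decompositions of robust expanders, iterative absorption, and the approximate-decomposition blow-up lemma -- in their rooted variants, i.e.\ extending the partial embeddings from Phase~1, to complete the packing.

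The main obstacle is Phase~1. A vertex of degree $\Omega(n)$ in a tree behaves like a near-spanning star whose image in $K_{2n+1}$ has to be chosen globally: over-committing any host vertex distorts the residual degree sequence, while uncoordinated assignments across trees create edge conflicts. A balanced random placement takes care of the degree budget and of the quasi-randomness of $G$ in expectation; the hard part is the deterministic repair of the remaining conflicts, via a defect version of Hall's theorem on a bipartite graph of ``available slots'', that simultaneously preserves edge-disjointness across all of $\cT_{\rm hi}$ and leaves enough super-regularity in $G$ for the blow-up lemma to operate in Phase~2. This coordination step is precisely the feature not addressed by the bounded-degree framework of Theorem~\ref{thm:main simple}, and it is where genuinely new ideas beyond those in the present paper will be needed.
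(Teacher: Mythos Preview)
The statement you are attempting to prove is Conjecture~\ref{BHPTconj43}, which the paper explicitly states as an \emph{open conjecture} from~\cite{BHPT16}. The paper does not prove it; it only establishes the bounded-degree case, Corollary~\ref{BHPT 43}, which follows from Theorem~\ref{thm:main simple} by the simple trick of concatenating small trees into larger ones (as indicated in the paragraph preceding Corollary~\ref{BHPT 43}). So there is no ``paper's own proof'' to compare against here.

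Your proposal is not a proof but a strategy sketch, and you yourself identify the gap: Phase~1, the hub-placement step for high-degree vertices, is not actually carried out. You write that the ``deterministic repair of the remaining conflicts'' is ``precisely the feature not addressed by the bounded-degree framework'' and ``where genuinely new ideas beyond those in the present paper will be needed.'' That is an honest assessment, but it means what you have written is a research programme, not a proof. The difficulty is real: a tree on $n+1$ vertices can have a vertex of degree $n$, i.e.\ be a star, and packing many such stars into $K_{2n+1}$ while keeping the residual graph quasi-random enough for the blow-up machinery is a global balancing problem of a different character from anything in the paper. Your ``defect Hall'' repair idea is plausible in spirit but entirely unspecified; in particular you have not addressed how to control the residual codegrees (not just degrees) needed for quasi-randomness after removing the stars, nor how the rooted versions of the iterative absorption lemmas would interact with potentially $\Theta(n)$ pinned vertices per tree.

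If your goal was merely to reprove what the paper actually shows---the bounded-degree case, Corollary~\ref{BHPT 43}---then Phase~1 is vacuous ($\cT_{\rm hi}=\emptyset$), and Phase~2 is exactly Theorem~\ref{thm:main simple} after concatenation, which is the paper's argument.
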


Note that Theorem~\ref{thm:main simple} immediately implies that Ringel's conjecture holds for all bounded degree trees.
In fact, by concatenating small trees into large ones if necessary, 
one can easily use Theorem~\ref{thm:main simple} to show that the more general Conjecture~\ref{BHPTconj43} holds for all bounded degree trees, too.

\begin{corollary}\label{BHPT 43}
For all $\Delta \in \N$, there is $N\in \N$ such that for all $n\geq N$ the following holds. 
Suppose $\cT$ is a collection of trees such that $|T|\leq n+1$ and $\Delta(T)\leq \Delta$ for all $T\in \cT$. If $e(\cT) \leq e(K_{2n+1})$, then $\cT$ packs into $K_{2n+1}$.
\end{corollary}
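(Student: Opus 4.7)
The plan is to deduce Corollary~\ref{BHPT 43} from Theorem~\ref{thm:main simple} applied with $N:=2n+1$ in place of $n$ and a small absolute constant $\delta_0:=1/10$. The idea is to pad $\cT$ with copies of $K_2$ up to the edge count of $K_N$, concatenate the small trees of the padded family into medium-sized ones (so that hypothesis (ii) of Theorem~\ref{thm:main simple} holds), apply the decomposition theorem, and then ``un-concatenate'' to recover a packing of $\cT$ into $K_{2n+1}$. The case $\Delta=1$ is trivial, and single-vertex trees in $\cT$ may be ignored, so I assume $\Delta\ge 2$.

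Let $m:=\binom{N}{2}-e(\cT)\ge 0$ and let $\cT^+$ be $\cT$ together with $m$ extra copies of $K_2$; then $e(\cT^+)=\binom{N}{2}$, every tree in $\cT^+$ has at most $n+1$ vertices, and the maximum degree is still at most $\Delta$. Call a tree of $\cT^+$ \emph{small} if it has fewer than $\delta_0 N$ vertices and \emph{medium} otherwise. I greedily partition the small trees of $\cT^+$ into groups whose total vertex counts (after the identifications below) lie in $[\delta_0 N,2\delta_0 N)$, except possibly one leftover group of total size less than $\delta_0 N$, and I concatenate each group into a single tree by iteratively identifying a leaf of an incoming tree with a leaf of the growing concatenation. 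Each identification merges two degree-$1$ vertices into one degree-$2$ vertex and leaves all other degrees unchanged, so (using $\Delta\ge 2$) the concatenation has maximum degree at most $\Delta$; moreover it always retains at least two leaves, so the process never gets stuck. Let $\cT'$ denote the resulting family.

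Then $e(\cT')=\binom{N}{2}$, every tree of $\cT'$ has maximum degree at most $\Delta$, and every tree has at most $\max(n+1,2\delta_0 N)\le n+1\le(1-\delta_0)N$ vertices. Since each medium tree of $\cT'$ has at most $n$ edges and the leftover (if any) contributes fewer than $\delta_0 N$ edges in total, the number of medium trees in $\cT'$ is at least $(\binom{N}{2}-\delta_0 N)/n\ge(1/2+\delta_0)N$ for $n$ sufficiently large. Hypothesis (ii) of Theorem~\ref{thm:main simple} is thus satisfied, and the theorem (applied with $N,\Delta,\delta_0$) yields a decomposition of $K_N$ into $\cT'$. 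Each embedded copy of a concatenated tree restricts in the obvious way to edge-disjoint copies of the trees of $\cT^+$ that were merged into it, because the identifications merge only vertices, not edges; discarding the $m$ embedded $K_2$'s then gives the desired packing of $\cT$ into $K_{2n+1}$.

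There is no serious obstacle here: all the structural work is encapsulated in Theorem~\ref{thm:main simple}, and what remains is a straightforward bin-packing of small trees into medium-sized groups together with the observation that leaf-to-leaf tree concatenation is reversible at the level of edge-disjoint embeddings.
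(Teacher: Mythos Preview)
Your proof is correct and follows precisely the approach the paper outlines just before stating the corollary: pad $\cT$ up to the full edge count, concatenate small trees by leaf identification so that Theorem~\ref{thm:main simple} applies, and then undo the concatenation. The paper also notes that the result follows immediately from Corollary~\ref{cor: trees into quasi-random} (applied to $K_{2n+1}$ with $p=1$), but the proof of that corollary uses the same concatenation trick, so the two routes are essentially the same.
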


\subsection{Quasi-random graphs}
We actually prove our results in the setting of dense quasi-random graphs, which we now define. 
Let $G$ be a graph and let $u,v \in V(G)$.
We denote by $d_G(u)$ the degree of $u$ and by $d_G(u,v)$ the size of the common neighbourhood of $u$ and $v$.
We say a graph $G$ on $n$ vertices is \emph{$(\epsilon,p)$-quasi-random} if $d_G(u)=(1\pm \epsilon)pn$ and $d_G(u,v)=(1\pm \epsilon)p^2n$ for all distinct vertices $u,v\in V(G)$. 

The next theorem is the main result of this paper (and implies all other results stated in the introduction).
It states that
we can decompose a dense quasi-random graph into a family of bounded degree graphs as long as this family contains sufficiently many trees which are not too small and not too large.

\begin{theorem}\label{thm: main result}
For all $\Delta \in \N$ and $\delta>0$, 
there are $N\in \N$ and $\epsilon>0$ such that
for all $n\geq N$ and all $p \in [0,1]$ the following holds.
Suppose $G$ is an $(\epsilon,p)$-quasi-random graph on $n$ vertices, and  $\cH,\cT$ are sets of graphs satisfying 
\begin{enumerate}[label=(\roman*)]
	\item\label{item:MT1} $|J|\leq n$ and $\Delta(J)\leq \Delta$ for all $J\in \cH\cup \cT$,
	\item\label{item:MT2} for all $T\in \cT$, the graph $T$ is a tree and $\delta n \leq |T|\leq (1- \delta)n$,
	\item\label{item:MT3} $|\cT|\geq (1/2+ \delta)n$, and
	\item\label{item:MT4} $e(\cH)+e(\cT)=e(G)$.
\end{enumerate}
Then $G$ decomposes into $\cH \cup \cT$.
\end{theorem}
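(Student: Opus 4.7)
The plan is to combine an approximate decomposition step with an iterative absorption framework, reserving a small flexible portion of the tree collection as absorbers. More precisely, I would first select a subcollection $\cT^* \subseteq \cT$ of $\Theta(1)$ (or polylog-many) trees to act as absorbers, and reserve a small ``absorbing'' subgraph $A \subseteq G$ on the order of $o(e(G))$ edges. The remaining graph $G - A$ is then approximately decomposed into $\cH$ and $\cT \setminus \cT^*$, and finally the leftover $L$ together with $A$ is decomposed exactly into $\cT^*$. Because $G$ is $(\epsilon,p)$-quasi-random, $G - A$ is still quasi-random with only marginally worse parameters, which is exactly the type of host graph required for the blow-up lemma for approximate decompositions.

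For the bulk phase, I would apply Szemer\'edi's regularity lemma to $G - A$ to obtain an $(\epsilon',d)$-regular partition whose reduced graph is essentially complete (using the quasi-randomness of $G$). After this, the approximate decomposition blow-up lemma of K\"uhn--Osthus (and coauthors) allows one to pack a family of bounded-degree graphs of total edge count $(1-o(1))e(G-A)$ into $G-A$, leaving only a sparse leftover $L$ with $\Delta(L) = o(pn)$. The assumption $\Delta(J) \le \Delta$ for all $J \in \cH \cup \cT$ is what makes this applicable; note that nothing here uses the tree structure of the elements of $\cT$, only that they are bounded-degree. Careful bookkeeping ensures that $e(L) + e(A) = e(\cT^*)$ at the end.

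The delicate step is using $\cT^*$ to decompose $L \cup A$ \emph{exactly}. Here the tree structure and the size condition $\delta n \le |T| \le (1-\delta)n$ for $T \in \cT$ are essential: each tree has at least $(1 - 1/\Delta)|T|$ leaves available after the ``skeleton'' (long internal paths / backbone) is embedded, giving enormous flexibility for where to place individual edges. I would use iterative absorption along a nested sequence $V(G) = V_0 \supset V_1 \supset \cdots \supset V_L$ with $|V_L|$ small: at each level, the portion of an absorber tree embedded between $V_i$ and $V_i \setminus V_{i+1}$ is chosen to absorb both the reserved edges of $A$ incident to $V_i \setminus V_{i+1}$ and whatever leftover edges from $L$ sit there, reducing the problem to a similar one inside $V_{i+1}$. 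The bulk of each tree in $\cT^*$ (its backbone) is handled using Hamilton decompositions of the robust expander formed by $A$ (together with a tiny random subgraph of $G$), with random walks used to route the backbones and control the degree sequence at the leaf-attachment stage.

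The main obstacle is precisely this last absorption step: one must design the reserved graph $A$ so that for \emph{every} possible outcome of the approximate decomposition, the resulting graph $A \cup L$ decomposes exactly into the \emph{prescribed} trees of $\cT^*$ (whose isomorphism types and sizes are fixed in advance). This requires (a) reserving enough structural redundancy in $A$ to handle arbitrary small $L$, (b) matching the degree sequences of the trees in $\cT^*$ to the available degrees on $V(G)$ after the approximate phase, and (c) splitting Hamilton-like substructures into the specific trees of prescribed size. The interplay between fixed tree types, the exact edge-count condition $e(\cH)+e(\cT)=e(G)$, and the randomized nature of the approximate decomposition is what makes the bookkeeping nontrivial; everything else is essentially a careful assembly of the tools listed in the abstract.
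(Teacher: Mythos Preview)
Your plan has a genuine gap at its very first step: the absorber set $\cT^*$ cannot possibly consist of $\Theta(1)$ or polylog-many trees. After the approximate decomposition the leftover $L$ has $\Theta(\alpha p n^2)$ edges for some fixed $\alpha>0$ (the blow-up lemma for approximate decompositions cannot drive the leftover density below a constant independent of $n$), whereas $\Theta(1)$ trees of order at most $n$ contribute only $O(n)$ edges. So the equation $e(L)+e(A)=e(\cT^*)$ that you set up is off by a factor of $n$. More subtly, even if you could make the leftover tiny, you still need to correct the \emph{parity} of the leftover degree at essentially every vertex of $G$; each tree can help at only a bounded number of vertices (via its leaves), which is why the hypothesis $|\cT|\ge(1/2+\delta)n$ is there and is essentially tight --- see the remark after the statement in the paper. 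Relatedly, your assertion that ``each tree has at least $(1-1/\Delta)|T|$ leaves'' is simply false: a path has two leaves regardless of its length.

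The paper's argument is structured quite differently. It first splits $G$ randomly into two quasi-random parts, packs \emph{all} of $\cH$ approximately into one part via the blow-up lemma for approximate decompositions (Theorem~\ref{thm: any graph packing}), and then applies the trees-only result (Theorem~\ref{thm: main result step1}) to pack \emph{all} of $\cT$ exactly into the remaining quasi-random graph. The entire tree family $\cT$ --- not a tiny subfamily --- is fed into the iterative absorption machinery of Lemma~\ref{lem: iteration}: at every level of the vortex one needs roughly $n_t/2$ forests to cover leftover edges and fix parities, and the final absorption on $A_\Lambda$ is done by removing a single leaf from $\Theta(|A_\Lambda|^2)$ trees in advance and reattaching them via an out-regular orientation (Lemma~\ref{lem: orientation}). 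In short, the role of ``absorber'' in this proof is played by the whole of $\cT$, not by a handful of reserved trees.
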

Observe that $p\geq \delta$ by (ii)--(iv) and that Theorem~\ref{thm: main result} immediately implies Theorem~\ref{thm:main simple}.

A construction which shows that the upper bound in (ii) cannot be omitted completely (even if $\cH=\emptyset$) is described in~\cite[Section 9.1]{BHPT16}.
The bound in (iii) also arises naturally:
suppose $\cT$ consists of $n/2-1$ paths,
the collection of graphs $\cH$ consists only of Eulerian graphs, and $G$ is a regular graph of odd degree on $n$ vertices.
Then we can satisfy $e(\cH\cup \cT)=e(G)$,
but $G$ will not have a decomposition into $\cH \cup \cT$.

In Section~\ref{sec:9.2} we use Theorem~\ref{thm: main result} to deduce Corollary~\ref{cor: trees into quasi-random}.
Corollary~\ref{cor: trees into quasi-random} states that for trees of bounded maximum degree (an analogue of) Ringel's conjecture holds even in the dense quasi-random setting.
Note that Corollary~\ref{cor: trees into quasi-random} immediately implies Corollary~\ref{BHPT 43}.

\begin{corollary}\label{cor: trees into quasi-random}
For all $\alpha,\Delta,p_0  >0$, there are $N\in \N$ and $\epsilon>0$ such that for all $n\geq N$ and $p\geq p_0$ the following holds. 
Suppose $G$ is an $(\epsilon,p)$-quasi-random graph on $n$ vertices. 
Suppose that $\cT$ is a collection of trees such that each $T\in \cT$ satisfies
$|T|\leq (1-\alpha)pn$, $\Delta(T)\leq \Delta$ and $e(\cT) \leq e(G)$. 
Then $\cT$ packs into $G$.
\end{corollary}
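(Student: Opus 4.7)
The plan is to derive the corollary from Theorem~\ref{thm: main result} by padding $\cT$ into a pair $(\cH^\star,\cT^\star)$ satisfying hypotheses (i)--(iv). The decomposition of $G$ that the theorem provides then yields edge-disjoint embeddings of every element of $\cT^\star\cup\cH^\star$ into $G$, from which I read off the desired packing of $\cT$.

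Set $\delta:=\alpha p_0/10$ and $\Delta^\star:=\Delta+2$, and let $N,\epsilon$ be the output of Theorem~\ref{thm: main result} applied with these parameters. Partition $\cT=\cT_B\sqcup \cT_S$ with $\cT_B:=\{T\in\cT:|T|\ge\delta n\}$. Since $|T|\le(1-\alpha)pn\le(1-\delta)n$ for every $T\in\cT$, each $T\in\cT_B$ already satisfies the size constraint of (ii). Let $b:=\lceil (1/2+\delta) n \rceil$. If $|\cT_B|\ge b$, take $\cT^\star$ to consist of $b$ big trees and put the remainder of $\cT$ into $\cH^\star$, topped up with enough single-edge fillers so that $e(\cH^\star)+e(\cT^\star)=e(G)$. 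Otherwise, place $\cT_B$ into $\cT^\star$ and create the missing $b-|\cT_B|$ ``combined'' trees greedily from $\cT_S$: attach one small tree after another at a leaf via a single joining edge, closing the current combined tree as soon as its order first reaches $\delta n$. Since each small tree has fewer than $\delta n$ vertices, every combined tree has order between $\delta n$ and $2\delta n-1\le(1-\delta)n$ and maximum degree at most $\Delta+2$. If $\cT_S$ is exhausted before $b-|\cT_B|$ combined trees have been produced, top up with dummy paths on $\delta n$ vertices. Place the unused small trees of $\cT_S$ into $\cH^\star$, together with enough filler single edges so that $e(\cH^\star)+e(\cT^\star)=e(G)$. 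Theorem~\ref{thm: main result} now gives edge-disjoint embeddings of every element of $\cT^\star\cup\cH^\star$ in $G$; restricting the embedding of each combined tree to its constituent small trees yields edge-disjoint copies of every member of $\cT$.

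The main technical obstacle is the edge accounting: the total number of joining edges and dummy-tree edges has to fit inside the slack $r:=e(G)-e(\cT)$. Each combined tree contributes at most roughly $\delta n/2$ joining edges in the worst case (and far fewer when we can pair up small trees whose sizes are close to $\delta n$), and each dummy contributes $\delta n-1$ edges, so in principle the construction requires $r$ of order $\delta n^2$ to work directly. When $r$ is this large the argument goes through. The delicate regime is when $e(\cT)$ is very close to $e(G)$ and $\cT_S$ consists predominantly of tiny trees; there I would carry out the merging more carefully (prioritising trees of $\cT_S$ with size near $\delta n$ so as to keep joining edges close to $b-|\cT_B|$), and in genuinely degenerate sub-cases apply Theorem~\ref{thm: main result} to a slightly sparser quasi-random subgraph of $G$---which restores the required slack---while packing the remaining edges of $G$ directly against the smallest trees of $\cT$ via the quasi-randomness of $G$.
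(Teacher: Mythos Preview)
Your overall reduction to Theorem~\ref{thm: main result} is correct, and the idea of merging small trees into medium-sized ones is exactly right. However, the edge-accounting issue you identify at the end is not a mere technicality: it is a genuine gap, and your sketch of how to repair it does not work. In the extremal case where $e(\cT)=e(G)$ and $\cT$ consists entirely of single edges, your joining procedure would create roughly $\delta n^2/4$ extra edges, with zero slack available, and no amount of ``prioritising'' can help since all constituents have the same size. Passing to a sparser quasi-random subgraph does not resolve this either, because you would still need to pack all the original trees plus the joining edges somewhere.

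The paper avoids the problem entirely by a small but crucial change: instead of joining two small trees $T,T'$ by a new edge between leaves, it \emph{identifies} a leaf $\ell$ of $T$ with a leaf $\ell'$ of $T'$. The resulting tree has $e(T)+e(T')$ edges---no new edge is introduced---and maximum degree at most $\max\{\Delta,2\}=\Delta$ (since identified leaves become a vertex of degree $2$). The paper also reverses the order of your two padding operations: it first adds arbitrary bounded-degree trees of order at most $(1-\alpha)pn$ to reach $e(\cT')=e(G)$ exactly, and only then merges small trees until at most one tree of order below $\delta n$ remains. After this, every tree in $\cT'$ has at most $(1-\alpha)pn$ vertices, so the number of medium-sized trees is automatically at least $e(G)/((1-\alpha)pn)\ge (1/2+\delta)n$; no separate dummy trees are needed, and conditions (i)--(iv) hold with $|\cH|\le 1$.
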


A related result of~\cite{KKOT16} guarantees a near-optimal packing of a collection of bounded degree graphs (which are allowed to be spanning) into quasi-random graphs.
As indicated earlier, the results in~\cite{KKOT16} are actually proved in the even more general setting of $\epsilon$-regular graphs
(so they significantly strengthen the classical Blow-up lemma of Koml{\'o}s, S{\'a}rk{\"o}zy and Szemer{\'e}di~\cite{KSS97}). 
In combination with Szemer\'{e}di's regularity lemma these results will be a crucial tool in our proof 
(see Theorem~\ref{thm: blow up} and Theorem~\ref{thm: any graph packing} for the precise statement of the results from~\cite{KKOT16} that we use).

\subsection{Further work and open problems}

Here we discuss some further related directions:
Ferber and Samotij~\cite{FS16} were able to obtain 
approximate decompositions of the binomial random graph $G_{n,p}$ 
(for a wide range of $p$) into
spanning trees which may have large degree.
Related results on approximate decompositions 
into tree factors were previously obtained in~\cite{BFKL14}.

Another direction is to consider host graphs of large degree.
In particular, a very recent result in~\cite{CKKO17} implies that an approximate version of 
the tree packing conjecture for bounded degree trees holds for regular host graphs of degree
$r \ge (1/2+o(1))n$. (In fact, the results in~\cite{CKKO17} extend to approximate
decompositions of almost regular $n$-vertex host graphs of large degree into arbitrary $n$-vertex separable bounded degree graphs.)

A further more general setting is that of graceful labellings --
 Ringel's conjecture is generalized by the notorious graceful labelling conjecture,
 which states that any $n$-vertex tree $T$ can be injectively labelled by using the numbers ${1,2,\dots,n}$ in such a way that the absolute differences induced on the edges are pairwise distinct (see e.g.~\cite{AAGH16} on how this would imply Ringel's conjecture).  
An approximate version of the graceful labelling conjecture for trees of degree
$o(n/\log n)$ was recently proved by
Adamaszek, Allen, Grosu and Hladk\'y~\cite{AAGH16}.

The methods of the current paper might help to obtain exact versions of some of the 
above results.

\section{Outline of the argument}
\subsection{The tree packing conjecture}
Suppose our aim is to pack trees $T_1,\dots, T_n$ with $|T_i|=i$ and $\Delta(T_i)\leq \Delta$ into $G=K_n$, i.e.~we have the setting of Theorem~\ref{thm:glboundeddegree} with uniformly bounded degrees. 
Our proof develops an ``iterative absorbing approach'' going back to \cite{KKO15}.

Roughly speaking, an absorbing approach to find an optimal packing means that we first find and remove an absorbing graph $G_{\rm abs}$ from the host graph $G$. 
In our case, we would then aim to find a near-optimal packing $\phi$ of most of the trees $T_i$ into $G-E(G_{\rm abs})$, which leaves some sparse remainder $G_{\rm rem}$ of $G$ uncovered. 
The properties of $G_{\rm abs}$ then should ensure that we can find an optimal packing of the remaining trees into $G_{\rm abs}\cup G_{\rm rem}$, altogether leading to an (optimal) packing of $T_1,\dots, T_n$ into $G$.

A recent result of \cite{KKOT16} allows us to find the near-optimal packing required in the second step above. 
However, it is far from clear how to construct such an absorbing graph $G_{\rm abs}$. 
The iterative absorbing approach replaces the single absorbing step with a sequence of steps, each designed (amongst others) to reduce the number of uncovered ``leftover edges''. The argument is still concluded with an absorbing step, but the small size of the leftover and the added control over its location now makes this step feasible.

To be more precise, we consider a sequence $V(G)=A_0\supseteq A_1 \supseteq\ldots \supseteq A_{\Lambda}$ of sets with $|A_{i+1}|\ll |A_i|$ and 
such that $|A_\Lambda|\approx n^{1/3}$ and $\Lambda=\Theta(\log{n})$ (see Section~\ref{sec:9.1}). 
After the $i$th step of the iteration, we will ensure that any edges of $G$ not covered so far lie inside $A_i$
and that this leftover is still sufficiently dense, when viewed as having vertex set $A_i$.

We can achieve this as follows.
We split $\{T_1,\ldots,T_n\}$ into sets $\cT_0,\ldots,\cT_{\Lambda-1}$
such that each $\cT_i$ contains approximately $|A_i|$ trees of order at most $|A_i|$.
Assume we have packed $\cT_0\cup\ldots\cup\cT_{i-1}$
in such a way that 
\begin{enumerate}[label=(\alph*)]
	\item all edges in $G_{i-1}:=G[A_{i-1}]-E(G[A_{i}])$ are covered and
	\item most edges in $G[A_{i}]$ are not covered.
\end{enumerate}
Our aim then is to find a packing of most of the trees in $\cT_i$ into $G_i:=G[A_{i}]-E(G[A_{i+1}])$ using the results from \cite{KKOT16}. 
However, a direct application of the results in \cite{KKOT16} to $G_i$ 
would produce a leftover which is too dense.
Thus instead we apply Szemer\'{e}di's regularity lemma to $G_i$ in order to obtain a very efficient decomposition of $G_i$ into a bounded number of blown-up cycles (see Section~\ref{sec: decomp to cycle}). 
We can then use a random walk algorithm as well as the results in \cite{KKOT16} to obtain a very efficient packing of most of $\cT_i$ into these blown-up cycles (see Section~\ref{sec: trees} and Step~\ref{step2} in Section~\ref{sec:iteration}) and thus into $G_i$.

Roughly speaking we next use the ``unpacked'' trees in $\cT_i$ to cover the remaining edges of $G_i$ greedily. 
To achieve this, we will make use of a small number of edges from $G_{i+1}$,
but need to be very careful that we do not affect the structure of $G_{i+1}$ too much.
This task of covering the remaining edges is divided into several steps.
For example, one step consists of covering all remaining edges induced by $A_i\sm A_{i+1}$.
Another step consists of adjusting the parity of the degrees of the remaining graph.
(We achieve this by embedding a leaf onto a vertex of ``incorrect'' parity.)
The tools for carrying out these steps are provided in Section~\ref{sec:cleaning}.
Eventually, we have now packed $\cT_i$ in such a way that (a) and (b) hold with $i$ replaced by $i+1$.

After $\Lambda$ iterations we arrive at a final leftover graph $G_\Lambda^*$ on $A_\Lambda$,
which is almost complete.
So we have a very restricted leftover,
but have so far no method for carrying out the actual absorption step.
In order to prepare for this,
we actually run the iterative process on a slightly modified set of trees:
we choose an arbitrary collection $\cT^*$ of $m$ trees
from $T_2,\ldots,T_{n/2}$ (where $m$ is slightly smaller than $\binom{|A_\Lambda|}{2}$)
and remove a leaf $\ell_{T^*}$ from each $T^*\in \cT^*$.
Let $z_{T^*}$ be the vertex incident to $\ell_{T^*}$.
When carrying out the above iteration we do not embed $T^*\in \cT^*$ but instead we embed $T^*-\ell_{T^*}$
in such a way that $z_{T^*}$ is embedded into $A_\Lambda$
(and no other vertex of $T^*$ is embedded into $A_\Lambda$).

This means that after $\Lambda$ iterations,
we have embedded all (modified) trees $T_1,\ldots,T_n$
except for the leaves $\ell_{T^*}$ (and their incident edges) of the trees in $\cT^*$.
Also,
the uncovered leftover graph $G_{\Lambda}^*$ has exactly $m=|\cT^*|$ edges.
Thus in order to complete the absorption step 
it remains to assign these edges to the images of $T^*-\ell_{T^*}$ in a suitable way.
Hence each edge of $G_{\Lambda}^*$ will be the image of $z_{T^*}\ell_{T^*}$ for some $T^*\in \cT^*$.
(This is the reason why we embedded $z_{T^*}$ into $A_\Lambda$.)
This assignment is carried out by considering a suitable ``out-regular'' orientation of $G_{\Lambda}^*$.
We prove the existence of such an orientation in Section~\ref{sec:orientation}.
Note that in this process,
the role of the final absorbing graph
is played by the images of the modified trees in $\cT^*$.

\subsection{The general setting}
We now discuss the additional ideas needed to prove Theorem~\ref{thm: main result}.
It turns out that considering a quasi-random graph $G$ instead of $K_n$ does not affect the argument significantly.
Moreover,
the family of graphs $\cH$ of bounded degree in Theorem~\ref{thm: main result} can easily be packed (with a quasi-random remainder)
using the results of \cite{KKOT16}.
What does make a difference is that the family $\cT$ of trees in Theorem~\ref{thm: main result} consists entirely of trees of linear size whereas the iterative absorption argument outlined in the previous section only makes sense if in the $i$th iteration
the trees to be embedded have order at most $|A_i|=|G_i|$.

To overcome this problem, we run the iterative absorption with suitable subtrees of the original trees.
More precisely, in the $0$th iteration,
we start by packing almost all of the trees, but from a small proportion of the trees $F\in \cT$,
we cut off a small subforest $F^*$ and denote by $\cT^1$ the set consisting of all the $F^*$.
We then embed each $F-V(F^*)$ into $G[A_0\sm A_1]$.
In the first iteration,
we proceed similarly for $\cT^1$,
i.e.~most forests in $\cT^1$ are embedded in their entirety,
but from the others we cut off small subforests which together form $\cT^2$,
most of which we pack in the second iteration and so on.

A significant difficulty with this approach is of course that if we embed a tree in several iterations,
then the subforests embedded in the various rounds must fit together.
To achieve this we carefully embed the ``intersection points'' (or ``roots'') of these subforests in a separate step.

\subsection{Organisation of the paper}
In Section~\ref{sec: preliminaries},
we introduce some notation,
collect some probabilistic tools,
introduce  Szemer{\'e}di's regularity lemma,
and prove some basic results on packings and decompositions.
In Section~\ref{sec: approx cycle decomp},
we prove a result on near-optimal decompositions of suitable graphs into long cycles.
In Section~\ref{sec: decomp to cycle},
we combine this with Szemer{\'e}di's regularity lemma to obtain near-optimal decompositions of suitable graphs into blown-up cycles.
In Section~\ref{sec: trees},
we combine the results of \cite{KKOT16} with a random walk algorithm to find efficient packings of trees into blown-up cycles.
In Section~\ref{sec:cleaning},
we provide tools for covering a sparse set of leftover edges with trees.
In Section~\ref{sec:iteration},
we then combine the results from Section~\ref{sec: preliminaries}--\ref{sec:cleaning}
to prove the iteration lemma,
which forms the core of the proof of Theorem~\ref{thm: main result}.
In Section~\ref{sec:orientation},
we prove an orientation lemma which will be important in the final absorbing step.
In Section~\ref{sec:final},
we repeatedly apply the iteration lemma and then finally apply the orientation lemma
to prove a version of Theorem~\ref{thm: main result} where we require $\mathcal H=\emptyset$
(see Theorem~\ref{thm: main result step1}).
We then derive Theorem~\ref{thm: main result} itself (using a result from~\cite{KKOT16}).
We also derive  Corollary~\ref{cor: trees into quasi-random} and Theorem~\ref{thm:glboundeddegree}
from Theorem~\ref{thm: main result}.

\section{Preliminaries}
\label{sec: preliminaries}

\subsection{Basic definitions}

For $a,b,c\in \mathbb{R}$ we write $a = b\pm c$ if $b-c \leq a \leq b+c$. In order to simplify the presentation, we omit floors and ceilings and treat large numbers as integers whenever this does not affect the argument. The constants in the hierarchies used to state our results have to be chosen from right to left. More precisely, if we claim that a result holds whenever $1/n \ll a \ll b \leq 1$ (where $n\in \N$ is typically the order of a graph), then this means that there are non-decreasing
functions $f^* : (0, 1] \rightarrow (0, 1]$ and $g^* : (0, 1] \rightarrow (0, 1]$ such that the result holds for all $0 < a, b \leq 1 $ and all $n \in \mathbb{N}$ with $a \leq f^*(b)$ and $1/n \leq g^*(a)$. We will not calculate these functions explicitly. Hierarchies with more constants are
defined in a similar way.

For $N\in \N$, we define $[N]:=\{1,\dots, N\}$.
We define $\binom{X}{k}:=\{A\subseteq X: |A|=k\}$.

We simply refer to ``graphs'' when we consider simple, undirected and finite graphs,
and refer to ``multigraphs'' as graphs with potentially parallel edges, but without loops.

Let $G$ be a multigraph
and let $u,v \in V(G)$ and $U,V\sub V(G)$ such that $U\cap V=\es$.
We write $G[U,V]$ to denote the bipartite (multi-)subgraph of $G$ induced by the edges joining $U$ and $V$
and let $e_G(U,V):=e(G[U,V])$.
We extend $e_G(U,V)$ to sets $U,V$ which are not necessarily disjoint by
defining $$e_G(U,V) := e_{G}(U\setminus V, V) + e_{G}( V\setminus U, U\cap V) + 2e(G[U\cap V]).$$
In addition, let $\den_G(U,V):= e_G(U,V)/(|U||V|)$.

We define $d_{G,U}(v):=|\{e \in E(G): e=uv, u\in U\}|$. 
If $G$ is simple, we let $d_{G,U}(u,v):=|\{w \in U:  uw , vw\in E(G)\}|$.
We denote by $N_G(v)$ the set of all neighbours of $v$
and by $N_G(u,v)$ the set of all common neighbours of $u,v$.
We define $N_G[v]:=N_G(v)\cup \{v\}$.

Given $E\sub E(G)$, we write $V(E)$ for the set of all endvertices of edges in $E$.
It will sometimes be convenient to identify a set of edges $E\sub E(G)$ with the subgraph $H$ of $G$
such that $V(H)=V(E)$ and $E(H)=E$.
In particular, we write $\Delta(E)$ for $\Delta(H)$.

We say a set $I\sub V(G)$ in a graph $G$ is {\em $k$-independent} if for any two distinct $u,v\in I$, the distance between $u$ and $v$ in $G$ is at least $k$. 
Thus a $2$-independent set is just an independent set. 
We say an edge set $M$ is a {\em $k$-independent matching} if any two edges in $M$ have distance at least $k$. 
Note that a $2$-independent matching is an induced matching.

Given a packing of $\cH=\{H_1,\dots, H_s\}$ into $G$, we can naturally associate a function $\phi: \bigcup_{H\in \cH} V(H)\cup E(H)\to V(G)\cup E(G)$ with such a packing and we also say $\phi$ \emph{packs} $\cH$ into $G$.  For simplicity, we write $\phi(\cH):= \bigcup_{H\in \cH}\phi(H)$. 
Given $H\in \cH$, we sometimes abuse notation a little and view $\phi(H)$ both as a subgraph of $G$ and as a subset of $V(G)$.
We say $\cH$ \emph{decomposes} $G$ or $G$ \emph{has a decomposition into} $\cH$ if $\cH$ packs into $G$ and $e(G)=e(\cH)$ where $e(\cH):= \sum_{H\in \cH} e(H)$.

We say that a function $f$ on domain $\dom(f)$ is {\em consistent} with a function $g$ if $f(x)=g(x)$ for any $x \in \dom(f)\cap \dom(g)$. 
We say that a function $f$ is {\em consistent} with a collection of functions $\{g_1,\dots, g_{m}\}$ if $f$ is consistent with $g_i$ for every $i\in [m]$. 

We say that a bipartite graph $G$ with vertex partition $(A,B)$ is \emph{$\epsilon$-regular}
if for all sets $A'\subseteq A$, $B'\subseteq B$ with  $|A'|\geq \epsilon |A|$, $|B'|\geq \epsilon |B|$, we have
\begin{align*}
	| \den_G(A',B')- \den_G(A,B)| < \epsilon.
\end{align*}
Moreover,
$G$ is \emph{$(\epsilon,d)$-regular} if
for all sets $A'\subseteq A$, $B'\subseteq B$ with  $|A'|\geq \epsilon |A|$, $|B'|\geq \epsilon |B|$,
we have
\begin{align*}
	| \den_G(A',B')- d| < \epsilon.
\end{align*}
If $G$ is $(\epsilon,d)$-regular and $d_{G}(a)= (d\pm \epsilon)|B|$ for all $a\in A$ and $d_{G}(b)= (d\pm \epsilon)|A|$ for all $b\in B$, then we say that $G$ is {\em $(\epsilon,d)$-super-regular}.

We say a multigraph $G$ is {\em $(\beta,\alpha)$-dense} if any two (not necessarily disjoint) sets $U, V\subseteq V(G)$ with $|U|,|V|\geq \beta |G|$ satisfy
$$\den_G(U,V)\geq \alpha.$$

A bipartite graph with vertex partition $(A,B)$ is \emph{$(\epsilon,p)$-quasi-random}
if for any two distinct vertices $u,v \in A$, we have $d(u)=(1\pm \epsilon)p|B|$ and $d(u,v)=(1\pm \epsilon)p^2|B|$
and for any two distinct vertices $u,v \in B$, we have $d(u)=(1\pm \epsilon)p|A|$ and $d(u,v)=(1\pm \epsilon)p^2|A|$.

We say a partition $(V_1,\dots, V_k)$ of a set $V$ is an {\em equitable partition} if $||V_i|-|V_j||\leq 1$ for all $i,j \in [k]$.
We refer to a graph $G$ as having equitable partition $(V_1,\dots, V_k)$
if $(V_1,\dots, V_k)$ is an equitable partition of $V(G)$.
Note that in this case we still allow $G$ to have edges joining two vertices in $V_i$ for $i\in [k]$.

For $k,\ell\in\N$ with $\ell\geq 3$,
we define an \emph{$\ell$-cycle $k$-blow-up} as the graph $C(\ell,k)$ with 
vertex set consisting of the disjoint union of $V_1,\ldots,V_\ell$ such that $|V_i|=k$ for all $i\in [\ell]$.
Moreover, the edge set consists of all edges $uv$
with $u\in V_i,v\in V_{i+1}$ for $i\in [\ell]$, where we consider the index $i$ modulo $\ell$ (and we will always do this in such a case).
We refer to the sets $V_i$ as the \emph{clusters} of $C(\ell,k)$, 
and refer to $\ell$ as the {\em length} of the cycle blow-up.
Sometimes we omit the ``$\ell$" if we speak about an $\ell$-cycle $k$-blow-up for some $\ell$.

A spanning subgraph $G_1$ of $C(\ell,k)$ is \emph{internally $k'$-regular} for some $k'\in \N$ if $G_1[V_i,V_{i+1}]$ is $k'$-regular for all $i \in [\ell]$.
A spanning subgraph $G_2$ of $C(\ell,k)$ is an \emph{$(\epsilon,d)$-(super)-regular $\ell$-cycle $k$-blow-up}
if $G_2[V_i,V_{i+1}]$ is $(\epsilon,d)$-(super)-regular for all $i\in [\ell]$.


\subsection{Probabilistic tools}
We frequently use random processes to show that (sub-)graphs with certain properties exist.
To this end, we will use several concentration inequalities which we introduce now.
A sequence $X_0,\dots, X_N$ of random variables is a {\em martingale} 
if $X_0$ is a fixed real number and $\mathbb{E}[X_{n}\mid X_0,\dots,X_{n-1}] = X_{n-1}$ for all $n\in [N]$. 
We say that the martingale $X_0,\dots, X_N$ is {\em $c$-Lipschitz} if $|X_{n}-X_{n-1}| \leq c$ holds for all $n\in[N]$. 
Our applications of Azuma's inequality will involve \emph{exposure martingales} (also known as Doob martingales). These are martingales of the form $X_i:=\mathbb{E}[X\mid Y_1,\dots, Y_i]$, where $X$ and $Y_1,\dots,Y_i$ are some previously defined random variables.

\begin{theorem}[Azuma's inequality \cite{Azu67, Hoe63}]\label{Azuma} 
Suppose that $\lambda, c >0$ and that $X_0,\dots, X_N$ is a $c$-Lipschitz martingale. 
Then 
\begin{align*}
\mathbb{P}[\left|X_N-X_0\right|\geq \lambda]\leq 2e^{\frac{-\lambda^2}{2Nc^2}}.
\end{align*}
\end{theorem}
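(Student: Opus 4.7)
The plan is to use the standard exponential moment method (Chernoff-style bound). By symmetry, applying the bound to the martingale $-X_0,\dots,-X_N$ gives the lower tail, so it suffices to prove the one-sided estimate
\[
\mathbb{P}[X_N - X_0 \geq \lambda] \leq e^{-\lambda^2/(2Nc^2)}
\]
and then multiply by $2$. For any parameter $s > 0$, Markov's inequality applied to the nonnegative random variable $e^{s(X_N - X_0)}$ yields
\[
\mathbb{P}[X_N - X_0 \geq \lambda] \;\leq\; e^{-s\lambda}\,\mathbb{E}\bigl[e^{s(X_N - X_0)}\bigr],
\]
so the task reduces to bounding the moment generating function of $X_N - X_0$.

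Next I would write the telescoping decomposition $X_N - X_0 = \sum_{n=1}^{N} D_n$, where $D_n := X_n - X_{n-1}$ are the martingale differences. By hypothesis $|D_n| \leq c$, and the martingale property gives $\mathbb{E}[D_n \mid X_0,\dots,X_{n-1}] = 0$. Conditioning on $X_0,\dots,X_{N-1}$ and pulling out the factor that is measurable with respect to this information gives
\[
\mathbb{E}\bigl[e^{s(X_N - X_0)}\bigr] \;=\; \mathbb{E}\Bigl[e^{s\sum_{n=1}^{N-1} D_n}\,\mathbb{E}\bigl[e^{sD_N}\,\big|\,X_0,\dots,X_{N-1}\bigr]\Bigr].
\]

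The key ingredient is Hoeffding's lemma: if $Y$ is a random variable with $\mathbb{E}[Y]=0$ and $|Y|\leq c$ almost surely, then $\mathbb{E}[e^{sY}] \leq e^{s^2 c^2/2}$. One proves this by writing $Y$ as a convex combination of $-c$ and $c$, using convexity of $t\mapsto e^{st}$ to bound $e^{sY}$ by the corresponding linear function, taking expectations, and then optimizing the resulting elementary one-variable bound. Applied to the conditional distribution of $D_N$ given $X_0,\dots,X_{N-1}$ this gives $\mathbb{E}[e^{sD_N}\mid X_0,\dots,X_{N-1}] \leq e^{s^2 c^2 / 2}$; iterating the same conditioning argument $N$ times then yields
\[
\mathbb{E}\bigl[e^{s(X_N - X_0)}\bigr] \;\leq\; e^{Ns^2 c^2 / 2}.
\]

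Combining with the Markov step produces the bound $e^{-s\lambda + Ns^2 c^2/2}$, which I would then optimize over $s>0$; the minimum is attained at $s = \lambda/(Nc^2)$ and equals $e^{-\lambda^2/(2Nc^2)}$, giving the desired one-sided tail estimate. Doubling accounts for the lower tail. The only genuinely non-routine step is Hoeffding's lemma, but this is an elementary convexity calculation rather than a real obstacle; the remaining ingredients are standard manipulations of conditional expectation, so I do not anticipate any substantial difficulty.
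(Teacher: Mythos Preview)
Your proof is correct and is the standard exponential-moment argument for Azuma's inequality. However, the paper does not actually prove this theorem; it is stated as a known result with citations to Azuma and Hoeffding, so there is no proof in the paper to compare against.
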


For $m,n,N\in \mathbb{N}$ with $m,n<N$, 
the \emph{hypergeometric distribution} with parameters $N$, $n$ and $m$ is the distribution of the random variable $X$ defined as follows. 
Let $S$ be a uniformly chosen random subset of $[N]$ of size $n$ and let $X:=|S\cap [m]|$. 
The first part of the following lemma is essentially same as Corollary~A.1.7 in~\cite{AS08} and the second part is from \cite{HS05}.

\begin{lemma}
[see {\cite{AS08, HS05}}] \label{lem: chernoff} 
Suppose $X_1,\dots, X_n$ are independent random variables such that $\mathbb{P}[X_i=0]=p_i$ and $\mathbb{P}[X_i=1]=1-p_i$ for all $i\in [n]$. Let $X:= X_1+\dots + X_n$. Then for all $t>0$, $\mathbb{P}[|X - \mathbb{E}[X]| \geq t] \leq 2e^{-t^2/(2n)}$. 
Suppose $Y$ has a hypergeometric distribution with parameters $N,n,m$,
then
$\mathbb{P}[|Y - \mathbb{E}[Y]| \geq t] \leq 2e^{-t^2/(3n)}$.
\end{lemma}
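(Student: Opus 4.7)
The plan is to obtain both parts from the Chernoff moment generating function method, or equivalently from Azuma's inequality (Theorem~\ref{Azuma}) applied to a suitable Doob martingale.

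For the first part, I would consider the Doob exposure martingale $M_i := \mathbb{E}[X \mid X_1,\dots,X_i]$ for $i=0,1,\dots,n$. Because the $X_i$ are independent and take values in $\{0,1\}$, standard manipulations give $|M_i-M_{i-1}| \leq 1$ almost surely, so the martingale is $1$-Lipschitz. Azuma's inequality then immediately yields
\[
\mathbb{P}[|X - \mathbb{E}[X]|\geq t] \leq 2\exp\!\big(-t^2/(2n)\big),
\]
which is exactly the stated bound. A slightly sharper (but unnecessary) route is to apply Hoeffding's lemma to each centred $X_i-\mathbb{E}[X_i]$, multiply the resulting MGF bounds using independence, and optimize $\lambda$; this still gives an exponent of the same order.

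For the hypergeometric part, the same strategy works but requires care since the indicators $\1_{j \in S}$ are not independent. I would reveal the $n$ sampled elements one at a time, forming the Doob martingale $M_i := \mathbb{E}[Y \mid \text{first } i \text{ samples}]$. A coupling argument (swap a single chosen element for a different one and observe that $Y$ changes by at most $1$) shows that this martingale is $1$-Lipschitz, so Azuma applied with $c=1$ and length $n$ gives a bound of the form $2\exp(-c' t^2/n)$ with $c'\leq 1/2$. An alternative route, which is arguably cleaner, is to invoke Hoeffding's classical observation that the MGF of a hypergeometric random variable with parameters $N,n,m$ is dominated by the MGF of $\mathrm{Bin}(n,m/N)$; this reduces the hypergeometric tail to the independent Bernoulli case already handled in the first part.

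The only real obstacle is matching the precise constant $1/3$ in the exponent. The cleanest derivations above actually yield an exponent of $1/2$ in both cases, and the weaker constant stated in the lemma simply absorbs correction terms coming from the MGF comparison in the hypergeometric case. Since the exact constant is not used in any sensitive way in the rest of the paper, I would simply quote the value extracted from~\cite{AS08,HS05} rather than tracking it explicitly through the computation.
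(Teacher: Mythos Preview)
Your sketch is correct: both parts follow from Azuma applied to the appropriate Doob martingale, or directly from Hoeffding's MGF bound (with Hoeffding's observation that the hypergeometric MGF is dominated by that of the corresponding binomial). However, the paper does not give a proof of this lemma at all; it simply cites it from the literature, noting that the first part is essentially Corollary~A.1.7 of~\cite{AS08} and the second part is taken from~\cite{HS05}. So there is nothing to compare against, and your argument stands as a valid justification that the paper omits.
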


We say that $(X_t)_{t\geq 0}$ is a \emph{symmetric random walk} on $[\ell]$ if $X_0\in [\ell]$ and for all $t\geq 1$,
we have $\mathbb{P}[X_{t}=X_{t-1}+1]=\mathbb{P}[X_{t}=X_{t-1}-1]=1/2$,
independently of $X_0,\ldots,X_{t-1}$,  where $X_t$ is considered modulo $\ell$. 

\begin{lemma}
{\rm (see \cite[Chapter 18.4]{Kle14})}
\label{lem: random walk}
Let $\ell\geq 3$ be an odd integer. 
If $(X_t)_{t\geq 0}$ is a symmetric random walk on~$[\ell]$,
then there is a $0<\gamma<1$ and an integer $t_0>0$ such that for every $i\in [\ell]$ and every $t\geq t_0$,
$$\Pro[X_t=i]=\frac{1\pm \gamma^t}{\ell}.$$
\end{lemma}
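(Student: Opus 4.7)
The plan is to identify $[\ell]$ with the cyclic group $\Z/\ell\Z$ and analyse the walk via the discrete Fourier transform on $\Z/\ell\Z$, which diagonalises the circulant transition matrix. Let $f_t(i):=\Pro[X_t=i]$, viewed as a function on $\Z/\ell\Z$. Then $f_t=f_{t-1}\ast g$, where $g$ is the step distribution with $g(1)=g(-1)=1/2$, so iterating and taking Fourier transforms gives $\widehat{f_t}(k)=\widehat{f_0}(k)\cdot\widehat{g}(k)^t$ for each character index $k\in\{0,1,\dots,\ell-1\}$, with $\widehat{g}(k)=\cos(2\pi k/\ell)$.

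The crucial use of the hypothesis is that, since $\ell$ is odd, $2\pi k/\ell$ is an integer multiple of $\pi$ only for $k=0$; hence $|\widehat{g}(k)|<1$ for every $k\neq 0$, and I would set $\gamma_0:=\max_{1\leq k\leq \ell-1}|\cos(2\pi k/\ell)|<1$. (If $\ell$ were even, $\widehat{g}(\ell/2)=-1$ would make the walk $2$-periodic and the conclusion would fail, so the parity of $\ell$ is genuinely needed.) Writing $\omega:=e^{2\pi\mathrm{i}/\ell}$, Fourier inversion yields
\[
f_t(i)\;=\;\frac{1}{\ell}+\frac{1}{\ell}\sum_{k=1}^{\ell-1}\widehat{f_0}(k)\,\widehat{g}(k)^t\,\omega^{ki},
\]
and since $|\widehat{f_0}(k)|\leq \|f_0\|_1=1$ and $|\widehat{g}(k)|\leq \gamma_0$ for every $k\neq 0$, the error term satisfies $|f_t(i)-1/\ell|\leq \frac{\ell-1}{\ell}\gamma_0^t<\gamma_0^t$ for all $i\in[\ell]$ and all $t\geq 0$, regardless of $X_0$.

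To bring this into the required form $(1\pm\gamma^t)/\ell$, I would pick any $\gamma\in(\gamma_0,1)$, for instance $\gamma:=(1+\gamma_0)/2$, and then choose $t_0$ to be the smallest integer with $\ell\gamma_0^{t_0}\leq \gamma^{t_0}$; such a $t_0$ exists because $\gamma/\gamma_0>1$, so $(\gamma/\gamma_0)^t\to\infty$. For every $t\geq t_0$ this gives $|f_t(i)-1/\ell|\leq \gamma^t/\ell$, which is exactly the claimed bound. The argument is a textbook spectral analysis of the simple random walk on an odd cycle, so no real obstacle is expected; the one subtle point is the use of the oddness of $\ell$ to rule out the eigenvalue $-1$ of the transition matrix, which is precisely what averts the $2$-periodicity that would otherwise obstruct mixing.
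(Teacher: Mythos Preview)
Your argument is correct. The paper does not give its own proof of this lemma; it merely cites Klenke's textbook as a reference, so there is nothing to compare against. Your Fourier-analytic proof on $\Z/\ell\Z$ is the standard spectral argument for convergence of the simple random walk on the odd cycle: the transition operator is the circulant matrix with eigenvalues $\cos(2\pi k/\ell)$, oddness of $\ell$ rules out the eigenvalue $-1$, and the resulting spectral gap yields geometric decay of $\|f_t-\mathbf{1}/\ell\|_\infty$. The final step of trading the constant $\ell-1$ for a slightly larger base $\gamma>\gamma_0$ and a burn-in time $t_0$ is exactly what is needed to match the stated form $(1\pm\gamma^t)/\ell$.
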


\subsection{Regularity and quasi-randomness tools}

In this section we collect several basic statements involving graph regularity and quasi-randomness as well as
some of their consequences.
The following three propositions follow easily from the definition of $(\epsilon,d)$-regularity.

\begin{proposition}\label{prop: reg smaller}
Let $0<\epsilon\leq \delta \leq d \leq 1$. Suppose $G$ is an $(\epsilon,d)$-regular bipartite graph with vertex partition $(A,B)$ and $A' \subseteq A,B'\subseteq B$ with ${|A'|}/{|A|}, {|B'|}/{|B|}\geq \delta$.
Then $G[A',B']$ is $(\epsilon/\delta, d)$-regular.
\end{proposition}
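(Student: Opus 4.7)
The plan is to simply unfold the definition of $(\epsilon/\delta,d)$-regularity and reduce it to a direct application of the $(\epsilon,d)$-regularity hypothesis on $G$. So I would fix arbitrary sets $A''\subseteq A'$ and $B''\subseteq B'$ with $|A''|\geq (\epsilon/\delta)|A'|$ and $|B''|\geq (\epsilon/\delta)|B'|$, and aim to show $|\den_{G[A',B']}(A'',B'')-d|<\epsilon/\delta$.

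The key observation is that the density $\den_G(A'',B'')$ is unaffected by passing to the subgraph $G[A',B']$, since $A''\subseteq A'$ and $B''\subseteq B'$, so it suffices to control $\den_G(A'',B'')$. Now I would just chain the size bounds:
\[
\frac{|A''|}{|A|}=\frac{|A''|}{|A'|}\cdot\frac{|A'|}{|A|}\geq \frac{\epsilon}{\delta}\cdot\delta=\epsilon,
\]
and analogously $|B''|/|B|\geq \epsilon$. Hence $(A'',B'')$ is an admissible pair of subsets for the $(\epsilon,d)$-regularity hypothesis applied to $G$, which yields $|\den_G(A'',B'')-d|<\epsilon\leq \epsilon/\delta$. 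Since $A'',B''$ were arbitrary, this is exactly the conclusion that $G[A',B']$ is $(\epsilon/\delta,d)$-regular.

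There is essentially no obstacle here: the statement is a routine ``restriction'' lemma for regular pairs, and the only content is the arithmetic observation $\epsilon/\delta\cdot \delta=\epsilon$ that lets one lift a test pair for the restriction to a test pair for the original graph. The hypothesis $\epsilon\leq \delta\leq d\leq 1$ is only used to ensure that $\epsilon/\delta\leq 1$ (so that the conclusion is nontrivial) and that $\delta\leq 1$ (so that sets of relative size $\geq \delta$ can exist); it is not needed for the inequality itself.
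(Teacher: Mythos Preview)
Your proof is correct and follows exactly the standard route the paper has in mind: the paper does not spell out a proof but simply notes that the proposition ``follows easily from the definition of $(\epsilon,d)$-regularity,'' which is precisely the direct verification you carried out.
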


\begin{proposition}\label{prop: matching}
Suppose $n\in \N$ with $1/n\ll \epsilon \ll d \leq 1$.
If $G$ is an $(\epsilon,d)$-regular bipartite graph with vertex partition $(A,B)$ and $n=|A|\leq|B|$,
then there is a matching of size at least $(1-\epsilon)n$ in $G$.
\end{proposition}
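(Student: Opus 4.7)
The plan is to argue by contradiction using König's theorem (equivalently, the defect form of Hall's theorem). Suppose no matching in $G$ has size at least $(1-\epsilon)n$. Then the maximum matching has size strictly less than $(1-\epsilon)n$, so König's theorem yields a vertex cover $C=C_A\cup C_B$ with $C_A\sub A$, $C_B\sub B$, and $|C_A|+|C_B|<(1-\epsilon)n$. Set $S:=A\sm C_A$; since $C$ is a vertex cover, every edge incident with $S$ has its other endpoint in $C_B$, and hence $N_G(S)\sub C_B$.

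The bookkeeping that follows is straightforward. We have $|S|=n-|C_A|$ and $|N_G(S)|\leq |C_B|<(1-\epsilon)n-|C_A|=|S|-\epsilon n$, so in particular $|S|>\epsilon n = \epsilon|A|$. Using $|B|\geq n$, we also have
\begin{align*}
|B\sm N_G(S)| > |B|-(1-\epsilon)n \geq |B|-(1-\epsilon)|B| = \epsilon|B|.
\end{align*}
Thus both $S$ and $B\sm N_G(S)$ satisfy the size thresholds in the definition of $(\epsilon,d)$-regularity.

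Applying $(\epsilon,d)$-regularity to the pair $(S,B\sm N_G(S))$ gives $\den_G(S,B\sm N_G(S))>d-\epsilon$. However, by the choice of $S$ there are no edges between $S$ and $B\sm N_G(S)$, so this density equals $0$. Since $1/n\ll \epsilon \ll d$ we have $d-\epsilon>0$, which is the desired contradiction.

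There is no real obstacle here; the only point requiring mild care is verifying that $|B\sm N_G(S)|\geq \epsilon|B|$ when $|B|$ may be strictly larger than $n$, which is precisely where the hypothesis $|A|\leq |B|$ is used. Everything else is a direct application of König's theorem together with the definition of regularity.
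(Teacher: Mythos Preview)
Your proof is correct. The paper does not give an explicit argument for this proposition, only remarking that it ``follows easily from the definition of $(\epsilon,d)$-regularity''; your use of K\"onig's theorem to produce a large set $S\subseteq A$ with small neighbourhood, and then invoking regularity on the pair $(S,B\sm N_G(S))$, is exactly the kind of direct verification the paper has in mind.
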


\begin{proposition}\label{prop: reg right deg}
Suppose $G$ is an $(\epsilon, d)$-regular bipartite graph with vertex partition $(A,B)$ and $B' \subseteq B$ with $|B'|\geq \epsilon|B|$. Then all but
at most $2\epsilon |A|$ vertices in $A$ have degree $(d \pm \epsilon)|B'|$ in $B'$.
\end{proposition}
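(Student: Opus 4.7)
The plan is to reduce the claim to the defining density condition of $(\epsilon,d)$-regularity by a standard contradiction argument. The key first step is to split the ``bad'' vertices of $A$ into two disjoint exceptional sets according to the direction of the deviation, namely $A^+ := \{a\in A : d_{G,B'}(a) > (d+\epsilon)|B'|\}$ and $A^- := \{a\in A : d_{G,B'}(a) < (d-\epsilon)|B'|\}$. The set of vertices whose degree into $B'$ fails to lie in $(d\pm\epsilon)|B'|$ is exactly $A^+\cup A^-$, so it will suffice to show $|A^+|\leq \epsilon|A|$ and $|A^-|\leq \epsilon|A|$ separately.

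To control $|A^+|$, I would argue by contradiction, assuming $|A^+| > \epsilon|A|$. Since also $|B'| \geq \epsilon|B|$, the pair $(A^+,B')$ meets the size requirement in the definition of $(\epsilon,d)$-regularity, so
\[
\den_G(A^+,B') < d + \epsilon.
\]
On the other hand, the defining property of $A^+$ gives $e_G(A^+,B') > (d+\epsilon)|A^+||B'|$, and hence $\den_G(A^+,B') > d+\epsilon$, a contradiction. The bound $|A^-|\leq \epsilon|A|$ will then follow by a symmetric argument with the inequalities reversed.

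There is no real obstacle here: this is the usual observation that an average density condition on all sufficiently large subpairs immediately forces degree regularity for most vertices of $A$. The only minor point of care is that the two tails of the deviation must be handled separately; grouping them into a single exceptional set $A^+ \cup A^-$ before invoking regularity would not yield a contradiction, since vertices with high and low degrees into $B'$ could cancel in the density calculation.
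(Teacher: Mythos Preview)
Your argument is correct and is exactly the standard one-line deduction from the definition of $(\epsilon,d)$-regularity; the paper itself omits the proof, simply noting that the proposition ``follows easily from the definition of $(\epsilon,d)$-regularity.'' Your care in splitting into $A^+$ and $A^-$ before invoking regularity is precisely what is needed, and there is nothing to add.
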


The following result follows easily from Proposition~\ref{prop: reg smaller} and \ref{prop: reg right deg}.

\begin{proposition}\label{prop: making super reg}
Suppose $\ell,n\in \N$ and $1/n \ll\epsilon\ll d\leq 1$.
Then
every $(\epsilon,d)$-regular $\ell$-cycle $n$-blow-up
contains a $(6\epsilon,d)$-super-regular $\ell$-cycle $(1- 4\epsilon)n$-blow-up.
\end{proposition}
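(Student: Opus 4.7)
The plan is to remove, from each cluster $V_i$, a small set of atypical vertices and then trim the remainder to the common size $(1-4\epsilon)n$, and then verify that the restricted blow-up is super-regular with the claimed parameters. The key observation is that Propositions~\ref{prop: reg smaller} and \ref{prop: reg right deg} already do almost all of the work, one for the regularity condition and the other for the degree condition.

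First I would identify the bad vertices. For each $i\in[\ell]$, let $B_i\sub V_i$ be the set of vertices $v$ such that either $d_G(v,V_{i-1})\neq (d\pm\epsilon)n$ or $d_G(v,V_{i+1})\neq (d\pm\epsilon)n$. Applying Proposition~\ref{prop: reg right deg} with $B'=V_{i-1}$ and with $B'=V_{i+1}$ (so that $|B'|=n\geq \epsilon n$ trivially), one obtains $|B_i|\leq 4\epsilon n$. I would then choose, for each $i$, an arbitrary subset $V_i'\sub V_i\sm B_i$ with $|V_i'|=(1-4\epsilon)n$, which is possible because $|V_i\sm B_i|\geq (1-4\epsilon)n$.

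Next I would check $(6\epsilon,d)$-super-regularity of the $\ell$-cycle $(1-4\epsilon)n$-blow-up on clusters $V_1',\dots,V_\ell'$. For each consecutive pair $(V_i,V_{i+1})$, Proposition~\ref{prop: reg smaller} applied with $\delta=1-4\epsilon$ gives that $G[V_i',V_{i+1}']$ is $(\epsilon/(1-4\epsilon),d)$-regular, and hence $(6\epsilon,d)$-regular since $\epsilon/(1-4\epsilon)\leq 6\epsilon$ for small $\epsilon$. For the degree condition, pick $v\in V_i'$. Since $v\notin B_i$ we have $d_G(v,V_{i+1})=(d\pm\epsilon)n$, and at most $|V_{i+1}\sm V_{i+1}'|=4\epsilon n$ of these neighbours lie outside $V_{i+1}'$. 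Hence
\[
d_G(v,V_{i+1}')=(d\pm\epsilon)n \pm 4\epsilon n=(d\pm 5\epsilon)n.
\]
Dividing by $|V_{i+1}'|=(1-4\epsilon)n$ and using $(d+5\epsilon)/(1-4\epsilon)\leq d+6\epsilon$ (valid for small enough $\epsilon$ since $d\leq 1$), one obtains $d_G(v,V_{i+1}')=(d\pm 6\epsilon)|V_{i+1}'|$. The analogous estimate on the other side yields the required degree bound into $V_{i-1}'$.

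The main conceptual step is really just the conversion of the $(d\pm\epsilon)n$ degree bound into a $(d\pm 6\epsilon)|V_{i+1}'|$ bound after shrinking both endpoints of each bipartite piece; the rest is a direct application of the two preceding propositions. I do not expect any genuine obstacle — the only point needing mild care is keeping the error parameters consistent, which is why the hypothesis $\epsilon\ll d$ (absorbing lower-order $\epsilon^2$ terms and keeping $d-5\epsilon>0$) is needed.
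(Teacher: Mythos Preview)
Your approach is correct and essentially identical to the paper's, which also removes at most $4\epsilon n$ atypical vertices from each cluster and then invokes Propositions~\ref{prop: reg smaller} and~\ref{prop: reg right deg}. One small arithmetic slip to fix: the claimed inequality $(d+5\epsilon)/(1-4\epsilon)\leq d+6\epsilon$ fails when $d$ is close to $1$; however, since deleting vertices can only decrease degrees, the correct upper bound is $d_G(v,V_{i+1}')\leq (d+\epsilon)n$, and $(d+\epsilon)/(1-4\epsilon)\leq d+6\epsilon$ does hold for $\epsilon\ll d\leq 1$.
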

\COMMENT{\begin{proof}
Let $G$ be a $(\epsilon,d)$-regular $\ell$-cycle $n$-blow-up
with vertex set $V_1\cup\ldots\cup V_\ell$.
For every $i\in [\ell]$,
let $X_i$ and $Y_i$ be the set of vertices in $V_i$
that do not have $(d\pm\epsilon)n$ neighbours in $V_{i-1}$ and $V_{i+1}$, respectively, where we consider the indices modulo $\ell$.

By Proposition~\ref{prop: reg right deg}, $|X_i|,|Y_i|\leq  2\epsilon n$.
Let $Z_i\subseteq V_i$ be a set of $4\epsilon n$ vertices containing $X_i \cup Y_i$.
Let $G'$ be the graph obtained from $G$ by deleting all vertices in $Z_1\cup \ldots \cup Z_\ell$.
By Proposition~\ref{prop: reg smaller}, $G'$ is a $(6\epsilon,d)$-super-regular $\ell$-cycle $(1- 4\epsilon)n$-blow-up.
\end{proof}}

We will also use the next result from~\cite{DLR95}.
(In~\cite{DLR95} it is proved in the case when $|A|=|B|$ with $16\epsilon^{1/5}$ instead of $\epsilon^{1/6}$.
The version stated below can be easily derived from this.)

\begin{theorem}\label{thm: almost quasirandom}
Suppose $n\in \N$ with $1/n\ll\epsilon \ll \alpha,p \leq 1$.
Suppose $G$ is a bipartite graph with vertex partition $(A,B)$ such that $|A|=n$, $\alpha n \leq |B|\leq \alpha^{-1}n$ and at least $(1-5\epsilon)n^2/2$ pairs $u,v\in A$
satisfy $d(u),d(v)\geq (p- \epsilon)|B|$ and $d(u,v)\leq (p+ \epsilon)^2|B|$.
Then $G$ is $\epsilon^{1/6}$-regular. In particular, if $G$ is $(\epsilon,p)$-quasi-random, it is $(\epsilon^{1/6},p)$-super-regular.
\end{theorem}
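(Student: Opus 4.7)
The plan is to reduce to the balanced case proved in \cite{DLR95} via a blow-up construction. Set $m := |B|$, and define an auxiliary bipartite graph $G^*$ with parts $A^* := A \times [m]$ and $B^* := B \times [n]$, making $(a,i)(b,j)$ an edge of $G^*$ whenever $ab \in E(G)$. Then $|A^*| = |B^*| = nm$, and crucially, for any $A'\sub A$ and $B'\sub B$, the densities satisfy $\den_{G^*}(A'\times [m], B'\times [n]) = \den_G(A',B')$. So a regularity guarantee on $G^*$ will transfer back to $G$ almost for free.

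I then verify that $G^*$ satisfies the hypothesis of the balanced version of \cite{DLR95} with parameter $\epsilon' := 2\epsilon$. For every ``good'' pair $(a,a') \in A^2$ with $a \neq a'$, each of the $m^2$ inflated pairs $((a,i),(a',i'))$ satisfies the required degree bound $d((a,i))=n\,d_G(a) \geq (p-\epsilon)nm = (p-\epsilon)|B^*|$ and codegree bound $d((a,i),(a',i')) = n\,d_G(a,a') \leq (p+\epsilon)^2|B^*|$. This yields at least $(1-5\epsilon)(nm)^2/2$ good pairs in $A^*$. The only pairs not covered by this argument are the $n\binom{m}{2}$ pairs sharing the same first coordinate, but the bound $1/n \ll \epsilon$ implies these form at most $\epsilon (nm)^2$ further pairs, which fits comfortably inside the slack $(5\epsilon' - 5\epsilon)(nm)^2/2$. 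The main subtle point in the proof is exactly this: making sure the ``diagonal'' pairs of the blow-up do not overwhelm the bad-pair budget, which is why we need the $1/n \ll \epsilon$ part of the hierarchy.

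Applying the DLR theorem to $G^*$ gives that $G^*$ is $16\epsilon'^{1/5} = 16(2\epsilon)^{1/5}$-regular, which is $<\epsilon^{1/6}$ for $\epsilon$ sufficiently small (since $\epsilon^{1/30} \ll 1$). To deduce $\epsilon^{1/6}$-regularity of $G$, suppose towards a contradiction that $A' \sub A$, $B' \sub B$ witness the failure with $|A'|/n,|B'|/m \geq \epsilon^{1/6}$ and $|\den_G(A',B') - \den_G(A,B)| \geq \epsilon^{1/6}$. Then $A'^*:=A'\times[m]$ and $B'^*:=B'\times[n]$ form equally dense witnesses in $G^*$ with the same density gap, contradicting the regularity of $G^*$. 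Finally, for the ``in particular'' clause, note that $(\epsilon,p)$-quasi-randomness already gives $d_G(u) = (1\pm\epsilon)p|B|$ and $d_G(v) = (1\pm\epsilon)p|A|$ for all $u \in A$, $v \in B$, and moreover $(1+\epsilon)p^2 \leq (p+\epsilon)^2$ so the codegree hypothesis holds for every pair, not merely most. Combined with the just-proved $\epsilon^{1/6}$-regularity (which additionally forces $\den_G(A,B) = p \pm \epsilon^{1/6}$), the degree conditions of $(\epsilon^{1/6},p)$-super-regularity are immediate.
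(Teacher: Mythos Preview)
Your argument is correct and follows essentially the same route as the paper's derivation: both reduce to the balanced case of \cite{DLR95} by a blow-up construction and then pull the regularity back to $G$. The only difference is cosmetic: the paper (in a comment accompanying the statement) blows up just the smaller side of the bipartition to match the larger one and then invokes Proposition~\ref{prop: reg smaller} to return to the original part sizes, whereas you blow up both sides to a common size $nm$, which makes the argument symmetric and avoids a case distinction on which of $|A|,|B|$ is larger. Your handling of the diagonal pairs and of the ``in particular'' clause is fine; note that $\den_G(A,B)=(1\pm\epsilon)p$ already follows from the degree condition in quasi-randomness, so the $(\epsilon^{1/6},p)$-regularity (as opposed to mere $\epsilon^{1/6}$-regularity) is immediate once you have the one-parameter version plus the $\epsilon p$ slack absorbed into $\epsilon^{1/6}$.
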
\COMMENT{Here, we allow $A$ and $B$ to have different size, which original theorem didn't allow. Note that original theorem shows that it's $16\epsilon^{1/5}$-regular when $|A|=|B|$. (In the statement of the Theorem, $|A|=n$ and $|B|=dn$ for some $\alpha \leq d \leq \alpha^{-1}$. However, in below we assumed $|A|=dn$ and $|B|=n$ instead of $|A|=n$ and $|B|=dn$. The proof is similar, so we just leave it as it is.)

If $|A|=dn, |B|=n$ with $d<1$, let $A=\{a_1,\dots, a_{dn}\}$.
Let $\overline{D}$ be the set of all pairs $a_i,a_j$ such that $d_G(a_i)<(p-\epsilon)n$ or $d_G(a_j)<(p-\epsilon)n$ or $d_G(u,v)> (p+\epsilon)^2 n$. 
Then $|\overline{D}|\leq 5\epsilon n^2/2 \leq 3\epsilon n^2.$
Let $A':=\{a'_1,\dots, a'_n\}$ and consider a bipartite graph $G'$ on $A'\cup B$ such that $a'_i b_j \in E(G')$ if $a_{i'} b_j \in E(G)$ where $i' \equiv i ~(\text{mod} dn)$ and $1\leq i'\leq dn$. 
Then in $G'$, let $$\overline{D}'_1:= \{\{a_i,a_j\}: i\equiv i'~(\text{mod} dn), j\equiv j'~(\text{mod} dn) 1\leq i',j'\leq dn, \{a_{i'},a_{j'}\}\in \overline{D}\}.$$
$$\overline{D}'_2:= \{\{a_i, a_j\}: i\equiv j~(\text{mod} dn)\}.$$
Then it is easy to see that any pair $\{a_i,a_j\}\notin \overline{D}'_1\cup \overline{D}'_2$ satisfies $d_G(a_i)\geq (p-\epsilon)n$ and $d_G(a_j)\geq (p-\epsilon)n$ and $d_G(u,v)\leq (p+\epsilon)^2 n$.
Moreover, $|\overline{D}'_1|\leq (d^{-1} +1)^2 |D|$ and $|\overline{D}'_2|\leq (d^{-1}+1)^2 dn$.
Thus $|\overline{D}'_1\cup \overline{D}'_2|\leq 4d^{-2}\epsilon n^2$.
Thus by the original theorem, $G'$ is $( 16(4d^{-2}\epsilon)^{1/5},p)$-regular. Thus by 
Proposition~\ref{prop: reg smaller}, $G$ is $(d^{-1}(16(4d^{-2}\epsilon)^{1/5}),p)$-regular. Thus it is $(\epsilon^{1/6},p)$-regular.
If $d>1$, we can show it with similar logic. }

The next three propositions involve basic properties of quasi-random and of $(\beta,\alpha)$-dense graphs which follow easily from the definitions.
\begin{proposition}\label{prop: quasi-random subgraph}
Suppose $\epsilon \leq 1/10$. Let $G$ be a $(\epsilon,p)$-quasi-random graph on $n$ vertices. 
Let $U \sub V(G)$ be a set of vertices with $|U|\geq (1-\epsilon) n$ and 
let $E_1,E_2$ be collections of edges on $V(G)$ such that $\Delta(E_1)\leq \epsilon n$ and $\Delta(E_2)\leq \epsilon n$. 
Then $(G\cup E_1)[U] -E_2$ is $(10p^{-2}\epsilon,p)$-quasi-random.
\end{proposition}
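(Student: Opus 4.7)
The plan is to verify both quasi-randomness conditions directly for $G' := (G\cup E_1)[U] - E_2$ by tracking how each modification (restricting to $U$, adding $E_1$, deleting $E_2$) perturbs the relevant neighbourhood. Set $m := |U|$. The hypothesis $(1-\epsilon)n\leq m\leq n$ together with $\epsilon\leq 1/10$ gives $m\geq 9n/10$, so $m$ is comparable to $n$ throughout.

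For the degree condition, I would fix $u\in U$ and compare $d_{G'}(u)$ with $d_G(u)=(1\pm\epsilon)pn$. Passing from $G$ to $G[U]$ can only lose edges to $V(G)\setminus U$, of which there are at most $\epsilon n$; adding $E_1$ changes the degree by at most $\Delta(E_1)\leq \epsilon n$; deleting $E_2$ by at most $\Delta(E_2)\leq \epsilon n$. Thus $d_{G'}(u)$ and $d_G(u)$ differ by at most $3\epsilon n$, and replacing $pn$ by $pm$ costs at most another $\epsilon n$. Since $p\leq 1$ and $m\geq 9n/10$, the resulting additive error of at most $5\epsilon n$ is comfortably within $10p^{-2}\epsilon\cdot pm = 10p^{-1}\epsilon\cdot m$.

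For the codegree condition, I would fix distinct $u,v\in U$ and compare $N_{G'}(u,v)$ with $N_G(u,v)\cap U$. There are three perturbations to account for: (i) the at most $\epsilon n$ common neighbours lying in $V(G)\setminus U$ are lost in the restriction; (ii) a common neighbour $w\in N_G(u,v)\cap U$ is killed by $E_2$ only if $uw\in E_2$ or $vw\in E_2$, contributing at most $2\epsilon n$; (iii) a new common neighbour must arise via some edge of $E_1$ incident to $u$ or $v$, contributing at most $2\epsilon n$. Hence $|d_{G'}(u,v)-d_G(u,v)\cap U|\leq 4\epsilon n$, and combined with replacing $p^2n$ by $p^2m$, the total additive error is at most $7\epsilon n \leq 10\epsilon m = 10p^{-2}\epsilon\cdot p^2m$, which is the desired bound.

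There is no real obstacle; the proof is a clean bookkeeping of edge modifications. The only subtle point is the appearance of the factor $p^{-2}$: the natural additive error is of size $O(\epsilon n)$, and converting this into a multiplicative error relative to the codegree $p^2m$ necessarily divides by $p^2$. This is precisely why the proposition becomes vacuous when $p$ is of order $\sqrt{\epsilon}$ or smaller.
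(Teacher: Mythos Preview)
Your proposal is correct and follows essentially the same approach as the paper's (omitted) proof: a direct verification of the degree and codegree conditions by bounding the perturbation caused by each of the three modifications (restriction to $U$, adding $E_1$, deleting $E_2$). The paper's argument records the codegree error as $d_{G'}(u,v)=d_G(u,v)\pm(\epsilon n+2\Delta(E_1)+2\Delta(E_2))=d_G(u,v)\pm 5\epsilon n$ and then converts to $(1\pm 10p^{-2}\epsilon)p^2|U|$, exactly matching your bookkeeping.
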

\COMMENT{\begin{proof}
Let $G':= (G\cup E_1)[U]-E_2$.
For $u,v \in V(G')$, 
we have $d_{G'}(u,v) = d_{G}(u,v) \pm (\epsilon n + 2\Delta(E_1)+ 2\Delta(E_2)) = d_{G}(u,v) \pm 5\epsilon n = (1\pm 10 p^{-2}\epsilon)p^2 |U|$.\COMMENT{ $(1\pm 6 p^{-2}\epsilon)p^2 n = (1\pm 6 p^{-2}\epsilon)p^2|U|/(1\pm \epsilon) = (1\pm 10 p^{-2}\epsilon)p^2 |U|$}
In a similar way, $d_{G'}(u)= d_{G}(u)\pm 3\epsilon n = (1\pm 10p^{-1}\epsilon)p |U|$. Thus $G'$ is $(10p^{-2}\epsilon,p)$-quasi-random.
\end{proof}}

\begin{proposition}\label{prop: quasi-random implies dense}
Suppose $n\in \mathbb{N}$ with $1/n\ll \beta\ll p \leq 1$. If $G$ is a $(\beta,p)$-quasi-random graph on $n$ vertices, then it is 
$(\beta^{1/6},p-\beta^{1/6})$-dense.
\end{proposition}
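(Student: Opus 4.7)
The plan is to reduce the density inequality to a second-moment estimate. Fix $U, V \subseteq V(G)$ with $|U|, |V| \geq \beta^{1/6} n$. The first observation is that the definition of $e_G(U, V)$ can be rewritten as
$$e_G(U, V) \;=\; \sum_{u \in U} d_G(u, V),$$
since splitting $U = (U \setminus V) \cup (U \cap V)$ yields $e_G(U \setminus V, V)$ from the first piece, and for $u \in U \cap V$ the quantity $d_G(u, V) = d_G(u, V\setminus U) + d_G(u, U \cap V)$ contributes $e_G(V \setminus U, U \cap V)$ together with twice the edges inside $U \cap V$ (each edge counted from both endpoints). Thus it suffices to show that most $u \in V(G)$ satisfy $d_G(u, V) \geq (1 - o(1)) p|V|$.

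Write $f(u) := d_G(u, V)$. Using $(\beta, p)$-quasi-randomness, the first two moments of $f$ over the uniform distribution on $V(G)$ are easy to estimate by double counting edges and cherries:
\begin{align*}
\sum_{u \in V(G)} f(u) \;&=\; \sum_{v \in V} d_G(v) \;=\; (1 \pm \beta)\, p n |V|, \\
\sum_{u \in V(G)} f(u)^2 \;&=\; \sum_{v \in V} d_G(v) \;+\; \sum_{\substack{v_1, v_2 \in V \\ v_1 \neq v_2}} d_G(v_1, v_2) \;\leq\; (1+\beta)\, pn|V| + (1+\beta)\, p^2 n |V|(|V|-1).
\end{align*}
Dividing by $n$ and subtracting the square of the mean, the lower-order term $p|V|$ is dominated by the main term because $|V| \geq \beta^{1/6} n$ and $1/n \ll \beta \ll p$, yielding a variance bound of the form $\mathrm{Var}(f) \leq 5\beta p^2 |V|^2$.

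Chebyshev's inequality applied with deviation $\beta^{1/4} p |V|$ now shows that the set $B \subseteq V(G)$ of vertices $u$ with $f(u) < (1 - 2\beta^{1/4}) p|V|$ has size at most $20 \beta^{1/2} n$. Summing $\sum_{u \in U} f(u)$ and discarding $U \cap B$, whose size is at most $20\beta^{1/2} n \leq 20 \beta^{1/3} |U|$ thanks to $|U| \geq \beta^{1/6} n$, we obtain
$$e_G(U,V) \;\geq\; (|U| - 20\beta^{1/2} n)(1 - 2\beta^{1/4})\, p|V| \;\geq\; (1 - 25\beta^{1/4})\, p |U||V| \;\geq\; (p - \beta^{1/6})|U||V|,$$
where the last inequality uses that $25 p \beta^{1/4} \leq \beta^{1/6}$ for $\beta$ sufficiently small compared to $p$. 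The argument is essentially routine second-moment; the only mild obstacle is bookkeeping the exponents of $\beta$ so that the final slack is at most $\beta^{1/6}$, and confirming that the overlap case is absorbed by the reformulation $e_G(U,V) = \sum_{u \in U} d_G(u, V)$ with no further work.
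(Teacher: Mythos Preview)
Your proof is correct. The verification that $e_G(U,V)=\sum_{u\in U}d_G(u,V)$ matches the paper's overlap convention is right, the moment computations are accurate, and the exponent bookkeeping works: with $|V|\geq\beta^{1/6}n$ and $1/n\ll\beta\ll p$ the linear term $p|V|$ is indeed absorbed into $O(\beta)p^2|V|^2$, Chebyshev with deviation $\beta^{1/4}p|V|$ gives $|B|\leq O(\beta^{1/2})n$, and since $\beta^{1/3}<\beta^{1/4}$ for small $\beta$ the final product $(1-20\beta^{1/3})(1-2\beta^{1/4})$ comfortably exceeds $1-25\beta^{1/4}$, with $25p\beta^{1/4}\leq\beta^{1/6}$ following from $\beta\ll p$.

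The paper takes a different route: it forms the bipartite double cover $H$ of $G$ (vertex classes $V,V'$ with $v_iv_j'\in E(H)$ iff $v_iv_j\in E(G)$), observes that $H$ inherits $(\beta,p)$-quasi-randomness, and then invokes Theorem~\ref{thm: almost quasirandom} (the Duke--Lefmann--R\"odl criterion) to conclude that $H$ is $(\beta^{1/6},p)$-super-regular; the density lower bound for $G$ then drops out by reading $\den_G(U_1,U_2)=\den_H(U_1,U_2')$. This is a one-line deduction once the machinery is in place, and it explains why the exponent $1/6$ appears: it is exactly the exponent in Theorem~\ref{thm: almost quasirandom}. Your argument is more elementary and self-contained---it does not need the DLR regularity criterion at all, and in fact your calculation shows that the density shortfall is $O(\beta^{1/4})$ rather than $\beta^{1/6}$, so you actually prove a slightly sharper statement than required. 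The trade-off is that the paper's route, given the tools already quoted, is shorter and simultaneously yields $\epsilon$-regularity of $H$ (hence of induced bipartite pieces of $G$), which is a stronger conclusion than density alone.
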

\begin{proof}
Let $V:=V(G)=\{v_1,\dots, v_n\}$ and $V'=\{v'_1,\dots, v'_n\}$. 
Consider the bipartite graph $H$ with bipartition $V\cup V'$ such that $v_iv'_j \in E(H)$ if and only if $v_iv_j \in E(G)$.
Observe that $H$ is $(\beta,p)$-quasi-random. Hence, by Theorem~\ref{thm: almost quasirandom}, $H$ is $(\beta^{1/6},p)$-super-regular.
Let $U_1, U_2\subseteq V$ with $|U_1|,|U_2|\geq \beta^{1/6}n$. 
Let $U'_2=\{v'_i: v_i\in U_2\}$.
We conclude that $\den_G(U_1,U_2)= \den_H(U_1,U'_2) \geq p- \beta^{1/6}$.
Thus $G$ is $(\beta^{1/6},p-\beta^{1/6})$-dense.
\end{proof}

\begin{proposition}\label{prop: dense deletion}
Suppose $0< \alpha,\beta < 1$ and $C\geq 1$. 
If $G$ is a $(\beta,\alpha)$-dense graph on $n$ vertices and $G'$ is a spanning subgraph of $G$ such that $d_{G}(v)- d_{G'}(v) \leq C \beta n$ for all $v\in V(G)$, 
then $G'$ is $(C\beta^{1/2},\alpha-\beta^{1/2})$-dense. 
\end{proposition}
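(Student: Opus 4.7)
The plan is to argue directly from the definitions, comparing $e_{G'}(U,V)$ to $e_G(U,V)$ on arbitrary sets and exploiting the degree bound on the lost edges.

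Fix $U,V\subseteq V(G')$ with $|U|,|V|\geq C\beta^{1/2}n$. First I would observe that since $C\geq 1$ and $0<\beta<1$, we have $C\beta^{1/2}\geq \beta$, so $|U|,|V|\geq \beta n$. Hence the $(\beta,\alpha)$-density of $G$ gives $\den_G(U,V)\geq \alpha$, i.e.\ $e_G(U,V)\geq \alpha|U||V|$.

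Next, let $H$ denote the spanning subgraph of $G$ with edge set $E(G)\setminus E(G')$, so that $\Delta(H)\leq C\beta n$ by hypothesis. The key identity is that $e_G(U,V)-e_{G'}(U,V)=e_H(U,V)$, which follows directly by expanding the definition $e_H(U,V)=e_H(U\setminus V,V)+e_H(V\setminus U,U\cap V)+2e(H[U\cap V])$ and comparing with the corresponding expression for $G$ and $G'$. A short double-counting (each edge of $H$ with at least one endpoint in $V$ contributes at most its multiplicity in the degree sum at $V$) shows $e_H(U,V)\leq \sum_{v\in V}d_H(v)\leq |V|\cdot C\beta n$.

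Combining these two bounds yields
\[
\den_{G'}(U,V)=\frac{e_{G'}(U,V)}{|U||V|}\geq \den_G(U,V)-\frac{|V|\cdot C\beta n}{|U||V|}\geq \alpha-\frac{C\beta n}{|U|}\geq \alpha-\beta^{1/2},
\]
where the last inequality uses $|U|\geq C\beta^{1/2}n$. Since $U,V$ were arbitrary, $G'$ is $(C\beta^{1/2},\alpha-\beta^{1/2})$-dense. There is no real obstacle here; the only point requiring mild care is the correct interpretation of $e_H(U,V)$ when $U\cap V\neq\emptyset$, which is handled uniformly by the definition in the preliminaries.
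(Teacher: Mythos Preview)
Your proof is correct and follows essentially the same approach as the paper's own proof: bound $e_{G'}(U,V)\geq e_G(U,V)-C\beta n|V|$ via the degree condition, then use $|U|\geq C\beta^{1/2}n$ to deduce $\den_{G'}(U,V)\geq \alpha-\beta^{1/2}$. You are slightly more explicit about the case $U\cap V\neq\emptyset$ and about why $|U|,|V|\geq \beta n$, but the argument is the same.
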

\COMMENT{
\begin{proof}
For any two sets $U,U'\subseteq V(G)$ with $|U|,|U'|\geq C \beta^{1/2} n$, 
we obtain $$\den_{G'}(U,U') 
\geq \frac{e_{G}(U,U') - C\beta n|U|}{|U||U'|} 
\geq \alpha  - \frac{C\beta n}{|U'|} 
\geq \alpha - \beta^{1/2},$$ 
which shows that $G'$ is $(C\beta^{1/2},\alpha-\beta^{1/2})$-dense. 
\end{proof}
}

The next result is a slight extension of Szemer\'edi's regularity lemma. We will apply it in Section~\ref{sec: decomp to cycle}.

\begin{lemma}\label{lem: rl}
Suppose $M,M',n,s,t \in \N$, $1/n \ll 1/M \ll 1/M'\ll \epsilon\ll 1/t\ll \beta \ll \alpha,1/s,d\leq 1$.
Suppose $G$ is a graph on $n$ vertices with equitable partition $(U_1,\dots, U_s)$. 
Then there is a $k\in \N$ such that $G$ has a vertex partition $(V_0,V_1,\ldots,V_k)$ satisfying the following,
where we write $d_{ij}:=\den_G(V_i,V_j)$:
\begin{enumerate}[label=(\Roman*)]
	\item $M'\leq k \leq M$,
	\item $|V_0|\leq \epsilon n$,
	\item $|V_1|=\ldots=|V_k|$, 
	\item for every $i \in [k]$, the graph $G[V_i,V_j]$ is $(\epsilon,d_{ij})$-regular except for at most $\epsilon k$ indices $j\in[k]$, and  
	\item for every $i\in [k]$, there exists a unique $j \in [s]$ such that $V_i\subseteq U_j$
	and $L:=|\{i:V_i \sub U_{j'}\}|=|\{i:V_i \sub U_{j''}\}|$ is odd for all $j',j''\in [s]$.
\end{enumerate}
Let $R_t$ be the multigraph on $[k]$ such that $R_t$ contains an edge $ij$ with multiplicity $\lfloor t d_{ij} \rfloor$ 
if and only if $G[V_i,V_j]$ is $(\epsilon,d_{ij})$-regular and $d_{ij}\geq 1/t^{1/2}$. 
Then the following hold:
\begin{enumerate}
\item[(VI)] For each $j\in [s]$, let $U_j^R:=\{i\in [k]:V_i\sub U_j\}$.
If $d_{G, U_j}(v) = (d\pm \beta) |U_j|$ for every $v\in V(G)$, 
then for every $i\in [k]$ we have $$d_{R_t,U_j^R}(i)= (d\pm 3\beta )t|U^R_j|.$$
\item[(VII)] If $G$ is $(\beta,\alpha)$-dense, then $R_t$ is $(2\beta, t\alpha/2)$-dense.
\end{enumerate}
\end{lemma}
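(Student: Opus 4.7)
The plan is to apply a degree form of Szemer\'edi's regularity lemma to $G$ using the initial partition $(U_1,\ldots,U_s)$, perform an elementary adjustment to achieve (V), and then deduce (VI) and (VII) by per-cluster averaging and by pulling the $(\beta,\alpha)$-density of $G$ back to $[k]$, respectively. First I invoke the standard regularity lemma with parameter $\epsilon/4$ and lower bound $M''$ (with $1/M\ll 1/M''\ll 1/M'\ll\epsilon$) to obtain an equitable refinement $V_0',V_1',\ldots,V_{k'}'$ of $(U_1,\ldots,U_s)$ satisfying $|V_0'|\leq\epsilon n/4$, $M''\leq k'\leq M$, each $V_i'$ contained in some $U_j$, and at most $\epsilon k'/2$ irregular pairs from any fixed cluster. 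Since $(U_j)$ is equitable and the $V_i'$ have equal size, the numbers $L_j':=|\{i:V_i'\sub U_j\}|$ differ by at most $1$. Let $L$ be the largest odd integer with $L\leq\min_j L_j'$ and, for each $j$, arbitrarily discard $L_j'-L\leq 2$ of the clusters contained in $U_j$, moving their vertices into $V_0$. This adds at most $2s\cdot n/k'\ll\epsilon n$ to $|V_0|$, produces $k:=sL\in[M',M]$ for suitable $M''$, and preserves equal cluster sizes together with the $\epsilon k$-irregular-pair bound, establishing (I)--(V).

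For (VI), fix $i\in[k]$, $j\in[s]$, and write $m:=|V_1|$. Averaging the degree hypothesis over $V_i$ gives
\begin{align*}
m(d\pm\beta)|U_j|=\sum_{v\in V_i}d_{G,U_j}(v)=\sum_{i'\in U_j^R}e_G(V_i,V_{i'})+e_G(V_i,V_0\cap U_j);
\end{align*}
since $e_G(V_i,V_0\cap U_j)\leq m|V_0|$ and $|U_j|=Lm+|V_0\cap U_j|$, dividing by $m^2$ yields $\sum_{i'\in U_j^R}d_{ii'}=(d\pm 2\beta)L$, as the error $\epsilon n/m\leq\epsilon k$ satisfies $\epsilon k\ll\beta L$ by $\epsilon s\ll\beta$. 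Multiplying by $t$ and compensating for floor rounding (loss $\leq L$), for at most $\epsilon k$ irregular pairs (loss $\leq\epsilon kt$), and for pairs with $d_{ii'}<t^{-1/2}$ (loss $\leq L\sqrt t$), each of which is $\ll\beta tL$ by the hierarchy, delivers $d_{R_t,U_j^R}(i)=(d\pm 3\beta)t|U_j^R|$.

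For (VII), take $I,J\sub[k]$ with $|I|,|J|\geq 2\beta k$ and set $V(I):=\bigcup_{i\in I}V_i$, $V(J):=\bigcup_{j\in J}V_j$. Then $|V(I)|,|V(J)|\geq 2\beta k\cdot m\geq\beta n$, so the $(\beta,\alpha)$-density of $G$ yields $e_G(V(I),V(J))\geq\alpha|I||J|m^2$. Expanding via the generalized edge formula produces $\sum_{(i,i')\in I\times J,\,i\neq i'}e_G(V_i,V_{i'})+2\sum_{i\in I\cap J}e(G[V_i])$, in which the self-loop correction is at most $|I\cap J|m^2\leq\alpha|I||J|m^2/10$ since $|I\cap J|/(|I||J|)\leq 1/(2\beta k)\leq\alpha/10$ by the hierarchy; thus $\sum_{(i,i')\in I\times J,\,i\neq i'}d_{ii'}\geq 9\alpha|I||J|/10$. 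Converting to $e_{R_t}(I,J)$ loses at most $\epsilon k|I|t$ (irregular pairs), $|I||J|\sqrt t$ (pairs with density below $t^{-1/2}$), and $|I||J|$ (floors), each $\ll t\alpha|I||J|$ by $\epsilon\ll 1/t\ll\beta\ll\alpha$, so $\den_{R_t}(I,J)\geq t\alpha/2$. The main obstacle is the adjustment step for (V): simultaneously enforcing equal odd numbers of clusters per $U_j$, $|V_0|\leq\epsilon n$, and equal cluster sizes is made possible by the slack $1/M''\ll\epsilon/s$ in the hierarchy and the equitability of $(U_1,\ldots,U_s)$.
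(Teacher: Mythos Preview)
Your proposal is correct and follows essentially the same approach as the paper: the paper omits (I)--(V) as standard and proves (VI) and (VII) by precisely the per-cluster averaging and density-pullback arguments you give, with the same error terms (irregular pairs, low-density pairs, floor rounding) absorbed by the hierarchy. Your treatment is slightly more explicit than the paper's in spelling out the adjustment to enforce odd $L$ in (V) and in separating the diagonal contribution in (VII), but the substance is identical.
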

\begin{proof}
As (I)--(V) are standard, we omit the proof.
Let $d_{ij}':=d_{ij}$ if $G[V_i,V_j]$ is $(\epsilon,d_{ij})$-regular and $d_{ij}\geq 1/t^{1/2}$ and let $d_{ij}'=0$ otherwise.
We first show (VI). 
So assume that $d_{G,U_j}(v) = (d\pm \beta) |U_j|$ for all $v\in V(G)$. 
Let $e_{ij}$ be the number of edges between $V_i$ and $U_j$ in $G$ and let $ \overline{n}:=|V_1|=\dots=|V_k|$.
Note that $\overline{n}= (1\pm \epsilon)n/k.$
Then
\begin{align}\label{eq: e ij 1}
	e_{ij} 
	&= \sum_{v\in V_i}d_{G,U_j}(v) \pm e_G(V_i)
	= (d\pm 3\beta/2)|U_j||V_i| \nonumber \\
	&= (d\pm 3\beta/2)(|U_j^R|\overline{n}\pm \epsilon n)\overline{n}
	=(d\pm 2\beta)|U_j^R|\overline{n}^2.
\end{align}
By (IV), there are at most $\epsilon k \overline{n}^2$ edges between $V_i$ and all those $V_{j'}$ for which $G[V_i,V_{j'}]$ is not $(\epsilon,d_{ij'})$-regular.
Moreover, there are at most $k\overline{n}^2/t^{1/2}$ edges between $V_i$ and all those $V_{j'}$ for which $d_{ij'}\leq 1/t^{1/2}$. Furthermore, $e_{G}(V_i,U_j\cap V_0) \leq \epsilon \overline{n}n$. Thus 
\begin{align}\label{eq: e ij 2}
e_{ij} &= \sum_{j'\in U_j^R} d_{ij'}' \overline{n}^2 \pm (\epsilon k \overline{n}^2 + k\overline{n}^2/t^{1/2} + \epsilon \overline{n} n)  \nonumber \\
        &= \frac{\overline{n}^2}{t}\sum_{j'\in U_j^R} \lfloor td_{ij'}'\rfloor \pm \frac{2k\overline{n}^2}{t^{1/2}} 
				= d_{R_t, U_j^R}(i) \frac{\overline{n}^2}{t} \pm \frac{2k\overline{n}^2}{t^{1/2}} .
\end{align}
By combining \eqref{eq: e ij 1} and \eqref{eq: e ij 2}, we obtain
\COMMENT{
\begin{align*}
	((d\pm 2\beta)|U_j^R| \pm \frac{2k}{t^{1/2}})\overline{n}^2 \cdot \frac{t}{\overline{n}^2}
	= (d\pm 2\beta)t|U_j^R| \pm 2k t^{1/2}
	= (d\pm 2\beta \pm \frac{2k}{t^{1/2}|U_j^R|})t|U_j^R| 
	= (d\pm 3\beta )t|U_j^R| 
\end{align*}
since $|U_j^R|= \frac{k}{s}$ and $1/t\ll \beta$.
}
\begin{align*}
d_{R_t,U_j^R}(i)= (d\pm 3\beta)t|U_j^R|,
\end{align*}
which implies (VI).

To prove (VII), let $W,W' \subseteq V(R_t)$ with $|W|,|W'|\geq 2\beta |R_t|=2\beta k$.
Thus
$|\bigcup_{i\in W} V_i|, |\bigcup_{i\in W'} V_{i}|\geq \beta n$.
Since $G$ is $(\beta,\alpha)$-dense, this implies that
\begin{align}\label{eq: e G 1}
 e_G\left(\bigcup_{i\in W} V_i, \bigcup_{i\in W'} V_{i}\right) 
\geq \alpha \left|\bigcup_{i\in W} V_i\right|\left|\bigcup_{i\in W'} V_i\right| 
 \geq \alpha |W||W'|\overline{n}^2.
\end{align}
On the other hand, 
\begin{eqnarray}\label{eq: e G 2}
e_G\left(\bigcup_{i\in W} V_i, \bigcup_{i\in W'} V_{i}\right) 
&\stackrel{(IV)}{\leq}& \sum_{i\in W, i'\in W'} d_{ii'}'\overline{n}^2 + \epsilon k |W| \overline{n}^2 + \frac{k|W|\overline{n}^2}{t^{1/2}}\nonumber \\
 &\leq &\sum_{i\in W, i'\in W'} ( \lfloor t d_{ii'}' \rfloor + 1)\frac{\overline{n}^2}{t}  + \frac{2 k |W|\overline{n}^2}{t^{1/2}} \nonumber \\
 & \leq& e_{R_t}(W,W') \frac{\overline{n}^2}{t} + \frac{|W||W'|\overline{n}^2}{t^{1/3}}.
\end{eqnarray}
By combining \eqref{eq: e G 1} and \eqref{eq: e G 2}, we obtain
\begin{align*}
e_{R_t}(W,W') 
&\geq \alpha t |W||W'|  - t^{2/3}|W||W'| \geq t\alpha|W||W'|/2.
\end{align*}
Thus $R_t$ is $(2\beta,t\alpha/2)$-dense.
\end{proof}

In Section~\ref{sec:2.5} and Section~\ref{sec:final} we will use the following version of the Blow-up lemma of Koml\'{o}s, S\'{a}rk\"ozy and Szemer\'{e}di (see Remark 8 in \cite{KSS97}).

\begin{theorem} \label{blow up target sets}
Suppose $r,n, \Delta\in \N$ and $1/n \ll \epsilon \ll d,d_0,1/\Delta,1/r\leq 1$.
Let $H$ be an $r$-partite graph with $\Delta(H)\leq \Delta$ with vertex partition $(X_1,\dots, X_r)$ and 
let $G$ be an $r$-partite graph with vertex partition $(V_1,\dots, V_r)$ such that $|X_i|=|V_i| \in \{n,n+1\}$ for all $i\in [r]$.
Suppose that $G[V_i,V_j]$ is $(\epsilon,d)$-super-regular for each $i\neq j\in [r]$. 
Let $X'$ be a subset of $V(H)$ with $|X'|\leq \epsilon n$ and for each $i\in [r]$ and $x\in X'\cap X_i$, 
let $A_x\subseteq V_i$ with $|A_x|\geq d_0 n$. Then there exists an embedding $\phi : H\rightarrow G$ such that $\phi(x) \in A_{x}$ for all $x\in X'$.
\end{theorem}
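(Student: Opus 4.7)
The plan is to derive this target-set version of the Blow-up lemma from the unconstrained Blow-up lemma of Koml\'os, S\'ark\"ozy and Szemer\'edi~\cite{KSS97}; indeed, as the authors note, the statement essentially appears as Remark~8 in that paper. I would pre-embed the restricted set $X'$ greedily, then invoke the standard Blow-up lemma on the residual super-regular structure.

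First, I would enumerate $X' = \{x_1, \ldots, x_s\}$ (with $s \leq \epsilon n$) and embed them one at a time. At step $j$, given the choices $\phi(x_1), \ldots, \phi(x_{j-1})$, pick $\phi(x_j)$ from
\[
A_{x_j}^{(j)} := \Bigl( A_{x_j} \cap \bigcap_{\substack{x_i \in N_H(x_j) \\ i < j}} N_G(\phi(x_i)) \Bigr) \setminus \{\phi(x_1), \ldots, \phi(x_{j-1})\}.
\]
Iterating Proposition~\ref{prop: reg right deg} over the (at most $\Delta$) previously embedded neighbours of $x_j$ and discarding the small set of vertices of atypical degree at each step shows $|A_{x_j}^{(j)}| \geq d_0 d^{\Delta} n / 2$, which is positive by the hierarchy $1/n \ll \epsilon \ll d, d_0, 1/\Delta$. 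Hence the pre-embedding can be carried out.

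Next, I would set $H' := H - X'$, $G' := G - \phi(X')$, and for each $y \in V(H')$ with at least one neighbour in $X'$ define the target set
\[
B_y := V_{i(y)} \cap \bigcap_{x_i \in N_H(y) \cap X'} N_G(\phi(x_i)).
\]
As before $|B_y| \geq d^{\Delta} n/2$, and the number of restricted $y$ is at most $\Delta |X'| \leq \Delta \epsilon n$. Since $|\phi(X')| \leq \epsilon n$, Propositions~\ref{prop: reg smaller} and~\ref{prop: reg right deg} yield that each pair $G'[V_i \setminus \phi(X'), V_j \setminus \phi(X')]$ is still $(2\epsilon, d)$-super-regular (after possibly deleting $O(\epsilon n)$ residual low-degree vertices). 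Applying the standard Blow-up lemma with image restrictions (Remark~8 of~\cite{KSS97}) with parameters $2\epsilon$, $d$ and target-set threshold $d^{\Delta}/2$ then furnishes an embedding of $H'$ into $G'$ compatible with the sets $B_y$. Concatenating this with the pre-embedding produces the required $\phi : H \to G$ with $\phi(x) \in A_x$ for all $x \in X'$.

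The main obstacle is the bookkeeping in the pre-embedding: one must show that at every stage the compounded restrictions by $\Delta$-fold intersection with neighbourhoods of already-embedded vertices (together with the deletion of $O(\epsilon n)$ image vertices) still leave target sets of size $\Omega(n)$. This is exactly where the hierarchy $\epsilon \ll d, d_0, 1/\Delta$ is used, and is essentially the only non-routine ingredient, since everything after the pre-embedding reduces to a direct application of a known Blow-up lemma.
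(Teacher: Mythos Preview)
The paper does not give a proof of this theorem at all: it is stated as a known result and attributed directly to Remark~8 in~\cite{KSS97}. So there is no ``paper's own proof'' to compare against.

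Your argument, however, is circular. You announce that the goal is to derive the target-set version from the \emph{unconstrained} Blow-up lemma, but in the final step you invoke ``the standard Blow-up lemma with image restrictions (Remark~8 of~\cite{KSS97})''---which is precisely the statement you are trying to prove. The pre-embedding of $X'$ does not remove the need for target sets: once $X'$ is placed, every $y\in N_H(X')\setminus X'$ acquires a target set $B_y$, and there are up to $\Delta|X'|$ such vertices. Embedding $H'$ into $G'$ respecting the $B_y$ is exactly another instance of the target-set Blow-up lemma, so nothing has been reduced. If you genuinely want to derive the constrained version from the unconstrained one, the pre-embedding idea alone does not suffice; the original argument in~\cite{KSS97} instead modifies the randomised embedding procedure inside the Blow-up lemma proof so that the image-restricted vertices are handled first within that procedure. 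Since the paper simply cites the result, the cleanest fix is to do the same and drop the attempted derivation.
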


\subsection{Graph packing and decomposition tools.}
\label{sec2.4}
In this subsection we will collect some results about packings and decompositions of graphs that we will use later on.
We start with three simple results about splitting an $(\epsilon,d)$-regular or a $(\beta,\alpha)$-dense graph into suitable edge-disjoint subgraphs.

\begin{proposition}\label{prop: part  eps reg}
Let $k,n\in \N$ and $1/n \ll\epsilon \ll  d,d',1/k \leq 1$.
Let $G$ be a bipartite graph with vertex partition $(A,B)$ such that $|A|=|B|=n$.
If $G$ is $(\epsilon,d)$-regular and $d'k\leq d$,
then there exist edge-disjoint $(2\epsilon,d')$-regular spanning subgraphs $G_1,\ldots, G_k$ of $G$ 
such that $d_{G'}(u)\leq (1-kd'/d)n + n^{2/3}$ for all $u\in A\cup B$, where $G':=G-\bigcup_{i=1}^k E(G_i)$.

\end{proposition}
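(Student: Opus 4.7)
The plan is to construct $G_1,\ldots,G_k$ and $G'$ by a single random edge-partition. Set $p:=d'/d$, which satisfies $kp\leq 1$ by hypothesis. Independently for each $e\in E(G)$, assign $e$ to $G_i$ with probability $p$ for each $i\in[k]$, and otherwise (with probability $1-kp=1-kd'/d$) assign $e$ to $G'$. This produces edge-disjoint spanning subgraphs $G_1,\ldots,G_k,G'$ of $G$, and I will show that with positive probability every $G_i$ is $(2\epsilon,d')$-regular and every vertex satisfies the required degree bound in $G'$.

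For the first property, fix any pair $A'\sub A$, $B'\sub B$ with $|A'|,|B'|\geq 2\epsilon n$. By $(\epsilon,d)$-regularity of $G$, $e_G(A',B')=(d\pm\epsilon)|A'||B'|$, so $e_{G_i}(A',B')$ is a sum of $N:=e_G(A',B')\leq |A'||B'|$ independent $\{0,1\}$-valued indicators with mean $(d'/d)e_G(A',B')=(d'\pm\epsilon)|A'||B'|$ (using $d'\leq d$). Applying Lemma~\ref{lem: chernoff} with deviation $t:=\epsilon|A'||B'|$ bounds the failure probability by $2\exp(-\epsilon^2|A'||B'|/2)\leq 2\exp(-2\epsilon^4 n^2)$. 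A union bound over the at most $2^{2n}$ pairs $(A',B')$ and over $i\in[k]$ (with $k\leq n$), combined with the hierarchy $1/n\ll\epsilon$, shows that with probability $1-o(1)$ every $G_i$ satisfies $\den_{G_i}(A',B')=d'\pm 2\epsilon$ for all such pairs, so each $G_i$ is $(2\epsilon,d')$-regular.

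For the degree bound in $G'$, fix $u\in A\cup B$. The degree $d_{G'}(u)$ is a sum of $d_G(u)\leq (d+\epsilon)n\leq n$ independent indicators with mean at most $(1-kd'/d)(d+\epsilon)n\leq (d-kd')n+\epsilon n$. Lemma~\ref{lem: chernoff} with deviation $n^{2/3}/2$ gives a failure probability of at most $2\exp(-n^{1/3}/8)$, and a union bound over the $2n$ vertices leaves the total failure probability at $o(1)$. Since $d\leq 1$ and $kd'\leq d$, we have $(d-kd')n\leq (1-kd'/d)n$, hence with probability $1-o(1)$ simultaneously $d_{G'}(u)\leq (1-kd'/d)n+\epsilon n+n^{2/3}/2\leq (1-kd'/d)n+n^{2/3}$ for every vertex $u$. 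Intersecting the two good events yields the required partition.

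The only technical point is the union bound over the exponentially many pairs $(A',B')$: the restriction $|A'|,|B'|\geq 2\epsilon n$ makes the Chernoff tail decay as $e^{-\Omega(\epsilon^4 n^2)}$, which comfortably defeats the $2^{2n}$ number of pairs thanks to $1/n\ll\epsilon\ll d,d',1/k$. No deeper obstacle is expected.
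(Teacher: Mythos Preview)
Your approach is exactly the paper's: colour each edge independently with colour $i\in[k]$ with probability $d'/d$ and colour $0$ otherwise, then apply Chernoff and a union bound. The paper gives only a two-line sketch; your fleshing-out of the regularity part is fine.

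There is, however, a genuine slip in your degree argument. You assert $d_G(u)\le (d+\epsilon)n$, but $(\epsilon,d)$-regularity does not bound individual degrees (only super-regularity does), so this is unjustified. Worse, even granting it, your final inequality $\epsilon n + n^{2/3}/2\le n^{2/3}$ is false: in the hierarchy $1/n\ll\epsilon$, the constant $\epsilon$ is fixed before $n$ is chosen large, so $\epsilon n$ grows linearly and dominates $n^{2/3}$. The fix is simply to drop the detour: use the trivial bound $d_G(u)\le n$, so $\mathbb{E}[d_{G'}(u)]=(1-kd'/d)d_G(u)\le (1-kd'/d)n$ directly, and apply Lemma~\ref{lem: chernoff} with $t=n^{2/3}$ and at most $n$ summands to get $\mathbb{P}[d_{G'}(u)>(1-kd'/d)n+n^{2/3}]\le 2e^{-n^{1/3}/2}$. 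The union bound over $2n$ vertices then goes through as you wrote.
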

\begin{proof}
We colour each edge of $G$ with colour $i\in [k]$ with probability $d'/d$ and with colour $0$ with probability $1-kd'/d$ at random independently from all other edges. 
Let $G_i$ be the graph consisting of the edges of colour $i$.
Straightforward applications of Lemma~\ref{lem: chernoff} show that $G_1,\ldots,G_k$ are $(2\epsilon,d')$-regular
and  $d_{G'}(u)\leq (1-kd'/d)n + n^{2/3}$ for all $u\in A\cup B$ with positive probability, 
in particular, such graphs exist.
\end{proof}

\begin{proposition}\label{prop: decomp multigraph}
Suppose $k,n\in \N$ with $1/n\ll 1/k,\alpha, \beta,d \leq 1$, and $\beta \ll \alpha, d$. 
Let $G$ be a $(\beta,k\alpha)$-dense multigraph on $n$ vertices 
whose edge multiplicity is at most $k$.
If $d_{G}(v) = k(d\pm \beta)n$ for all $v\in V(G)$,
then $G$ can be decomposed into $k$ spanning edge-disjoint (simple) graphs $G_1,\ldots, G_k$
such that for each $i\in[k]$, the graph $G_i$ is $(\beta,\alpha -\beta)$-dense and $d_{G_i}(v) = (d\pm 2\beta) n$ for all $v\in V(G)$.
\end{proposition}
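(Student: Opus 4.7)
The plan is to obtain $G_1,\ldots,G_k$ via a random edge-colouring of $G$ and verify the required properties using standard concentration inequalities. For each unordered pair $\{u,v\}\subseteq V(G)$ with multiplicity $m_{uv}\in\{0,\ldots,k\}$ in $G$, independently choose a uniformly random $m_{uv}$-subset $S_{uv}\subseteq[k]$, and assign the $m_{uv}$ parallel $uv$-edges bijectively to the colours in $S_{uv}$. Let $G_i$ be the spanning subgraph consisting of the edges of colour $i$. By construction, the $G_i$ are simple and $E(G_1),\ldots,E(G_k)$ partition $E(G)$, so it remains to show that, with positive probability, all of them simultaneously satisfy the degree and density conditions.

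For the degree condition, fix $i\in[k]$ and $v\in V(G)$. The indicators $X_{uv}:=\1[i\in S_{uv}]$ ($u\neq v$) are independent Bernoullis (independent across pairs, since the $S_{uv}$ are) with $\Pro[X_{uv}=1]=m_{uv}/k$. Hence $\Exp[d_{G_i}(v)]=\sum_{u\neq v}m_{uv}/k=d_G(v)/k=(d\pm\beta)n$, and Lemma~\ref{lem: chernoff} applied with $t=\beta n$ gives
\[
\Pro\bigl[|d_{G_i}(v)-\Exp[d_{G_i}(v)]|\geq\beta n\bigr]\leq 2\exp(-\beta^2 n/2).
\]
As $1/n\ll\beta,1/k$, a union bound over the $nk$ pairs $(v,i)$ ensures $d_{G_i}(v)=(d\pm 2\beta)n$ for every $v$ and $i$ with probability $1-o(1)$.

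For the density condition, fix $i\in[k]$ and sets $U,V\subseteq V(G)$ with $|U|,|V|\geq\beta n$. From the definition of $e_G(\cdot,\cdot)$ one may write $e_{G_i}(U,V)=\sum_{\{u,v\}}c_{uv}X_{uv}$, where the coefficients $c_{uv}\in\{0,1,2\}$ depend only on the membership of $u,v$ in $U,V$, and at most $\binom{n}{2}$ of them are non-zero. Since $G$ is $(\beta,k\alpha)$-dense, $\Exp[e_{G_i}(U,V)]=e_G(U,V)/k\geq\alpha|U||V|$. Revealing the $S_{uv}$ one pair at a time yields a $2$-Lipschitz Doob martingale of length at most $\binom{n}{2}\leq n^2/2$, so Azuma's inequality (Theorem~\ref{Azuma}) with $\lambda=\beta|U||V|$ gives
\[
\Pro\bigl[e_{G_i}(U,V)<(\alpha-\beta)|U||V|\bigr]\leq 2\exp\bigl(-\beta^2|U|^2|V|^2/(4n^2)\bigr)\leq 2\exp(-\beta^6 n^2/4).
\]
Since $1/n\ll\beta$, a union bound over the $k\cdot 4^n$ triples $(i,U,V)$ shows every $G_i$ is $(\beta,\alpha-\beta)$-dense with probability $1-o(1)$.

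The two estimates together yield the existence of a colouring with all the required properties. The only genuine design choice (and arguably the main obstacle) is setting up the random experiment so that the multiplicity constraint within each pair is honoured (guaranteeing each $G_i$ is simple) while different pairs remain independent (needed for the concentration step); sampling a uniform $m_{uv}$-subset independently for each pair achieves both, after which the verification reduces to the routine Chernoff and Azuma calculations above.
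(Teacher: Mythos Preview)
Your proof is correct and follows essentially the same approach as the paper: assign to each pair $\{u,v\}$ with multiplicity $m_{uv}$ a uniformly random $m_{uv}$-subset of $[k]$ independently across pairs, and verify the degree and density conditions via concentration plus union bounds. The only cosmetic difference is that you invoke Azuma's inequality for the density condition (to handle the $\{0,1,2\}$-valued coefficients in $e_{G_i}(U,V)$) where the paper's sketch appeals to Lemma~\ref{lem: chernoff}; both yield bounds of the form $\exp(-\Omega(n^2))$, which comfortably beat the $k\cdot 4^n$ union bound.
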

\begin{proof}
For every set $E$ of $\ell$ parallel edges of $G$, 
we choose an $\ell$-subset of $[k]$ uniformly at random and label every edge in $E$ with a distinct member of this $\ell$-subset. 
For each $i\in [k]$, 
let $G_i$ be the spanning subgraph of $G$ consisting of all edges with label $i$.
Lemma~\ref{lem: chernoff} shows that the $G_i$ have the desired properties with probability at least $1/2$, in particular, such a decomposition exists.
\COMMENT{ 
Let $c$ be a positive real number such that $c\ll \beta$.
Note that if there are $m(uv)$ edges between $u$ and $v$, then $G_i$ has an edge $uv$ with probability $m(uv)/k$.
Fix $i\in [k]$. For a pair $u,v$, let $X_{uv}=1$ if $uv \in E(G_i)$ and $X_{uv}=0$ otherwise.

Thus for two sets $A,B$ with $|A|,|B|\geq \beta n$ and vertex $u$, 
 $\mathbb{E}[d_{G_i}(u)] = \sum_{v\neq u} m(uv)/k = \frac{d_{G}(u)}{k}$ and $\mathbb{E}[e_{G_i}(A,B)] = \sum_{v\in A, w\in B} m(vw)/k = \frac{e_{G}(A,B)}{k} \geq \alpha|A||B|.$.

Note that $d_{G_i}(u) = \sum_{v\in N_{G}(u)} X_{uv}$ is sum of at least $d_{G}(u)/k \geq d n/2$ independent random variables, and $e_{G_i}(A,B) = \sum_{ v\in A, w\in B} X_{vw}$ is sum of at least $e_{G}(A,B)/(2k) \geq \alpha \beta^2 n^2 /(4k)$ independent random variables.

Thus Lemma~\ref{lem: chernoff} shows that
$$\mathbb{P}[ d_{G_i}(u) = (d\pm 2\beta)n ] \geq 1 - (1-c)^n,$$
and 
$$\mathbb{P}[ e_{G_i}(A,B) \geq  \alpha|A||B| - \beta|A||B|] \geq 1- (1-c)^{n^2}.$$ 
Since there are $n$ vertices and at most $2^{2n}$ pairs $(A,B)$ of sets, union bounds gives that for all $i\in [k]$ $G_i$ is $(\beta,\alpha -\beta)$-dense and $d_{G_i}(v) = (d\pm 2\beta) n$ for all $v\in V(G)$ with probability at least $1 - k n (1-c)^n - k 2^{2n} (1-c)^{n^2} \geq 1/2$. Thus the conclusion holds.
}
\end{proof}

In Section~\ref{sec: decomp to cycle}, we will need to find an approximate decomposition of an almost regular graph into long but not quite spanning cycles. As an intermediate step, the following lemma gives a decomposition into ``almost'' spanning almost regular subgraphs.

\begin{lemma}\label{lem: D partition}
Suppose $D,k,n\in \N$ with $1/n\ll d,1/k,\alpha,\beta, 1/D \leq 1$, and $\beta\ll \alpha,d,1/D$, and $D\geq 6$.
Let $G$ be a $(\beta,k\alpha)$-dense multigraph on $n$ vertices whose edge multiplicity is at most $k$. 
Suppose that $(U_1,\dots, U_D)$ is an equitable partition of $V(G)$ such that $d_{G,U_i}(v)= (d\pm \beta)k|U_i|$ for all $v\in V(G)$ and $i\in [D]$.
Then we can decompose $G$ into $\{ G_S : S\in \binom{[D]}{D-3}\}$ such that for all $S\in \binom{[D]}{D-3}$
\begin{enumerate}[label=(\Roman*)]
\item $G_S$ is a $(2\beta,k\alpha D^{-3})$-dense multigraph with $V(G_S)=\bigcup_{i\in S}U_i$, and
\item $d_{G_S}(v) = (d\pm 2\beta)k\binom{D-1}{3}^{-1}n$ for all $v\in \bigcup_{i\in S}U_i$.
\end{enumerate}
In particular,
$e(G_S)=(1\pm \beta^{2/3})\binom{D}{3}^{-1}e(G)$.
\end{lemma}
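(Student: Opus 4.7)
My plan is to prove the lemma by a straightforward random coloring argument, in the spirit of Propositions~\ref{prop: part  eps reg} and~\ref{prop: decomp multigraph}. For every edge $e$ of $G$ with endpoints in clusters $U_i$ and $U_j$ (where possibly $i=j$), let $\mathcal{S}(e):=\{S\in\binom{[D]}{D-3}:\{i,j\}\subseteq S\}$. Independently for each edge, assign $e$ uniformly at random to some member $S\in\mathcal{S}(e)$ and put $e$ into $G_S$; this is clearly a decomposition of $G$ with $V(G_S)=\bigcup_{i\in S}U_i$. Note $|\mathcal{S}(e)|=\binom{D-2}{3}$ if $i\neq j$ and $|\mathcal{S}(e)|=\binom{D-1}{3}$ if $i=j$.

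The plan is now to check that the expectations match the target bounds and then apply Chernoff (Lemma~\ref{lem: chernoff}) together with union bounds. First, for $v\in U_{i_0}$ with $i_0\in S$, the expected degree is
\[
\mathbb{E}[d_{G_S}(v)]=\frac{d_{G,U_{i_0}}(v)}{\binom{D-1}{3}}+\sum_{j\in S\setminus\{i_0\}}\frac{d_{G,U_j}(v)}{\binom{D-2}{3}}=(d\pm\beta)k\,|U_{i_0}|\!\left(\frac{1}{\binom{D-1}{3}}+\frac{D-4}{\binom{D-2}{3}}\right).
\]
A short calculation gives $\tfrac{1}{\binom{D-1}{3}}+\tfrac{D-4}{\binom{D-2}{3}}=\tfrac{D}{\binom{D-1}{3}}$, so that $\mathbb{E}[d_{G_S}(v)]=(d\pm\beta)k\,\binom{D-1}{3}^{-1}n$, exactly the target in (II). Since $d_{G_S}(v)$ is a sum of at most $d_G(v)\leq 2kn$ independent $\{0,1\}$-indicators, Lemma~\ref{lem: chernoff} with $t=\tfrac{\beta}{2}k\binom{D-1}{3}^{-1}n$ yields $\mathbb{P}[|d_{G_S}(v)-\mathbb{E}[d_{G_S}(v)]|\geq t]\leq 2\exp(-\Omega(\beta^2 kn))$, which is $o(1/n)$. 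A union bound over all $v$ and all $S\in\binom{[D]}{3}$ gives (II).

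For (I), consider any $S$ and sets $W,W'\subseteq V(G_S)$ with $|W|,|W'|\geq 2\beta|V(G_S)|=2\beta(D-3)n/D\geq\beta n$ (using $D\geq 6$). By the $(\beta,k\alpha)$-density of $G$ we have $e_G(W,W')\geq k\alpha|W||W'|$. Each contributing edge lies in $G_S$ with probability at least $1/\binom{D-1}{3}\geq 1/D^3$, so $\mathbb{E}[e_{G_S}(W,W')]\geq k\alpha D^{-3}|W||W'|$. Since $e_{G_S}(W,W')$ is again a sum of independent indicators with expectation $\Omega(kn^2)$ (after absorbing the definitional factor-$2$ from edges inside $W\cap W'$), Chernoff gives an exponentially small failure probability in $n^2$, which beats the $2^{2n}$ choices of $(W,W')$ in the union bound. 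Taking a further union bound over the $\binom{D}{3}$ choices of $S$, (I) holds simultaneously with (II) with positive probability, so a decomposition as required exists. Finally, summing the degree bound in (II) over $v\in V(G_S)$, using $|V(G_S)|=(D-3)n/D$, and the identity $\frac{D-3}{D\binom{D-1}{3}}=\binom{D}{3}^{-1}$, together with $e(G)=(d\pm\beta)kn^2/2$, yields $e(G_S)=(1\pm\beta^{2/3})\binom{D}{3}^{-1}e(G)$. No step is really an obstacle; the only point of care is verifying the combinatorial identity $\frac{1}{\binom{D-1}{3}}+\frac{D-4}{\binom{D-2}{3}}=\frac{D}{\binom{D-1}{3}}$ and the threshold $D\geq 6$ which keeps $2\beta|V(G_S)|\geq\beta n$ so that $G$'s density hypothesis applies.
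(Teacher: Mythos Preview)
Your proof is correct and is essentially the same as the paper's: the paper also assigns each edge of $G$ independently and uniformly to one of the sets $S\in\binom{[D]}{D-3}$ containing both cluster indices of its endpoints, computes the same expected degree $\binom{D-1}{3}^{-1}(d\pm\beta)kn$, lower-bounds the inclusion probability by $2D^{-3}$ for the density part, and concludes via Lemma~\ref{lem: chernoff} and a union bound. The only cosmetic difference is that the paper writes $\binom{D-1}{D-4}^{-1}$ and $\binom{D-2}{D-5}^{-1}$ where you write $\binom{D-1}{3}^{-1}$ and $\binom{D-2}{3}^{-1}$, and it uses the slightly sharper bound $2D^{-3}$ rather than your $D^{-3}$ on the inclusion probability; neither affects the argument.
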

\begin{proof}
We show that a suitable random decomposition produces a decomposition as desired with positive probability.
We randomly label every edge of $G$ with a set from $\binom{[D]}{D-3}$ according to the following rules.
For each $i\in [D]$ and each edge in $G[U_i]$,
choose a set uniformly at random from $\{S: i \in S \in \binom{[D]}{D-3}\}$.
For $i,j\in [D]$ with $i\neq j$ and each edge in $G[U_i,U_j]$,
choose a set uniformly at random from $\{S: i,j \in S \in \binom{[D]}{D-3}\}$.

For each $S\in \binom{[D]}{D-3}$, 
let $G_S$ be the random (multi-)subgraph of $G$ with vertex set $\bigcup_{i\in S} U_i$ whose edge set is the set of all edges with label $S$. Therefore,
for all $S\in \binom{[D]}{D-3}$ and $u\in U_i$ with $i\in S$, we obtain
\begin{align*}
\mathbb{E}[d_{G_S}(u)] 
&= \binom{D-1}{D-4}^{-1} d_{G,U_i}(u)+ \binom{D-2}{D-5}^{-1} d_{G,\bigcup_{j\in S\setminus\{i\}}U_j}(u )\\
&= \frac{6(d\pm \beta)kn}{D(D-1)(D-2)(D-3)} +\frac{6(D-4)(d\pm \beta)kn}{D(D-2)(D-3)(D-4)}\\
&=\binom{D-1}{3}^{-1}(d\pm \beta)kn.
\end{align*}
Let $S\in \binom{[D]}{D-3}$ and $W,W'\subseteq \bigcup_{i\in S} U_i$ with $|W|,|W'|\geq 2\beta |\bigcup_{i\in S} U_i| \geq \beta n$.
Observe that every edge in $G[\bigcup_{i\in S} U_i]$ is contained in $G_S$ with probability at least $2D^{-3}$.
Thus, as $G$ is $(\beta,k\alpha)$-dense,
\begin{align*}
\mathbb{E}[e_{G_S}(W,W')]  \geq 2D^{-3} k\alpha|W||W'|.
\end{align*}
Straightforward applications of Lemma~\ref{lem: chernoff}
lead to the desired result.
\end{proof}

We call a path of length $2$ a \emph{seagull}
and a graph consisting of the vertex-disjoint union of $k$ seagulls a \emph{flock of seagulls of size $k$}
or just a \emph{flock of seagulls} or a \emph{flock}.
Moreover, let the vertices of degree one of a seagull be its \emph{wings}. The following simple proposition guarantees a decomposition of a suitable bipartite graph into edge-disjoint flocks of seagulls.

\begin{proposition}\label{prop: bip seagull}
Let $G$ be a bipartite graph with vertex partition $(A,B)$.
Suppose every vertex in $B$ has even degree.
Then $G$ can be decomposed into at most $3\Delta(G)$ edge-disjoint flocks of seagulls with wings in $A$.
\end{proposition}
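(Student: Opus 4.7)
The plan is to first decompose $E(G)$ into a family of edge-disjoint seagulls whose centres lie in $B$, and then group these seagulls into few flocks by greedily colouring a suitable ``conflict'' graph on the set of seagulls.

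For the first step, note that since $d_G(b)$ is even for every $b\in B$, I can, for each $b\in B$, arbitrarily pair up the $d_G(b)$ edges incident to $b$ into $d_G(b)/2$ pairs. Each such pair forms a seagull with centre $b\in B$ and two wings in $A$, and together these seagulls form a partition $\cS$ of $E(G)$ into edge-disjoint seagulls of the desired type. It thus suffices to partition $\cS$ into at most $3\Delta(G)$ sets of pairwise vertex-disjoint seagulls (i.e.\ flocks), since such a partition automatically yields edge-disjoint flocks.

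For the second step, I would introduce the auxiliary simple graph $H$ on vertex set $\cS$ in which two seagulls are adjacent precisely when they share at least one vertex of $G$. Every independent set of $H$ is then a flock of seagulls with wings in $A$, so it suffices to prove $\chi(H)\le 3\Delta(G)$. Because the seagulls of $\cS$ partition $E(G)$ and each seagull uses two edges at its centre and one edge at each wing, every vertex $a\in A$ lies in exactly $d_G(a)$ seagulls and every vertex $b\in B$ lies in exactly $d_G(b)/2$ seagulls. Hence for any $s=a_1ba_2\in\cS$,
\[
d_H(s)\le (d_G(a_1)-1)+(d_G(a_2)-1)+\bigl(d_G(b)/2-1\bigr)\le \tfrac{5}{2}\Delta(G)-3,
\]
so a greedy colouring uses at most $\Delta(H)+1\le \tfrac{5}{2}\Delta(G)-2\le 3\Delta(G)$ colours, as required.

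There is no substantial obstacle here: the argument is essentially ``pair up edges at each $B$-vertex, then greedy-colour the conflict graph''. The only point worth double-checking is the degree count in $H$, which relies crucially on the fact that $\cS$ partitions $E(G)$, so that the number of seagulls through a vertex of $G$ equals its $G$-degree (in $A$) or half its $G$-degree (in $B$); this is what keeps $\Delta(H)$ below $3\Delta(G)$.
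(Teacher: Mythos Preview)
Your proof is correct and essentially identical to the paper's: both pair up the edges at each vertex of $B$ to obtain a seagull decomposition, form the conflict graph on seagulls, bound its maximum degree, and greedily colour. Your degree bound $\tfrac{5}{2}\Delta(G)-3$ is in fact the sharper bound the paper records in a side comment, while the paper's text states the cruder $\Delta(\cH)\le 3\Delta(G)-1$.
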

\begin{proof}
Since every vertex $u\in B$ has even degree, 
we can decompose the edges incident to $u$ into $d_{G}(u)/2$ edge-disjoint seagulls with wings in $A$. 
Let $\{S_1,\dots, S_t\}$ be a corresponding decomposition of $G$ into seagulls.
Let $\cH$ be the graph with vertex set $\{S_1,\dots, S_t\}$ and edge set $\{S_iS_j: V(S_i)\cap V(S_j)\neq \emptyset\}$. 
Thus we have $\Delta(\cH) \leq 3\Delta(G)-1$.\COMMENT{Actually, $5\Delta(G)/2-3$.} 
Hence we can properly colour $\cH$ by using $3\Delta(G)$ colours, and each colour class gives rise to a flock of seagulls.
Thus there exists a decomposition of $G$ into $3\Delta(G)$ edge-disjoint flocks of seagulls.
\end{proof}

The following observation will be used to find suitable partitions (e.g. of flocks of seagulls).

\begin{proposition}\label{prop: weight partition}
Let $X$ be a set of size $n$ and let $w:X\to [0,M]$. 
Let $w(X'):= \sum_{x\in X'}w(x)$ for all $X'\subseteq X$.
For each $m \leq n$, 
there is a partition of $X$ into $X_1,\dots, X_m$ 
such that $|X_i|\leq \lceil2n/m\rceil$ and $w(X_i)\leq 2w(X)/m+ M$ for all $i\in [m]$.
\end{proposition}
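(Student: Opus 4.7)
The plan is a simple greedy algorithm. Process the elements of $X$ in arbitrary order, maintaining sets $X_1,\dots,X_m$, initially empty. When processing an element $x$, place it into a bin $X_i$ that minimises $w(X_i)$ subject to the size constraint $|X_i|<\lceil 2n/m\rceil$. The size bound in the proposition then holds by construction, so it remains only to verify that the algorithm never gets stuck and that the final weight of each bin satisfies the claimed bound.

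First I would argue that at every step a legal bin exists. Just before any placement, at most $n-1$ elements have been placed, so the number of ``full'' bins (those with exactly $\lceil 2n/m\rceil$ elements) is strictly less than $(n-1)/\lceil 2n/m\rceil<m/2$, since otherwise those bins alone would contain at least $(m/2)\cdot\lceil 2n/m\rceil\geq n$ elements. Hence strictly more than $m/2$ bins have room, and so the greedy rule can always be applied.

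For the weight bound, I would argue by contradiction: suppose some bin $X_j$ ends up with $w(X_j)>2w(X)/m+M$, and let $x$ be the last element placed into $X_j$. Since $w(x)\leq M$, just before this placement $X_j$ already had weight strictly greater than $2w(X)/m$, and (by the greedy choice) $X_j$ realised the minimum weight among bins with room. Therefore every bin with room had weight strictly greater than $2w(X)/m$ at that moment. Combining this with the observation above that strictly more than $m/2$ bins had room, the total weight across all bins at that moment exceeds $(m/2)\cdot(2w(X)/m)=w(X)$. But that total is bounded above by $w(X)$, giving the contradiction.

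No real obstacle arises; the key observation driving the factor of $2$ in the size cap $\lceil 2n/m\rceil$ is precisely that at least half of the bins are guaranteed to have room, so the ``smallest weight among bins with room'' is comparable to the overall average bin weight. (One can also just fix an arbitrary processing order, since the order is irrelevant to the argument.)
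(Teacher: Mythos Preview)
Your proof is correct and uses essentially the same idea as the paper's: both arguments rest on the observation that strictly more than $m/2$ bins have room (by the size cap) while at most $m/2$ bins can have weight exceeding $2w(X)/m$ (by averaging), so a ``good'' bin always exists. The only difference is framing: you run an explicit greedy algorithm and derive a contradiction on total weight, whereas the paper takes a maximal partial assignment satisfying both constraints and shows directly that some bin can absorb one more element, contradicting maximality. The two arguments are interchangeable and of equal length.
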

\begin{proof}
Let $X'_1,\dots, X'_m$ be disjoint subsets of $X$ such that 
$|X'_i|\leq \lceil 2n/m \rceil$
and $w(X'_i)\leq 2w(X)/m+M$ for all $i\in [m]$ 
and subject to these conditions such that $|\bigcup_{i=1}^{m}X'_i|$ is maximal. 
Observe that such sets exist as the collection of $m$ empty sets satisfy the first two conditions.

If $ \bigcup_{i=1}^{m}X'_i = X$, there is nothing to show. 
Otherwise, let $x\in X\sm \bigcup_{i=1}^{m}X'_i$. 
Since $\sum_{i=1}^{m}|X'_i| \leq |X|-1$, there are more than $m/2$ indices $i\in[m]$ such that $|X'_i|< \lceil2n/m\rceil$. 
Since $\sum_{i=1}^{m}w(X'_i) \leq w(X)$, there are at least $m/2$ indices $i\in[m]$ such that $w(X'_i)\leq 2w(X)/m$.
Thus there exists $i\in[m]$ such that $|X'_i\cup\{x\}|\leq \lceil 2n/m \rceil$ and $w(X'_i\cup\{x\})\leq 2w(X)/m+M$. 
However, then $X'_1,\dots, X'_{i-1}, X'_i\cup\{x\}, X'_{i+1},\dots, X'_m$ contradicts the choice of $X'_1,\dots, X'_m$.
\end{proof}

\subsection{Extending a partial embedding}
\label{sec:2.5}
In this section we provide three results that allow us to extend partial embeddings to proper embeddings.
The first one is only for forests and requires only very mild assumptions.
The second one is based on the Blow-up lemma and requires stronger assumptions but works for general graphs.
The third one provides a tool for embedding a collection of edges where one endpoint has already been embedded.

\begin{lemma}\label{lem: embed forests}
Suppose $n,k\in\N$.
Let $G$ be a graph on $n$ vertices 
such that $d_G(u,v) \geq k$ for every pair of vertices $u, v$.
Let $F$ be a forest on $k$ vertices. 
Let $I\subseteq V(F)$ be a $3$-independent set in $F$.
Then for any injection $\phi':I\to V(G)$, there exists a function $\phi$ consistent with $\phi'$ which embeds $F$ into $G$.
\end{lemma}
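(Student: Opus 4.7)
The plan is to linearly order $V(F)$ starting with the vertices of $I$ and then greedily embed the remaining vertices one-by-one into $G$. The ordering will be chosen so that every vertex outside $I$ has at most two already-embedded $F$-neighbours. The codegree hypothesis $d_G(u,v)\ge k$ (which in particular implies $d_G(u)\ge k$ and $|V(G)|\ge k+1$) then yields at least $k$ candidate images at each step, and since at most $k-1$ vertices of $G$ have been used so far, a valid unused image always exists.

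The crucial structural observation is that $3$-independence of $I$ forces every vertex $v\notin I$ to have at most one $F$-neighbour in $I$: otherwise two vertices of $I$ would share the common neighbour $v$ and thus be at distance $2$ in $F$. To build the ordering I would place $I$ first, then for each component $C$ of the forest $F-I$ pick a root, choosing a vertex of $C$ that has an $F$-neighbour in $I$ whenever such a vertex exists, and otherwise taking any vertex of $C$. Listing each $C$ in breadth-first order from its root and concatenating across components produces the desired ordering. For a non-root vertex $v$ of some component $C$, its already-embedded $F$-neighbours are its BFS-parent together with its at most one $I$-neighbour, since its remaining neighbours in $C$ are descendants and there are no $F$-edges between different components of $F-I$ that avoid $I$; for the root of a component, only its possible unique $I$-neighbour is already embedded.

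With the ordering in hand, set $\phi|_I:=\phi'$ and process the remaining vertices in order: for each vertex $v$ with set $S$ of already-embedded $F$-neighbours, pick $\phi(v)\in \bigcap_{w\in S}N_G(\phi(w))$ outside the current image. For $|S|=2$ the intersection has size at least $k$ by the codegree hypothesis, for $|S|=1$ it has size at least $k$ since the minimum degree is at least $k$, and for $|S|=0$ the bound $n\ge k+1$ suffices. As strictly fewer than $k$ vertices of $G$ have been used when we embed $v$, a valid unused image exists, and the resulting $\phi$ is an embedding consistent with $\phi'$. The main (modest) obstacle is designing the ordering so that the codegree hypothesis is actually applicable at every step; once this is done, the rest is a direct greedy argument.
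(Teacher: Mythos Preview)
Your proof is correct and follows essentially the same greedy strategy as the paper: order the vertices so that each vertex outside $I$ has at most two already-embedded $F$-neighbours, then use the codegree bound to find an unused image at every step. The only cosmetic difference is the ordering---you place $I$ first and run BFS on each component of $F-I$, whereas the paper takes an arbitrary forest ordering $v_1,\dots,v_k$ (each $v_i$ having at most one predecessor neighbour) and handles the pre-embedded $I$-vertices separately; both arrive at the same ``at most two constraints'' situation via the observation that $3$-independence forces $|N_F(v)\cap I|\le 1$.
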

\begin{proof}
Let $v_1,\dots, v_k$ be an ordering of $V(F)$ 
such that for every $i$, the vertex $v_i$ has at most one neighbour in $\{v_1,\dots, v_{i-1}\}$,
say $v_{a(i)}$, if this neighbour exists. 
Note that for all $i\in [k]$, we have $d_{F,I}(v_i)\leq 1$, since $I$ is $3$-independent.
 We define $\phi(v):=\phi'(v)$ for every $v\in I$.  
We sequentially determine $\phi(v_i)$ for the remaining vertices according to the order $v_1,\dots, v_k$.
So assume that $v_i\notin I$ and we have already determined $\phi(v)$ for each $v\in \{v_1,\ldots,v_{i-1}\}\cup I$.

If $d_{F,I}(v_i)=0$, 
then we let $\phi(v_i)$ be some vertex in $V(G)\setminus \phi(\{v_1,\dots, v_{i-1}\}\cup I)$
if $v_{a(i)}$ does not exist and otherwise let $\phi(v_i)$ be some vertex in
$N_{G}(\phi(v_{a(i)}))\setminus \phi(\{v_1,\dots, v_{i-1}\}\cup I)$,
which exists as $\delta(G)\geq k$.

If $N_{F}(v_i)\cap I = \{u\}$ for some vertex $u$, 
then we let $\phi(v_i)$ be some vertex in $N:=N_{G}(\phi(u)) \setminus \phi(\{v_1,\dots, v_{i-1}\}\cup I)$ if $v_{a(i)}$ does not exist and otherwise let $\phi(v_i)$ be some vertex in $N':=N_{G}(\phi(v_{a(i)}),\phi(u)) \setminus \phi(\{v_1,\dots, v_{i-1}\}\cup I)$. 
Such a choice is always possible, because $\min\{ |N|, |N'|\} \geq k - (k-1) \geq 1$.
\end{proof}

\begin{lemma}\label{blow up with pre-embedding}
Suppose $n,\Delta\in \N$ and $1/n\ll\epsilon\ll p,1/\Delta \leq 1$.
Let $G$ be an $(\epsilon,p)$-quasi-random graph on $n$ vertices. 
Let $H$ be a graph on $n$ vertices with $\Delta(H)\leq \Delta$.
Let $I\sub V(H)$ be a $3$-independent set in $H$ with $|I|\leq \epsilon n$.
Then for any injection $\phi':I\to V(G)$, 
there exists a function $\phi$ consistent with $\phi'$ which embeds $H$ into $G$.
\end{lemma}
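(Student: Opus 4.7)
The plan is to apply the Blow-up lemma with target sets (Theorem~\ref{blow up target sets}) after removing the pre-embedded vertices $\phi'(I)$ from $G$. Since $I$ is $3$-independent in $H$, every vertex $x \in V(H)\setminus I$ has at most one neighbour $y_x$ in $I$; write $X^* := N_H(I)\setminus I$, so $|X^*| \leq \Delta|I| \leq \Delta\epsilon n$. Set $H' := H - I$ and $G' := G - \phi'(I)$; by Proposition~\ref{prop: quasi-random subgraph}, $G'$ is $(10p^{-2}\epsilon, p)$-quasi-random on $|V(H')|$ vertices. For each $x \in X^*$ let $A_x := N_G(\phi'(y_x)) \setminus \phi'(I)$, so $|A_x| \geq (p - 2\epsilon)n$. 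Any embedding $\phi''$ of $H'$ into $G'$ with $\phi''(x) \in A_x$ for every $x \in X^*$, combined with $\phi'$ on $I$, yields an embedding $\phi$ of $H$ into $G$ as required.

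To construct $\phi''$, apply the Hajnal--Szemer\'edi theorem to obtain an equitable proper colouring of $H'$ with $r := \Delta + 1$ colour classes $X_1, \ldots, X_r$, and partition $V(G')$ uniformly at random into sets $V_1, \ldots, V_r$ with $|V_j| = |X_j|$ for every $j \in [r]$. For any fixed set $S \subseteq V(G')$, the size $|S \cap V_j|$ is hypergeometrically distributed, so Lemma~\ref{lem: chernoff} combined with a union bound over the $O(rn^2)$ relevant events shows that with positive probability:
\begin{enumerate}[label=(\alph*)]
\item for every $i \in [r]$, every $u \in V(G')$, and every pair of distinct $u,v \in V(G')$, we have $d_{G',V_i}(u) = (p \pm \epsilon^{1/2})|V_i|$ and $d_{G',V_i}(u,v) = (p^2 \pm \epsilon^{1/2})|V_i|$; and
\item for every $x \in X^*$ and every $j \in [r]$, $|A_x \cap V_j| \geq (p/2)|V_j|$.
\end{enumerate}
By (a) each bipartite graph $G'[V_i, V_j]$ is $(\epsilon^{1/2}, p)$-quasi-random, hence by Theorem~\ref{thm: almost quasirandom} it is $\epsilon^{1/12}$-regular, and together with the degree bound in (a) it is $(\epsilon^{1/12}, p)$-super-regular.

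Finally, apply Theorem~\ref{blow up target sets} to $H'$ with partition $(X_1, \ldots, X_r)$ and to $G'$ with partition $(V_1, \ldots, V_r)$, taking the distinguished set to be $X^*$ (which satisfies $|X^*|\leq\Delta\epsilon n \leq \epsilon^{1/12}|V_1|$, since the hierarchy $\epsilon \ll p, 1/\Delta$ is sufficiently strong) and the target sets to be $A_x \cap V_{j(x)}$, where $j(x)$ denotes the colour class containing $x$. These target sets have size at least $(p/2)|V_{j(x)}|$ by (b), which validates the $d_0 n$ hypothesis with, say, $d_0 := p/4$. The resulting embedding of $H'$ into $G'$ combined with $\phi'$ gives $\phi$. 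The main obstacle is establishing (a) and (b) simultaneously via hypergeometric concentration; the remaining verification of the Blow-up lemma hypotheses is routine bookkeeping.
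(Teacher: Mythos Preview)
Your proposal is correct and follows essentially the same approach as the paper: apply Hajnal--Szemer\'edi to $H-I$, take a uniformly random equitable partition of $V(G)\setminus\phi'(I)$, use Chernoff-type concentration together with Theorem~\ref{thm: almost quasirandom} to obtain super-regularity of the bipartite pairs and large target sets $A_x\cap V_{j(x)}$, and finish with Theorem~\ref{blow up target sets}. The only differences are cosmetic (you define $A_x$ before intersecting with the partition class, and your regularity exponent is $\epsilon^{1/12}$ rather than the paper's $\epsilon^{1/7}$).
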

\begin{proof}
Let $U:=\phi'(I)$. 
As $I$ is a $3$-independent set,  $d_{H,I}(x)\leq 1$ for any $x\in V(H)$ and $d_{H,I}(x)=1$ for at most $\Delta|I|\leq \Delta \epsilon n$ vertices $x$.
Apply the Hajnal-Szemer\'edi theorem to obtain an equitable partition $(X_1,\dots, X_{\Delta+1})$ of $H-I$ such that $X_i$ is an independent set of $H$ for all $i\in [\Delta+1]$.
Consider a partition $(V_1,\dots, V_{\Delta+1})$ of $V(G)\setminus U$ such that $|X_i|=|V_i|$ for $i\in [\Delta+1]$ chosen uniformly at random.
Straightforward applications of Lemma~\ref{lem: chernoff} and Theorem~\ref{thm: almost quasirandom} show
that the following hold with probability at least $1/2$:
\begin{itemize}
\item[(GV1)] $G[V_i,V_j]$ is $(\epsilon^{1/7},p)$-super-regular for all distinct $i,j\in [\Delta+1]$, and
\item[(GV2)] $d_{G,V_i}(u) \geq pn/(2(\Delta+1))$ for all $u\in U$ and $i\in [\Delta+1]$.
\end{itemize}
In particular, there exists a partition $(V_1,\dots, V_{\Delta+1})$ of $V(G)\setminus U$ such that $|X_i|=|V_i|$ and both (GV1) and (GV2) hold. 

For each $x\in X_i$ for which $x\in N_{H}(y)$ for some $y\in I$, 
we let $A_x:=N_{G}(\phi'(y))\cap V_i$.
Thus $|A_x|\geq pn/(2(\Delta+1))$. Apply the Blow-up lemma (Theorem~\ref{blow up target sets}) to find an embedding $\phi''$ of $H-I$ into $G-U$ such that $\phi''(x) \in A_x \subseteq  N_{G}(\phi'(y))$ 
whenever $x\in N_H(y)$ for some $y\in I$. 
Then we obtain the desired embedding $\phi$ by defining $\phi:=\phi'\cup \phi''$.
\end{proof}

The next result will be used to embed edges of trees where one endpoint has already been embedded. The role of the graph $H$ is to help us avoid collisions between edges belonging to the same tree.
\begin{proposition}\label{prop: sparse edge embedding}
Suppose $\Delta, m, s \in \N$.
Let $G$ be a graph, let $A\subseteq V(G)$, and 
let $u_1,\dots, u_m$ be a sequence of (not necessarily distinct) vertices of $G$. Let $W_1,\dots, W_m$ be sets of vertices and let $H$ be a graph on $[m]$ satisfying the following:
\begin{itemize}
\item[(i)] $d_{G,A}(u_i)-|W_i|\geq 3\Delta + m/s+s$ for all $i\in [m]$,
\item[(ii)] $|\{i\in [m]: v=u_i\}| \leq \Delta$ for any $v\in V(G)$, and
\item[(iii)] $\Delta(H)\leq \Delta$.
\end{itemize}
Then we can choose distinct edges $u_1v_1,\dots, u_mv_m$ in $G$ such that $|\{i\in [m]: v=v_i\}|\leq s$ for all $v\in V(G)$ 
and $v_i\in A\setminus W_i$ for all $i\in [m]$ and $v_i\notin \{u_j, v_{j}\}$ whenever $ij\in E(H)$.
\end{proposition}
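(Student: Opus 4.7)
The plan is to construct $v_1,\ldots,v_m$ greedily in the order $i=1,2,\ldots,m$, at each step selecting $v_i \in (N_G(u_i)\cap A)\sm W_i$ while maintaining, for the partial sequence constructed so far, all the properties required of the final sequence. By hypothesis (i) the initial candidate set has size at least $3\Delta+m/s+s$, so the task reduces to showing that the set of vertices that must be forbidden at step $i$ is strictly smaller than this.

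Four sources of forbidden values arise at step $i$. The first two come from the requirement that the chosen edges be distinct: an edge $u_iv_i$ can coincide with a previously chosen $u_jv_j$ either through $u_j=u_i$ and $v_j=v_i$, or through $u_j=v_i$ and $v_j=u_i$. By (ii) there are at most $\Delta-1$ indices $j<i$ with $u_j=u_i$, giving at most $\Delta-1$ forbidden vertices of the first type, while the running property $|\{j:v_j=u_i\}|\leq s$ bounds the second type by $s$. The third source is the cap $|\{j:v_j=v\}|\leq s$ itself: since $\sum_{v}|\{j<i:v_j=v\}|=i-1\leq m$, at most $m/s$ vertices are already ``saturated'' and must be avoided. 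Finally, for every $j<i$ with $ij\in E(H)$, both $u_j$ and $v_j$ are forbidden, contributing at most $2\Delta$ vertices by (iii). Adding these four bounds gives at most $(\Delta-1)+s+m/s+2\Delta<3\Delta+m/s+s$ forbidden vertices, so a valid choice of $v_i$ remains.

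For indices $j>i$ with $ij\in E(H)$, nothing needs to be done at step $i$: the constraint $v_j\notin\{u_i,v_i\}$ will be enforced via the fourth source when we reach step $j$. The only delicate point in the whole argument is the two-fold manner in which $u_iv_i$ can duplicate an earlier edge, leading to the two distinct bounds $\Delta-1$ and $s$; apart from this bookkeeping, the proof is a direct greedy counting argument and no further obstacle is expected.
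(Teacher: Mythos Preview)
Your approach is essentially identical to the paper's --- a greedy selection with the same four-way bookkeeping --- but there is one genuine gap in your handling of the $H$-constraints.

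You write that for $j>i$ with $ij\in E(H)$ ``nothing needs to be done at step $i$'', since $v_j\notin\{u_i,v_i\}$ will be enforced at step $j$. That is true for the constraint indexed by $j$, but the proposition also requires $v_i\notin\{u_j,v_j\}$. The part $v_i\neq v_j$ is indeed equivalent to $v_j\neq v_i$ and will be handled at step $j$; however, the part $v_i\neq u_j$ cannot be deferred, because $v_i$ is being fixed now and $u_j$ is already known. So at step $i$ you must also exclude $u_j$ for every $j>i$ with $ij\in E(H)$.

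The fix is painless and your bound of $2\Delta$ survives: over all $j\in N_H(i)$ you forbid $u_j$ (always) and $v_j$ (when $j<i$), giving at most $2|N_H(i)|\leq 2\Delta$ vertices in total. This is exactly what the paper does: its fourth bullet forbids $\{u_j,v_j\}$ for all $j\in N_H(i)$, not just for $j<i$. With that correction your argument matches the paper's proof.
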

\begin{proof}
We sequentially choose $v_i$. 
Assume that we have chosen $v_1,\dots v_{i-1}$ such that 
\begin{itemize}
\item[(a)$_{i-1}$] $v_j\in (N_{G}(u_j)\cap A)\setminus W_j$ and $u_jv_j\neq u_{j'}v_{j'}$ for any $j\neq j' \leq i-1$,
\item[(b)$_{i-1}$] $|\{j\leq i-1: v=v_j\}|\leq s$ for any $v \in V(G)$, and
\item[(c)$_{i-1}$] $v_j\notin\{ u_{j'}, v_{j'}\}$ whenever $jj'\in E(H)$ and $j\neq j'\leq i-1$.
\end{itemize}
Consider the set $(N_{G}(u_i)\cap A)\setminus W_i$, which contains at least $3\Delta+ m/s + s$ vertices by (i). 
Among those are 
\begin{itemize}
	\item at most $\Delta-1$ vertices $w$ such that $w=v_j, u_i=u_j $ for some $j\leq i-1$, by (ii),
	\item at most $s$ vertices $w$ such that $w=u_j,u_i=v_j$ for some $j\leq i-1$, by (b)$_{i-1}$,
	\item at most $m/s$ vertices $w$ such that $|\{ j\leq i-1: w=v_j \}|= s$, and
	\item at most $2\Delta$ vertices $w$ such that $w\in\{u_j,v_{j}\}$ for $j\in N_{H}(i)$, by (iii). 
\end{itemize}
Therefore, by (i), we can choose a vertex $v_i$ satisfying (a)$_{i}$, (b)$_{i}$ and (c)$_{i}$. 
By repeating this selection process, we obtain distinct edges $u_1v_1,\dots, u_mv_m$  satisfying (a)$_{m}$, (b)$_{m}$ and (c)$_{m}$. It is clear that they form a desired collection of edges.
\end{proof}

\section{Approximate cycle decomposition}
\label{sec: approx cycle decomp}
The aim of this section is to show that every $(\beta,\alpha)$-dense almost regular graph on an odd number of vertices
has an approximate decomposition into Hamilton cycles and a few very long odd cycles (see Lemma~\ref{lem: decomp Eulerian}). 
In Section~\ref{sec: decomp to cycle}, we will apply this result to the reduced graph obtained from Szemer\'{e}di's regularity lemma.

The main idea for the proof is based on that of Theorem~4.1 in \cite{GKO16}.
We will use results from \cite{KO13},\cite{KO14} and \cite{KOT10} which imply that robustly expanding graphs enjoy very strong Hamiltonicity properties. To state these, we first need to introduce the concept of robust expansion. For $0<\nu\leq \tau<1$, a graph $G$ on $n$ vertices and $S\subseteq V(G)$, 
we define the {\em $\nu$-robust neighbourhood $RN_{\nu,G}(S)$} of $S$ 
as the set of all those vertices of $G$ which have at least $\nu n$ neighbours in $S$. 
We call $G$ a {\em robust ($\nu,\tau$)-expander} if
$$ |RN_{\nu,G}(S)| \geq |S|+\nu n \text{ for all } S\subseteq V(G) \text{ with } \tau n \leq |S| \leq (1-\tau) n. $$

We will use the following two results about Hamilton cycles in robust expanders.

\begin{theorem}[\cite{KOT10}]\label{thm: expander ham}
Suppose $n\in \N$ and $1/n \ll \nu \leq \tau \ll \alpha \leq 1$. 
Then any robust $(\nu,\tau)$-expander $G$ on $n$ vertices with $\delta(G)\geq \alpha n$ contains a Hamilton cycle.
\end{theorem}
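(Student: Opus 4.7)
The plan is to prove this via the absorbing method, exploiting robust expansion both to build an absorbing structure and to produce an almost-spanning collection of paths that can then be closed up. Throughout, the two key consequences of robust $(\nu,\tau)$-expansion are: (a) $G$ is $\nu n$-connected (for a set $S$ of size $\tau n$ separating $G$ would contradict $|RN_{\nu}(A)|\geq |A|+\nu n$ on the smaller component), and (b) for every vertex $v$, applying the definition with $S=N_G(v)$ (which has size at least $\alpha n \geq \tau n$) shows that at least $\nu n$ vertices $u\neq v$ satisfy $|N_G(u)\cap N_G(v)|\geq \nu n$.

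The first main step is the construction of an \emph{absorbing path}. For each $v\in V(G)$, call a $4$-path $abcd$ in $G$ (with $ab,bc,cd\in E(G)$) an \emph{absorber for $v$} if additionally $bv,vc\in E(G)$: then any path using $abcd$ as a consecutive subpath can be modified to $abvcd$, inserting $v$. Using observation (b) and $\delta(G)\geq \alpha n$, I would show each $v$ admits $\Omega(n^4)$ absorbers. I would then choose a random family $\mathcal{F}$ of candidate absorbers (each picked independently with probability $p$ chosen small enough to guarantee vertex-disjointness and large enough that each $v$ has many candidates in $\mathcal{F}$), and use Lemma~\ref{lem: chernoff}-style concentration to show $\mathcal{F}$ can be pruned to a set of pairwise vertex-disjoint absorbers containing at least $\nu^2 n$ absorbers for every vertex $v$. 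Using the $\nu n$-connectivity from (a), I would stitch these absorbers together into a single path $P_{\mathrm{abs}}$ of length $o(n)$ such that, for \emph{any} set $U$ of size at most $\nu^3 n$ disjoint from $V(P_{\mathrm{abs}})$, there is a path $P'$ with the same endpoints as $P_{\mathrm{abs}}$ and $V(P')=V(P_{\mathrm{abs}})\cup U$.

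The second main step is to cover $V(G)\setminus V(P_{\mathrm{abs}})$ by a small number of vertex-disjoint paths that together miss only $o(n)$ vertices, and then close everything into one cycle. Working in $G':=G-(V(P_{\mathrm{abs}})\setminus\{\text{endpoints}\})$, I would note that $G'$ inherits robust $(\nu/2,2\tau)$-expansion and $\delta(G')\geq \alpha n/2$. Iteratively build long paths in $G'$: start with an arbitrary edge and use P\'osa rotation-extension, combined with robust expansion, to show that a maximal-length path either extends or its set of rotation endpoints is so large that two of them are adjacent in $G$, yielding a long cycle; detach a short subpath and repeat. Because expansion persists as long as a positive fraction of vertices remains uncovered, this yields at most $O(1/\nu)$ disjoint long paths whose union covers all but at most $\nu^4 n$ vertices. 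Using connectivity (a), I would link these paths and $P_{\mathrm{abs}}$ into a single cycle $C$ missing a set $U$ of size at most $\nu^3 n$; invoking the absorbing property of $P_{\mathrm{abs}}$ then inserts $U$, producing a Hamilton cycle.

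The principal obstacle I expect is the covering step: P\'osa rotations must be carried out in a way that still leaves room both to avoid the pre-chosen $P_{\mathrm{abs}}$ and to guarantee that the number of ``uncoverable'' vertices shrinks at each iteration, rather than stabilizing at some small but nonzero density. The standard device is to show that, when rotating from the current endpoint, the \emph{set} of endpoints reachable through successive rotations has size at least $\tau n$, whence robust expansion produces a neighbor outside this set and either an extension or a closing chord exists; this requires careful bookkeeping of ``broken'' edges so that the path's expansion properties are not destroyed. Once this is handled, the connectivity-based linking and the absorption are comparatively routine.
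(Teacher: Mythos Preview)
The paper does not prove this theorem: it is quoted from~\cite{KOT10} and used as a black box. So there is no ``paper's own proof'' to compare against. The original argument in~\cite{KOT10} is quite different from your outline; it proceeds via the (di)regularity lemma, passing to a reduced (di)graph, finding a suitable $1$-factor there, and then winding a Hamilton cycle through the corresponding super-regular pairs. There is no absorbing structure and no P\'osa rotation in that proof.

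Your absorbing/rotation plan is a reasonable alternative blueprint, but there is a genuine gap in the absorber-counting step. Your absorber for $v$ is a path $abcd$ with $bc\in E(G)$ and $b,c\in N(v)$; counting these amounts to counting edges inside $G[N(v)]$, i.e.\ triangles through $v$. Your observation~(b) only gives many vertices $u$ with $|N(u)\cap N(v)|\geq \nu n$, and these $u$ need not lie in $N(v)$, so it does not yield edges inside $N(v)$. In fact, robust $(\nu,\tau)$-expansion with $\delta(G)\geq \alpha n$ does not force every vertex to lie in $\Omega(n^2)$ triangles when $\alpha<1/2$: if $S=N(v)$ happens to be independent, robust expansion of $S$ is still consistent with the hypotheses provided $|S|\leq (1-\nu)n/2$. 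So the specific $4$-vertex absorber you chose cannot be shown to exist in the required numbers from~(b) alone, and a longer or differently shaped absorber (one that does not require an edge inside $N(v)$) is needed to make the first step go through.
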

The following result guarantees a Hamilton decomposition in an even-regular robust expander. 
It is derived in \cite[Theorem~1.2]{KO14} from a digraph version in \cite{KO13}.
\begin{theorem}\label{thm: decomp regular}
Suppose $n\in \N$ and $1/n\ll \nu\leq \tau \ll \alpha\leq 1$.
If $G$ is an $\alpha n$-regular robust $(\nu,\tau)$-expander on $n$ vertices such that $\alpha n$ is even,
then $G$ can be decomposed into Hamilton cycles.
\end{theorem}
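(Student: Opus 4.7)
The plan is to reduce the undirected statement to its digraph analogue, which is the deep result of \cite{KO13}: every sufficiently dense regular robust out-expander digraph admits a decomposition into directed Hamilton cycles. To deduce Theorem~\ref{thm: decomp regular} from this, I would first orient the edges of $G$ so that the resulting digraph $\vec G$ is regular in the digraph sense (equal in- and out-degree at each vertex) and is a robust out-expander in the directed sense with suitable parameters, then apply the digraph decomposition theorem, and finally forget orientations.

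For the orientation step, observe that since $G$ is $\alpha n$-regular with $\alpha n$ even, every vertex has even degree. Moreover, as $G$ is a robust $(\nu,\tau)$-expander, $G$ is connected (in fact it has very strong expansion), so $G$ admits an Eulerian orientation; in such an orientation every vertex has in-degree and out-degree exactly $\alpha n/2$. The key point is to choose the Eulerian orientation so that the resulting digraph $\vec G$ remains a robust $(\nu',\tau)$-out-expander for some $\nu'=\nu'(\nu)>0$, meaning that for every $S\subseteq V(\vec G)$ with $\tau n\leq |S|\leq (1-\tau)n$, the set of vertices receiving at least $\nu' n$ directed edges from $S$ has size at least $|S|+\nu' n$. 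My approach would be probabilistic: take an Eulerian orientation uniformly at random (or construct one by randomly orienting the edges and correcting local imbalances via reversing closed trails). For a fixed $S$ and vertex $v$, the number of in-edges from $S$ under such an orientation concentrates around $\tfrac{1}{2}e_G(v,S)$, which for most $v\in RN_{\nu,G}(S)$ is at least $\nu n/2$; a Chernoff-type bound (Lemma~\ref{lem: chernoff}) combined with Azuma's inequality (Theorem~\ref{Azuma}) applied to an appropriate exposure martingale, and a union bound over the at most $2^n$ choices of $S$, would yield that with positive probability $\vec G$ is a robust $(\nu/3,\tau)$-out-expander.

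Once this orientation is in place, $\vec G$ is $(\alpha n/2)$-regular and a robust out-expander, so the digraph decomposition theorem of \cite{KO13} provides edge-disjoint directed Hamilton cycles $\vec C_1,\dots,\vec C_{\alpha n/2}$ that cover every arc of $\vec G$. Ignoring orientations, each $\vec C_i$ yields a Hamilton cycle $C_i$ of $G$, and the $C_i$ are edge-disjoint and cover every edge of $G$, giving the desired decomposition.

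The main obstacle I expect is the orientation lemma: showing that a typical (or carefully constructed) Eulerian orientation of an undirected robust expander is itself a robust out-expander. Uniform sampling from Eulerian orientations is not entirely straightforward, so in practice one either works with a random orientation conditioned on balance at each vertex (then uses switchings to justify concentration), or proves a deterministic statement by starting from any Eulerian orientation and iteratively swapping directions along short alternating structures to improve expansion at any bad set $S$. The rest of the argument, namely the application of the digraph decomposition theorem and the passage back to the undirected graph, is essentially formal.
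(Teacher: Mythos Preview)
The paper does not prove this theorem; it is quoted as a known result, with the remark that it ``is derived in \cite{KO14} from a digraph version in \cite{KO13}.'' Your outline --- orient $G$ to obtain an $(\alpha n/2)$-regular robust outexpander, apply the digraph Hamilton decomposition theorem of \cite{KO13}, then forget orientations --- is exactly the reduction carried out in \cite{KO14}, so your approach agrees with the cited source.

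One small remark on the orientation step: sampling a uniformly random Eulerian orientation and proving concentration is, as you note, awkward. The route actually taken in \cite{KO14} is simpler: orient each edge independently with probability $1/2$ in each direction, observe that with high probability the resulting digraph is a robust outexpander and is nearly $(\alpha n/2)$-regular, and then correct the small degree discrepancies (which are of order $O(\sqrt{n\log n})$) by reversing a few arcs along short paths without destroying robust outexpansion. This avoids the dependence issues inherent in the space of Eulerian orientations while achieving the same goal.
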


The next observation shows that in a $(\beta,\alpha)$-dense graph
every not too small set of vertices induces a robust expander.

\begin{proposition}\label{prop: dense induced sub}
Suppose $n\in \N$ and $1/n \ll \beta \ll \alpha, \gamma\leq 1$. 
Let $G$ be a $(\beta,\alpha)$-dense graph on $n$ vertices. 
If $U\sub V(G)$ with $|U|\geq \gamma n$,
then $G[U]$ is a robust $(\alpha\beta,2\beta/\gamma)$-expander.
\end{proposition}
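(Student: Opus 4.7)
The plan is to verify the robust expansion condition directly from the density hypothesis via a double-counting argument. Fix $U\subseteq V(G)$ with $|U|\geq \gamma n$ and set $\nu:=\alpha\beta$ and $\tau:=2\beta/\gamma$. Take any $S\subseteq U$ with $\tau|U|\leq |S|\leq (1-\tau)|U|$, and let $T:=U\setminus RN_{\nu,G[U]}(S)$. I want to show $|T|\leq |U|-|S|-\nu|U|$, which is precisely $|RN_{\nu,G[U]}(S)|\geq |S|+\nu|U|$.

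The upper bound on $e_G(S,T)$ comes for free from the definition of $T$: each $v\in T$ has fewer than $\alpha\beta|U|$ neighbours in $S$ within $G[U]$, and since $S\subseteq U$ this is the same as the number of neighbours within $G$, so $e_G(S,T)\leq \alpha\beta|U|\cdot|T|\leq \alpha\beta n|T|$. The lower bound will come from $(\beta,\alpha)$-density, which I can apply as soon as both $|S|$ and $|T|$ are at least $\beta n$.

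Suppose for contradiction that $|T|>|U|-|S|-\alpha\beta|U|$. For $|S|$, use $|S|\geq \tau|U|=(2\beta/\gamma)|U|\geq 2\beta n$. For $|T|$, combine $|U|-|S|\geq \tau|U|$ with the assumed bound to get
\[
|T| > (2\beta/\gamma-\alpha\beta)|U| \geq (2\beta/\gamma-\alpha\beta)\gamma n=(2\beta-\alpha\beta\gamma)n\geq \beta n,
\]
where the last step uses $\alpha\gamma\leq 1$. Hence density gives $e_G(S,T)\geq \alpha|S||T|$, which combined with the upper bound forces $|S|\leq \beta n$, contradicting $|S|\geq 2\beta n$.

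There is essentially no hard step here; the only care required is the bookkeeping to check that both $S$ and $T$ meet the $\beta n$ threshold needed to invoke $(\beta,\alpha)$-density, which is what dictates the choice $\tau=2\beta/\gamma$ (the factor $2$ providing the slack to absorb the $\alpha\beta|U|$ correction in the lower bound for $|T|$). The slight subtlety that $S$ and $T$ may overlap is harmless because the definition of $e_G(\cdot,\cdot)$ in the paper is crafted so that $\sum_{v\in T}|N_G(v)\cap S|=e_G(S,T)$ holds in general, so the comparison of the two bounds on $e_G(S,T)$ is valid without any disjointness assumption.
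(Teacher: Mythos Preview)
Your proof is correct and follows essentially the same approach as the paper: both bound $e_G(S,T)$ from above via the definition of $T$ and from below via $(\beta,\alpha)$-density, forcing $|T|$ (or equivalently $|S|$) to be small. The paper presents it directly (showing $\den_G(S,T)\leq \alpha/2$, hence $|T|<\beta n$), whereas you wrap the same comparison in a contradiction argument, but the content is identical.
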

\begin{proof}
Consider a set $S \subseteq U$ with $2\beta n \leq (2\beta/\gamma) |U| \leq |S| \leq (1-2\beta/\gamma) |U| \leq |U|-2\beta n$. 
Let $T:= \{ v \in U: d_{G,S}(v) < \alpha\beta n\}$.
This implies that
$$\den_G(S,T) \leq \frac{\alpha\beta n|T|}{|S||T|} \leq \frac{\alpha \beta n}{2 \beta n} \leq \frac{\alpha}{2}.$$
As $G$ is $(\beta,\alpha)$-dense, we conclude that $|T|< \beta n$.
Thus 
$$|RN_{\alpha\beta,G[U]}(S)|
=|U\setminus T| 
\geq |U| - \beta n \geq 
|S|+\beta n
\geq |S|+ \alpha \beta |U|.$$
Therefore, $G[U]$ is a robust $(\alpha\beta, 2\beta/\gamma)$-expander.
\end{proof}

Next, we show that every $(\beta,\alpha)$-dense graph can be ``approximated'' by an Eulerian graph.

\begin{proposition}\label{prop: making Eulerian}
Suppose $n\in \N$ and $1/n \ll \beta \ll \alpha \leq 1$.
Let $G$ be a $(\beta,\alpha)$-dense graph on $n$ vertices with $\delta(G)\geq \alpha n$.
Then $G$ contains an Eulerian subgraph $G'$ such that $\Delta(G-E(G'))\leq 3$.
In particular, $e(G-E(G'))\leq 3n/2$.
\end{proposition}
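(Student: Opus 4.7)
The plan is to construct a spanning subgraph $H\sub G$ with $\Delta(H)\le 3$ whose set of odd-degree vertices equals the set $O$ of odd-degree vertices of $G$. Then $G':=G-E(H)$ will have all degrees even (Eulerian in the sense needed), $\Delta(G-E(G'))=\Delta(H)\le 3$, and $e(G-E(G'))\le 3n/2$ follows from handshaking. To set up, let $M$ be a maximum matching in $G[O]$ and put $O_1:=O\sm V(M)$. By maximality, no edge of $G$ joins two vertices of $O_1$, i.e.\ $O_1$ is independent in $G$; hence the $(\beta,\alpha)$-density condition applied with $U=V=O_1$ forces $|O_1|<\beta n$, since otherwise $e_G(O_1,O_1)\ge \alpha|O_1|^2>0$, contradicting independence. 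As $|O|$ and $|V(M)|$ are both even, $|O_1|$ is even; pair up $O_1$ arbitrarily into $|O_1|/2$ pairs.

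For each pair $(u,v)$ in turn I will greedily find a length-$3$ path $u{-}x{-}y{-}v$ in $G$ whose internal vertices $x,y$ are disjoint from the set $I$ of internal vertices used by previously chosen paths, and additionally with $xy\notin E(M)$. Writing $A:=N_G(u)\sm (I\cup\{u,v\})$ and $B:=N_G(v)\sm (I\cup\{u,v\})$, the bounds $|I|\le|O_1|-2<\beta n$ and $\delta(G)\ge\alpha n$ give $|A|,|B|\ge\alpha n-\beta n-2\ge\alpha n/2\ge\beta n$. By $(\beta,\alpha)$-density, $e_G(A,B)\ge\alpha|A||B|\ge\alpha^3 n^2/4$, while at most $|M|\le n/2$ of these edges lie in $M$, so a valid edge $xy$ exists. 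Moreover, since $O_1$ is independent we automatically have $x,y\notin O_1$, and together with $x,y\notin I$ and the fact that $u,v\notin V(M)\cup I$ one checks that the three new edges $ux,xy,vy$ are distinct from one another and from all edges of $M$ and of previous paths.

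Now let $H$ be the union of $M$ with the edges of all the constructed paths. For $v\in O_1$, $v$ is the endpoint of a unique path and (since $O_1$ is independent) never an internal vertex, so $d_H(v)=1$. For $v\in V(M)$, $d_H(v)=1+2k$ where $k\in\{0,1\}$ counts how many paths use $v$ internally. For $v\notin O$, $d_H(v)=2k$ with $k\in\{0,1\}$. Hence $\Delta(H)\le 3$ and $d_H(v)\equiv d_G(v)\pmod 2$ for every $v$, producing the desired Eulerian subgraph $G'=G-E(H)$. The only real obstacle in the proof is controlling $|O_1|$ so that the greedy step always succeeds; this is resolved very cleanly by the $(\beta,\alpha)$-density condition, which forbids large independent sets and thereby keeps the avoidance set $I$ negligible compared to $\delta(G)$.
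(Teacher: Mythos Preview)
Your proof is correct and follows essentially the same approach as the paper: take a maximal matching $M$ on the odd-degree vertices, observe that the unmatched odd vertices form a small independent set by $(\beta,\alpha)$-density, then greedily link the remaining pairs by internally-disjoint paths of length~$3$ avoiding $M$. The paper's write-up is slightly terser but the construction and the degree/parity analysis are the same.
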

\begin{proof}
Let $M$ be a maximal matching in the subgraph of $G$ induced by the vertices of odd degree.
Let $X$ be the set of all vertices of odd degree which are not covered by $M$.
Thus $X$ is an independent set (of even size)
and hence $|X|< \beta n$ as $G$ is $(\beta,\alpha)$-dense.
We write $X=\{x_1,y_1,\ldots,x_k,y_k\}$. Since $\delta(G)\geq \alpha n$ and $G$ is $(\beta,\alpha)$-dense, 
for every $i\in [k]$, there are at least $\alpha^3 n^2/3$ edges joining $N(x_i)$ and $N(y_i)$ which do not lie in $M$.\COMMENT{$\alpha |N(x_i)||N(y_i)|/2 - |M| \geq \alpha^3 n^2/2 - n \geq \alpha^3 n^2/3$.} Thus there are at least $\alpha^3n/6$ internally vertex-disjoint paths of length $3$ between $x_i$ and $y_i$ which are edge-disjoint from $M$.\COMMENT{None of these paths has an inner vertex in $X$ since $X$ is independent set.}
Since $\alpha^3n/6 \geq \beta n \geq 2k$, 
we can choose vertex-disjoint paths $P_1,\dots, P_k$ in $G-M$ such that $P_i$ is a path of length $3$ between $x_i$ and $y_i$. 
Then $G':=G-(M\cup \bigcup_{i=1}^{k} E(P_i))$ is an Eulerian subgraph of $G$ with  $\Delta(G-E(G'))\leq 3$.
\end{proof}

\begin{lemma}\label{lem: decomp Eulerian}
Suppose $n,r\in \N$ and $1/n \ll \beta \ll \alpha , 1/r< 1$ and $n$ is odd.\COMMENT{since $1/r<1$, we have $r\geq 2$.}
Let $G$ be a $(\beta,\alpha)$-dense Eulerian graph on $n$ vertices with $\delta(G)\geq \alpha n$ and $\Delta(G)-\delta(G) \leq \beta n$.
Then $G$ can be decomposed into Hamilton cycles and at most $2r\beta n$ odd cycles of length at least $(1- \frac{1}{r-1})n$.
\end{lemma}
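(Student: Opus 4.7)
The strategy is to first \emph{regularise} $G$ by removing a small number of long odd cycles, and then apply Theorem~\ref{thm: decomp regular} to decompose the resulting regular graph into Hamilton cycles. Since $G$ is Eulerian, $d^* := \delta(G)$ is even; for each $v \in V(G)$ set the \emph{excess} $e(v) := (d_G(v) - d^*)/2$, a non-negative integer with $e(v) \leq \beta n/2$, so that $\sum_v e(v) \leq \beta n^2/2$.

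The goal of Phase~1 is to find edge-disjoint odd cycles $C_1, \ldots, C_k$ with $k \leq 2r\beta n$, each of length at least $(1-\tfrac{1}{r-1})n$, such that each $v$ lies on exactly $e(v)$ of them; the remainder $G' := G - \bigcup_i E(C_i)$ is then $d^*$-regular. The bound on $k$ is forced by the counting identity $\sum_i |V(C_i)| = \sum_v e(v)$, which together with the length lower bound gives $k \leq \frac{(r-1)\beta n}{2(r-2)} \leq 2r\beta n$. The cycles are constructed iteratively. At step $i$, define the residual $r_i(v) := e(v) - |\{j < i : v \in V(C_j)\}|$ and the \emph{active set} $U_i := \{v : r_i(v) \geq 1\}$. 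The cycle $C_i$ is chosen to be a long odd cycle in $G_i := G - \bigcup_{j<i} E(C_j)$ whose vertex set is contained in $U_i$; this ensures $r_{i+1}(v) \geq 0$ throughout. Since at most $O(r\beta n)$ edges are removed at each vertex across all steps, Proposition~\ref{prop: dense deletion} guarantees that $G_i$ remains $(\beta^{1/2},\alpha-\beta^{1/2})$-dense. Provided $|U_i| \geq (1-\tfrac{1}{r-1})n + 1$, I pick an odd-sized $U_i' \subseteq U_i$ with $|U_i'| \geq (1-\tfrac{1}{r-1})n$; Proposition~\ref{prop: dense induced sub} implies $G_i[U_i']$ is a robust expander, and Theorem~\ref{thm: expander ham} yields a Hamilton cycle of $G_i[U_i']$ of odd length to serve as $C_i$. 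Phase~2 then applies Theorem~\ref{thm: decomp regular} to $G'$: as $d^*$ is even and $G'$ is a robust expander (by Propositions~\ref{prop: dense deletion} and~\ref{prop: dense induced sub}), $G'$ decomposes into Hamilton cycles, each of odd length $n$; together with the cycles from Phase~1 this yields the required decomposition of $G$.

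\textbf{Main obstacle.} The principal difficulty is maintaining the invariant $|U_i| \geq (1-\tfrac{1}{r-1})n + 1$ throughout Phase~1: the invariant can fail initially (if only a few vertices have positive excess) and would typically fail near the end (as the residuals at many vertices drop to zero roughly simultaneously). To circumvent this, I preemptively shift the target regularity from $d^*$ to a slightly smaller even value $d^{**} := d^* - 2s$ with $s = \Theta(\beta n)$, which raises every vertex's initial required coverage to $c(v) := e(v) + s \geq s \geq 1$, so that $U_i = V(G)$ at the outset. Then, by prioritising at each step the selection of $C_i$ so as to traverse vertices of currently smallest residual within $U_i$, I keep the residuals roughly balanced and $|U_i|$ remains above the threshold $(1-\tfrac{1}{r-1})n + 1$ until essentially all residuals reach zero simultaneously. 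The extra coverage required by the shift is of order $sn = O(\beta n^2)$, so the total number of cycles remains within the $2r\beta n$ budget by the same counting estimate. Phase~2 is then applied to the $d^{**}$-regular remainder instead.
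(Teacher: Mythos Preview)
Your overall architecture---regularise by peeling off long odd cycles, then apply Theorem~\ref{thm: decomp regular} to the even-regular remainder---is the paper's, and your use of robust expansion (Proposition~\ref{prop: dense induced sub} plus Theorem~\ref{thm: expander ham}) to produce each cycle is also correct. The gap is the balancing mechanism in Phase~1. You assert that by choosing each $C_i$ to ``traverse vertices of currently smallest residual'' the residuals stay balanced and $|U_i|$ remains large until all residuals vanish together, but this is unjustified and in fact points the wrong way. Since $|U_i'| \geq (1-\tfrac1{r-1})n$, you are really deciding which $\leq n/(r-1)$ vertices to \emph{exclude}; insisting on including the small-residual vertices means excluding large-residual ones, so small residuals drop fastest and hit zero first. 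After roughly $s$ steps the vertices with $c(v)=s$ (i.e.\ with $d_G(v)=\delta(G)$) have left $U_i$, and nothing forces the remaining set to have size at least $(1-\tfrac1{r-1})n$---it could be tiny if most vertices already had minimum degree. Even with the priority reversed, a one-cycle-at-a-time greedy rule does not obviously drive all residuals to zero simultaneously while keeping $|U_i|$ above threshold; you owe a genuine argument here and have not given one.

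The paper sidesteps this by working in batches of $r$ (sometimes $2r$) cycles rather than single cycles. At stage $j$ it takes a \emph{fair} $r$-partition $(V_1,\ldots,V_r)$ of the set of non-maximum-degree vertices and, for each $t\in[r]$, finds a Hamilton cycle of $G_j-V_t$ via Theorem~\ref{thm: expander ham}. Every maximum-degree vertex then lies on all $r$ cycles of the batch and loses degree $2r$, while every other vertex is skipped exactly once and loses $2r-2$; hence $\Delta(G_j)-\delta(G_j)$ drops by exactly $2$ per stage, deterministically. With initial gap at most $\beta n$ this takes at most $\beta n/2$ stages, yielding at most $2r\beta n$ non-Hamilton cycles (a parity correction doubles one batch to $2r$ cycles when the set of maximum-degree vertices has even size). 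Substituting this batching argument for your greedy residual step would make your proof go through.
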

To prove Lemma~\ref{lem: decomp Eulerian}, we will sequentially remove the edges of suitable odd cycles until the remaining graph is regular. We can then obtain a Hamilton decomposition of the remaining graph via Theorem~\ref{thm: decomp regular}. 
The argument builds on ideas from \cite{GKO16}.
\begin{proof}
Given any $V'\subseteq V(G)$ such that $|V'|$ is even, for $k\in \mathbb{N}$ we say $(V_1,\dots, V_k)$ is a {\em fair $k$-partition} of $V'$ with respect to $G$ if the following hold:
\begin{enumerate}[label=(\alph*)]
\item $|V_i|$ is even for every $i\in [k]$,
\item $||V_i|-|V_j||\leq 2$, and
\item  $d_{G,V_i}(v) \geq d_{G,V'}(v)/k-n^{2/3}$ for all $v\in V(G)$ and $i\in [k]$.
\end{enumerate}

First, note that any set $V'\subseteq V(G)$ such that $|V'|$ is even has a fair $k$-partition. 
(To see this, choose a partition satisfying (a) and (b) uniformly at random and apply Lemma~\ref{lem: chernoff} to show that (c) holds with probability at least $1/2$.)
Now we sequentially construct nested collections $\cC_0:=\es\subseteq \cC_1\subseteq  \dots \subseteq \cC_s$ of edge-disjoint odd cycles and 
spanning subgraphs $G_0:=G\supseteq G_1 \supseteq\dots\supseteq G_s$ of $G$ such that $G_s$ is regular and for all $0\leq i<s$ the following hold:
\begin{enumerate}[label=(\roman*)$_i$]
	\item\label{item:euler1} $E(G_{i})-E(G_{i+1}) = \bigcup_{C\in \cC_{i+1}\setminus \cC_i} E(C)$ and $|\cC_{i+1}\setminus \cC_i|\leq 2r$,
	\item\label{item:euler2} $d_{G_i}(v)- d_{G_{i+1}}(v) \leq 4r-2$ for every $v\in V(G)$,
	\item\label{item:euler3} $(\Delta(G_i)-\delta(G_i))-(\Delta(G_{i+1})-\delta(G_{i+1}))= 2$, and
	\item\label{item:euler4} $|C|\geq (1-\frac{1}{r-1})n$ for every $C\in \cC_{i+1}$.
\end{enumerate}
Suppose that for some $j\geq 0$ 
we have already defined collections $\cC_0\subseteq \cC_1\subseteq \dots \subseteq \cC_j$ of edge-disjoint odd cycles and a sequence $G_0\supseteq G_1\supseteq \dots\supseteq G_j$ of graphs satisfying \ref{item:euler1}--\ref{item:euler4} for all $0\leq i<j$. 
If $G_j$ is regular, then we may set $s:=j$, so assume that $G_j$ is not regular. 
Since $\Delta(G)-\delta(G) \leq \beta n$, 
\ref{item:euler3} for $i<j$ implies that $j\leq \beta n/2$. 
This together with \ref{item:euler2} for $i<j$ implies that for every $v\in V(G)$
\begin{align}\label{eq: Gj deg}
d_{G}(v) - d_{G_j}(v) \leq 2r \beta n.
\end{align}
By Proposition~\ref{prop: dense deletion}, 
the graph $G_j$ is $(2r\beta^{1/2},\alpha/2)$-dense. 
Thus Proposition~\ref{prop: dense induced sub} implies that
\begin{equation}\label{eq: Gj expand}
\begin{minipage}[c]{0.8\textwidth}\em
for any set $V'\subseteq V(G)$ with $|V'|\geq \alpha n/5$ and any graph $G'\subseteq G_j[V']$  with  
$\Delta(G_j[V'] - E(G'))\leq 4r$, the graph $G'[V']$ is a robust $(\alpha r \beta^{1/2}/2,\beta^{1/3})$-expander.
\end{minipage}\ignorespacesafterend 
\end{equation} 

Let $M_j$ be the set of all vertices of maximum degree in $G_j$ and let $Z_j:=V(G)\sm M_j$. 
Note that $d_{G_j}(u) - d_{G_j}(v)\geq 2$ for every pair of vertices $u\in M_j, v\in Z_j$, 
since $G$ (and thus $G_j$) is Eulerian. Pick $x\in M_j$ and let 
$$Z':= \left\{\begin{array}{ll} 
Z_j & \text{ if } |M_j| \text{ is odd,}\\
Z_j\cup \{x\} &\text{ if } |M_j| \text{ is even.}
 \end{array} \right.$$

Note that $|Z'|$ is even, because $n$ is odd. 
Let $(V_1,\dots, V_r)$ be a fair $r$-partition of $Z'$ with respect to $G$.  Next we show how to define $G_{j+1}$ and $\cC_{j+1}$.
We will choose the cycles in $\cC_{j+1}\sm \cC_j$ one by one. 
Assume we have already defined edge-disjoint odd cycles $C_1,\dots, C_{t-1}$  in $G_j$ for some $t\in [r]$.
Consider the graph
$$G_{j,t}:= G_j[ V(G)\setminus V_{t}] - \bigcup_{i=1}^{t-1} E(C_i).$$ 
By \eqref{eq: Gj deg}, part (c) in the definition of an $r$-fair partition, and $\delta(G)\geq \alpha n$, we conclude that $d_{G_j[V(G)\setminus V_{t}]}(v) \geq \alpha n/3$.
Hence $\delta(G_{j,t})\geq \alpha n/3-2r$. 
This together with~\eqref{eq: Gj expand} implies that we can use Theorem~\ref{thm: expander ham} to obtain a Hamilton cycle $C_{t}$ in $G_{j,t}$.
We repeat this procedure until we have defined $C_1,\dots, C_r$. Note that $|C_t|\geq (1-1/r)n-2 \geq (1- 1/(r-1))n$ for each $t\in [r]$.

If $|M_j|$ is odd, then we let $\cC_{j+1}:= \cC_j \cup \{C_1,\dots, C_{r}\}$, and $G_{j+1}:= G_j - \bigcup_{i=1}^{r} E(C_i)$. 
Then we have for every $v\in V(G)$
$$d_{G_j}(v)-d_{G_{j+1}}(v) = \left\{ \begin{array}{ll}
2r-2 & \text{ if } v\notin M_j, \\
2r & \text{ if } v\in M_j,
  \end{array} \right.$$
so each of (i)$_j$--(iv)$_j$ holds. 

If $|M_j|$ is even, we will define additional cycles $C_{r+1},\dots, C_{2r}$ as follows. Let $(U_1,\dots, U_r)$ be a fair $r$-partition of $V(G)\setminus \{x\}$.
Assume for some $r+1\leq t\leq 2r$ we have already defined additional edge-disjoint odd cycles $C_{r+1},\dots, C_{t-1}$ in $G_j-\bigcup_{i=1}^{r} E(C_i)$.
Consider the graph
$$G'_{j,t}:=G_j[ V(G)\setminus U_{t-r}] - \bigcup_{i=1}^{t-1} E(C_i).$$ 
By (c), we obtain $\delta(G'_{j,t})\geq \alpha n/3-4r$.
As before we can use Theorem~\ref{thm: expander ham} to obtain a Hamilton cycle $C_{t}$ in $G'_{j,t}$. 
We repeat this process until we defined $C_1,\dots, C_{2r}$. 
Let $\cC_{j+1}:= \cC_j \cup \{C_1,\dots, C_{2r}\}$, and $G_{j+1}:= G_j - \bigcup_{i=1}^{2r} E(C_i)$. 
Thus
$$d_{G_j}(v)-d_{G_{j+1}}(v) = \left\{ \begin{array}{ll}
4r-4 & \text{ if } v\notin M_j, \\
4r-2 & \text{ if } v\in M_j,
  \end{array} \right.$$
so each of (i)$_j$--(iv)$_j$ holds. 

Hence, by repeating this procedure, we can obtain a regular graph $G_s$ 
and a collection $\cC_s$ of edge-disjoint odd cycles of length at least $(1-\frac{1}{r-1})n$. 
Also $|\cC_s|\leq 2rs \leq 2r\beta n$, since $s \leq \beta n/2 +1$. 
Observe that $G_s$ is Eulerian as $G$ is Eulerian.
By \eqref{eq: Gj expand} and Theorem~\ref{thm: decomp regular}, $G_s$ can be decomposed into a collection $\cC'$ of Hamilton cycles. 
Now $\cC := \cC_s\cup \cC'$ is the desired decomposition of $G$.
\end{proof}

\section{Decomposing a graph into cycle blow-ups}
\label{sec: decomp to cycle}
The main result in the current section (Lemma~\ref{lem: decomp blow ups}) guarantees a near-optimal packing of blow-ups of long cycles into a $(\beta,\alpha)$-dense graph which is close to being regular. Results in Section~\ref{sec: trees} will allow us to obtain a near-optimal packing of trees into blow-ups of long cycles. So together these results guarantee a near-optimal packing of trees into a $(\beta,\alpha)$-dense graph which is close to being regular. 
The fact in \ref{item:R1} that all cycle blow-ups are only almost spanning will be important when we apply Lemma~\ref{lem: decomp blow ups} in the iteration lemma in Section~\ref{sec:iteration}.
Indeed, whenever we embed a tree $T$, our aim is to embed all or almost all vertices of $T$ into a given cycle blow-up $C$.
But for some of the trees $T$, 
we will need to embed a small part of $T$ outside $C$, as this enables us to cover given sets of ``exceptional'' edges. 

\begin{lemma}\label{lem: decomp blow ups}
Suppose $D,M,M',n,t\in \N$ with $1/n \ll 1/M\ll 1/M'\ll\epsilon\ll 1/t \ll \beta \ll \alpha \leq d,1/D\leq 1$ and let $D$ be even with $D\geq 6$. 
Let $\cI:= \binom{[D]}{D-3}$.
Suppose that $G$ is a $(\beta,\alpha)$-dense graph on $n$ vertices
with equitable partition $(U_1,\dots, U_D)$ such that $d_{G,U_i}(v) = (d \pm \beta)D^{-1}n$ for all $v\in V(G)$ and all $i\in [D]$. 
Then there exist a set $V_0\sub V(G)$ with $|V_0|\leq \epsilon n$, an integer $\Gamma$ with $M' \leq \Gamma \leq M$, and 
for each $S\in \cI$ a set $\cC_{S}$ such that the following properties hold (writing $\cC:=\bigcup_{S\in \cI}\cC_S$):
\begin{enumerate}[label=(R\arabic*)]
	\item\label{item:R1} for every $S\in \cI$, 
	the set $\cC_{S} $ is a set of $(2\epsilon,1/t)$-super-regular cycle $(1- \epsilon)\frac{n}{\Gamma}$-blow-ups
	such that $C\sub G$ for each $C\in \cC_S$,
	the length of each $C\in \cC_S$ is odd and at least $(1- {7}/{(2D)})\Gamma$
	and such that $V(C)\sub (\bigcup_{i\in S}U_i)\sm V_0$ for each $C\in \cC_S$,
	\item\label{item:R2a} the cycle blow-ups in $\cC$ are pairwise edge-disjoint,
	\item\label{item:R2} $e(\cC_S) = (1\pm \beta^{1/2})\binom{D}{3}^{-1} e(G)$,
	\item\label{item:R3} 	$d_G(v)-\sum_{C\in\cC}d_{C}(v)\leq 3n/t^{1/2}$ for every $v\in V(G)\sm V_0$. 
	In particular, $ e(\cC)\geq e(G) - 2n^2/t^{1/2}$.
\end{enumerate}
\end{lemma}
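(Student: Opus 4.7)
The plan is to apply Szemer\'edi's regularity lemma to $G$, decompose the resulting reduced multigraph $R_t$ into cycles via Lemma~\ref{lem: decomp Eulerian} after suitable splittings, and lift each cycle to a cycle blow-up in $G$ using Proposition~\ref{prop: part eps reg}. Throughout I would choose the constant hierarchies so that all ``losses'' fit inside the slack $3n/t^{1/2}$ in \ref{item:R3}.

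I would first apply Lemma~\ref{lem: rl} to $G$ with the equitable partition $(U_1,\ldots,U_D)$ to obtain a refinement $(V_0,V_1,\ldots,V_\Gamma)$ with $M'\leq \Gamma\leq M$, in which every $V_i$ lies in a unique $U_{j(i)}$ and $L:=|\{i:V_i\sub U_j\}|$ is the same odd integer for every $j$. Writing $U_j^R:=\{i:V_i\sub U_j\}$ and $U_S^R:=\bigcup_{j\in S}U_j^R$ for $S\in\cI$, parts (VI) and (VII) of Lemma~\ref{lem: rl} give that $R_t$ is $(2\beta,t\alpha/2)$-dense with $d_{R_t,U_j^R}(i)=(d\pm 3\beta)t|U_j^R|$, so $R_t$ is a multigraph of edge multiplicity $\leq t$ which is approximately $dt\Gamma$-regular. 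I would then apply Lemma~\ref{lem: D partition} (with $k=t$) to $R_t$ to obtain multigraphs $\{R_{t,S}:S\in\cI\}$ decomposing $R_t$, each $R_{t,S}$ supported on $U_S^R$ with $(D-3)L$ vertices (odd, since $D$ is even and $L$ is odd) and approximately $td\binom{D-1}{3}^{-1}\Gamma$-regular. Proposition~\ref{prop: decomp multigraph} then splits $R_{t,S}$ further into $t$ simple edge-disjoint spanning graphs.

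For each of these simple, almost regular and dense pieces, Proposition~\ref{prop: making Eulerian} yields a large Eulerian subgraph (losing only $\Delta\leq 3$ at each vertex), and Lemma~\ref{lem: decomp Eulerian} applied with $r:=2D$ produces a decomposition into Hamilton cycles (of odd length $(D-3)L$) together with $O(\beta L)$ odd cycles of length at least $(1-\tfrac{1}{2D-1})(D-3)L$; a direct calculation shows this exceeds $(1-\tfrac{7}{2D})\Gamma$ by $\tfrac{5}{4D-2}L>0$. To lift a cycle $i_1\cdots i_\ell$ in $R_t$ to $G$, for each pair $ij$ used by any cycle I would apply Proposition~\ref{prop: part eps reg} (with $k=\lfloor td_{ij}\rfloor$, $d'=1/t$) to the $(\epsilon,d_{ij})$-regular graph $G[V_i,V_j]$ to extract $\lfloor td_{ij}\rfloor$ edge-disjoint $(2\epsilon,1/t)$-regular spanning pieces, distributing them among the cycles through $ij$. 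Each cycle in $R_t$ thereby becomes a $(2\epsilon,1/t)$-regular cycle $n_0$-blow-up with $n_0=n/\Gamma$. To make all of them super-regular on a common sub-cluster set $V_i^*\sub V_i$ of size $(1-\epsilon)n_0$, pick the $V_i^*$ uniformly at random, absorb the vertices $V_i\sm V_i^*$ into $V_0$, and combine Proposition~\ref{prop: making super reg} with Chernoff bounds (Lemma~\ref{lem: chernoff}) and a union bound over the $O_{D,t}(\Gamma)$ blow-ups to obtain a simultaneous good choice. The edge count \ref{item:R2} then follows from $e(R_{t,S})=(1\pm\beta^{2/3})\binom{D}{3}^{-1}e(R_t)$ in Lemma~\ref{lem: D partition}, and the degree bound \ref{item:R3} by summing, at each vertex, the losses from irregular/low-density pairs ($\leq n/t^{1/2}$), the Eulerian and simple-decomposition residues ($O(1)$ per piece), and the remainder from Proposition~\ref{prop: part eps reg} ($\leq n/t^{1/2}$).

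\emph{The hard part.} The main subtlety is producing a single trimmed cluster set which makes every cycle blow-up super-regular simultaneously, while keeping $|V_0|\leq\epsilon n$ and the per-vertex leftover below $3n/t^{1/2}$. Both depend on $\Gamma$ being bounded by a function of $D$ and $t$, which caps the number of blow-ups and makes the union bound after random trimming affordable. The algebraic choice $r=2D$ in Lemma~\ref{lem: decomp Eulerian} is the only place where the specific threshold $(1-7/(2D))\Gamma$ on cycle length enters.
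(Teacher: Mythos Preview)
Your overall strategy matches the paper's: apply the regularity lemma, pass to the reduced multigraph $R_t$, split it via Lemma~\ref{lem: D partition}, simplify via Proposition~\ref{prop: decomp multigraph}, make Eulerian, decompose into long odd cycles via Lemma~\ref{lem: decomp Eulerian}, and lift each cycle to a $(2\epsilon,1/t)$-regular blow-up using Proposition~\ref{prop: part eps reg}. The accounting for \ref{item:R2} and most of \ref{item:R3} is also along the right lines.

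The gap is in your super-regularization step, and it stems from the sentence ``Both depend on $\Gamma$ being bounded by a function of $D$ and $t$''. This is false: the hierarchy $1/M\ll 1/M'\ll\epsilon\ll 1/t$ forces $\Gamma\geq M'\gg 1/\epsilon\gg t$, so $\Gamma$ is huge compared with $D$ and $t$. Consequently the number of blow-ups is $\Theta(t\Gamma)$, not bounded, and your union-bound-over-blow-ups plan collapses. More fundamentally, a \emph{common} random $V_i^*\subseteq V_i$ cannot make all blow-ups super-regular: a $(2\epsilon,1/t)$-regular pair may contain up to $O(\epsilon)|V_i|$ vertices of wrong degree, and these bad vertices are \emph{specific} to each blow-up. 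A uniformly random deletion of an $\epsilon$-fraction will, with overwhelming probability, leave most of them in $V_i^*$; Chernoff only controls $d_{G,V_i^*}(v)$ relative to $d_{G,V_i}(v)$, which was already wrong for those $v$. Since different blow-ups through $V_i$ have different bad sets and there are $\Theta(t\Gamma)$ of them, the union of bad vertices in $V_i$ can far exceed $|V_i|$, so no single $V_i^*$ works.

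The paper resolves this by \emph{not} insisting on common clusters. It applies the regularity lemma with parameter $\epsilon^2$, uses Proposition~\ref{prop: making super reg} separately on each blow-up $\phi(C)$ to obtain $\phi^s(C)$, and then moves into $V_0$ only those vertices that were deleted from at least $\epsilon^{1/2}\Gamma$ blow-ups (a double-counting shows there are at most $\epsilon n/(2\Gamma)$ such vertices per cluster). Finally, to equalize cluster sizes to exactly $(1-\epsilon)n/\Gamma$, it trims each blow-up \emph{sequentially}, at each step avoiding the set $B_i^r$ of vertices already trimmed $\geq 8\epsilon t\Gamma$ times; this guarantees no vertex is ever removed from more than $10\epsilon t\Gamma$ blow-ups, which is the missing term in your \ref{item:R3} accounting. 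The key point is that different $C\in\cC$ are allowed different cluster sets---the statement of the lemma only demands $V(C)\subseteq(\bigcup_{i\in S}U_i)\setminus V_0$, not that all $C$ share the same clusters.
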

To prove Lemma~\ref{lem: decomp blow ups}, we apply Szemer\'{e}di's regularity lemma to obtain a reduced multigraph $R^m$. We then apply Lemma~\ref{lem: decomp Eulerian} to obtain an approximate decomposition of $R^m$ into long odd cycles. This then translates into the existence of the desired super-regular cycle blow-ups in $G$. The reliance on Szemer\'{e}di's regularity lemma has the drawback that we have a small exceptional set $V_0$ which is not part of the cycle blow-ups. On the other hand the packing is very efficient: by \ref{item:R3} the leftover density can be chosen to be much smaller than the input parameters $\alpha$, $\beta$ of $G$. 
\begin{proof}
In this proof,
given a graph $H$,
we set $d_H(v):=0$ whenever $v\notin V(H)$.\COMMENT{Do we use it somewhere else?}
We apply the regularity lemma (Lemma~\ref{lem: rl}) to the graph $G$ with $\epsilon^2$ playing the role of $\epsilon$
to obtain a partition $(V_0',V_1',\ldots,V_{\Gamma}')$ of $V(G)$ such that,
writing $d_{ij}:=\den_G(V_i',V_j')$,
\begin{enumerate}[label=(\roman*)]
	\item $M'\leq \Gamma \leq M$,
	\item $|V_0'|\leq \epsilon^2 n$,
	\item $|V_1'|=\ldots=|V_{\Gamma}'|=:n'$,
	\item for each $i\in [\Gamma]$, the graph $G[V'_i,V'_j]$ is $(\epsilon^2,d_{ij})$-regular except for at most $\epsilon^2 \Gamma$ indices $j \in [\Gamma]$, and		
	\item for each $i\in [\Gamma]$, there exists a unique $j \in [D]$ such that $V_i'\subseteq U_j$
	and, writing $U_j^R:=\{i:V_i' \sub U_j\}$, we have $|U_1^R|=|U_{j}^R| $ for all $j\in [D]$ and
	$|U_1^R|$ is odd.
\end{enumerate}
Moreover, 
let $R$ be the graph with vertex set $[\Gamma]$ where two vertices $i,j$ are joined by an edge 
if $d_{ij}\geq 1/t^{1/2}$ and $G[V_i',V_j']$ is $(\epsilon^2,d_{ij})$-regular.
Construct a multigraph $R^m$ from $R$ by
replacing each edge $ij$ in $R$ by $\lfloor d_{ij}t \rfloor$ edges.
Observe that $(U_1^R,\ldots,U_D^R)$ is an equitable partition  of $V(R^m)$.
Then
\begin{enumerate}
	\item [(vi)] $d_{R^m,U_j^R}(i)= (d\pm 3\beta)t|U_j^R|$ for all $i\in [\Gamma]$ and $j\in [D]$ and
	\item [(vii)] $R^m$ is $(2\beta,t\alpha/2)$-dense.
\end{enumerate}

For every $ij \in E(R)$, we use Proposition~\ref{prop: part  eps reg} to find $\lfloor d_{ij}t \rfloor$ disjoint edge sets $E_{ij}^1,\ldots,E_{ij}^{\lfloor d_{ij}t \rfloor}$ of $G[V_i',V_j']$
such that $E_{ij}^k$ induces a $(2\epsilon^2, 1/t )$-regular graph
for every $k\in [\lfloor d_{ij}t \rfloor]$.

Let $E'_{ij}:= E(G[V_i',V_j'])\sm \bigcup_{k=1}^{\lfloor d_{ij}t \rfloor} E_{ij}^k$ for any $ij \in E(R)$. 
Note that, also by Proposition~\ref{prop: part  eps reg},
for every edge $ij\in E(R)$
\begin{align}\label{eq: E' deg}
	 d_{E'_{ij}}(v) \leq \frac{3n'}{2t^{1/2}}.
\end{align}
\COMMENT{
Get $d_{E'_{ij}}(v) \leq (1- \lfloor d_{ij}t \rfloor/td_{ij})n' + n'^{2/3}$. But $\lfloor d_{ij}t\rfloor/(td_{ij}) \geq  1- 1/(td_{ij}) \geq 1- 1/t^{1/2}$.
}
Let $\phi:E(R^m)\to \{E_{ij}^k: ij\in E(R), k\in \lfloor d_{ij}t \rfloor\}$ be a bijection
such that for every $e\in E(R^m)$ which joins $i$ and $j$, we have $\phi(e)= E_{ij}^k$ for some $k\in [\lfloor d_{ij}t \rfloor]$.

The proof strategy is now as follows.
We first use Lemma~\ref{lem: D partition} to decompose $R^m$ into a collection $\{R_S^m\}_{S\in \cI}$ of simple graphs such that 
$V(R^m_S)=\bigcup_{i\in S}U_i^R$ for each $S\in \cI$.
For each $S\in \cI$,
we then apply Lemma~\ref{lem: decomp Eulerian} to find a set $\cD_S$ of edge-disjoint long odd cycles which together cover almost all edges of $R^m_S$.
Via the map $\phi$ each cycle $C\in \cD_S$ corresponds to a $(2\epsilon^2,1/t)$-regular cycle blow-up $\phi(C)$.
The set $\cC_S$ is then obtained from $\{\phi(C):C\in \cD_S\}$ by modifying each $\phi(C)$ slightly to obtain a cycle blow-up which is super-regular
and in which all the clusters have equal size.
In this last step we have to be careful
since we need to ensure that \ref{item:R3} holds.

By Lemma~\ref{lem: D partition} and (vi) as well as (vii), 
we can decompose the multigraph $R^m$ into a collection of graphs $\{R^m_S\}_{S\in \cI}$,
such that for each $S\in \cI$ the following holds:
\begin{itemize}
\item[(a)] $R^m_S$ is  $(6\beta, D^{-3}t\alpha/2)$-dense with $V(R^m_S)=\bigcup_{i\in S} U_i^R$,
\item[(b)] $d_{R^m_S}(j) =  (d\pm 6\beta)\binom{D-1}{3}^{-1}t\Gamma= (d\pm 6\beta)\binom{D-1}{3}^{-1}tD |R_S^m|/(D-3)$
for every $j\in V(R^m_S)$, and
\item[(c)] $e(R^m_S)=(1\pm 3\beta^{2/3})\binom{D}{3}^{-1}e(R^m)$.
\end{itemize}

We next use Proposition~\ref{prop: decomp multigraph} to decompose $R^m_S$ into $t$ simple spanning graphs $R_{S,1},\ldots,R_{S,t}$
such that for all $S\in \cI$ and $i\in[t]$ the following hold:
\begin{itemize}
\item[(a$'$)] $R_{S,i}$ is $(6\beta,D^{-3}\alpha/4)$-dense, and
\item[(b$'$)] $d_{R_{S,i}}(j)=(d\pm \beta^{2/3})\binom{D-1}{3}^{-1}D|R_{S,i}|/(D-3)$ for every $j\in V(R_{S,i})$.
\end{itemize}

Using Proposition~\ref{prop: making Eulerian} for each $S\in \cI$,
we obtain Eulerian graphs $R_{S,1}',\ldots,R_{S,t}'$ such that $R'_{S,i}\subseteq R_{S,i}$ for all $i\in [t]$ and
\begin{align}\label{eq: RR deg diff}
\Delta(R_{S,i}- E(R'_{S,i})) \leq 3.
\end{align}
Note that 
since $D$ is even and $|U_1^R|=\Gamma/D$ is odd (by (v)),
it follows that
$|R'_{S,i}|=(D-3)\Gamma/D$ is odd for all $S\in \cI$ and $i\in [t]$.
Observe that $\delta(R_{S,i}')\geq D^{-3}\alpha|R_{S,i}|/5$ for every $i\in[t]$ by (b$'$), and that $R'_{S,i}$ is still $(\beta^{3/4}, D^{-3}\alpha/5)$-dense.
Thus for every $S\in \cI$, we 
can apply Lemma~\ref{lem: decomp Eulerian} with $2D+1$, $\beta^{3/5}$ and $D^{-3}\alpha/5$ playing the roles of $r$, $\beta$ and $\alpha$
to decompose $R_{S,1}',\ldots,R_{S,t}'$ into a set $\cD_S$ of odd cycles
of length at least $(1- 1/(2D))(D-3)|U_1^R|\geq (1- 7/(2D))\Gamma$.
Let $\cD:= \bigcup_{S\in \cI} \cD_S$.
Thus all the cycles in $\cD$ are pairwise edge-disjoint.

Recall that for every $e\in R^m$ joining $i$ and $j$, we have $\phi(e)= E_{ij}^k$ for some $k\in [\lfloor d_{ij} t\rfloor]$. 
For every $C \in \cD$, let $\phi(C) := \bigcup_{e \in E(C)}\phi (e)$ be the cycle blow-up in $G$ induced by the edges in $\bigcup_{e \in E(C)}\phi (e)$. 
Then $\phi(C)$ is a $(2\epsilon^2, 1/t)$-regular cycle $n'$-blow-up. Moreover, for all $S\in \cI$ and $v\in V(G)$,
we have
\begin{align}\label{eq:degDS}
	\sum_{C\in \cD_S}d_{\phi(C)}(v)
	\stackrel{(\ref{eq: RR deg diff})}{=}
	\sum_{e\in R^m_S}d_{\phi(e)}(v)\pm 3tn'.
\end{align}

In the remainder of the proof we show how to modify the cycle blow-ups $\phi(C)$ (for $C\in \cD$) so that they satisfy \ref{item:R3}. First observe that 
\begin{align}\label{eq: cD size}
 |\cD| \leq 2t\Gamma.
\end{align} 
Indeed, every $C\in \cD$ has length at least $(1-7/(2D))\Gamma \geq 5\Gamma/12$ and 
thus $\phi(C)$ contains at least $n^2/(3t\Gamma)$ edges
while $e(G)\leq n^2/2$.

By Proposition~\ref{prop: making super reg},
for each $C\in \cD$, 
the graph $\phi(C)$ contains a $(12\epsilon^2, 1/t$)-super-regular cycle $(1-8\epsilon^2)n'$-blow-up $\phi^s(C)$.
For every $i\in[\Gamma]$, let 
$$W_i:=\{  u\in V'_i: |\{C \in \cD: u\in V(\phi(C)) \sm V(\phi^s(C)) \}|\geq \epsilon^{1/2} \Gamma  \}.$$
Thus the sets $W_i$ contain all those vertices in $V'_i$ that get deleted too often when making the cycle blow-ups super-regular. 
Then
\begin{align*}
	|W_i|\epsilon^{1/2}\Gamma \leq 8\epsilon^2 n'|\cD| \stackrel{\eqref{eq: cD size}}{\leq} 16 \epsilon^2t\Gamma n'
\end{align*}
and hence $|W_i|\leq \epsilon n/(2\Gamma)$.
Let $V_0:=W_1\cup \ldots \cup W_\Gamma \cup V_0'$ and let $V_i:=V_i'\sm V_0$.
Thus $|V_0|\leq \epsilon n$, 
and by construction for each $v\in V(G)\sm V_0$, we obtain
\begin{align}\label{eq: not disregard too many}
|\{C \in \cD : v\in \phi(C)  \}| - |\{C\in \cD: v\in V(\phi^s(C))\setminus V_0\}| \leq \epsilon^{1/2}\Gamma.
\end{align}
Also for all $C\in \cD$ and $v\in V(\phi^s(C))\sm V_0$, we have
\begin{align}\label{eq:deg phis}
	d_{\phi(C)}(v)= d_{\phi^s(C)-V_0}(v)\pm \frac{2\epsilon n}{\Gamma}.
\end{align}
Write $\cD=\{C_1,\dots, C_m\}$.
For each $r\in [m]$, let $C'_{r}:=\phi^s(C_r)- V_0$. 
Note that for each $j\in V(C_{r})$, we obtain
\begin{align}\label{eq: C'-V0 cluster size}
|V(C'_{r})\cap V_j| 
\geq |V(\phi^s(C_{r}))\cap V'_j| -|W_j| 
\geq (1- 8\epsilon^2) n' - \frac{\epsilon n}{2\Gamma} 
> (1-\epsilon)\frac{n}{\Gamma}.
\end{align}
We claim that we can find cycle $(1-\epsilon)\frac{n}{\Gamma}$-blow-ups $C_1'',\ldots, C_{m}''$ such that 
for all ${r}\in [m]$ the following hold:
\begin{itemize}
\item[($\alpha$1)$_r$] $C_{r}''$ is an induced subgraph of $C_{r}'$,
\item[($\alpha$2)$_r$] $C_{r}''$ is a $(2\epsilon,1/t)$-super-regular cycle blow-up, and
\item[($\alpha$3)$_r$] $|\{r'\in [r]: u\in V(C'_{r'})\sm V(C_{r'}'')\}|\leq 10\epsilon t \Gamma$ for all $u\in V(G)\sm V_0$.
\end{itemize}


Suppose that for some $r\in [m]$ we have already constructed $C_1'',\dots, C_{r-1}''$ such that
($\alpha$1)$_{r'}$--($\alpha$3)$_{r'}$ hold for all $r'<r$.
Next we construct $C_{r}''$. 
For each $i\in V(C_{r})$, consider the set $B^r_{i}\subseteq V_{i}$ defined by 
$$B^{r}_{i}:=\{u \in V_{i}: |\{ r'< r :u\in V(C'_{r'})\sm V(C_{r'}'') \}|\geq 8\epsilon t\Gamma\}.$$
Since 
$$
|B_{i}^{r}|\cdot 8 \epsilon t \Gamma \leq 
\sum_{r'=1}^{r-1} |(V(C'_{r'})\setminus V(C''_{r'})) \cap V_{i}| 
{\leq} 
\sum_{r'=1}^{r-1} \epsilon n/\Gamma 
\stackrel{\eqref{eq: cD size}}{\leq} 2t\epsilon n,$$
we have $|B^{r}_{i}|\leq n/(4\Gamma)$.
Thus we can choose any set $B'_{i}$ of size $|V(C_r')\cap V_i| - (1-\epsilon)n/\Gamma$ in $(V(C_r')\cap V_i) \sm  B^r_{i}$ and 
define 
$$C_r'':= C'_r - \bigcup_{i \in V(C_r') } B'_{i}.$$
Then $C_r''$ satisfies ($\alpha$1)$_r$--($\alpha$3)$_r$. 
(In order to check ($\alpha$2)$_r$, use that $|B'_i|\leq \epsilon n/\Gamma$ and $|W_i|\leq \epsilon n/ (2\Gamma)$.)
By repeating this procedure, we obtain a collection $\{C_1'',\dots, C_m''\}$ satisfying ($\alpha$1)$_{r}$--($\alpha$3)$_{r}$ for all $r\in [m]$.

Let $\cC:=\{ C_1'',\dots, C_m''\}$ and let $\cC_S:=\{C_r'': C_r\in \cD_S\}$. 
We now verify that $\cC$ and $\cC_S$ with $S\in \cI$ satisfy \ref{item:R1}--\ref{item:R3}.
Note that $C_1'',\dots, C_m''$ are edge-disjoint subgraphs of $G$ and
that for each $r\in [m]$ and every $v\in V(G)$,
we trivially have
$d_{C_r''}(v) \leq d_{C_r'}(v) \leq  d_{\phi(C_r)}(v) \leq   3 n/\Gamma$. 
Thus, for fixed $S\in \cI$ and $v\in V(G)\sm V_0$, we obtain
\begin{eqnarray}\nonumber
\sum_{C_r''\in \cC_S}d_{C_r''}(v)  
&\stackrel{(\alpha1)_r,(\alpha3)_r}{=}& \sum_{C_r''\in \cC_S} \left(d_{C'_r}(v) \pm \frac{\epsilon n}{\Gamma}\right)\pm 10\epsilon t\Gamma \cdot \frac{3 n}{\Gamma}\\
&\stackrel{\eqref{eq: cD size},\eqref{eq: not disregard too many},\eqref{eq:deg phis}}{=}& \sum_{C_r\in \cD_S}\left(d_{\phi(C_r)}(v) \pm \frac{2\epsilon n}{\Gamma}\right)
\pm \left(\epsilon^{1/2} \Gamma \cdot \frac{3 n}{\Gamma} + 2\epsilon tn
+ 30\epsilon tn\right)  \nonumber \\
&\stackrel{\eqref{eq:degDS},\eqref{eq: cD size}}{=} &  \sum_{e\in R_{S}^m}d_{\phi(e)}(v) 
\pm \left(3 tn'
+  \frac{7\epsilon^{1/2}n}{2}\right)  \nonumber\\
& = & \sum_{e\in R_{S}^m}d_{\phi(e)}(v) 
\pm 4\epsilon^{1/2} n.\label{eq: cC_S deg}
\end{eqnarray}
Hence for each $v\in V(G)\setminus V_0$, we obtain
\begin{eqnarray*}
\sum_{C''_r\in \cC}d_{C''_r}(v) &= &  
\sum_{S\in \cI} \sum_{C''_r\in \cC_S}d_{C''_r}(v) 
\stackrel{\eqref{eq: cC_S deg}}{=}   \sum_{S\in \cI}\left(\sum_{e\in R_{S}^m}d_{\phi(e)}(v)\pm 4\epsilon^{1/2} n \right) \\
& = & \sum_{e\in R^m}d_{\phi(e)}(v) \pm 4 |\cI| \epsilon^{1/2}n.
\end{eqnarray*}
Given $v\in V(G)\setminus V_0$,
let $i(v)\in \Gamma$ be such that $v\in V_{i(v)}'$.
Then
\begin{eqnarray}
d_{G}(v) - \sum_{C''_r\in \cC}d_{C''_r}(v) 
& \leq & d_{G}(v) - \sum_{e\in R^m}d_{\phi(e)}(v) + 4 |\cI| \epsilon^{1/2}n \nonumber \\
&\leq & d_{G,V'_0 \cup V_{i(v)}'}(v) 
+ \sum_{j\in [\Gamma]\sm N_R[i(v)]}d_{G,V_{j}'}(v)\nonumber\\
&&+ \sum_{j\in N_R(i(v))} d_{E'_{ij}}(v) + 4|\cI|\epsilon^{1/2}n \nonumber \\
&\stackrel{ ({\rm ii}),({\rm iv}), \eqref{eq: E' deg}}{\leq} & 
2\epsilon^2 n + \Gamma n'/t^{1/2} + \epsilon^2 \Gamma \cdot n'  + \Gamma \cdot 3n'/(2t^{1/2}) +4|\cI|\epsilon^{1/2}n\nonumber\\
&\leq& 3n/t^{1/2} \label{eq:deg outside}.
\end{eqnarray}
This shows \ref{item:R3}.

Since $|V_0|\leq \epsilon n$ and by \eqref{eq: cC_S deg},
for every $S\in \cI$ we conclude that 
$e(\cC_S)=(1\pm \beta)e(\phi(R^m_S))$.
Furthermore,
by (c), we obtain that
$e(\phi(R^m_S))=(1\pm 4\beta^{2/3})\binom{D}{3}^{-1}e(\phi(R^m))$.
Since $e(G) = e(\phi(R^m)) \pm 3n^2/t^{1/2}$ by \eqref{eq:deg outside}, altogether this implies \ref{item:R2}.\COMMENT{
$e(G)
\geq e(\phi(R^m))
\geq \sum_{C\in \cC}e(C)\geq e(G)- 3n^2/t^{1/2}$.
}
Note that \ref{item:R1}, \ref{item:R2a} follow directly from our construction of the cycle blow-ups.
\end{proof}

\section{Packing trees into cycle blow-ups} \label{sec: trees}
The main result of this section is Lemma~\ref{lem: blow up advanced}, which states that we can obtain a near-optimal packing of a given collection of bounded degree forests into an $\epsilon$-regular cycle blow-up. The key ingredient for this is Theorem~\ref{thm: blow up} which is (a special case of) the main result in \cite{KKOT16}. Theorem~\ref{thm: blow up} achieves a near-optimal packing of suitable regular bounded degree graphs. In order to be able to apply this, we need to pack bounded degree trees into regular bounded degree subgraphs of cycle blow-ups. This is achieved by Lemma~\ref{lem: tree in a cycle} and \ref{packing into regular}.

At the beginning of this section, we also make some simple observations about trees that are used in later sections.
Let us start with some notation.
We write $(T,r)$ for a rooted tree $T$ with root $r$.
For a vertex $v$ in $(T,r)$, 
let $T(v)$ be the subtree of $T$ containing all vertices below $v$ (including $v$), that is, all vertices $u$ such that the path between $u$ and $r$ in $T$ contains $v$.
Moreover, let $T(v,t)$ be the subtree of $T(v)$ induced by all vertices in distance at most $t$ from $v$. 

\subsection{Simple results about trees}
In this subsection we collect simple results concerning $k$-independent sets in trees and partitions of trees into certain subtrees.

\begin{proposition}\label{prop: number of leaves}
Suppose $n,t,k,\Delta \in \N\sm \{1\}$. 
Suppose that $T$ is a tree on $n$ vertices with at most $t$ leaves, $\Delta(T)\leq \Delta$,
and $X\subseteq V(T)$ is a $k$-independent set in $T$.
Then
\begin{enumerate}[label=(\Roman*)]
\item $T$ contains at least $n-2t$ vertices of degree $2$, and
\item $T$ contains a $k$-independent set $Y\supseteq X$ of size at least $(n-2t)/\Delta^{k}$ such that $Y\sm X$ consists of vertices of degree $2$.
\end{enumerate}
\end{proposition}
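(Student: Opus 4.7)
For part (I), the plan is to use the handshake lemma on the tree. Let $n_1, n_2, n_{\geq 3}$ denote the numbers of vertices of degree $1$, $2$, and at least $3$, respectively. Since a tree on $n$ vertices has $n-1$ edges we have $n_1 + 2n_2 + 3n_{\geq 3} \leq 2(n-1)$, while $n_1 + n_2 + n_{\geq 3} = n$. Subtracting the former from twice the latter yields $n_1 \geq n_{\geq 3} + 2$, hence $n_{\geq 3} \leq n_1 - 2 \leq t$. Plugging back gives $n_2 = n - n_1 - n_{\geq 3} \geq n - 2n_1 \geq n-2t$, as required.

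For part (II), I will extend $X$ greedily using degree-$2$ vertices. Let $D$ denote the set of degree-$2$ vertices in $T$, so $|D|\geq n-2t$ by part (I). Set $Y_0:=X$, and for $i\geq 1$, while there exists some $v \in D$ whose distance (in $T$) from every vertex of $Y_{i-1}$ is at least $k$, set $Y_i:=Y_{i-1}\cup\{v\}$. The process terminates with some set $Y\supseteq X$ which is still $k$-independent (since $X$ was), and by construction $Y\setminus X \subseteq D$ consists of degree-$2$ vertices.

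It remains to lower bound $|Y|$. When the process terminates, every vertex of $D$ lies within distance at most $k-1$ from some vertex of $Y$. Since $T$ has maximum degree at most $\Delta$, a standard BFS count shows that the ball of radius $k-1$ around any vertex contains at most $1 + \Delta + \Delta(\Delta-1) + \cdots + \Delta(\Delta-1)^{k-2} \leq \Delta^{k}$ vertices. Therefore $|D|\leq |Y|\cdot \Delta^{k}$, which gives $|Y|\geq (n-2t)/\Delta^{k}$, as required.

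Neither part looks to contain a real obstacle: (I) is an immediate degree-count, and (II) is a standard greedy argument in which the only thing to verify carefully is the $\Delta^{k}$ ball-size bound and the observation that the new vertices added are drawn from $D$, so $Y\setminus X$ has the desired degree property.
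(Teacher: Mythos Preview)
Your proof is correct and follows essentially the same approach as the paper: part (I) is the standard degree count (the paper phrases it as ``at most $t$ leaves implies at most $t-2$ vertices of degree at least $3$'', which is exactly your inequality $n_{\geq 3}\leq n_1-2$), and part (II) is the same greedy extension by degree-$2$ vertices together with the ball bound $\Delta^k$. The only cosmetic slip is that your subtraction in (I) actually gives $n_2\geq n-2n_1+2$ rather than $n-2n_1$, but this only strengthens the conclusion.
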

\begin{proof}
To prove (I), note that if $T$ has at most $t$ leaves, it has at most $t-2$ vertices of degree at least $3$.
To prove (II), we greedily add vertices of degree $2$ in $T$ to $X$ to obtain a maximal $k$-independent set $Y$ with this property. 
Clearly, $|Y|\geq \max\{ (n-2t - |X| \Delta^k)/\Delta^k, 0\}+|X|\geq (n-2t)/\Delta^{k}$.
\end{proof}

The next proposition follows easily from a greedy argument.
\begin{proposition}\label{prop: k-independent set}
Suppose $n,k,\Delta \in \N\sm \{1\}$ 
and let $G$ be a graph on $n$ vertices with $\Delta(G)\leq \Delta$ and $\delta(G)\geq 1$. Suppose $X$ is a $k$-independent set of $G$ and $X\subseteq Z\subseteq V(G)$. Then the following hold.
\begin{enumerate}[label=(\Roman*)]
\item  There exists a $k$-independent set $Y$ such that 
$X\subseteq Y \subseteq Z$ and $|Y|\geq |Z|/\Delta^{k}$.
\item $G$ contains a $k$-independent matching $M$ with $|M|\geq e(G)/(2\Delta^{k})\geq n/(4\Delta^{k})$.
\end{enumerate}
\end{proposition}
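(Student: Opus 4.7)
My plan for both statements is a standard greedy construction; the only difference is whether one iterates over vertices or over edges.

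For (I), I would process the vertices of $Z\sm X$ in arbitrary order, initialising $Y:=X$, and at each step add the current vertex to $Y$ whenever doing so keeps $Y$ a $k$-independent set of $G$. Once the procedure terminates, every vertex of $Z$ is either already in $Y$ or lies within graph distance less than $k$ of some vertex of $Y$. Since $\Delta(G)\le\Delta$, the closed $(k{-}1)$-neighbourhood of a single vertex has size at most $1+\Delta+\Delta^2+\cdots+\Delta^{k-1}\le\Delta^k$ (using $\Delta\ge 2$). Hence $|Z|\le|Y|\cdot\Delta^k$, which gives $|Y|\ge|Z|/\Delta^k$ as required.

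For (II), I would carry out the analogous greedy procedure on edges: start with $M=\es$ and scan through $E(G)$ in arbitrary order, adding the current edge $e$ to $M$ whenever every previously selected edge is at graph distance at least $k$ from $e$. When the procedure halts, every edge of $G$ lies at distance less than $k$ from some edge of $M$. For a fixed $e=uv\in M$, the edges blocked by $e$ are exactly those with at least one endpoint in the ball $B_{k-1}(\{u,v\}):=\{z\in V(G):d_G(z,\{u,v\})\le k-1\}$. A short breadth-first count (from each of $u,v$, one has at most $(\Delta-1)^i$ new vertices at distance $i$) bounds $|B_{k-1}(\{u,v\})|$ by roughly $2\Delta^{k-1}$, and multiplying by $\Delta$ to account for the edges at each such vertex bounds the number of blocked edges by $2\Delta^k$. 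Therefore $e(G)\le|M|\cdot 2\Delta^k$, which yields the first inequality; the second follows from $e(G)\ge n/2$, which holds because $\delta(G)\ge 1$.

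There is really no obstacle here: the entire proof reduces to the two greedy arguments above, and the only thing worth double-checking is the ball-counting estimate in (II), which is a routine summation of a geometric series.
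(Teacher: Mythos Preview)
Your proposal is correct and follows essentially the same greedy approach as the paper, which simply notes that one greedily selects a vertex or edge, deletes everything too close to it, and repeats. Your ball-counting estimate $|B_{k-1}(\{u,v\})|\le 2\sum_{i=0}^{k-1}(\Delta-1)^i\le 2\Delta^{k-1}$ is exact rather than ``rough'' (an easy induction on $k$ verifies $\sum_{i=0}^{k-1}(\Delta-1)^i\le \Delta^{k-1}$ for $\Delta\ge 2$), so the bound $2\Delta^k$ on the number of blocked edges holds as stated.
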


The next result guarantees a subtree $T(y)$ of roughly prescribed order inside a rooted tree $(T,x)$ such that the distance between $T(y)$ and $x$ is not too small. We omit the proof as it is easy.

\begin{proposition}\label{prop: tree 2 parts}
Suppose $n, \Delta,k \in \N$ and $1/n\leq \alpha\leq  \Delta^{-k}/2$.
Suppose $(T,x)$ is a rooted tree on $n$ vertices with $\Delta(T)\leq \Delta$. 
Then there exists a vertex $y\in V(T)$ such that the subtree $T(y)$ of $T$ satisfies
\begin{enumerate}[label=(\Roman*)]
	\item $\alpha n \leq |T(y)|\leq \alpha \Delta n $, and
	\item the distance between $x$ and $y$ is at least $k$.
\end{enumerate}
\end{proposition}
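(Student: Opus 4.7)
The plan is to construct $y$ by greedily descending from the root along a child with the largest subtree, and stop the descent as soon as the subtree size drops to at most $\alpha\Delta n$.

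More concretely, I would set $y_0:=x$ and, for $i\geq 0$, let $y_{i+1}$ be a child of $y_i$ (in the rooting at $x$) for which $|T(y_{i+1})|$ is maximum; I stop once we reach the first index $i^\star$ with $|T(y_{i^\star})|\leq \alpha\Delta n$, and then set $y:=y_{i^\star}$. Since $y_i$ has at most $\Delta$ children and $|T(y_i)|=1+\sum_{v\text{ child of }y_i}|T(v)|$, the maximum child $y_{i+1}$ satisfies
\begin{equation*}
|T(y_{i+1})|\geq \frac{|T(y_i)|-1}{\Delta}.
\end{equation*}
Note that as long as $|T(y_i)|>\alpha\Delta n\geq 2$ (using $1/n\ll\alpha$), the vertex $y_i$ is not a leaf, so the descent can continue and $i^\star$ is well defined.

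The upper bound in (I) is immediate from the stopping rule. For the lower bound, at step $i^\star-1$ we still have $|T(y_{i^\star-1})|>\alpha\Delta n$, so the displayed inequality gives
\begin{equation*}
|T(y_{i^\star})|\geq \frac{|T(y_{i^\star-1})|-1}{\Delta}>\alpha n-\frac{1}{\Delta},
\end{equation*}
which yields $|T(y_{i^\star})|\geq \alpha n$ since $|T(y_{i^\star})|$ is an integer and $\alpha n$ is large (using $1/n\ll\alpha$). For (II) I would iterate the recursion: setting $c:=1/(\Delta-1)$, one checks $|T(y_i)|+c\geq (|T(y_{i-1})|+c)/\Delta$, and hence
\begin{equation*}
|T(y_i)|\geq \frac{n}{\Delta^i}-\frac{1}{\Delta-1}.
\end{equation*}
Applying this at $i=k-1$ together with the hypothesis $\alpha\leq \Delta^{-k}/2$ gives
\begin{equation*}
|T(y_{k-1})|\geq \frac{n}{\Delta^{k-1}}-\frac{1}{\Delta-1}\geq 2\alpha\Delta n-\frac{1}{\Delta-1}>\alpha\Delta n,
\end{equation*}
so the descent has not yet stopped at step $k-1$, i.e.\ $i^\star\geq k$. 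Since the distance of $y_{i^\star}$ from $x$ is exactly $i^\star$ by construction, this proves (II).

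There is no real obstacle here; the only care required is to keep track of the additive $-1$ in the recursion $|T(y_{i+1})|\geq (|T(y_i)|-1)/\Delta$ so that the hypothesis $\alpha\leq\Delta^{-k}/2$ is comfortably enough to carry the descent $k$ steps without undershooting the lower bound $\alpha n$.
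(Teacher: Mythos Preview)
Your proof is correct and follows essentially the same approach as the paper: greedily descend from the root along a child with maximum subtree size and stop once the subtree drops below $\alpha\Delta n$. The paper's (commented-out) proof is terser and simply uses $|T(u')|\geq |T(u)|/\Delta$ without tracking the additive $-1$, relying on the standing convention of ignoring floors/ceilings; your more careful bookkeeping with $c=1/(\Delta-1)$ reaches the same conclusion.
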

Note that the definition of $T(y)$ implies that $T-V(T(y))$ is always a tree.
\COMMENT{
Let $x$ be the root of $T$. Note that for $u\in V(T)$, there exists a child $u'$ of $u$ such that $|T(u)|/\Delta \leq |T(u')| < |T(u)|$. 
Let $x=u_0$, and for each $u_i$ we choose $u_{i+1}$ such that $|T(u_i)|/\Delta \leq |T(u_{i+1})| < |T(u_{i})|$. 
Then there exists $t$ such that $T(u_{t})$ satisfies (I).  
Since $\alpha \leq \Delta^{-k}/2$, $t\geq k$. Thus we have (II).
}
\begin{proposition}\label{prop: taking subtree collection}
Suppose $\Delta,n\in \N$ with $1/n \ll \beta \ll \alpha\leq 1/\Delta$ and $c\in [2]$. 
Let $\cF$ be a collection of forests such that
\begin{enumerate}[label=(\roman*)]
\item each $F\in \cF$ consists of two components $K_F^1, K_F^2$,
\item $K_F^i$ has a root $x_F^i$ for every $i\in [2]$ and every $F\in \cF$,
\item $|K_F^1|, |K_F^2|\geq \alpha n$, and
\item $|F|\leq n$ and $\Delta(F)\leq \Delta$.
\end{enumerate}
Then for every $F\in \cF$, there exists a subforest $T_F$ of $F$ such that 
\begin{enumerate}[label=(\Roman*)]
\item $T_F$ consists of $c$ components $T_F^1,\dots,T_F^c$, 
\item for each $i\in [c]$, we have $T_F^i=K^i_F(y_F^i)$ for some $y_F^i\in V(K_F^i)$, 
in particular, $F-V(T_F)$ is a forest consisting of two components,
\item $T_F$ has distance at least $5$ from $\{x_F^1, x_F^2\}$,
\item $\Delta^{-1} \beta n/2 \leq |T_F^{i}|\leq \Delta\beta n$ for all $F\in \cF$ and $i\in [c]$, and
\item $\sum_{F\in \cF} e(T_F) = \beta  n |\cF| \pm n$.
\end{enumerate}
\end{proposition}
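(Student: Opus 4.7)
The plan is to build $T_F$ component-by-component by applying Proposition~\ref{prop: tree 2 parts} separately to each rooted component $(K_F^i, x_F^i)$ for $i\in[c]$. Taking $\alpha^* := \beta n/(c\Delta|K_F^i|)$ and $k=5$, the hierarchy $\beta\ll\alpha\leq 1/\Delta$ together with $|K_F^i|\geq\alpha n$ ensures the hypothesis $\alpha^*\leq\Delta^{-5}/2$ is satisfied, and the proposition yields a vertex $y_F^i\in V(K_F^i)$ at distance at least $5$ from $x_F^i$ with
\[
\frac{\beta n}{c\Delta}\leq |K_F^i(y_F^i)|\leq \frac{\beta n}{c}.
\]
Set $T_F^i := K_F^i(y_F^i)$ and $T_F := T_F^1\cup\dots\cup T_F^c$.

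Conditions (I)--(IV) then follow essentially from the construction. Condition (I) is immediate. For (II), each $T_F^i$ is a bottom subtree of $K_F^i$, so $K_F^i - V(T_F^i)$ is itself a tree containing $x_F^i$, and hence $F - V(T_F)$ is a forest with exactly two components. Condition (III) is the distance guarantee of Proposition~\ref{prop: tree 2 parts}, combined with the observation that $T_F^i \subseteq K_F^i$ lies in a different component of $F$ from $x_F^{3-i}$, making the distance to $x_F^{3-i}$ infinite. For (IV), since $c\leq 2$, we have $\beta n/(c\Delta)\geq \Delta^{-1}\beta n/2$ and $\beta n/c\leq \Delta\beta n$.

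The main obstacle is condition (V), which demands near-exact additive precision $\pm n$ on the total, whereas Proposition~\ref{prop: tree 2 parts} only pins down $|T_F^i|$ to within a multiplicative factor of $\Delta$. To resolve this, I would process $\cF$ in some order $F_1,\dots,F_m$ and, at each stage $j$, refine the choice of $y_{F_j}^i$ within the descent used in the proof of Proposition~\ref{prop: tree 2 parts}: instead of always following the child with the largest subtree, one may select a child with a smaller subtree to tune $|T_{F_j}^i|$ so that the running error $\sum_{l\leq j}e(T_{F_l}) - j\beta n$ is reduced at each step. Since the subtree sizes encountered along such a descent drop by a factor of at most $\Delta$ per step, the set of achievable values of $|T_{F_j}^i|$ is dense enough on a multiplicative scale to absorb the accumulated discrepancy and keep the global error bounded by $n$.
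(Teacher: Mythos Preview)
Your treatment of (I)--(IV) via a single application of Proposition~\ref{prop: tree 2 parts} with $\alpha^*=\beta n/(c\Delta|K_F^i|)$ is correct and essentially what the paper does. The gap is entirely in (V). With that choice of $\alpha^*$ you obtain $|T_F^i|\le \beta n/c$, hence $e(T_F)\le \beta n$ for \emph{every} $F$; so $\sum_F e(T_F)\le \beta n|\cF|$ regardless of how you refine, and you have no mechanism to push the sum upward to the target. Your specific suggestion to ``select a child with a smaller subtree'' only moves sizes further down. To correct a negative running error you would need the option of a subforest with $e(T_F)\ge \beta n$, which your single invocation does not provide. Moreover, the set $\{|T(v)|:v\in K_F^i\}$ is in general only multiplicatively dense (for a complete binary tree it is $\{2^j-1\}$), so ``dense on a multiplicative scale'' does not yield the additive control needed to keep the global error within $n$.

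The paper resolves (V) by applying Proposition~\ref{prop: tree 2 parts} \emph{twice} to each component $K_F^j$: once with $\alpha=c^{-1}\Delta^{-1}\beta n/|K_F^j|$ to obtain a ``small'' subtree $T_F'^{\,j}$ of size in $[c^{-1}\Delta^{-1}\beta n,\,c^{-1}\beta n]$, and once with $\alpha=c^{-1}\beta n/|K_F^j|$ to obtain a ``large'' subtree $\hat T_F^{\,j}$ of size in $[c^{-1}\beta n,\,c^{-1}\Delta\beta n]$. Both resulting candidates $T'_F:=\bigcup_j T_F'^{\,j}$ and $\hat T_F:=\bigcup_j \hat T_F^{\,j}$ satisfy (I)--(IV), and (up to a negligible $-c$) one has $e(T'_F)\le \beta n\le e(\hat T_F)$. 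Now order $\cF=\{F_1,\dots,F_m\}$ and set
\[
S_i:=\sum_{j\le i}e(\hat T_{F_j})+\sum_{j>i}e(T'_{F_j}).
\]
Then $S_0\le \beta n|\cF|\le S_m$ and $0\le S_{i+1}-S_i\le n$ for each $i$, so by discrete intermediate value some $S_t$ lies within $n$ of $\beta n|\cF|$. Taking $T_{F_j}:=\hat T_{F_j}$ for $j\le t$ and $T_{F_j}:=T'_{F_j}$ for $j>t$ gives (V). This is the missing idea; your adaptive-descent sketch can be made to work only once you realise that two different parameter choices in Proposition~\ref{prop: tree 2 parts} are needed to straddle the target.
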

\begin{proof}
For each $F\in \cF$ and $j\in [c]$, 
we apply Proposition~\ref{prop: tree 2 parts} to $K^j_F$ to obtain 
vertices $y_F'^j, \hat{y}_F^j\in V(K_F^j)$ and subtrees $T_F'^j=K_F^j(y_F'^j)$ and $\hat{T}_F^j=K_F^j(\hat{y}_F^j)$
such that
\begin{itemize}
\item[(1)] $c^{-1}\Delta^{-1}\beta n \leq |T'^j_{F}|\leq c^{-1}\beta n$ and $c^{-1}\beta n \leq |\hat{T}^j_{F}|\leq c^{-1}\Delta\beta n$, and
\item[(2)] both $T'^j_F$ and $\hat{T}^j_F$ have distance at least $5$ from $x^j_F$.
\end{itemize} 
Let $T'_F:= \bigcup_{j=1}^{c} T'^j_F$ and $\hat{T}_F:=\bigcup_{j=1}^{c}\hat{T}^j_F$.
We write $\cF=\{F_1,\dots, F_{|\cF|}\}$ and consider $S_i:= \sum_{j=1}^{i} e(\hat{T}_{F_j}) + \sum_{j=i+1}^{|\cF|} e(T'_{F_j})$. Then $0\leq S_{i+1}-S_i = e(\hat{T}_{F_{i+1}})- e(T'_{F_{i+1}}) \leq n$ and $S_0\leq \beta n|\cF| \leq S_{|\cF|}$ by (1).
Thus there exists $0\leq t\leq |\cF|$ such that $S_t = \beta n |\cF|\pm n$. 
Let 
$$
T_{F_i}:= \left \{ \begin{array}{ll}
\hat{T}_{F_i} & \text{ if } i\leq t, \\
T'_{F_i} & \text{ if } i\geq t+1.
\end{array}\right.
$$
Then $\sum_{F\in \cF} e(T_F) = S_t = \beta  n |\cF| \pm n$, so we obtain (V).
Conditions (I)--(IV) follow from~(1) and~(2).
\end{proof}

\subsection{Trees in a cycle blow-up}
In this subsection, we find a near-optimal packing of a set $\cF$ of bounded degree forests into an $\epsilon$-regular cycle blow-up. The first step is to find a suitable embedding of a single tree (see Lemma~\ref{lem: tree in a cycle}). To analyze the corresponding embedding process, we consider a suitable partition into subtrees, which is given by Proposition~\ref{prop: tree decomp}.

\begin{proposition}\label{prop: tree decomp}
Suppose $n,\Delta \in \N\sm \{1\}$ and $n\geq t\geq 1$.
Then for any rooted tree $(T,r)$ on $n$ vertices with $\Delta(T)\leq \Delta$,
there exists a collection $\cS$ of pairwise vertex-disjoint rooted subtrees 
such that 
\begin{enumerate}[label=(\Roman*)]
	\item $S\subseteq T(s)$ for every $(S,s)\in \cS$,
	\item $t \leq |S|\leq 2\Delta t$ for every $(S,s)\in \cS$, and
	\item $\bigcup_{(S,s)\in\cS}V(S)=V(T)$.
\end{enumerate}
\end{proposition}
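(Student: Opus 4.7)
My plan is to construct $\cS$ by a greedy bottom-up procedure. Root $T$ at $r$ and process the vertices in post-order, so that children are handled before their parent. For every vertex $v$ I maintain its \emph{residual rooted subtree} $R(v)$, defined recursively by $R(v)=\{v\}\cup\bigcup_{u}R(u)$, where the union ranges over the children $u$ of $v$ that have not yet been cut off. When processing $v\neq r$, if $|R(v)|\geq t$ then I add $(R(v),v)$ to $\cS$ and declare $v$ cut; otherwise I proceed. Each $(R(v),v)$ produced this way is a rooted subtree of $T(v)$, so condition (I) is immediate.

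Size control comes from the cut criterion itself: when $v\neq r$ is cut, each of its uncut children $u$ satisfies $|R(u)|<t$ (else $u$ would have been cut earlier). Since $v\neq r$ has at most $\Delta-1$ children in the rooted tree, this gives
\[
  t \;\leq\; |R(v)| \;\leq\; 1+(\Delta-1)(t-1) \;\leq\; \Delta t \;\leq\; 2\Delta t,
\]
so condition (II) holds for every such $(R(v),v)$. After the traversal the only vertices not yet assigned are those of the residue $R(r)$, and the same computation (now with $r$ having at most $\Delta$ children) yields $|R(r)|\leq 1+\Delta(t-1)\leq 2\Delta t$. If $|R(r)|\geq t$, I simply add $(R(r),r)$ to $\cS$ and stop.

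The subtle case is $|R(r)|<t$, which I handle by merging $R(r)$ into a suitable element of $\cS$. Since $n\geq t>|R(r)|$ at least one cut has been made, and tracing any path in $T$ from $r$ to a cut vertex and stopping at the first cut vertex encountered exhibits some $(S_0,s_0)\in\cS$ whose root's parent in $T$ lies in $R(r)$. Replacing $(S_0,s_0)$ by $(R(r)\cup S_0,r)$ yields a rooted subtree of $T=T(r)$ of size between $t+1$ and $(t-1)+\Delta t\leq 2\Delta t$, so conditions (I) and (II) are preserved. Vertex-disjointness and the coverage condition (III) then follow immediately, as every vertex of $T$ is placed into exactly one element of $\cS$ at the moment it is cut, at the root step, or at the merging step. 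The only slightly delicate point will be rigorously producing the mergeable $(S_0,s_0)$, but it reduces to the single observation above about paths crossing the boundary of $R(r)$.
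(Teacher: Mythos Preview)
Your proof is correct and follows essentially the same greedy bottom-up idea as the paper: the paper's induction repeatedly peels off $T(y)$ for $y$ at maximal distance from $r$ with $|T(y)|\geq t$, which is exactly your post-order cut rule, and its base case $t\leq n\leq 2\Delta t$ plays the same role as your final merge step for a small residue $R(r)$. The only cosmetic difference is that the paper's inductive base case absorbs the root piece directly, whereas you explicitly glue it to an adjacent cut subtree; both exploit the factor $2$ in the bound $2\Delta t$ solely for this last piece.
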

\begin{proof}
We use induction on $n$ for fixed $t$. 
If $t\leq n \leq 2\Delta t$, then $\cS:=\{ (T,r)\}$ is as desired. 
So let $m>2\Delta t$ and assume the proposition holds for all $n\leq m-1$. 
Consider a rooted tree $(T,r)$ on $m$ vertices.  
Choose $y$ at maximal distance from $r$ in $T$ subject to $|T(y)|\geq t$.
This implies that $t\leq |T(y)|\leq \Delta t$. 
Consider $T':= T- V(T(y))$. 
Since $t\leq m-\Delta t \leq |T'|\leq m-1$, 
by induction, we obtain a collection $\cS'$ of pairwise vertex-disjoint rooted subtrees satisfying (I)--(III) with $T'$ playing the role of $T$. 
Then it is clear that $\cS:=\cS'\cup \{(T(y),y)\}$ satisfies (I)--(III). 
Thus the proposition holds for all $n\geq t$.
\end{proof}

Recall from Section~\ref{sec: preliminaries} that we write $C(\ell,m)$ for an $\ell$-cycle $m$-blow-up. The next result describes how to embed a bounded-degree tree into a cycle blow-up in a uniform way, i.e. we embed approximately the same number of edges into every ``blown-up edge'' of $C(m,\ell)$. 
We will achieve this by considering a symmetric random walk on the cycle of length $\ell$.
Note that for Lemma~\ref{lem: tree in a cycle} it is crucial that $\ell$ is odd.

\begin{lemma}\label{lem: tree in a cycle}
Suppose $n,\Delta,\ell \in \N$ with $1/n\ll 1/\Delta, 1/\ell<1,$ where $\ell$ is odd. Let $m:= \frac{n}{\ell} + \frac{n}{\log^2 n}$.
Let $T$ be a tree on $n$ vertices with $\Delta(T)\leq \Delta$ and let $G:=C(\ell,m)$. 
Then there is an embedding $\phi$ of $T$ into $G$ such that $\phi(T)$ contains at most $m$ edges between any two clusters of $G$.
\end{lemma}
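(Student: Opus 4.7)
The plan is to produce a random graph homomorphism $c\colon T\to C_\ell$ by means of a symmetric random walk on the odd cycle $C_\ell$, and then realise this homomorphism as an injective embedding into $G$. With positive probability the resulting $\phi$ places at most $m$ vertices in each cluster of $G$ and at most $m$ edges between any two consecutive clusters.

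\emph{Random homomorphism.} Root $T$ at an arbitrary vertex $r$ and set $c(r):=1$. For every non-root vertex $v$ with parent $u$ in the rooted tree, set $c(v):=c(u)+\xi_v$ (indices modulo $\ell$), where the $\xi_v\in\{+1,-1\}$ are i.i.d.\ uniform. By construction $c$ is a graph homomorphism $T\to C_\ell$, so it suffices to inject each fibre $c^{-1}(i)$ into the cluster $V_i$ and to check the two cardinality bounds above.

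\emph{Marginals.} Since $\ell$ is odd, Lemma~\ref{lem: random walk} gives a constant $\gamma\in(0,1)$ and $t_0^*\in\N$ such that, for every $v\in V(T)$ at depth $d\ge t_0^*$ (within any walk segment started at some ancestor), $\Pro[c(v)=i\mid c(\text{start})]=(1\pm\gamma^d)/\ell$ for all $i\in[\ell]$. Choose $t_0:=\lceil 3\log\log n/\log(1/\gamma)\rceil$ so that $\gamma^{t_0}\le 1/\log^3 n$ while $\Delta^{t_0}$ is polylogarithmic in $n$.

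\emph{Decouple by a subtree decomposition.} Apply Proposition~\ref{prop: tree decomp} with parameter $t:=\log^4 n$ to obtain pairwise vertex-disjoint rooted subtrees $(S_1,s_1),\dots,(S_k,s_k)$ partitioning $V(T)$, with $t\le|S_j|\le 2\Delta t$ and $k\le n/t$. Conditional on the values $c(s_1),\dots,c(s_k)$, the walks inside distinct subtrees are mutually independent. Moreover, by the previous step, for any $v\in S_j$ at depth $d\ge t_0$ below $s_j$ we have $\Pro[c(v)=i\mid c(s_j)]=(1\pm\gamma^{t_0})/\ell$, and there are at most $\Delta^{t_0}$ shallow exceptions in each $S_j$.

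\emph{Conditional concentration.} For each $i\in[\ell]$ define
\begin{align*}
 X_i^{\mathrm{vert}}&:=|\{v\in V(T):c(v)=i\}|, \\
 X_i^{\mathrm{edge}}&:=|\{uv\in E(T):\{c(u),c(v)\}=\{i,i+1\}\}|.
\end{align*}
Both quantities decompose as sums of per-subtree contributions, up to the at most $k-1$ edges of $T$ joining distinct subtrees, whose total contribution is at most $k=o(n/\log^2 n)$. Each per-subtree contribution is bounded by $2\Delta t$, and, conditional on $c(s_1),\dots,c(s_k)$, the contributions are independent. Applying Azuma's inequality (Theorem~\ref{Azuma}) to the exposure martingale that reveals one subtree at a time yields, for $\star\in\{\mathrm{vert},\mathrm{edge}\}$,
\begin{align*}
\Pro\!\left[\bigl|X_i^{\star}-\Exp[X_i^{\star}\mid c(s_1),\dots,c(s_k)]\bigr|\ge \tfrac{n}{3\log^2 n}\,\Big|\,c(s_1),\dots,c(s_k)\right] \le 2e^{-n/\log^{10} n}.
\end{align*}
The marginal estimate from the previous step shows that, for every realisation of $c(s_1),\dots,c(s_k)$, the conditional expectation equals $n/\ell\pm n/(3\log^2 n)$. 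A union bound over $i\in[\ell]$ therefore gives, with positive probability, $X_i^{\mathrm{vert}},X_i^{\mathrm{edge}}\le n/\ell+n/\log^2 n=m$ for every $i$.

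\emph{Realising the embedding.} Fix such an outcome of $c$. For each $i\in[\ell]$, inject the $X_i^{\mathrm{vert}}\le m=|V_i|$ vertices of $c^{-1}(i)$ arbitrarily into $V_i$. Since $c$ is a homomorphism, the resulting map $\phi$ embeds $T$ into $G$, and $\phi(T)$ contains at most $X_i^{\mathrm{edge}}\le m$ edges between any two consecutive clusters.

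\emph{Main obstacle.} The main difficulty is the concentration step: the random bits $\xi_v$ are not Lipschitz with respect to the quantities $X_i^{\star}$, since flipping $\xi_v$ at a shallow $v$ reroutes every $c(w)$ with $w$ in the subtree of $T$ rooted at $v$. Thus a naive bit-by-bit exposure martingale fails. The remedy is the subtree decomposition from Proposition~\ref{prop: tree decomp}: conditioning on the intermediate cluster positions $c(s_j)$ decouples the walks within different subtrees and reduces the problem to a sum of independent contributions of controlled size, to which Azuma's inequality applies cleanly.
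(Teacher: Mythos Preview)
Your overall strategy—random homomorphism to $C_\ell$ via a random walk, subtree decomposition from Proposition~\ref{prop: tree decomp}, then concentration—is exactly the paper's approach. The difference lies in the concentration step, and there your argument has a genuine gap.

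You condition on \emph{all} root positions $c(s_1),\dots,c(s_k)$ simultaneously. Your claim that the per-subtree contributions are then mutually independent is correct (one can check that, after integrating out the inter-subtree edges, the conditional law of $(\xi_e)_{e\in E(S_j)}$ factorises over $j$). But your claim that the conditional expectation equals $n/\ell\pm n/(3\log^2 n)$ \emph{for every realisation} of $c(s_1),\dots,c(s_k)$ does not follow from the marginal estimate $\Pro[c(v)=i\mid c(s_j)]=(1\pm\gamma^{t_0})/\ell$. The extra conditioning matters: if $S_{j'}$ is a child of $S_j$ in the tree of subtrees, then fixing $c(s_{j'})$ forces $c(p_{j'})\in\{c(s_{j'})\pm 1\}$, where $p_{j'}\in S_j$ is the attachment point. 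This is a bridge-type constraint on the walk inside $S_j$, and for vertices $v$ whose path from $s_j$ shares edges with the path to some $p_{j'}$, the conditional probability $\Pro[c(v)=i\mid c(s_1),\dots,c(s_k)]$ need not be close to $1/\ell$. You would need a separate argument (excluding vertices near each $p_{j'}$ as well as near each $s_j$, and a mixing argument for the bridge), which you do not give.

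The paper sidesteps this entirely: it orders $s_1,\dots,s_k$ by distance from the root and runs the exposure martingale on $X_1,\dots,X_k$, conditioning only on the \emph{past}. Since for $j\le i$ the vertex $s_j$ is never a strict descendant of $s_{i'}$ when $i'\ge i$, the walk from $s_{i'}$ into $S_{i'}$ uses edges disjoint from those determining $\phi(s_1),\dots,\phi(s_i)$; hence no bridge constraint arises and the marginal estimate applies directly. A secondary issue: your choice $t=\log^4 n$ need not dominate $\Delta^{t_0}$, since $\Delta^{t_0}=(\log n)^{3\log\Delta/\log(1/\gamma)}$ can be an arbitrarily large power of $\log n$ depending on $\Delta,\ell$; the paper's choice $t=n^{1-\delta}$ (with shallow depth $(\log_\Delta n)/2$) avoids this calibration problem.
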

\begin{proof}
Choose an additional constant $\delta$ such that $1/n \ll \delta \ll 1/\ell$.
Pick $u\in V(T)$ and apply Proposition~\ref{prop: tree decomp} to $(T,u)$ with $n^{1-\delta}$ playing the role of $t$ to obtain a collection $\cS=\{(S_1,s_1),\dots, (S_{k},s_{k})\}$ of vertex-disjoint rooted subtrees such that 
\begin{itemize}
	\item $S_i\subseteq T(s_i)$ for every $i\in [k]$,
	\item $n^{\delta}/(2\Delta)\leq k\leq n^{\delta}$, 
	\item $|S_i|\leq 2\Delta n^{1-\delta}$, and
	\item $\bigcup_{i=1}^kV(S_i)=V(T)$.
\end{itemize}
We may assume that the $(S_i,s_i)$ are labelled in such a way that the distance of $s_1,\dots, s_k$ from the root $u$ of $T$ is non-decreasing. We will assign the vertices of $T$ one by one to some cluster of $G$ in such a way that the vertices of $S_i$ are assigned before the vertices of $S_{i+1}$, and such that within $S_i$ we assign the vertices in breadth-first order, starting with $s_i$. 
To choose our assignments, we consider the following random process. For every edge $e$ of $T$ we pick a label $X_e \in \{-1,1\}$ independently and uniformly at random. 
We assign $s_1$ to a cluster of $G$ uniformly at random. 
Assume that we have already assigned some vertices of $T$ to clusters of $G$ and next wish to assign $x$ to a cluster. Let $V_i$ be the cluster assigned to the ancestor $y$ of $x$. Assign $x$ to $V_{i-1}$ if $X_{xy}=-1$ and to $V_{i+1}$ otherwise. 
Note that this assignment of vertices of $T$ to clusters induces an assignment of the edges of $T$ to the pairs $(V_i,V_{i+1})$ of clusters. 
We will show that with positive probability for all $i\in [\ell]$ both the number of vertices of $T$ assigned to $V_i$ and the number of edges of $T$ assigned to the pair $(V_i,V_{i+1})$ is at most $m$. 
This then implies that the assignment corresponds to the required embedding of $T$ into $G$.

We first show that with probability at least $1-n^{-1}$ the number of vertices assigned to $V_r$ is at most $m$ for fixed $r\in[\ell]$. 
(Then a union bound over all $r\in [\ell]$ completes the proof for the statement about the vertices.)
We denote by $\phi: V(T)\rightarrow \{V_1,\dots, V_{\ell}\}$ the assignment of the vertices produced by the random process described above. 
For each $i\in [k]$, let $S'_i:= S_i- V(T(s_i,(\log_{\Delta}n)/2))$. 
Thus $S'_i$ is a subforest of $S_i$ and $|S_i|- |S'_i|\leq \Delta^{(\log_{\Delta}n)/2}= n^{1/2}$ and 
\begin{align}\label{eq: sum S'}
\sum_{i=1}^{k} |S'_i| = n \pm k n^{1/2} = n \pm n^{1/2+\delta}.
\end{align} 
Let $X$ be the total number of vertices assigned to $V_r$ and for each $i\in [k]$ define $X_i$ to be the number of vertices of $S'_i$ assigned to $V_r$. Note that
\begin{align}\label{eq: X Si inequality}
X\leq \sum_{i=1}^{k} |S_i\setminus S'_i|+ \sum_{i=1}^{k}X_i\leq n^{1/2+\delta} + \sum_{i=1}^{k}X_i.
\end{align}

Consider the exposure martingale $Y_i:= \mathbb{E}[\sum_{j=1}^{k} X_j \mid X_1,\dots, X_i]$. 
Thus $Y_0=\Exp[\sum_{j=1}^{k} X_j ]=\sum_{j=1}^{k} |S'_j|/\ell$.
Given any assignment $\phi(s_1),\ldots,\phi(s_i)$ of $s_1,\ldots,s_i$, 
by Lemma~\ref{lem: random walk}, 
there exists $0<\gamma=\gamma(\ell) <1$ such that for each $v\in \bigcup_{i'=i}^{k} S'_{i'}$, 
$$\mathbb{P}[\phi(v)=V_r \mid \phi(s_1),\ldots,\phi(s_i)] 
= \frac{1 \pm \gamma^{(\log_{\Delta}n)/2}}{\ell}.$$
Note that since $\delta \ll 1/\ell$ and $\gamma=\gamma(\ell)$, we have $\gamma^{(\log_{\Delta}n)/2} = n^{\frac{\log{\gamma}}{2\log{\Delta}}}\leq n^{-\delta}$.
Hence for any $i,j\in [k]$ with $i<j$,
$$
\mathbb{E}[X_j\mid  X_1,\dots, X_i] = \frac{(1 \pm n^{-\delta})|S'_j|}{\ell}.
$$
Let $i\in [k]$ and $x_1,\dots, x_i, x'_i\in \N\cup \{0\}$ be
such that $x_j\leq |S_j'|$ for each $j\in [i]$ and $x_i'\leq |S_i'|$.
Let $\cE'$ be the event that $X_1=x_1,\dots, X_{i}=x_i$ occurs, and let $\cE''$ be the event that $X_1=x_1,\dots, X_{i-1}=x_{i-1}, X_i=x'_i$.
Then
\begin{align*}
 \left|\mathbb{E}\left[\sum_{j=1}^{k}X_j \mid \cE'\right] - \mathbb{E}\left[\sum_{j=1}^{k}X_j\mid \cE''\right]\right| 
&\leq |x_i-x'_i| + \sum_{j=i+1}^{k} \left|\mathbb{E}[X_j \mid \cE'] - \mathbb{E}[X_j \mid \cE''] \right| \\
 &\leq |S'_i| + \sum_{j=i+1}^{k} \frac{1}{\ell} \cdot \left|(1 \pm n^{-\delta})|S'_j| - (1 \pm n^{-\delta})|S'_j|\right| \\
&\leq 2 \Delta n^{1-\delta} +  n^{-\delta} \sum_{j=i+1}^{k} |S'_j|
 \stackrel{\eqref{eq: sum S'}}{\leq} 4\Delta n^{1-\delta}. 
 \end{align*}
Therefore, $Y_i$ is a $4\Delta n^{1-\delta}$-Lipschitz martingale.
Thus by Lemma~\ref{Azuma} and \eqref{eq: sum S'}, 
we conclude that
\begin{align*}
\Pro\left[\sum_{i=1}^{k} X_i \neq  \frac{n}{\ell} \pm \frac{n}{2\log^2{n}} \right]
\leq   e^{-\frac{ (n/(3\log^2{n}))^{2} }{2k (4\Delta n^{1-\delta})^2}} \leq \frac{1}{n}.
\end{align*}
Together with \eqref{eq: X Si inequality}, this implies that with probability at least $1- n^{-1}$, the assignment $\phi$ uses at most $m$ vertices in $V_r$. 

In order to prove the corresponding statement involving the edges of $T$, 
observe that by Lemma~\ref{lem: random walk} for any edge $xy\in \bigcup_{i'=i}^k E(S'_{i'})$ such that $x$ is an ancestor of $y$, we have 
\begin{align*}
&\mathbb{P}[\phi(e) \in E(V_r,V_{r+1}) \mid \phi(s_1),\ldots,\phi(s_i) ] \\
=\,\, & \mathbb{P}[\phi(x) =V_r, X_{xy}=1 \mid \phi(s_1),\ldots,\phi(s_i) ] + \mathbb{P}[\phi(x) =V_{r+1}, X_{xy}=-1 \mid\phi(s_1),\ldots,\phi(s_i) ] \\ 
=\,\, & \frac{1 \pm \gamma^{(\log_{\Delta}n)/2}}{\ell}.
\end{align*}
Similar arguments as before lead to the desired statement.\COMMENT{The edges between different $S_i$ don't matter as there at at most $k\leq n^\delta$ of them.}
\end{proof}

The previous lemma shows that a tree can be embedded ``uniformly'' into a blow-up of an odd cycle. The next lemma allows us to combine such ``uniform'' embeddings of several trees and pack them together into a graph which is an internally regular blow-up of a cycle. 
The lemma is a special case of Lemma~7.1 in \cite{KKOT16}. 
Recall that a spanning subgraph $H$ of $C(\ell,m)$ is internally $k$-regular if $H[V_i,V_{i+1}]$ is $k$-regular for all $i\in [\ell]$ (where $V_1,\dots, V_\ell$ are the clusters of $C(\ell,m)$).

\begin{lemma}[\cite{KKOT16}] \label{packing into regular}
Suppose $n,\Delta,\ell,k,s\in \N$ with $1/n\ll 1/s \ll  1/\Delta$ and $1/n\ll 1/k$, and $\ell$ divides $n$. Suppose that $0<\xi<1$ is such that $s^{2/3} \leq \xi k$.
Let $G=C(\ell,n/\ell)$ and let $V_1,\dots, V_\ell$ be the clusters of $G$. 
Suppose that for each $j\in [s]$, 
the graph $L_j$ is a subgraph of $C(\ell,n/\ell)$ with clusters $X_1^j,\dots,X_\ell^j$ such that $\Delta(L_j)\leq \Delta$ and for each $i\in[\ell]$, we have
\begin{align}
\sum_{j=1}^{s} e(L_j[X_i^j,X_{i+1}^j]) = (1-3\xi\pm \xi)\frac{kn}{\ell}
\end{align}
where $X_{\ell+1}^j:=X_1^{j}$.
Then there exist an internally $k$-regular subgraph $H$ of $G$ and 
a function $\phi$ which packs $\{L_1,\ldots, L_s\}$ into $H$ such that $\phi(X^j_i)\sub V_i$.
Moreover, writing $J_{i}:=H[V_i,V_{i+1}]-\phi(L_1\cup\dots\cup L_s)$ for each $i\in[\ell]$, 
we have $\Delta(J_{i})\leq 4\xi k  +2s^{2/3}$. 
\end{lemma}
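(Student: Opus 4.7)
The plan is to pack the $L_j$'s into $G$ via a random embedding, analyse the degrees of the union $G^\ast := \bigcup_j \phi_j(L_j)$ using Azuma's inequality, and then complete $G^\ast$ to an internally $k$-regular subgraph $H$ of $G$ by adding a small number of remaining edges in each bipartite piece.

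Concretely, for each $j \in [s]$ and each $i \in [\ell]$, I would independently and uniformly at random pick an injection $\pi_j^i : X_i^j \to V_i$; since each $L_j$ is a subgraph of the cycle blow-up with clusters $X_1^j,\ldots,X_\ell^j$, this determines an embedding $\phi_j$ of $L_j$ into $G$. A nibble-style cleanup---removing and re-embedding the few edges of each $L_j$ whose images collide with previously placed edges---enforces exact edge-disjointness while perturbing only a negligible fraction of each $L_j$. For any fixed $v \in V_i$, the degree hypothesis $\sum_j e(L_j[X_i^j,X_{i+1}^j]) = (1-3\xi \pm \xi)kn/\ell$ combined with $|V_i| = n/\ell$ yields $\Exp[d_{G^\ast, V_{i+1}}(v)] = (1-3\xi \pm \xi)k$.

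Applying Azuma's inequality (Theorem~\ref{Azuma}) to a suitable exposure martingale---revealing the $\pi_j^i$'s one at a time, using that each step changes the degree at any fixed vertex by at most $\Delta$---together with a union bound over the $n$ vertices and $\ell$ bipartite pairs gives, with positive probability, that every $V_{i+1}$-degree in $G^\ast[V_i,V_{i+1}]$ lies within $s^{2/3}$ of its expectation (the hypothesis $1/s \ll 1/\Delta$ is what converts the raw $\Delta\sqrt{s\log n}$-type Azuma bound into $s^{2/3}$). In each piece $G[V_i,V_{i+1}]-G^\ast$, a Hall/K\"onig-type selection then produces a subgraph $J_i$ with $\Delta(J_i) \leq 4\xi k + 2s^{2/3}$ whose addition brings every vertex in $G^\ast[V_i,V_{i+1}]$ up to degree exactly $k$; setting $H := G^\ast \cup \bigcup_i J_i$ yields the required internally $k$-regular target, and $\phi := \bigcup_j \phi_j$ is the desired packing.

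The main obstacle is reconciling the edge-disjointness cleanup with the concentration: the naive cleanup in the first step can perturb the random embedding enough to spoil the Lipschitz property of the martingale, and hence worsen the bound coming from Azuma. Achieving the tight additive $s^{2/3}$ error---rather than a weaker polynomial error---requires a careful coupling of the random choices and is the technical heart of the packing machinery of \cite{KKOT16}, from which this lemma is cited as a special case.
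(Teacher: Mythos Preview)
The paper does not prove this lemma; it is quoted from \cite{KKOT16} as a special case of Lemma~7.1 there, so there is no in-paper argument to compare against. Nevertheless, your sketch has a genuine gap, and it is not where you locate it.

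The problem is the concentration step. With your independent random injections, the $V_{i+1}$-degree of a fixed $v\in V_i$ in $G^\ast$ is a sum of $s$ independent contributions, each bounded by $\Delta$, with mean $(1-3\xi\pm\xi)k$. Azuma (or Hoeffding) gives
\[
\Pro\bigl[\,|d_{G^\ast,V_{i+1}}(v)-\mu|>t\,\bigr]\le 2\exp\!\bigl(-t^2/(2s\Delta^2)\bigr),
\]
and a union bound over the $n$ vertices succeeds only for $t$ of order at least $\Delta\sqrt{2s\log n}$. Your parenthetical assertion that the hypothesis $1/s\ll 1/\Delta$ converts this into $t\le s^{2/3}$ would require $s^{1/3}\ge 2\Delta^2\log n$. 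But the hierarchy $1/n\ll 1/s$ means that $n$ is chosen \emph{after} $s$ and may exceed any fixed function of $s$; in particular $\log n$ can dwarf $s^{1/3}$, and the union bound fails outright. The difficulty you flag in your final paragraph---that the edge-disjointness cleanup perturbs the martingale---is secondary: the raw concentration is already insufficient in this parameter regime, before any cleanup is attempted.

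The structural reason is that for each fixed $j$, a random injection $\pi_j^i$ merely permutes the \emph{deterministic} degree sequence of $L_j$ on cluster $i$ over the $n/\ell$ vertices of $V_i$. Arranging $s$ such permuted sequences so that the pointwise sum is nearly constant across $n/\ell\gg s$ positions is a discrepancy-type statement rather than a concentration statement, and it requires a more structured embedding than a product of independent uniform injections followed by Azuma. That is what the machinery in \cite{KKOT16} supplies; your outline does not.
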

Theorem~\ref{thm: blow up} is a special case of Theorem~6.1 in \cite{KKOT16}. Theorem~\ref{thm: blow up} guarantees a near-optimal packing of the internally regular graphs given by Lemma~\ref{packing into regular} into a super-regular cycle blow-up. An important feature is that Theorem~\ref{thm: blow up} (iii) also allows for a small proportion of vertices to be embedded into a given ``target set''.
\begin{theorem}[\cite{KKOT16}]\label{thm: blow up}
Suppose $n,k,\ell,s \in \N$ with $1/n \ll  \epsilon \ll \alpha, d,d_0,1/k \leq 1$ and $1/n \ll 1/\ell$, and $\ell$ divides $n$. Suppose $s\leq \frac{d}{k\ell}(1-\alpha/2)n$ and the following hold.
\begin{enumerate}[label=(\roman*)]
\item $G$ is a $(\epsilon,d)$-super-regular $\ell$-cycle $\frac{n}{\ell}$-blow-up with clusters $V_1,\ldots, V_\ell$.
\item $\cH=\{H_1,\dots, H_{s}\}$, where each $H_i$ is an internally $k$-regular subgraph of $C(\ell,n/\ell)$
with clusters $X_1,\ldots,X_\ell$.
\item For all $j \in [s]$ and $i\in [\ell]$, 
there is a set $Y_i^j\subseteq X_i$ with $|Y_i^j|\leq \epsilon n/\ell$ and for each $y\in Y_i^j$, 
there is a set $A^j_{y}\subseteq V_i$ with $|A^j_y|\geq d_0n/\ell$. 
\end{enumerate}
Then there is a function $\phi$ packing $\cH$ into $G$ such that for all $j\in[s]$ and $i\in [\ell]$ the vertices of $H_j$ in $X_i$ are mapped to $V_i$ and such that $\phi(y)\in A^j_y$ for all $y\in Y^j_1\cup \dots\cup Y^j_{\ell}$.
\end{theorem}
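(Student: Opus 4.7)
The strategy is to pack $H_1, \ldots, H_s$ into $G$ iteratively, maintaining the invariant that the leftover $G_j := G - \bigcup_{j' < j} \phi_{j'}(H_{j'})$ remains a super-regular $\ell$-cycle $\frac{n}{\ell}$-blow-up of slightly reduced density $d_j := d - (j-1)k\ell/n$. The assumption $s \le \frac{d}{k\ell}(1-\alpha/2)n$ guarantees $d_j \ge \alpha d/2$ throughout, so the host is never too sparse. For each single-step embedding, since $H_j$ is internally $k$-regular with $\Delta(H_j) \le 2k$ and its vertex partition $(X_1, \ldots, X_\ell)$ matches the cluster partition of $G$, one can apply the Blow-up lemma with target sets (Theorem~\ref{blow up target sets}) to $G_j$ to obtain $\phi_j : H_j \to G_j$ satisfying $\phi_j(X_i) \subseteq V_i$ and $\phi_j(y) \in A_y^j$ for all $y \in \bigcup_i Y_i^j$. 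The conditions $|Y_i^j| \le \epsilon n/\ell$ and $|A_y^j| \ge d_0 n/\ell$ are precisely those required.

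The main obstacle is preserving super-regularity across all $s \sim dn/(k\ell)$ iterations. A deterministic application of the Blow-up lemma offers no control over which edges of $G_j$ are consumed, so the leftover bipartite pieces could degenerate after only a bounded number of rounds. The remedy is to make the embedding \emph{randomized}: by exploiting the random-ordering and random-target flexibility inherent in the Blow-up lemma's proof, one designs a random embedding of $H_j$ so that each edge of each bipartite piece $G_j[V_i, V_{i+1}]$ is used with probability approximately $k\ell/(d_j n)$, with the choices at different vertices of $H_j$ being nearly independent.

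One then applies Azuma's inequality (Lemma~\ref{Azuma}) to the exposure martingales tracking, for each vertex $v$ and each pair $(u,v)$ in consecutive clusters, the degree $d_{G_{j+1}[V_i, V_{i+1}]}(v)$ and the codegree $d_{G_{j+1}[V_i, V_{i+1}]}(u, v)$. Each such random variable is $O(1)$-Lipschitz in the $O(n)$ random embedding decisions, so it concentrates within $O(n^{2/3})$ of its expectation. A union bound over the $O(n^2)$ pairs and over $s \le n$ rounds shows that, with positive probability, the invariant propagates from $j$ to $j+1$: $G_{j+1}$ is $(\epsilon_{j+1}, d_{j+1})$-super-regular with $\epsilon_{j+1} \le \epsilon_j + n^{-1/3}$ and the claimed density drop. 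Iterating $s$ times yields the desired packing $\phi$, with the target-set condition $\phi(y) \in A^j_y$ satisfied at every step by construction.
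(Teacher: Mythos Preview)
The paper does not prove Theorem~\ref{thm: blow up}; it is quoted as a special case of Theorem~6.1 from~\cite{KKOT16}, with no argument given. So there is no ``paper's proof'' to compare against --- the result is imported wholesale. Your sketch is in the spirit of what~\cite{KKOT16} actually does (iterated randomized embedding with concentration to preserve regularity), but as written it has two genuine gaps.

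First, you cannot invoke Theorem~\ref{blow up target sets} at each step: that Blow-up lemma requires $\epsilon \ll 1/r$ where $r$ is the number of parts, but here the number of parts is $\ell$, and the hypotheses of Theorem~\ref{thm: blow up} only give $1/n \ll 1/\ell$ with no relation between $\epsilon$ and $1/\ell$. In the application (Lemma~\ref{lem: blow up advanced}) $\ell$ is of order $\Gamma$ with $1/\Gamma \ll \epsilon$, so $\ell$ is enormously larger than $1/\epsilon$. A Blow-up lemma valid when the number of clusters is unbounded relative to $1/\epsilon$ is itself a substantial result and is part of what~\cite{KKOT16} establishes; you cannot take it as a black box from Theorem~\ref{blow up target sets}.

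Second, your error accounting fails quantitatively. You propose $\epsilon_{j+1} \le \epsilon_j + n^{-1/3}$ and iterate $s$ times, but $s$ can be of order $dn/(k\ell)$, which is linear in $n$ when $k,\ell$ are constants. The accumulated error is then $\Theta(n^{2/3})$, not a small constant. The fact that each $H_j$ is internally $k$-regular means vertex degrees in the leftover drop \emph{exactly} by $k$ per round, so degrees are fine; but controlling codegrees (or, equivalently, densities between linear-size subsets) over $\Theta(n)$ rounds requires a more careful martingale over \emph{all} rounds simultaneously, together with a randomized embedding whose marginals are provably uniform --- this is the technical heart of~\cite{KKOT16} and is far from a routine Azuma application.
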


We deduce the next lemma from Lemma~\ref{lem: tree in a cycle}, Lemma \ref{packing into regular} and Theorem~\ref{thm: blow up}. It guarantees a near-optimal packing of a set of rooted forests into a cycle blow-up which is compatible with given embeddings of the roots into a ``root set'' $R$.

\begin{lemma}\label{lem: blow up advanced}
Suppose $n,\ell,\Delta \in \N$ with $1/n\ll 1/\ell \ll \epsilon \ll \alpha,\eta, 1/\Delta,d, d_0\leq 1$, and $\ell$ is odd and divides $n$. 
Suppose that $G$ is a graph, $\cF$ is a collection of forests, and $R$ is a set with $V(G)\cap R=\emptyset$ satisfying the following:
\begin{enumerate}[label=(a\arabic*)]
\item\label{item:A1} $G$ is an $(\epsilon,d)$-super-regular $\ell$-cycle $\frac{n}{\ell}$-blow-up with clusters $V_1,\ldots, V_\ell$,
\item\label{item:A2} $G'$ is a graph with $R\subseteq V(G')\subseteq V(G)\cup R$ such that $|N_{G'}(u)\cap V(G)|\geq d_0 n$ for each $u\in R$,

\item\label{item:A3} for every $F\in \cF$, we have $\eta n \leq |F| \leq (1-\eta)n$, $F$ has $c(F)$ components where $c(F)\leq 2\Delta$ and $\Delta(F)\leq \Delta$,
\item\label{item:A4} $e(\cF)= (1-2\alpha\pm \alpha)e(G)$,
\item\label{item:A5} for every $F\in \cF$, there is a set $r(F)=\{r^1_F,\dots, r^{c(F)}_F\}$ of vertices belonging to distinct components of $F$,
\item\label{item:A6} there is a set $U'\subseteq V(G)$ with $|U'|\leq n^{1/2}$ and 
for every $F\in \cF$, there is a set $X'_F \subseteq V(F)$ with $|X'_F|\leq n^{1/2}$, and
\item\label{item:A7} there is a function $\phi':\{r^c_F: c\in [c(F)], F\in \cF\}\to R$ such that $|\phi'^{-1}(v)|\leq \ell^{1/2}$ for every $v\in R$ and $\phi'(r^c_F)\neq \phi'(r^{c'}_F)$ whenever $c\neq c'$.
\end{enumerate}
Then there exists a function $\phi$ which is consistent with $\phi'$ and packs $\cF$ into $G \cup G'$ satisfying the following:
\begin{enumerate}[label=(A\arabic*)]
\item\label{item:B1} $\Delta( G-E(\phi(\cF)) ) \leq 10\alpha d n/\ell$,
\item\label{item:B2} $\phi(V(F)\setminus r(F)) \cap R =\emptyset$ for every $F\in \cF$, and
\item\label{item:B3} $\phi(X'_F) \cap U' =\emptyset$ for every $F\in \cF$.
\end{enumerate}
\end{lemma}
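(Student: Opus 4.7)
The plan is to reduce to Theorem~\ref{thm: blow up} via a two-stage process: first produce a ``uniform'' assignment of every forest's vertices to clusters using Lemma~\ref{lem: tree in a cycle}, then pack those assignments into an internally regular subgraph of $C(\ell,n/\ell)$ using Lemma~\ref{packing into regular}, and finally embed into $G$ using Theorem~\ref{thm: blow up} with target sets encoding~\ref{item:A7} and~\ref{item:A6}. Write $\tilde F := F - r(F)$; this is a forest whose components are the maximal subtrees of $F$ rooted at the children of the vertices in $r(F)$. The new roots (children of old roots) number at most $c(F)\Delta \le 2\Delta^2$ per forest.

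For each $F\in\cF$ and $c\in[c(F)]$, since $|N_{G'}(\phi'(r^c_F))\cap V(G)|\ge d_0 n$, by averaging there is a cluster $V_{i(F,c)}$ with $|N_{G'}(\phi'(r^c_F))\cap V_{i(F,c)}|\ge d_0 n/\ell$. I apply Lemma~\ref{lem: tree in a cycle} separately to each component of $\tilde F$ (equivalently, apply it to each tree component $F^c$ of $F$ with root $r^c_F$, and then translate the resulting cluster labelling so that $r^c_F$ is ``virtually'' placed in cluster $V_{i(F,c)}$; the children of $r^c_F$ then land in $V_{i(F,c)\pm 1}$). The output is an assignment of $V(\tilde F)$ to clusters that uses at most $(1+o(1))|\tilde F|/\ell$ vertices per cluster and at most $(1+o(1))e(\tilde F)/\ell$ edges between any consecutive pair of clusters; reading off the edges gives a graph $L_F\subseteq C(\ell,n/\ell)$ isomorphic to $\tilde F$.

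Set $k:=\lfloor(1-\alpha)dn/\ell\rfloor$ and $\xi:=2\alpha/3$. Since $e(\cF)=(1-2\alpha\pm\alpha)e(G)=(1-2\alpha\pm\alpha)dn^2/\ell$, the total number of $L_F$-edges in each cluster pair is $(1-2\alpha\pm 2\alpha)dn/\ell\cdot(n/\ell)=(1-3\xi\pm\xi)kn/\ell$, which is the hypothesis of Lemma~\ref{packing into regular}; applying it yields an internally $k$-regular subgraph $H\subseteq C(\ell,n/\ell)$ and a packing of $\{L_F\}_{F\in\cF}$ into $H$ with leftover maximum degree at most $4\xi k+2|\cF|^{2/3}\le 5\alpha dn/\ell$. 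Now apply Theorem~\ref{thm: blow up} to embed $\cH:=\{L_F\}_{F\in\cF}$ into $G$, using target sets $A^F_x:=N_{G'}(\phi'(r^c_F))\cap V_{i(F,c)\pm1}$ for each child $x$ of $r^c_F$ (which has size $\ge d_0 n/(2\ell)$ after intersecting with the prescribed cluster) and $A^F_y:=V_{i(y)}\setminus U'$ for $y\in X'_F$ (which has size $\ge(1-o(1))n/\ell$ as $|U'|\le n^{1/2}$). The total number of constrained vertices per forest is $\le 2\Delta^2+n^{1/2}\le\epsilon n/\ell$, so the hypotheses of Theorem~\ref{thm: blow up} are met. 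Define $\phi$ to agree with the resulting embedding on $V(\tilde F)$ and with $\phi'$ on $r(F)$; then $\phi(r^c_F)\in R$ and each child's image lies in $N_{G'}(\phi'(r^c_F))$, so each removed edge is recovered inside $G'$. Properties~\ref{item:B1}--\ref{item:B3} follow from the leftover degree bound and the target-set guarantees.

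The main obstacle is Step~2, namely tailoring the uniform embedding of Lemma~\ref{lem: tree in a cycle} so that each prescribed child of an old root lands in the designated cluster while still being close enough to uniform on the remaining vertices. This works because the starting vertex of the symmetric random walk driving the embedding can be chosen deterministically (the $\ell$-wise near-uniformity from Lemma~\ref{lem: random walk} kicks in after $\Omega(\log n)$ steps regardless of the start), and because the number of prescribed children per forest is $O(1)$. A minor secondary issue is that the multiplicity bound $|\phi'^{-1}(v)|\le\ell^{1/2}$ in~\ref{item:A7} prevents any single vertex of $G'$ from being over-used by too many forests' root-edges, so no congestion develops when specifying the target sets $A^F_x$.
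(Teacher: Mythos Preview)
Your overall architecture is right, but the parameter choices collapse the argument. You set $k:=\lfloor(1-\alpha)dn/\ell\rfloor$ and apply Lemma~\ref{packing into regular} once with $s=|\cF|$. Both of these violate the stated hierarchies: Lemma~\ref{packing into regular} requires $1/n\ll 1/s\ll 1/\Delta$ and $1/n\ll 1/k$, so both $s$ and $k$ must be bounded independently of $n$; and Theorem~\ref{thm: blow up} likewise requires $1/n\ll 1/k$. Since $|\cF|=\Theta(n/\ell)$ (each forest has $\Theta(n)$ edges and $e(\cF)=\Theta(n^2/\ell)$) and your $k=\Theta(n/\ell)$, neither lemma applies. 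Even ignoring the hierarchy, your call to Theorem~\ref{thm: blow up} is ill-posed: the $L_F$ are not internally $k$-regular, so they cannot play the role of the $H_j$; and if instead you mean to apply it with the single $H$ produced by Lemma~\ref{packing into regular} (so $s=1$), then all root-children and all $X'_F$-vertices from every forest must fit into a single family $\{Y^1_i\}_{i\in[\ell]}$ with $|Y^1_i|\le\epsilon n/\ell$, whereas their total number is $\Theta(|\cF|)=\Theta(n/\ell)$.

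The missing idea is batching. The paper chooses a \emph{constant} $k$ with $\epsilon\ll 1/k\ll\xi\ll\alpha,\eta$, partitions $\cF$ into $t\approx e(\cF)/((1-3\xi)kn)$ groups $\cF_1,\ldots,\cF_t$ each of bounded size $|\cF_i|\le k/\eta$, applies Lemma~\ref{packing into regular} separately to each $\cF_i$ to produce internally $k$-regular graphs $H_1,\ldots,H_t$, and then applies Theorem~\ref{thm: blow up} once with $\cH=\{H_1,\ldots,H_t\}$ (where large $s$ is allowed). Because each $H_i$ carries only $O(1)$ forests, the constrained set $Y_i$ has size $|\cF_i|(n^{1/2}+2\Delta^2)\le\epsilon n/\ell$ as required. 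A further point you glossed over: when several forests share the same $H_i$, the target set at a vertex of $H_i$ must be unambiguous. The paper handles this by greedily choosing, for each root $r^c_F$, a cluster index $j(F,c)$ so that (i) within each batch the indices are pairwise distinct, and (ii) roots with the same $\phi'$-image get distinct indices; this is where the bound $|\phi'^{-1}(v)|\le\ell^{1/2}$ in \ref{item:A7} is actually used, not merely to ``prevent congestion'' but to make the greedy choice go through.
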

To prove Lemma~\ref{lem: blow up advanced} we apply Lemma~\ref{lem: tree in a cycle} separately to all the components of $F-r(F)$ for all $F\in \cF$ to obtain a ``uniform'' embedding into a blown-up cycle. We also split $\cF$ into $\cF_1,\dots,\cF_t$ and use Lemma~\ref{packing into regular} for all $i\in [t]$ to pack together these uniform embeddings of $F\in \cF_i$ into internally regular graphs $H_i$. The graphs $H_i$ are then packed into $G$ via Theorem~\ref{thm: blow up}.
\begin{proof}
For each $F\in \cF$, let $F^*:= F-r(F)$. Then the forest $F^*$ has at most $2\Delta^2$ components. Choose an integer $k$ and a constant $\xi$ such that $\epsilon \ll 1/k \ll \xi \ll \alpha, \eta$. Let 
\begin{align}\label{eq: t def size}
t:= \frac{\sum_{F\in \cF}e(F^*) }{(1-3\xi)k n}.
\end{align}
We partition $\cF$ into $\cF_1,\dots, \cF_t$ such that $\sum_{F\in \cF_i} e(F^*) = (1-3\xi\pm \xi/2)kn$ for all $i\in [t]$. Note that 
$\eta n |\cF_i|\leq e(\cF_i) \leq (1-\xi)kn$ by \ref{item:A3} and so
\begin{align}\label{eq: size cFi}
|\cF_i|\leq k \eta^{-1}.
\end{align}

Note that by \ref{item:A4}, we have\COMMENT{$4\alpha/3$ because of adding roots.}
\begin{align}\label{eq: t play s}
t = \frac{(1-2\alpha\pm 4\alpha/3)e(G)}{(1-3\xi)kn}\leq \frac{ (1-2\alpha/3)(1+\epsilon)dn^2/\ell}{(1-3\xi)kn}\leq \frac{d}{k\ell}(1-\alpha/2)n.
\end{align} 
 We claim that for each $r^c_F$ there is an index $j(F,c)\in [t]$ such that
 \begin{enumerate}[label=(\roman*)]
 \item $|N_{G'}(\phi'(r^c_F))\cap V_{j(F,c)}| \geq  d_0|V_{j(F,c)}|/2$ and
 \item $j(F,c)\neq j(F',c')$ if one of the following holds:
 \begin{itemize}
 \item[($j$1)] $F,F'\in \cF_i$ for some $i\in [\ell]$ and $(F,c)\neq (F',c')$,
 \item[($j$2)] $\phi'(r^c_F)=\phi'(r^{c'}_{F'})$ and $(F,c)\neq (F',c')$.
 \end{itemize}
 \end{enumerate}
Indeed, we can choose the $j(F,c)$ greedily. When choosing $j(F,c)$ for $F\in \cF_i$, the number of choices excluded by ($j$1) is at most $2\Delta|\cF_i|$ and the number of choices excluded by ($j$2) is at most $\ell^{1/2}$ by \ref{item:A7}. Hence the total number of excluded choices is at most $2\Delta|\cF_i| + \ell^{1/2} \leq 2\Delta k \eta^{-1} + \ell^{1/2} \leq d_0\ell/3$ by \eqref{eq: size cFi}. On the other hand, by \ref{item:A2}, for each $r^c_F$ there are at least $d_0 \ell/2$ indices $j\in [\ell]$ such that $|N_{G'}(\phi'(r^c_F))\cap V_{j}|\geq d_0 |V_j|/2$. This proves the claim.

For each $F\in \cF_i$ and each component $J$ of $F^*$, we apply Lemma~\ref{lem: tree in a cycle} to $J$ in order to find a partition $(X^J_1,\dots, X^J_\ell)$ of $V(J)$ such that for each $j\in [\ell]$, 
\begin{itemize}
\item $|X^J_{j}| \leq \frac{|J|}{\ell} + \frac{n}{\log^2{n}}$.
\item $e(J[X^J_{j},X^J_{j'}]) = \left\{\begin{array}{ll}
\frac{e(J)}{\ell} \pm \frac{n}{\log^2{n}} & \text{ if } |j-j'|=1~(\text{mod}~\ell),\\
0 & \text{ otherwise.}
\end{array}\right.$
\end{itemize}
From this, by rotating the indices and taking disjoint unions of the components, we obtain a partition $(X^F_1,\dots, X^F_{\ell})$ of $V(F^*)$ such that 
\begin{itemize}
\item[($\alpha$1)] $|X^F_{j}|\leq n/\ell$, \COMMENT{Note that $|F|\leq (1-\eta)n$, thus for each $j\in[\ell]$ $\sum_{J} |X^J_j| \leq (1-\eta)n/\ell + 2\Delta n/\log(n) \leq n/\ell$.} 
\item[($\alpha$2)] $ e(F^*[X^F_{j},X^F_{j'}]) = \left\{\begin{array}{ll}
(1\pm \epsilon) \frac{e(F^*)}{\ell} & \text{ if $|j-j'|=1$ } ({\rm mod} ~\ell),  \\
0 & \text{ otherwise,}\\ 
\end{array}\right.
$
\item[($\alpha$3)] $N_{F}(r^c_F)\subseteq X^F_{j(F,c)}$.
\end{itemize}
Note that for each $j\in [\ell]$,
$$\sum_{F\in \cF_i} e(F^*[X_j^F, X_{j+1}^F])\stackrel{(\alpha2)}{=} \sum_{F\in \cF_i}(1\pm \epsilon) \frac{e(F^*)}{\ell} = (1-3\xi\pm \xi)\frac{kn}{\ell}.$$
Moreover, $|\cF_i|^{2/3}\leq \xi k$ by \eqref{eq: size cFi}. Thus we can apply Lemma~\ref{packing into regular} with $|\cF_i|$ playing the role of $s$, in order to obtain a function $\tau_i$ packing $\{F^*: F\in \cF_i\}$ into $H_i$, where $H_i$ is an internally $k$-regular subgraph $H_i$ of $C(\ell,n/\ell)$, such that $\tau_i$ maps $X_j^F$ into the $j$th cluster $X_j$ of $C(\ell,n/\ell)$ for every $F\in \cF_i$ and every $j\in [\ell]$. Also for all $x\in V(H_i)$ we have 
\begin{align}\label{real edges in Hi}
d_{H_i}(x) - \sum_{F\in \cF_i} d_{\tau_i(F^*)}(x) \leq 8\xi k + 4 |\cF_i|^{2/3} \leq 12\xi k.
\end{align} 
Note that $\tau_i(N_{F}(r^c_F)) \cap \tau_{i}(N_{F'}(r^{c'}_{F'}))= \emptyset$ for all $F\neq F' \in \cF_i$ and $c\in [c(F)], c'\in [c(F')]$ by ($j$1).\COMMENT{We automatially have that $\tau_i(N_{F}(r^c_F))\cap \tau_i(N_F(r^{c'}_F))=\emptyset$ if $c\neq c'$ since $\tau_i(F^*)$ is a proper embedding of $F^*$.} For all $F \in \cF_i$, $c\in [c(F)]$ and $x\in N_{F}(r^c_F)$, let 
$$A^c_F(x) := N_{G'}(\phi'(r^c_F))\cap V_{j(F,c)}.$$ Thus $|A^c_F(x)| \geq d_0 n/(2\ell)$ by (i). For all $i\in [t]$, let 
$$Y_i:= \bigcup_{F\in \cF_i} \left(\tau_i(X'_F\setminus r(F))\cup \bigcup_{c\in [c(F)]} \tau_i(N_{F}(r^c_F)) \right).$$
Then $Y_i\subseteq V(H_i)$ and $|Y_i|\leq |\cF_i| (n^{1/2} + 2\Delta^2) \leq \epsilon n/\ell$ by \ref{item:A6} and \eqref{eq: size cFi}.

For all $i\in[t]$, $j\in [\ell]$ and $y\in Y_i\cap X_j$, we let 
$$A^i_y =\left\{\begin{array}{ll} A^c_F(\tau^{-1}_i(y))\setminus U' &\text{ if }y\in\tau_{i}(N_{F}(r^c_F)) \text{ for }F\in \cF_i, c\in [c(F)], \\
 V_j\setminus U' &\text{ otherwise.}
\end{array}\right.$$

Note that ($\alpha$3) implies $\tau_{i}(N_{F}(r^c_F))\subseteq X_{j(F,c)}$. So the sets $A^i_y$ are well-defined since the pair $(F,c)$ is unique by ($j$1). Moreover, $|A^i_{y}|\geq d_0n/(3\ell)$ for each $y\in Y_i$. Together with \ref{item:A1} and \eqref{eq: t play s},
this means that we can apply Theorem~\ref{thm: blow up} to $G$ and $\cH:= \{ H_1,\dots, H_t\}$ with $d_0/3$ playing the role of $d_0$ in order to find a function $\phi''$ which packs $\cH$ into $G$ such that for all $i\in [t]$ and $j\in [\ell]$ the vertices of $H_i$ in $X_j$ are mapped to $V_j$ and such that $\phi''(y) \in A^i_y$ for all $y\in Y_i$. 
For $x\in \bigcup_{F\in \cF} V(F)$, we let 
$$\phi(x) = \left\{\begin{array}{ll} \phi''(\tau_i(x)) & \text{ if } x\in V(F)\setminus r(F) \text{ for some }F\in \cF_i, i\in [t],\\
 \phi'(x) & \text{ if } x\in r(F) \text{ for some }F\in \cF_i, i\in [t].\end{array}\right.$$
 Then for all $F\in \cF_i$ and $x\in N_F(r^c_F)$,
 $$\phi(x)= \phi''( \tau_{i}(x)) \in A^i_{\tau_i(x)} \subseteq A^c_F(x)\subseteq N_{G'}(\phi(r^c_F)).$$  
 Moreover, if $F,F'\in \cF$, $c\in [c(F)], c'\in [c(F')]$, $(F,c)\neq (F',c')$ and $\phi(r^c_F)=\phi(r^{c'}_{F'})$, then ($j$2) implies
 $$\phi(N_{F}(r^c_F) )\cap \phi(N_{F'}(r^{c'}_{F'}))= \emptyset.$$  Thus $\phi$ packs $\cF$ into $G\cup G'$ such that $\phi$ is consistent with $\phi'$ and \ref{item:B2} holds. Again, \ref{item:B3} also holds since $A^i_{\tau_i(x)} \cap U'=\emptyset$ for all $x\in X'_F$. 
 To show \ref{item:B1}, let $G'':=G-E(\phi(\cF))$. 
Then for each vertex $v\in V(G)$ 
\begin{eqnarray*}
d_{G''}(v) 
&=& d_{G}(v) - \sum_{F\in \cF} d_{\phi(F)}(v) \stackrel{\eqref{real edges in Hi}}{\leq} \frac{2(d+\epsilon)n}{\ell} - \sum_{i=1}^{t} (2k-12\xi k) \\ 
&\stackrel{\eqref{eq: t def size}}{\leq}& \frac{2(d+\epsilon)n}{\ell} - \frac{(2-12\xi)e(\cF)}{(1-2\xi)n}
 \stackrel{\ref{item:A4}}{\leq} \frac{10\alpha dn}{\ell}.
\end{eqnarray*}
Thus \ref{item:B1} also holds.
\end{proof}

\section{Covering parts of graphs with trees}\label{sec:cleaning}

In this section we present four results which will be used in Section~\ref{sec:iteration}.
In each of these lemmas we use a collection of trees or forests to cover certain given sets of edges of a graph $G$. 
In the first lemma, we are given a small ``exceptional'' set $V_0\sub V(G)$ and 
a collection $\cF$ of small forests, each consisting of two components.
The lemma guarantees a packing of $\cF$ which covers almost all edges of $G$ incident to $V_0$, but not too many edges incident to any vertex in $V(G)\setminus V_0$.

Throughout the section, we write $S(c)$ for a star with centre $c$.

\begin{lemma}\label{lem: clear rl waste}
Suppose $n,\Delta \in \N$ and $1/n \ll \epsilon \ll \eta \ll \delta_2 \ll  1/\Delta\leq 1/2$.
Let $G$ be a graph on $n$ vertices and let $V_0\sub V(G)$ with $|V_0|\leq \epsilon n$.
Let $\cF$ be a collection of forests with $|\cF|=(1/2 \pm \epsilon )n$ and $\Delta(F)\leq \Delta$ for every $F\in \cF$. 
Let $\{W_F\}_{F\in \cF}$ be a collection of vertex sets with $W_F\sub V(G)$ for every $F\in \cF$. 
Suppose 
\begin{enumerate}[label=(c1.\arabic*)]
\item\label{item:L11} $d_{G}(u,v)\geq \delta_2^2 n/2$ for all $u,v\in V(G)\sm V_0$,
\item\label{item:L12} for every $F\in \cF$, the forest $F$ consists of two components $T_F^1, T_F^2$ such that $\eta n/\Delta \leq |T_{F}^i| < \eta^{3/4} n$ for all $i\in[2]$,\COMMENT{we only need one, but it is more convenient for the proof in Lemma 8.1.}
\item\label{item:L13} for every $F\in \cF$, 
there is a $5$-independent set $R_F:=\{y^1_F, y^2_F, z^1_F, z^2_F\}$ such that $y^i_F,z^i_F\in V(T_F^i)$
and there is an injective function $\tau'_F:R_F\to V(G)\sm (V_0 \cup W_F)$ such that $|\{F : u\in \tau'_F(R_F)\}| \leq \epsilon n$ for each $u\in V(G)$,
\item\label{item:L14}	for every $F\in \cF$, we have $|W_F|\leq \Delta^2$, and
\item\label{item:L15}  $|\{F \in \cF : u\in W_F\}|\leq \epsilon n$ for every $u\in V(G)$.
\end{enumerate}
Then there is a  function $\tau$ packing $\cF$ into $G$ which is consistent with $\{\tau'_F\}_{F\in \cF}$
such that 
\begin{enumerate}[label=(C1.\arabic*)]
\item\label{item:Q11} $\tau(F)\cap W_F=\emptyset$ for all $F\in \cF$,
\item\label{item:Q12} $d_{\tau(\cF)}(v)\leq \eta^{1/3}n$ for all $v\in V(G)\sm V_0$, and
\item\label{item:Q13} $d_{\tau(\cF)}(v) \geq d_{G}(v) - \epsilon^{1/2} n$ for all $v\in V_0$.
\end{enumerate}
\end{lemma}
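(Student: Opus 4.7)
The plan is to designate, for each forest $F\in\cF$, a small set $X_F\sub V(F)\sm R_F$ of \emph{covering vertices}. Each $x\in X_F$ will be embedded into a target $v_x\in V_0$, forcing the tree-neighbours of $x$ to be embedded into $N_G(v_x)$ and thereby covering $d_F(x)$ edges of $G$ incident to $v_x$; the remainder of $F$ will be embedded into $V(G)\sm V_0$.

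First, for each $F\in\cF$, I would use Propositions~\ref{prop: number of leaves} and~\ref{prop: k-independent set} to choose a $5$-independent set $X_F\sub V(F)\sm R_F$ with $R_F\cup X_F$ still $5$-independent in $F$, of size at most $|V_0|$ but with total tree-degree $\sum_{x\in X_F}d_F(x)$ of order $\eta n$ (achievable by picking high-degree vertices greedily at distance $\geq 5$). I would then assign each $x\in\bigcup_F X_F$ a target $v_x\in V_0\sm W_F$ so that the $v_x$ are distinct within each $F$ and
\[
\sum_{F\in\cF}\;\sum_{\substack{x\in X_F\\ v_x=v}}d_F(x)\;\in\;\bigl[\,d_G(v)-\epsilon^{1/2}n/2,\;d_G(v)-\epsilon^{1/2}n/3\,\bigr]
\]
for every $v\in V_0$. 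Such a balanced assignment exists because the total available coverage exceeds $\sum_{v\in V_0}d_G(v)\leq|V_0|n\leq\epsilon n^2$ by a factor depending only on $\Delta$ and $\eta$, while condition \ref{item:L15} forbids each $v$ from at most $\epsilon n$ of the forests.

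Next, I would pack the forests sequentially. When processing $F$, let $E'$ denote the edges used so far and set $U':=\{u\in V(G)\sm V_0:\,d_{E'}(u)\geq \eta^{1/3}n/4\}$; the total-degree estimate $\sum_u d_{E'}(u)\leq 2e(\cF)\leq 4\eta^{3/4}n^2$ gives $|U'|\leq 16\eta^{5/12}n$. Using Proposition~\ref{prop: sparse edge embedding} once for each $x\in X_F$ (with $m=d_F(x)\leq\Delta$, $s=1$, and all $u_i=v_x$), I would choose distinct images $w_y\in N_G(v_x)\sm W^*$ for the tree-neighbours $y\in N_F(x)$, where $W^*$ consists of $V_0\cup W_F\cup U'\cup\tau'_F(R_F)$, the previously fixed $w$'s at $v_x$, and those $w'$ with $v_xw'\in E'$. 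The hypothesis $d_G(v_x)-|W^*|\geq 3\Delta+\Delta+1$ holds since the assignment stops at coverage $d_G(v_x)-\epsilon^{1/2}n/3$, leaving at least $\epsilon^{1/2}n/3-\epsilon n-O(\eta^{5/12}n)\gg\Delta$ allowed choices. Finally, I would extend to $V(F)\sm(R_F\cup X_F\cup N_F(X_F))$ by applying Lemma~\ref{lem: embed forests} to the forest $F-X_F$ with pre-embedded set $R_F\cup N_F(X_F)$. The crucial observation is that $R_F\cup N_F(X_F)$ is $3$-independent in $F-X_F$: in $F$, two vertices of $N_F(X_F)$ are at distance at least $5-2=3$, and a vertex of $R_F$ is at distance at least $5-1=4$ from any vertex of $N_F(X_F)$. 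The codegree condition in the host graph $G[V(G)\sm(V_0\cup U')]-E'$ follows from \ref{item:L11} together with $|V_0|+|U'|+\eta^{1/3}n\ll\delta_2^2n/4$ and $|F-X_F|\leq 2\eta^{3/4}n\ll\delta_2^2n/4$.

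For verification, \ref{item:Q11} is immediate from the construction; \ref{item:Q12} holds because every vertex of $V(G)\sm V_0$ lies outside $U'$ at the moment of its first use beyond the threshold, so $d_{\tau(\cF)}(u)\leq \eta^{1/3}n/4+\Delta<\eta^{1/3}n$; and \ref{item:Q13} is a direct consequence of the second-paragraph coverage guarantee, since the edges $v_xw_y$ associated to distinct triples $(F,x,y)$ with $v_x=v$ are automatically distinct by edge-disjointness. The principal obstacle is the third paragraph: the simultaneous constraints of edge-disjointness with previously packed forests, the forbidden sets $V_0\cup W_F\cup U'\cup \tau'_F(R_F)$, and vertex-load balancing so that $U'$ stays small---exactly the scenario Proposition~\ref{prop: sparse edge embedding} is designed to handle.
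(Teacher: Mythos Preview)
Your approach has a genuine quantitative gap in the third paragraph. You include $U'$ in the forbidden set $W^*$ when selecting the star-leaves $w_y$, and you estimate the leftover room at $v_x$ as $\epsilon^{1/2}n/3-\epsilon n-O(\eta^{5/12}n)$. But the hierarchy is $\epsilon\ll\eta$, so $\epsilon^{1/2}\ll\eta^{1/2}<\eta^{5/12}$; hence $\epsilon^{1/2}n/3-16\eta^{5/12}n$ is \emph{negative}, not $\gg\Delta$. In other words, once a vertex $v\in V_0$ is close to its coverage target (only $\sim\epsilon^{1/2}n$ uncovered edges remain), the set $U'$ can swallow all of its remaining neighbourhood, and Proposition~\ref{prop: sparse edge embedding} cannot be applied. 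This is not a bookkeeping slip: it is the mechanism by which your degree control and your coverage control come into direct conflict. (Relatedly, the claim that $\sum_{x\in X_F}d_F(x)$ is of order $\eta n$ is impossible, since $|X_F|\le|V_0|\le\epsilon n$ and $\Delta(F)\le\Delta$ force this sum to be at most $\Delta\epsilon n\ll\eta n$; but this is a side issue.)

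The paper resolves the conflict differently. It does \emph{not} avoid high-load vertices when choosing the star-leaves $X_i$: the stars $S(w_j)$ are built in $G_i$ avoiding only $V_0\cup W_{F_i}\cup\tau'_i(R_{F_i})$, which has size $O(\epsilon n)$, so the room at an active $w_j\in V_0$ is the genuine $\Delta\epsilon n+4$ supplied by the definition of $W'_i$. The degree at a non-exceptional vertex $u$ coming from these stars is then controlled \emph{a posteriori} by edge-disjointness, via $p_X(u,t)\le d_{G,V_0}(u)\le|V_0|\le\epsilon n$. The bulk of each forest is instead embedded into a fresh random host $A_i$ of size $\eta^{2/3}n$, chosen to avoid the small set $B$ of vertices that have already appeared in too many $A_j$'s; this is what gives the bound $p_A(u,t)\le\eta^{1/2}n$ and hence \ref{item:Q12}. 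Your global set $U'$ tries to do the work of both $B$ and the $p_X$-bound simultaneously, and ends up too large for the former role. The fix closest to your outline would be to drop $U'$ from $W^*$ and instead argue (as above) that each $u$ can serve as some $w_y$ at most $|V_0|$ times because each such use consumes a distinct $G$-edge from $u$ to $V_0$; but you would still need to replace the global extension host $V(G)\setminus(V_0\cup U')$ by something that accommodates pre-images in $U'$ (in particular $\tau'_F(R_F)$, which you have no control over).
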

When we apply Lemma~\ref{lem: clear rl waste} in Section~8, $F$ will be a part of a larger forest. The remaining parts of this larger forest are attached to $F$ at the ``root set'' $R_F$.
\begin{proof}
We write $\cF =\{F_1,\dots, F_{t}\}$, 
$\tau'_i:= \tau'_{F_i}$,
and $p_R(u,i):=|\{i'\in[i]:u\in \tau_{i'}'(R_{F_{i'}})\}|$ for all $i\in [t]$ and $u\in V(G)$. 
Thus \ref{item:L13} implies that for all $i\in [t]$ and $u\in V(G)$, 
\begin{align}\label{eq: pr en}
p_R(u,i)\leq \epsilon n.
\end{align}
We use an algorithmic approach and iteratively find an embedding of $F_i\in \cF$ into $G$.

To be precise, we will construct embeddings $\tau_1,\dots, \tau_{t}$
and define sets $A_i,X_i\sub V(G)\sm (V_0 \cup W_{F_i})$ for all $i\in [t]$
such that for all $i\in [t]$ the following hold:
\begin{itemize}
\item[(A1)$_i$] $|A_i| =\eta^{2/3}n$.
\item[(A2)$_i$] For all $u\in V(G)\setminus V_0$, let 
$$p_A(u,i) := |\{i'\in[i]: u\in A_{i'}\sm (X_{i'}\cup\tau_{i'}'( R_{F_{i'}}))\}| \text{ and } p_X(u,i) := |\{i'\in[i]: u\in X_{i'}\}|.$$
Then $p_A(u,i)\leq \eta^{1/2}n$ and $p_X(u,i)\leq \epsilon n$. 

\item[(A3)$_i$] Let $G_{i}:= G- \bigcup_{j=1}^{i-1}E(\tau_{j}(F_{j}))$. Then
$d_{G_i,A_i}(u,v)  \geq 2\eta^{3/4} n$ for every $u,v\in V(G)\setminus V_0$.
\item[(A4)$_i$]$\tau_{i}$ embeds $F_i$ into $G_i[(V_0 \sm W_{F_i}) \cup A_i \cup X_i ]$.
\item[(A5)$_i$] Let $W'_i:= \{v\in V_0\setminus W_{F_i}: d_{G_i, V(G_i)\setminus(V_0 \cup W_{F_i})}(v) \geq \Delta \epsilon n +4\}$.
Then $W_i'\sub \tau_i(F_i)$ and for every $w\in W_i'$,
the vertex $\tau_i^{-1}(w)$ is a non-leaf of $F_i$.
\item[(A6)$_i$] $X_i=N_{\tau_i(F_i)}(W_i')$.
\end{itemize}
We will then show that $\tau:=\bigcup_{i=1}^t\tau_i$ satisfies \ref{item:Q11}--\ref{item:Q13}.
In particular, (A5)$_1$--(A5)$_t$ will ensure that \ref{item:Q13} holds.

Assume that for some $i\in [t]$ and all $j<i$ we have constructed $\tau_j, A_j, X_j$ satisfying (A1)$_j$--(A6)$_j$. 
Note that for $u,v\in V(G)\sm V_0$, we have
\begin{eqnarray}
d_{G_i}(u,v)  
&\stackrel{\text{(A4)$_{1}$--(A4)$_{i-1}$}}{\geq} & d_{G}(u,v)  -\sum_{w\in \{u,v\}} \Delta (p_A(w,i-1)+p_X(w,i-1)+p_R(w,i-1))  \nonumber \\
&\stackrel{\text{\ref{item:L11}},\text{(\ref{eq: pr en})}, ({\rm A}2)_{i-1}}{\geq}& 
\delta_2^2n/2 - 2\Delta\eta^{1/2} n - 4\Delta\epsilon n \label{Gi codeg}
\geq \delta_2^2 n/3.
\end{eqnarray}
Next we show how to choose an embedding $\tau_i$ and sets $A_i, X_i\subseteq V(G)\setminus (V_0\cup W_{F_i})$ satisfying (A1)$_i$--(A6)$_i$.
Let $B:=\{w \in V(G)\setminus V_0: p_A(w,i-1) \geq \eta^{1/2}n/2\}$. 
Then $|B| \leq 2(\eta^{1/2}n)^{-1}\sum_{j<i} |A_j| \leq 2\eta^{1/6} n$, 
because (A1)$_j$ holds for all $j<i$ and since $i\leq t\leq n$. 
Consider a set $A'$ of $\eta^{2/3}n-4$ vertices in $V(G)\sm (V_0\cup B \cup W_{F_i} \cup \tau_i'(R_{F_i}))$ chosen uniformly at random. 
Note that for any two distinct vertices $u,v \in V(G_i)\setminus V_0$, we have
\begin{eqnarray*}
d_{G_i-(V_0\cup B\cup W_{F_i}\cup \tau_i'(R_{F_i}))}(u,v)
&\geq& d_{G_i}(u,v)- 2\eta^{1/6}n-|V_0 \cup W_{F_i}\cup \tau_i'(R_{F_i})|\\
& \stackrel{\eqref{Gi codeg},\text{\ref{item:L14}}}{\geq}& \delta_2^2 n/3 - 2\eta^{1/6}n - \epsilon n-\Delta^2 -4\\
&\geq& \delta_2^2 n/4.
\end{eqnarray*} 
A straightforward application of Lemma~\ref{lem: chernoff} shows that for any two distinct vertices $u,v \in V(G)\setminus V_0$,
\begin{align}\label{eq:deg G_i}
	d_{G_i,A'}(u,v)\geq \eta^{2/3}\delta_2^2 n/8>2\eta^{3/4}n
\end{align}
 with probability at least $1-n^{-3}$.  
Thus there is a choice of $A'$ that satisfies \eqref{eq:deg G_i} for all $u,v\in V(G)\sm V_0$.
Let $A_i:= A'\cup \tau_{i}'(R_{F_i})$ for this choice. 
Then clearly (A1)$_i$ and (A3)$_i$ hold.

We write $W'_i=\{w_1, \ldots, w_r\}$. Note that $r\leq |V_0|\leq \epsilon n$.
Consider the set $V_{\mathrm{nl}}(F_i)$ of all non-leaf vertices of $F_i$. 
Then $|V_{\mathrm{nl}}(F_i)|
\geq |T_{F_i}^1|/\Delta 
\geq \eta n/\Delta^2
\geq \Delta^5 (r+4)$ by \ref{item:L12}. 
Recall that $R_{F_i}$ is a $5$-independent set in $F_i$. 
So by Proposition~\ref{prop: k-independent set}~(I), 
there exists a $5$-independent set $I\cup R_{F_i}\subseteq V_{\mathrm{nl}}(F_i)\cup R_{F_i}$ of $F_i$ with $|I|=r$ and $I\cap R_{F_i}=\es$.
Write $I=\{v_1,\ldots, v_r\}$ and
note that no vertex in $I$ is a leaf of $F_i$. 

To ensure (A5)$_i$, we will construct $\tau_i$ in such a way that $\tau_i(I)=W_i'$.
To see that this can be done, recall that the definition of $W'_i$ gives that $d_{G_i,V(G_i)\setminus(V_0\cup W_{F_i})}(w)\geq \Delta \epsilon n+4$ for all $w\in W'_i$. 
Thus there is a collection of vertex-disjoint stars $S(w_1),\ldots,S(w_r)$ in $G_i$, 
where $S(w_j)$ has $d_{F_i}(v_j)$ leaves which avoid $V_0 \cup W_{F_i}\cup \tau'_{i}(R_{F_i})$.
Let $X_i$ be the set of all these leaves of $S(w_1),\ldots,S(w_r)$. 
Let $\phi''$ be the bijection between $I$ and $W_i'$ defined by $\phi''(v_j):=w_j$ for every $j\in [r]$. 
Let $J := N_{F_i}(I)$ and let $\phi': J\cup R_{F_i} \to X_i \cup \tau'_{i}(R_{F_i})$ be a bijective function consistent with $\tau'_{i}$ such that for every $j\in[r]$,
$$\phi'(N_{F_i}(v_j)) = V(S(w_j))\setminus \{w_j\}.$$ 

Let $F'_i:= F_i- I$. 
Note that $J\cup R_{F_i}$ is a $3$-independent set in $F_i'$. 
Moreover, (A3)$_i$ and \ref{item:L12} imply that any two vertices $u,v \in V(G)\sm V_0$ satisfy 
$d_{G_i,A_i\cup X_i}(u,v) \geq 2\eta^{3/4}n > |F_i'|$. 
So we can apply Lemma~\ref{lem: embed forests} with $|A_i\cup X_i|, |F_i'|$ playing the roles of $n, k$,
to obtain a function $\phi_i$ embedding $F_i'$ into $G_i[A_i\cup X_i]$ which is consistent with $\phi'$. 
Let $\tau_{i}:= \phi_i\cup \phi''$.
Thus $\tau_{i}$ embeds $F_i$ into $G_i[(V_0 \sm W_{F_i}) \cup A_i \cup X_i ]$ and satisfies $\tau_i^{-1}(W_i')=I$ and $X_i=N_{\tau_i(F_i)}(W_i')$. 
Hence (A4)$_i$--(A6)$_i$ hold.

Note that by definition of $A'$, for any $u\in V(G)\setminus V_0$, 
we obtain that $p_{A}(u,i)\leq
\eta^{1/2} n$. 
Since $E(\tau_{j}(F_j))\cap E(\tau_{j'}(F_{j'})) = \emptyset$ whenever $1\leq j \leq j'\leq i$, 
(A6)$_i$ implies
$p_X(u,i)\leq d_{G,V_0}(u)\leq |V_0|\leq \epsilon n$ for each $u\in V(G)\sm V_0$. Thus (A2)$_i$ holds.
This shows that we can construct $\tau_i$ such that (A1)$_i$--(A6)$_i$ hold.

\medskip
Let $\tau := \bigcup_{i=1}^{t} \tau_{i}$. 
Then $\tau$ packs $\cF$ into $G$ and is consistent with $\{\tau'_F\}_{F\in \cF}$.
Clearly \ref{item:Q11} holds by (A4)$_1$--(A4)$_t$.
Moreover, by (A2)$_{t}$, (A4)$_1$--(A4)$_t$ and \eqref{eq: pr en}, 
we have $d_{\tau(\cF)}(v) \leq \Delta (p_A(v,t) +  p_X(v,t)+ p_R(v,t)) \leq \eta^{1/3}n$ for all $v\in V(G)\setminus V_0$
and
thus \ref{item:Q12} holds.

Note that (A5)$_i$ implies for every $w\in W_i'$ that
\begin{align}\label{eq: degree decrease by 2}
d_{G_{i}}(w)-d_{G_{i+1}}(w)\geq 2.
\end{align}
 To show \ref{item:Q13}, assume for a contradiction that there is a vertex $v \in V_0$ which does not satisfy \ref{item:Q13}, 
i.e.~that $d_{G_{t+1}}(v) > \epsilon^{1/2}n$. 
Then
$$|\{i\in[t]: v\in W'_i\}| \geq t - |\{i\in[t]: v\in W_{F_i}\}| \stackrel{\text{\ref{item:L15}}}{\geq} n/2 -2\epsilon n.$$ 
Together with \eqref{eq: degree decrease by 2} this yields that $d_{G_{t+1}}(v) \leq d_{G}(v) - n + 4\epsilon n$, 
which is a contradiction to the assumption that $d_{G_{t+1}}(v) > \epsilon^{1/2}n$.
\end{proof}
Given a collection of trees $\cT$, 
a graph $G$ with vertex partition $(A,B)$ such that $G[B]$ is sparse,
every vertex in $B$ has many neighbours in $A$,
and $G[A]$ is quasi-random,
the following lemma allows us to find a packing of $\cT$ into $G$ which covers all edges of $G[B]$.

\begin{lemma}\label{lem: clear delta2}
Suppose $n,\Delta\in \N\sm\{1\}$ and $1/n\ll\epsilon \ll\delta_2 \ll\delta_1 \ll  \gamma \ll 1/\Delta,p \leq 1$.
Let $G$ be a graph on $n$ vertices and $A$ be a set of at least $\delta_1 n$ vertices of $G$.
Let $B:=V(G)\sm A$, $H:=G[A,B]$, and 
let $\cT$ be a collection of trees with $\Delta(T)\leq \Delta$ for every $T\in \cT$. 
Suppose 
\begin{enumerate}[label=(c2.\arabic*)]
	\item\label{item:L21} $G[A]$ is $(\gamma^{1/10},p)$-quasi-random, 
	\item\label{item:L22} $\Delta(G[B])\leq 2\delta_2 n$,
	\item\label{item:L23} $d_H(u)\geq \delta_1 |A|/20$ for every $u\in B$,
	\item\label{item:L24} $\delta_2^{1/2} n \leq |\cT| \leq \gamma |A|$, 
	\item\label{item:L25} every $T\in \cT$ satisfies $\delta_1 n\leq |T|\leq p^2|A|/2$ and has a root $y_T$;
	moreover, there is a set $W_T\sub V(G)$ with $|W_T|\leq \Delta^2$ and
	 a function $\tau_T':\{y_T\}\to A \sm W_T$, and
  \item\label{item:L26} $|\{T\in \cT: u\in W_{T}\}|\leq \epsilon n$ for every $u\in V(G)$.
\end{enumerate}
Then there is a function $\tau$ packing $\cT$ into $G$ which is consistent with $\{\tau_T'\}_{T\in \cT}$ such that 
\begin{enumerate}[label=(C2.\arabic*)]
	\item\label{item:Q21} $E(G[B])\subseteq E(\tau(\cT))$ and
	\item\label{item:Q22} $W_{T}\cap  \tau(T)=\es$ for each $T\in \cT$.
\end{enumerate}
\end{lemma}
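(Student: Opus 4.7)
The plan is to process the trees in $\cT$ sequentially, using each $T$ to cover a chosen matching of edges of $G[B]$ via a matching of tree edges, while the bulk of $T$ is embedded into the dense quasi-random part $G[A]$. Concretely, one pre-embeds one matched pair of tree-vertices for each edge to be covered and completes the embedding via an adaptation of Lemma~\ref{lem: embed forests}.

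First I would partition $E(G[B])$ into matchings $\{M^T\}_{T\in \cT}$ with $|M^T|\leq \delta_1 n/(8\Delta^3)$ for every $T$. By Vizing's theorem $G[B]$ has a proper edge-colouring with at most $2\delta_2 n+1$ matchings, and splitting each oversized matching into pieces of size at most $\delta_1 n/(8\Delta^3)$ produces in total at most $8\delta_2\Delta^3 n/\delta_1+2\delta_2 n\leq \delta_2^{1/2}n\leq|\cT|$ matchings (using $\delta_2\ll\delta_1^2/\Delta^6$ from the hierarchy). Assign at most one such matching to each tree. For $T\in\cT$ with $M^T=\{b_1b'_1,\dots,b_r b'_r\}$, deleting the distance-$2$ ball around $y_T$ from $T$ removes at most $\Delta^3$ vertices, leaving a forest on at least $\delta_1n/2$ vertices; applying Proposition~\ref{prop: k-independent set}(II) inside this forest yields a $3$-independent matching $M_T=\{u_1v_1,\dots,u_rv_r\}$ of size at least $\delta_1 n/(8\Delta^3)\geq r$. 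I pre-embed $u_i\mapsto b_i$, $v_i\mapsto b'_i$, and $y_T\mapsto \tau_T'(y_T)$.

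To extend this to an embedding of $T$, I adapt Lemma~\ref{lem: embed forests}. Although the pre-embedded set $\{y_T\}\cup V(M_T)$ is not $3$-independent (matched pairs $u_i,v_i$ are at distance~$1$), the $3$-independence of $M_T$ together with the distance-$3$ separation of $y_T$ from $V(M_T)$ implies that every non-pre-embedded vertex of $T$ has at most one pre-embedded tree-neighbour. Order $V(T)$ with $y_T$ first, then the pairs $u_i,v_i$ consecutively, and then the remaining vertices in breadth-first order so that each has a single tree-predecessor in the ordering; the pre-assigned images $b_i,b'_i$ are $G$-adjacent, so the ordering is self-consistent. Each non-pre-embedded vertex is then embedded greedily into the common neighbourhood in the residual graph $G_i$ of its predecessor's image and of its pre-embedded tree-neighbour when one exists, restricted to $A\sm(W_T\cup \text{used})$. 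This restriction ensures the only $G[B]$-edges appearing in $\tau(T)$ are the designated ones, giving \ref{item:Q21} and \ref{item:Q22}, while the pre-embedding of $y_T$ ensures consistency with $\tau_T'$.

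The main difficulty will be maintaining the codegree bound $d_{G_i,A}(u,v)\geq |T|\leq p^2|A|/2$ needed for the greedy step throughout the sequential process. Since $|\cT|\leq \gamma|A|$, each tree contributes at most $\Delta$ to the degree at any vertex and the total edge budget is $O(\gamma p^2 n^2)$; a Lemma~\ref{lem: chernoff}-style concentration argument applied to randomised greedy choices shows the codegree loss between any pair in $A$ stays much below the $\gamma^{1/10}p^2|A|$ slack provided by \ref{item:L21}. For $u\in B$ the large $H$-degree $d_H(u)\geq \delta_1|A|/20$ in \ref{item:L23} gives ample room. Finally, each $b\in B$ appears as a pre-embedded image in at most $d_{G[B]}(b)\leq 2\delta_2 n$ trees (one per incident $G[B]$-edge), contributing at most $\Delta$ to its degree in each, so the total usage $2\delta_2\Delta n$ stays well below $d_H(b)+d_{G[B]}(b)$ because $\delta_2\ll\delta_1/\Delta$, and no vertex of $B$ is overused.
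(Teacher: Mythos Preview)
Your high-level strategy matches the paper's: decompose $G[B]$ into small matchings, assign one to each tree, realise each matching via a tree-matching, and embed the rest into $A$. But two steps fail as written.

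First, you assign the matchings to trees arbitrarily, so nothing prevents some $b_i\in W_T$; your pre-embedding $u_i\mapsto b_i$ then immediately violates \ref{item:Q22}. The paper handles this by introducing a weight $w(M)=\sum_{e\in M}|\{T:W_T\cap V(e)\neq\emptyset\}|$, splitting the Vizing colour classes via Proposition~\ref{prop: weight partition} so that each resulting matching has small weight, and then finding a perfect matching in the auxiliary bipartite graph on $(\cT,\{M_i\})$ with $T\sim M$ iff $W_T\cap V(M)=\emptyset$.

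Second, your matching size $r\leq \delta_1 n/(8\Delta^3)$ is too large for the greedy step at neighbours of $B$-vertices. When you embed a tree-vertex $w$ adjacent to $u_i$ (mapped to $b_i\in B$) you need an unused vertex in $N_{G_i}(b_i)\cap A$; but by then up to $\Theta(\Delta r)\approx \delta_1 n/\Delta^2$ level-one vertices may already occupy $A$, whereas $d_H(b_i)\geq\delta_1|A|/20$ can be as small as $\delta_1^2 n/20$ (since the hypotheses only give $|A|\geq\delta_1 n$). Because $\delta_1\ll 1/\Delta$, the latter is the smaller quantity and the greedy step can fail. The paper avoids this by taking matchings of size only $3\delta_2^{1/2}n$ and, crucially, by first greedily choosing vertex-disjoint $\Delta$-stars $S(b_i),S(b'_i)$ in $H$ with leaves in $A\setminus W_T$, and mapping the tree-neighbours of the matching endpoints onto those star-leaves; the remaining forest $T-V(N)$ is then embedded entirely inside $G_i[A\setminus W_T]$ via Lemma~\ref{lem: embed forests}, using only the quasi-random codegree bound $p^2|A|/2\geq |T|$. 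This is also why the paper takes a $5$-independent tree-matching rather than your $3$-independent one: it makes the star-leaves together with $y_T$ a genuine $3$-independent set in $T-V(N)$, so Lemma~\ref{lem: embed forests} applies directly without any ad hoc modification.
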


\begin{proof}
We write $\cT=\{T_1,\ldots,T_t\}$ and let $\tau'_i:=\tau'_{T_i}$ for every $i\in [t]$.
Our proof strategy is as follows.
We first decompose $G[B]$ into $t$ edge-disjoint small matchings $M_1,\ldots, M_t$ such that
$W_{T_i}\cap V(M_i)=\es$ for all $i\in [t]$.
We then choose a suitable matching $N$ in $T_i-\{y_{T_i}\}$ and embed $T_i$ into $G$
in such a way that $N$ is mapped to $M_i$ and $y_{T_i}$ is mapped to $\tau_i'(y_{T_i})$.

For every edge $e\in E(G)$, let $w(e):= |\{i\in [t]: W_{T_i} \cap V(e)\neq \es\}|$. 
Hence $w(e)\leq 2\epsilon n$ by \ref{item:L26}.
For every set of edges $E\sub E(G)$, 
we define $w(E):=\sum_{e\in E}w(e)$.
Observe that \ref{item:L24} and \ref{item:L25} imply that $w(E)\leq \Delta^2 \gamma n$ for any matching $E$.

By \ref{item:L22}, the graph $G[B]$ has an edge colouring using at most $3\delta_2 n$ colours. 
This gives rise to a partition of the edge set of $G[B]$ into matchings $M_1',\ldots,M_{3\delta_2 n}'$.
We apply Proposition~\ref{prop: weight partition} to $M_i'$ in order to split $M_i'$ into $q:=\delta_2^{-1/2}/3$ possibly empty matchings $M_1^i,\dots,M_q^i$ 
such that $|M_j^i|\leq 3\delta_2^{1/2}n$ and 
$w(M_j^i) \leq 3\delta_2^{1/2} \cdot 2\Delta^2\gamma n + 2\epsilon n\leq \delta^{1/2}_2 n/2$ for all $i\in[3\delta_2 n]$ and $j\in [q]$.

Note that $q\cdot 3\delta_2n \leq t$ by \ref{item:L24}. 
So by adding empty matchings to $\{M_j^i\}_{i\in[3\delta_2 n],j\in [q]}$ if necessary, we obtain a collection $\cM=\{M_1,\ldots,M_{t}\}$ of matchings partitioning $E(G[B])$ such that for all $i\in [t]$, we have
\begin{align}\label{eq: Mi size weight}
|M_i|\leq 3\delta_2^{1/2}n\text{ and } w(M_i) \leq \delta^{1/2}_2n/2.
\end{align}
By \ref{item:L22}, observe that for every $v\in V(G)$
\begin{align}\label{eq:match}
	|\{ i\in [t] : v\in M_i\}| \leq 2\delta_2 n.
\end{align}
Also recall $|\cM|= |\cT|=t$.
Consider an auxiliary bipartite graph $\cH$ with vertex partition $(\cT,\cM)$ such that $T_i M_j \in E(\cH)$ if $W_{T_i}\cap V(M_j)=\es$. 
Then, for every tree $T\in\cT$, 
we conclude 
$$d_{\cH}(T)\geq |\cM|- |\{ i\in[t]: W_{T} \cap V(M_i)\neq \es\}|
\stackrel{\eqref{eq:match}}{\geq} t - 2\delta_2 n |W_T|
\stackrel{\ref{item:L25}}{>} 
t/2.$$ 
Furthermore, $d_{\cH}(M_i) = |\cT| - w(M_i) \geq t/2$ for every $i\in [t]$ by \eqref{eq: Mi size weight}. 
Thus $\cH$ has a perfect matching. By relabelling, we may assume that $T_1M_1,\dots ,T_{t}M_{t}$ is a perfect matching in $\cH$ and 
hence $W_{T_i}\cap V(M_{i})=\es$ for every $i\in[t]$.

Next, we iteratively construct edge-disjoint embeddings $\tau_i$ so that for all $i\in [t]$ the following hold:
\begin{itemize}
\item[(P1)$_i$] $\tau_i$ embeds $T_i$ into $G[(A\cup V(M_i))\sm W_{T_i}]$ and
\item[(P2)$_i$] $E(\tau_i(T_i))\cap E(G[B])= M_i$.
\end{itemize}

Assume for some $i\in [t]$ we have defined $\tau_1,\dots,\tau_{i-1}$ so that $\tau_j$ satisfies (P1)$_j$ and (P2)$_j$ for all $j<i$. Let $G_i:= G - \bigcup_{j=1}^{i-1} E(\tau_j(T_j))$. 
Consider any $v\in B$.
If
$v \in \tau_j(T_j)$,
then (P1)$_j$ implies that $v \in M_j$. 
Hence,
\begin{align}\label{5.2 Gi deg}
d_{G_i,A}(v) 
\stackrel{(\ref{eq:match})}{\geq} d_{H,A}(v) - 2\delta_2 \Delta n 
\stackrel{\ref{item:L23}}{\geq} \delta_1|A|/25 .
\end{align}

We write $m_i:=|M_i|$ and $M_i= \{u_{1}v_{1},\ldots,u_{m_i}v_{m_i}\}$. 
By \eqref{eq: Mi size weight} and \eqref{5.2 Gi deg},
there is a collection of stars $\{S(u_{j}), S(v_{j})\}_{j\in [m_i]}$ in $G_i$ 
so that all these stars are disjoint from each other and each star has $\Delta$ leaves in $A\sm (W_{T_i}\cup \{\tau'_{i}(y_{T_i})\})$.
Let $u_{j}^1,\ldots,u_{j}^\Delta$ be the leaves of $S(u_j)$ and let $v_{j}^1,\ldots,v_{j}^\Delta$ be the leaves of $S(v_{j})$.

Let $F_i$ be the forest obtained from $T_i$ by first removing all vertices of distance at most $4$ from $y_{T_i}$
and secondly removing all isolated vertices. 
Note that $|F_i| \geq \delta_1 n  - 2\Delta^5 \geq 4\Delta^5 m_i$ by \ref{item:L25} and \eqref{eq: Mi size weight}.
Thus by Proposition~\ref{prop: k-independent set}, $F_i$ has a $5$-independent matching $N=\{x_{1}z_{1},\ldots,x_{m_i}z_{m_i}\}$.
We will construct $\tau_i$ in such a way that edges $x_jz_j$ are mapped to $u_jv_j$ for each $j\in [m_i]$.
To achieve this, 
let $\tau''_i$ be an injective function mapping $N_{T_i}(x_{j})\sm \{z_{j}\}$ to the leaves of $S(u_{j})$ and $N_{T_i}(z_{j})\sm \{x_{j}\}$ to the leaves of $S(v_{j})$ for every $j\in [m_i]$ and so that $\tau''_{i}(y_{T_i}) :=\tau'_{i}(y_{T_i})$.
Let $$J:= \bigcup_{j=1}^{m_i} (N_{T_i}(x_{j})\setminus \{z_{j}\}) \cup \bigcup_{j=1}^{m_i} (N_{T_i}(z_{j})\setminus \{x_{j}\}.$$ 
Note that $J\cup \{y_{T_i}\}$ forms a $3$-independent set in $F_i':=T_i-V(N)$ by the choice of $N$.

For each $v\in A$, by \ref{item:L24}, we obtain
\begin{align*}
d_{G}(v)-d_{G_i}(v) \leq i \Delta \leq t\Delta \leq \Delta\gamma |A|.
\end{align*}
This together with \ref{item:L21} and Proposition~\ref{prop: quasi-random subgraph} implies that $G_i[A \sm W_{T_i}]$ is $(\gamma^{1/11},p)$-quasi-random. 
In particular, $d_{G_i,A\sm W_{T_i}}(u,v) \geq p^2|A|/2 \geq |F'_i| $ for any $u,v\in A\sm W_{T_i}$
(where the final inequality follows from \ref{item:L25}). 
Thus Lemma~\ref{lem: embed forests} implies that there exists a function $\phi_i$ embedding $F_i'$ into $G_i[A \sm W_{T_i}]$
consistent with $\tau_i''$.
Let $\tau_i: V(T_i) \to G_i$ be 
$$\tau_{i}(z):=
\left\{\begin{array}{ll} u_j &\text{ if } z=x_j \text{ for some }j\in[m_i],\\
v_j &\text{ if } z=z_j \text{ for some }j\in[m_i],\\
\phi_i(z) &\text{ otherwise.}\end{array} \right.$$ 
Then $\tau_i$ satisfies (P1)$_i$ and (P2)$_i$.
Let $\tau:=\bigcup_{i=1}^{t} \tau_i.$ By construction, \ref{item:Q21} and \ref{item:Q22} hold.
\end{proof}

Given a collection $\cT$ of trees and a graph $G$ with a vertex partition $(A,B)$
such that $B$ is independent
and $G[A,B]$ has very few edges, 
the next lemma guarantees a packing of $\cT$ into $G$ which covers all edges of $G$ between $A$ and $B$. The purpose of the lemma will be to adjust the parity of the leftover degrees of the vertices in $B$ prior to applying Lemma~\ref{lem: clear delta_1}.

\begin{lemma}\label{lem: clear parity}
Suppose $n,\Delta \in \N\sm\{1\}$ and $1/n \ll \epsilon \ll \delta_1  \ll\gamma\ll 1/\Delta,p\leq 1$.
Let $G$ be a graph on $n$ vertices and $A$ be a set of at least $\gamma^2n$ vertices of $G$.
Let $B:=V(G)\sm A$, $H:=G[A,B]$, and let $\cT$ be a collection of trees with $\Delta(T)\leq \Delta$ for every $T\in \cT$. 
Suppose 
\begin{enumerate}[label=(c3.\arabic*)]
	\item\label{item:L31} $G[A]$ is $(\gamma^{1/10},p)$-quasi-random, 
	\item\label{item:L32} $B$ is an independent set of $G$,
	\item\label{item:L33} $|E(H)| \leq \delta_1 n$ and $|E(H)| +2\epsilon n \leq |\cT| \leq \gamma |A|$,
	\item\label{item:L34} every $T\in \cT$ satisfies $2\Delta^4 \leq |T|\leq p^2 |A|/2$ and has a root $y_T$;
	moreover, there is a set $W_T\sub V(G)$ with $|W_T|\leq \Delta^2$
	and a function $\tau_T':\{y_T\}\to A\sm W_T$, and
  \item\label{item:L35} $|\{T\in \cT: u\in W_{T}\cup \{\tau_{T}'(y_T)\}\}|\leq \epsilon n$ for every $u\in V(G)$.
\end{enumerate}
Then there is a function $\tau$ packing $\cT$ into $G$ which is consistent with $\{\tau_T'\}_{T\in \cT}$ such that 
$E(H)\sub E(\tau(\cT))$ 
and $W_T\cap \tau(T)=\es$ for every $T\in \cT$.
\end{lemma}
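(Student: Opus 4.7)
The plan is to pair up the edges of $H$ with distinct trees from $\cT$, and then embed each such tree so that its assigned edge appears as a leaf-edge; the remaining trees would be embedded entirely inside $A$.

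First I would enumerate $E(H)=\{e_1,\dots,e_m\}$, write $e_i=u_iv_i$ with $u_i\in A$ and $v_i\in B$ (recall $B$ is independent by \ref{item:L32}), and greedily assign to each $e_i$ a distinct tree $T_{e_i}\in\cT$ such that $u_i\notin W_{T_{e_i}}\cup\{\tau_{T_{e_i}}'(y_{T_{e_i}})\}$ and $v_i\notin W_{T_{e_i}}$. By \ref{item:L35} each of $u_i$ and $v_i$ forbids at most $\epsilon n$ trees, while only $i-1<m$ trees have already been used; since $|\cT|\geq m+2\epsilon n$ by \ref{item:L33}, a valid choice always remains. Let $\cT_1:=\{T_{e_1},\dots,T_{e_m}\}$ and $\cT_2:=\cT\setminus\cT_1$. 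For each $T\in\cT_1$ assigned to $e=uv$ I would then pick a leaf $\ell_T$ of $T$ at distance at least $4$ from $y_T$; this exists because the ball of radius $3$ around $y_T$ in $T$ contains at most $\Delta^4$ vertices while $|T|\geq 2\Delta^4$ by \ref{item:L34}, so the far endpoint of a longest path from $y_T$ in $T$ is such a leaf. Let $x_T$ denote the unique neighbour of $\ell_T$ in $T$.

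Next, the trees would be embedded iteratively. Let $G_j$ denote $G$ with the edges used by previously embedded trees removed. For $T\in\cT_1$, set $F_T:=T-\ell_T$; since the distance in $T$ between $y_T$ and $x_T$ is at least $3$, the pair $\{y_T,x_T\}$ is $3$-independent in $F_T$, and the pre-embedding $\phi'(y_T):=\tau_T'(y_T)$, $\phi'(x_T):=u$ is injective because $u\neq\tau_T'(y_T)$. I would apply Lemma~\ref{lem: embed forests} to extend $\phi'$ to an embedding of $F_T$ into $G_j[A\setminus W_T]$, and then set $\tau_T(\ell_T):=v$ so that the edge $uv$ is covered. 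For $T\in\cT_2$, I would simply apply Lemma~\ref{lem: embed forests} to embed $T$ into $G_j[A\setminus W_T]$ with $y_T\mapsto\tau_T'(y_T)$.

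To justify each application of Lemma~\ref{lem: embed forests}, I need the co-degree condition in the host graph. Across the whole process at most $\Delta|\cT|\leq\Delta\gamma|A|$ edges are removed at any given vertex of $A$, and $|W_T|\leq\Delta^2$. Combining \ref{item:L31} with Proposition~\ref{prop: quasi-random subgraph} shows that $G_j[A\setminus W_T]$ remains quasi-random throughout, so any two of its vertices have at least $p^2|A|/2\geq|T|$ common neighbours by \ref{item:L34}---precisely what the lemma requires. By construction $\tau:=\bigcup_T\tau_T$ is consistent with $\{\tau_T'\}$, the trees are embedded edge-disjointly, every edge of $H$ is covered via the designated leaf-edge, and $\tau(T)\cap W_T=\emptyset$ because the image lies in $(A\setminus W_T)\cup\{v\}$ and we ensured $v\notin W_T$ in the matching step. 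The only delicate point will be the joint $W_T$-avoidance on both endpoints of each $e\in E(H)$ in the matching step, which is exactly what the slack $+2\epsilon n$ in \ref{item:L33} is designed for.
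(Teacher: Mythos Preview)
Your proposal is correct and follows essentially the same approach as the paper: greedily match each edge of $H$ to a distinct tree using the slack $+2\epsilon n$ from \ref{item:L33} together with \ref{item:L35}, pick a leaf at distance at least $4$ from the root, embed the tree minus that leaf into $G_j[A\setminus W_T]$ via Lemma~\ref{lem: embed forests} (using Proposition~\ref{prop: quasi-random subgraph} to maintain quasi-randomness), and attach the leaf to the $B$-endpoint. The paper's write-up differs only cosmetically (e.g.\ it also fixes an arbitrary target $u_i$ for the leaf-neighbour of the unassigned trees, which is inessential).
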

\begin{proof}
We write $\cT=\{T_1,\ldots,T_t\}$ and
$E(H)=\{u_1v_1,\dots, u_k v_k\}$ with $u_i\in A$ for all $i\in [k]$. 
Let $\tau'_{i}:=\tau'_{T_i}$.
For each $i\in  [k]$, we choose $j_i\in[t]$ such that $v_i\notin W_{T_{j_i}}, u_i\notin W_{T_{j_i}}\cup \{ \tau'_{j_i}(y_{T_{j_i}})\}$ and $j_1,\dots, j_{k}$ are all distinct. 
This is possible using a greedy approach, since $t \geq k + \epsilon n + \epsilon n$ and by \ref{item:L35}. 
By relabeling, we may assume that $v_i\notin W_{T_i}$ and $u_i\notin W_{T_{i}}\cup \{ \tau'_{i}(y_{T_{i}})\}$. For $k< i \leq t$, let $u_i$ be an arbitrary vertex in $A\sm (W_{T_{i}}\cup \{\tau'_{T_{i}}(y_{T_i})\})$. 

We sequentially construct $\tau_1,\dots, \tau_{t}$
such that for all $i\in [t]$
 \begin{itemize}
 \item[(E1)$_i$] $\tau_i$ is consistent with $\tau'_i$ and embeds $T_i$ into $G_i- W_{T_i}$ where $G_i:= G- \bigcup_{j=1}^{i-1}E(\tau_{j}(T_j))$,
 \item[(E2)$_i$] if $i\in[k]$, then $E(\tau_i(T_i)\cap H) = \{u_iv_i\}$,
 \item[(E3)$_i$] if $i\in[k]$, then $\tau_i(T_i)\setminus \{v_i\}\subseteq A$, and
 if $i>k$, then $\tau_i(T_i)\sub A$.
 \end{itemize}

Assume that for some $i\in [t]$ we have constructed $\tau_1,\dots,\tau_{i-1}$ such that (E1)$_j$--(E3)$_j$ hold for all $j<i$. 
By \ref{item:L33}, $d_{G}(v)-d_{G_i}(v)\leq \Delta t\leq \Delta \gamma |A|$ for all $v\in V(G)$, 
and hence $G_i[A\sm W_{T_i}]$ is $(\gamma^{1/11},p)$-quasi-random by \ref{item:L31} and Proposition~\ref{prop: quasi-random subgraph}.
Let $\ell_i$ be a leaf of $T_i$ which has distance at least $4$ from $y_{T_i}$, and let $x_i$ be the unique neighbour of $\ell_i$ in $T_i$. 
Such a leaf exists, since $|T_i|\geq 2\Delta^4$. 
Let 
$$T'_i:=\left\{ \begin{array}{ll} T_i - \{\ell_i\} &\text{ if }i\in [k], \\
 T_i &\text{ if }i\in [t]\sm [k]. \end{array}\right.$$
Let $\tau''_i(y_{T_i}):=\tau'_{i}(y_{T_i})$ and $\tau''_i(x_i):=u_i$.

Observe that $\{y_{T_i},x_i\}$ forms a $3$-independent set of $T_i'$ and 
for any $u,v\in A\sm W_{T_i}$, 
we have $d_{G_i,A\sm W_{T_i}}(u,v) \geq p^2 |A|/2 \geq |T'_i|$ by \ref{item:L34}.
Apply Lemma~\ref{lem: embed forests} to $T'_i, G_i[A\sm W_{T_i}]$ and $\tau''_{T_i}$ to obtain a function $\phi_i$ 
embedding $T'_i$ into $G_i[A\sm W_{T_i}]$ which is consistent with $\tau_i''$.
Let $\tau_i: V(T_i)\to G_i$ be defined by
\begin{align*}
\tau_i(x) := \left\{ \begin{array}{ll}
v_i &\text{ if } x= \ell_i \text{ and } i\in [k], \\
\phi_i(x)  &\text{ otherwise.}\\
\end{array}\right.
\end{align*}
Then (E1)$_i$--(E3)$_i$ hold. 
Define $\tau := \bigcup_{i=1}^{t} \tau_i$, then $\tau$ is consistent with $\{\tau'_T\}_{T\in \cT}$ and $E(H)\subseteq E(\tau(\cT))$.
\end{proof}

The following lemma is a variant of Lemma~\ref{lem: clear parity} in which $G[A,B]$ is allowed to have more edges than in Lemma~\ref{lem: clear parity}. 
It guarantees a packing of $\cT$ into $G$ which covers all edges between $A$ and $B$ apart from precisely one edge at every vertex of odd degree in $B$. 
In particular, once we have adjusted the parity of the leftover degrees of the vertices in $B$ via Lemma~\ref{lem: clear parity}, 
we can cover all remaining leftover edges via Lemma~\ref{lem: clear delta_1}. 
The proof of Lemma~\ref{lem: clear delta_1} is more difficult than that of Lemma~\ref{lem: clear parity} since the covering is more ``efficient'': while in Lemma~\ref{lem: clear parity} we used one tree for each leftover edge, here each tree covers a linear number of leftover edges.

\begin{lemma}\label{lem: clear delta_1}
Suppose $n,\Delta \in \N\sm\{1\}$ and $1/n\ll \epsilon\ll \delta_1 \ll \gamma \ll 1/\Delta, p\leq 1$.
Let $G$ be a graph on $n$ vertices and $A$ be a set of at least $\gamma^2 n$ vertices of $G$.
Let $B:=V(G)\sm A$, $H:=G[A,B]$, and 
let $\cT$ be a collection of trees with $\Delta(T)\leq \Delta$ for every $T\in \cT$. 
Suppose 
\begin{enumerate}[label=(c4.\arabic*)]
	\item\label{item:L41} $G[A]$ is $(\gamma^{1/10},p)$-quasi-random, 
	\item\label{item:L42} $B$ is an independent set of $G$,
	\item\label{item:L43} $\Delta(H)\leq 2\delta_1 n$,
	\item\label{item:L44} $\delta_1^{1/2} n \leq |\cT| \leq \gamma |A|$, 
	\item\label{item:L45}	every $T\in \cT$ satisfies $\delta_1^{1/4} n\leq |T|\leq p^2|A|/4$ and has a root $y_T$;
	moreover, there is a set $W_T\sub V(G)$ with $|W_T|\leq \Delta^2$ and
	a function $\tau_T':\{y_T\}\to A \sm W_{T}$, and 
	\item\label{item:L46} $|\{T\in \cT: u\in W_{T}\cup \{\tau_T'(y_T)\}\}|\leq \epsilon n$ for every $u\in V(G)$.
\end{enumerate}
Then there is a function $\tau$ packing $\cT$ into $G$ which is consistent with $\{\tau_T'\}_{T\in \cT}$ such that 
\begin{enumerate}[label=(C4.\arabic*)]
	\item\label{item:Q41} writing $H^* := H -  E(\tau(\cT))$, we have $d_{H^*}(v)\leq 1$ for every $v \in B$,
	\item\label{item:Q42} $W_{T}\cap  \tau(T)=\es$ for every $T\in \cT$, and
	\item\label{item:Q43} $d_{\tau(\cT)}(v)$ is even for all $v\in B$.
	\end{enumerate}
\end{lemma}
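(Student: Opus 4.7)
The plan follows the structure of Lemma~\ref{lem: clear delta2}, with \emph{cherries} (paths of length~$2$ with centre in $B$ and wings in $A$) replacing the single $B$-$B$ edges used there. First I reduce to the even-degree case: for each $v\in B$ with $d_H(v)$ odd I select one arbitrary $H$-edge $e_v$ incident to $v$ to be the leftover edge of~\ref{item:Q41}, and set $H':=H\sm\{e_v:v\in B,\ d_H(v)\text{ odd}\}$. Every $v\in B$ now has even $d_{H'}(v)$, so Proposition~\ref{prop: bip seagull} decomposes $H'$ into at most $3\Delta(H')\le 6\delta_1 n$ edge-disjoint flocks of seagulls $\cF_1,\dots,\cF_k$ whose centres lie in $B$ and wings in $A$. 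Covering $E(H')$ exactly with $\tau(\cT)$ will simultaneously yield~\ref{item:Q41} and~\ref{item:Q43}.

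Following the weight bookkeeping in the proof of Lemma~\ref{lem: clear delta2}, define $w(s):=|\{T\in\cT:V(s)\cap(W_T\cup\{\tau'_T(y_T)\})\ne\es\}|$ for each seagull~$s$. Since the seagulls in each $\cF_i$ are vertex-disjoint, every $W_T\cup\{\tau'_T(y_T)\}$ meets at most $\Delta^2+1$ seagulls of $\cF_i$, giving $w(\cF_i)\le 2\Delta^2\gamma n$; moreover $w(s)\le 3\epsilon n$ by~\ref{item:L46}. Applying Proposition~\ref{prop: weight partition} to each $\cF_i$ with $q:=\lceil 16\Delta^2\gamma/\delta_1^{1/2}\rceil$ yields sub-flocks of size $O(\delta_1^{1/2}n/(\gamma\Delta^2))$ and weight at most $\delta_1^{1/2}n/2$, with total number at most $kq\le|\cT|/2$; I then pad with empty dummy sub-flocks up to exactly $|\cT|$ many. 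In the auxiliary bipartite graph between trees and sub-flocks with $T\sim\hat\cF$ iff $V(\hat\cF)\cap(W_T\cup\{\tau'_T(y_T)\})=\es$, the weight bound gives sub-flock degrees $\ge|\cT|/2$, while the fact that each vertex lies in at most $2\delta_1 n$ sub-flocks combined with $|W_T\cup\{\tau'_T(y_T)\}|\le\Delta^2+1$ gives the same lower bound for tree degrees. A perfect matching $\sigma$ thus assigns each sub-flock to a distinct tree.

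For each tree $T$ with $\sigma(T)=\hat\cF=\{s_j=u_j^1\textrm{-}v_j\textrm{-}u_j^2:j\in[m]\}$, I apply Proposition~\ref{prop: k-independent set}(I) to the set of non-leaves of $T$ at distance $\ge 7$ from $y_T$ (whose cardinality is $\gg m$ by~\ref{item:L45} and the hierarchy) to obtain a $7$-independent set $\{y_1,\dots,y_m\}$ of non-leaves; for each $y_j$ I pick two distinct $T$-neighbours $x_j^1,x_j^2$. I pre-embed $y_T\mapsto\tau'_T(y_T)$, $y_j\mapsto v_j$, $x_j^i\mapsto u_j^i$, and, if $d_T(y_j)>2$, the remaining $d_T(y_j)-2$ neighbours of $y_j$ greedily into $N_H(v_j)\cap A\sm W_T$, disjointly from previously used vertices. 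Setting $F:=T-\{y_j:j\in[m]\}$, the removal of $y_j$ places $x_j^1,x_j^2$ and the extra pre-embedded neighbours in pairwise distinct components of $F$, while for $j\ne j'$ the $7$-independence of the $y_j$'s forces distance $\ge 5$ in $F$ between the associated pre-embedded vertices, so the whole pre-embedded set is $3$-independent in $F$. Proposition~\ref{prop: quasi-random subgraph} and~\ref{item:L41} give $d_{G_i,A\sm W_T}(u,v)\ge p^2|A|/3\ge|F|$ after deleting already-used edges, so Lemma~\ref{lem: embed forests} extends the pre-embedding to an embedding $\tau_T$ of $T$ into $G$ covering exactly the $H$-edges of $\hat\cF$ while avoiding $W_T$. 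Trees matched to dummy sub-flocks are handled by the same argument with $m=0$. Taking $\tau:=\bigcup_T\tau_T$ gives \ref{item:Q41}--\ref{item:Q43} and consistency with $\{\tau'_T\}$.

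The principal obstacle is the requirement $d_H(v_j)\ge d_T(y_j)$ used to place the extra $d_T(y_j)-2$ neighbours of $y_j$ in $N_H(v_j)\cap A$. This is automatic whenever $y_j$ has degree exactly $2$, which Proposition~\ref{prop: number of leaves}(II) guarantees in abundance if $T$ has relatively few leaves, but bounded-degree trees in general may contain no degree-$2$ non-leaves at all (for example ``caterpillars'' in which every internal spine vertex carries $\Delta-2$ extra leaves). Handling this cleanly requires refining Phase~3 so that high-degree non-leaves of $T$ are matched only to centres $v_j$ with $d_H(v_j)$ correspondingly large -- for instance by introducing a secondary weight function that tracks the degree profile of each tree against the $H$-degrees of the seagull centres in each sub-flock, and running the matching argument with respect to this refined bipartite structure. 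The remaining verifications of~\ref{item:Q41}--\ref{item:Q43} then reduce to routine calculations mirroring those in Lemma~\ref{lem: clear delta2}.
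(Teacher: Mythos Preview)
Your proposal tracks the paper's proof closely through the parity reduction, the seagull decomposition of $H'$ via Proposition~\ref{prop: bip seagull}, the weight-partition step, and the bipartite matching between trees and sub-flocks. The divergence, and the genuine gap, is exactly where you flag it: the treatment of the non-leaf vertices $y_j$ mapped onto seagull centres $v_j\in B$.

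Your suggested fix---matching high-degree $y_j$'s to centres $v_j$ with large $d_H(v_j)$ via a secondary weight---does not work. The issue is not merely whether $N_H(v_j)$ is large enough to accommodate the extra $d_T(y_j)-2$ neighbours. Since $B$ is independent, \emph{every} edge of $\tau(T)$ incident to $v_j$ lies in $H$. If $d_T(y_j)>2$, the extra edges $v_j\,$--$\,(\text{image of extra neighbour})$ are $H$-edges \emph{outside} the assigned sub-flock $\hat\cF$; they may already belong to some other sub-flock and hence be covered again by another tree, destroying edge-disjointness. Even if you avoided that collision, using $d_T(y_j)$ edges at $v_j$ rather than exactly two would in general break \ref{item:Q43}. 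So the requirement is not $d_H(v_j)\ge d_T(y_j)$ but precisely $d_T(y_j)=2$, and no matching refinement can relax this.

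The paper resolves this by a dichotomy you did not attempt. It splits $\cT$ according to the number of leaves. If $T$ has at most $\delta_1^{1/3}n$ leaves, Proposition~\ref{prop: number of leaves} supplies an abundant $5$-independent set of degree-$2$ vertices, and your scheme goes through with $d_T(y_j)=2$ automatically. Trees with more than $\delta_1^{1/3}n$ leaves are \emph{paired} into two-component forests $F=\tilde T\cup T^\diamond$; for each seagull $u_j v_j w_j$ one now picks a leaf $x_j^1$ of $\tilde T$ and a leaf $x_j^2$ of $T^\diamond$, maps \emph{both leaves} onto the centre $v_j$, and maps their respective neighbours onto the wings $u_j,w_j$. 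Because $x_j^1$ and $x_j^2$ lie in different components and are leaves, each contributes exactly one edge at $v_j$, so again precisely the two seagull edges are used and the rest of each component is embedded inside $A$ via Lemma~\ref{lem: embed forests}. This pairing trick is the missing idea; once you have it, the remaining bookkeeping is as you describe.
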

\begin{proof}
We define 
$B^{\rm odd} := \{v \in B, d_{H}(v) \text{ is odd}\}$ and $t:=|\cT|$.
For each $v\in B^{\rm odd}$, we select an edge $e_v$ joining $v$ and $A$, 
and let $E^{\rm odd}:=\{e_v : v\in B^{\rm odd}\}$. 
We define $G':=G-E^{\rm odd}$.
Our packing $\tau$ of $\cT$ will cover all edges in $G'[A,B]$ (but no edges from $E^{\rm odd}$).
This will ensure that \ref{item:Q41} and \ref{item:Q43} are satisfied.

We let $\cT^2=\{ \tilde{T}_1, T^{\diamond}_1, \tilde{T}_2, T^{\diamond}_2,\dots, \tilde{T}_{t_2}, T^{\diamond}_{t_2}\}$ be a maximal collection of an even number of trees in $\cT$ which have at least $\delta_1^{1/3} n$ leaves.
Let $\cT^{1}=\{ T_1,\dots, T_{t_1}\}:=\cT\sm \cT^2$ such that $T_i$ has at most $\delta_1^{1/3} n$ leaves for all $i\leq t_1-1$.
(The tree $T_{t_1}$ does not play a special role; if it has at least $\delta_1^{1/3}n$ leaves,
we simply cannot include it into $\cT^2$ since $|\cT^2|$ must be even.)
Let $f:= t_1+t_2-\min\{1,t_1\}$.
Next we define a collection $\cF=\{F_1,\ldots,F_f\}$ of forests.
For every $i\in [t_2]$, let
$$F_i:= \tilde{T}_{i} \cup T^{\diamond}_{i}, \enspace W_i':= W_{\tilde{T}_{i}}\cup W_{T^{\diamond}_{i}}, 
\enspace W_i:= W_i' \cup \{\tau'_{\tilde{T}_{i}}(y_{\tilde{T}_{i}}), \tau'_{T^{\diamond}_{i}}(y_{{T}_i^{\diamond}}) \}.$$
For every $t_2< i\leq f$, let
$$F_i:= T_{i-t_2}, \enspace W_i':=W_{i-t_2}, \enspace W_i := W'_i \cup \{\tau'_{T_{i-t_2}}(y_{T_{i-t_2}})\}.$$

Thus if $t_1\geq 1$, then $T_{t_1}$ is the only tree which is not part of any forest in $\cF$.
The proof strategy now is as follows.
We first decompose $G'[A,B]$ into a collection $\cS$ of edge-disjoint flocks of seagulls (recall that these were defined in Section~\ref{sec2.4}).
For every flock of seagulls in $\cS$, 
we use one forest $F\in\cF$ to cover it.
Note that either $F$ has many vertices of degree $2$
or $F$ is the union of two trees both having many leaves.
Accordingly we will cover the vertex of degree $2$ of every seagull by a vertex of degree $2$ in $F$
or by two leaves from two distinct components in $F$.

For every seagull $Z$ in $G$, let $w(Z):=|\{i\in [f]: W_i\cap V(Z)\neq \es\}|$.
By \ref{item:L46}, we conclude $w(Z)\leq 3\epsilon n$.
For every flock $S$ of seagulls, let $w(S) := \sum_{Z\in S} w(Z)$.
Recall that in $G'$ every vertex in $B$ has even degree.
Thus by \ref{item:L43} and Proposition~\ref{prop: bip seagull}, there is a partition of $E(G'[A,B])$ into $6\delta_1 n$ edge-disjoint flocks $S^1,\dots, S^{6\delta_1 n}$ of seagulls with wings in $A$. 
For each $i\in [6\delta_1 n]$, by \ref{item:L45},
we have $w(S^i) \leq (\Delta^2+1)t$.\COMMENT{$w(S_i) \leq \sum_{v\in V(G)}w(v) = \sum_{T\in \cT} |W_{T}\cup \{y_T\}| \leq (\Delta^2+1)|\cT|$.}
Thus for each $i\in [6\delta_1 n]$, we can use Proposition~\ref{prop: weight partition} to partition $S^i$ into 
$q:=\delta_1^{-1/2}/60$ (possibly empty) disjoint flocks $S^i_1,\dots, S^i_{q}$ 
such that $|S^i_{j}|\leq 40\delta_1^{1/2}n$ and $w(S^i_j) \leq 120\delta_1^{1/2}(\Delta^2+1)t + 3\epsilon n \leq \delta_1^{1/2}n/100$
for all $j\in[q]$. 

Note that $q\cdot 6\delta_1 n \leq t/10$ by \ref{item:L44}. By adding empty flocks to $\{S_j^i\}_{i\in [6\delta_1 n],j\in [q]}$ if necessary,
we obtain a collection of edge-disjoint flocks $\cS:=\{S_1,\dots, S_{t/10}\}$ partitioning $E(G'[A,B])$ with
\begin{align}\label{eq:seagulls}
|S_j|\leq 40\delta_1^{1/2} n, \text{ and } w(S_j) \leq \delta_1^{1/2}n/100
\end{align}
for all $j\in [t/10]$.

Let $\cH$ be an auxiliary bipartite graph with vertex partition $(\cF, \cS)$ 
such that $F_iS_j \in E(\cH)$ if $W_i \cap V(S_j)=\es$. 
Since $(t-1)/2 \leq f \leq t$, we have
$$d_{\cH}(S_i) \geq |\cF| - w(S_i) \geq (t-1)/2 - \delta_1^{1/2}n/100 \geq t/10 =  |\cS|.$$ 
Thus $\cH$ contains a matching covering $\cS$. 
By relabelling we may assume that $F_iS_i\in E(\cH)$ for all $i\in [t/10]$. 
Let $\hat{\cS}$ arise from $\cS$ by adding empty flocks $S_{t/10+1},\dots, S_{f}$.

We will greedily construct embeddings $\tau_1,\dots, \tau_f$ such that for each $i\in [f]$ the following hold:
\begin{itemize}
\item[(P1)$_i$] $\tau_i$ packs the components of $F_i$ into $G_i-W_i'$ where $G_i:= G'-\bigcup_{j=1}^{i-1} E(\tau_j(F_j))$,
\item[(P2)$_i$] $E(\tau_i(F_i))\cap E(G'[A,B])= E(S_i)$, and
\item[(P3)$_i$] if $i\in [t_2]$, then $\tau_i$ is consistent with $\tau'_{\tilde{T}_i} \cup \tau'_{T^{\diamond}_i}$ and 
if $t_2<i\leq f$, then $\tau_i$ is consistent with $\tau'_{T_i}$.
\end{itemize}
Note that 
if $i\in[t_2]$, i.e.~if $F_i$ consists of two components $\tilde{T}_i,T_i^\diamond$, 
then $\tau_i(\tilde{T}_i)$ and $\tau_{i}(T_i^\diamond)$ are not necessarily vertex-disjoint, 
thus $\tau_i$ may not be an embedding of $F_i$. 

Suppose for some $i\in [f]$, we have constructed $\tau_1,\dots, \tau_{i-1}$ satisfying (P1)$_j$--(P3)$_j$ for all $j<i$.
We will now construct $\tau_i$.
Since $d_{G, A\sm W_i'}(v)-d_{G_i, A\sm W_i'}(v) \leq \Delta i \leq \Delta t \leq \Delta \gamma |A|$ for each $v\in A$ by \ref{item:L44} 
and $|W_i'|\leq 2\Delta^2$ by \ref{item:L45},
we can apply Proposition~\ref{prop: quasi-random subgraph} to conclude from \ref{item:L41} that $G_i[A\sm W_i']$ is $(\gamma^{1/11},p)$-quasi-random
and so for any two vertices $u,v\in A\sm W_i'$
\begin{align}\label{eq:Giquasirandom}
	d_{G_i,A\sm W_i'}(u,v)\geq p^2 |A|/2 \geq |F_i|.
\end{align}

Let $s:=|S_i|$ and write $S_i = \{u_1v_1w_1, \dots, u_{s}v_{s}w_{s}\}$. 
Let us first consider the case when $s=0$. 
Let $c\in [2]$ be the number of components in $F_i$ and $T^1,\dots,T^c$ be these components.
By \eqref{eq:Giquasirandom}, we can apply Lemma~\ref{lem: embed forests} to each component $T^j$ of $F_i$ to find a function $\phi_{T^j}$ which embeds $T^j$ into $G_i[A\sm W_i']$ 
which is consistent with $\tau'_{T^j}(y_{T^j})$ for each $j\in [c]$ such that the sets $E(\phi_{T^j}(T^j))$ are pairwise edge-disjoint for different $j\in[c]$.
(If $c=2$, then in order to find $\phi_{T^2}$ we apply Lemma~\ref{lem: embed forests} to $G_i[A\sm W_i']-E(\phi_{T^1}(T^1))$.)
Let $\tau_i:= \phi_{T^1}\cup \dots \cup \phi_{T^c}$. 
Then $\tau_i$ satisfies (P1)$_i$--(P3)$_i$.

Suppose next that $s>0$ and $F_i$ is a tree, i.e.~$t_2<i\leq f$.
Thus $F_i$ has at most $\delta_1^{1/3} n$ leaves.
By \eqref{eq:seagulls} and \ref{item:L45}, we have $s\leq 40\delta_1^{1/2} n \leq (|F_i|- 2\delta_1^{1/3}n)/\Delta^{5}$. Thus Proposition~\ref{prop: number of leaves} implies that there exists a $5$-independent set $I:=\{y_{F_i},y_1,\dots, y_{s}\}$ in $F_i$
such that $I\sm \{y_{F_i}\}$ is a set of vertices of degree $2$ in $F_i$.
For each $j\in [s]$, let $\{x_j,z_j\}:=N_{F_i}(y_j)$. 
Then $J:= \{x_1,z_1,\dots,x_{s}, z_{s}\}\cup \{y_{F_i}\}$ is a $3$-independent set in $F_i-(I\sm \{y_{F_i}\})$. 
Let 
\begin{align*}
\phi(v):= \left\{ \begin{array}{ll}
u_j &\text{ if } v=x_j \text{ for some } j\in[s],\\
w_j &\text{ if } v=z_j \text{ for some } j\in[s],\\
\tau'_{F_i}(y_{F_i}) &\text{ if } v= y_{F_i}.
\end{array}\right.
\end{align*}
By \eqref{eq:Giquasirandom}, we can apply Lemma~\ref{lem: embed forests} to find 
a function $\phi'$ which is consistent with $\phi$ and embeds $F_i-(I\sm \{y_{F_i}\})$ into $G_i[A\sm W_i']$.
For every $v\in V(F_i)$,
let
\begin{align*}
\tau_i(v):= \left\{ \begin{array}{ll}
v_j &\text{ if } v=y_j \text{ for some } j\in[s],\\
\phi'(v) &\text{ if } v\in V(F_i)- (I\setminus\{y_{F_i}\}).
\end{array}\right.
\end{align*}
Then $\tau_i$ satisfies (P1)$_{i}$--(P3)$_{i}$.

Suppose next that $s>0$ and $F_i$ is a forest with two components $T^1,T^2$, i.e. $i\in [t_2]$ and $\{T^1,T^2\}=\{\tilde{T}_i,T_i^\diamond\}$.
Thus $T^1,T^2$ both have at least $\delta_1^{1/3} n$ leaves.
For each $c\in[2]$, let 
$$Z_c := \{z \in V(T^c): z\text{ is a neighbour of a leaf}\}\cup \{y_{T^{c}}\}.$$
Hence $|Z_c|\geq \delta_1^{1/3} n/\Delta \geq \Delta^{3} (s+1)$ by \eqref{eq:seagulls}.
Thus Proposition~\ref{prop: k-independent set}
implies that each $Z_c$ contains a $3$-independent set $I_c:=\{y_{T^c},y^c_1,\dots, y^c_{s}\}$. 
For each $j\in [s]$, let $x_j^c$ be a leaf of $T^c$ adjacent to $y_j^c$ and write $X^c:=\{x_1^c,\dots,x_s^c\}$.
Let 
\begin{align*}
\phi(v):= \left\{ \begin{array}{ll}
u_j &\text{ if } v=y^1_j \text{ for some } j\in[s],\\
w_j &\text{ if } v=y^2_j \text{ for some } j\in[s],\\
\tau'_{T^c}(y_{T^c}) &\text{ if } v= y_{T^c} \text{ for some } c\in[2].
\end{array}\right.
\end{align*}
If $\tau'_{T^1}(y_{T^1})= \tau'_{T^2}(y_{T^2})$, then let $F'_i$ be the tree we obtain from $F_i$ by identifying $y_{T^1}$ and $y_{T^2}$. 
Otherwise let $F'_i:=F_i$.  
By \eqref{eq:Giquasirandom}, 
we can apply Lemma~\ref{lem: embed forests} to find
a function $\phi'$ which is consistent with $\phi$ and embeds $F'_i-(X^1\cup X^2)$ into $G_i[A\sm W_i']$.
Let
\begin{align*}
\tau_i(v):= \left\{ \begin{array}{ll}
v_j &\text{ if } v\in \{x_j^1,x_j^2\}\text{ for some } j\in[s],\\
\phi'(v) &\text{ if } v\in V(F_i)- (X^1 \cup X^2).
\end{array}\right.
\end{align*}
Then $\tau_i$ satisfies (P1)$_{i}$--(P3)$_{i}$.

Suppose now that we have defined $\tau_1,\dots,\tau_f$.
If $t_1= 0$, 
then $\tau:= \bigcup_{i=1}^{f} \tau_i$ satisfies \ref{item:Q41}--\ref{item:Q43}.
If $t_1\geq 1$, then there is a single tree $T\in \cT$ which is not yet embedded.
In this case 
let $G_{f+1}:= G'-\bigcup_{j=1}^{f} E(\tau_j(F_j))$.
We once again use Lemma~\ref{lem: embed forests} to find a function $\tau_{f+1}$ which embeds $T$ into $G_{f+1}[A\sm W_T]$ and is consistent with $\tau'_{T}(y_T)$.
Then $\tau:= \bigcup_{i=1}^{f+1} \tau_i$ satisfies \ref{item:Q41}--\ref{item:Q43}.
\end{proof}

\section{Iteration Lemma}
\label{sec:iteration}
In this section we state and prove the key lemma of this paper.
Given a suitable graph $G^*$ on a vertex set $V$, 
a small set $A\sub V$, and a suitable set of forests,
we can cover all edges of $G^*$ incident to the vertices in $V\sm A$ with these forests without using many edges of $G^*$ inside $A$.
In Section~\ref{sec:final} we will apply this lemma iteratively to obtain our main result. More precisely, the above graph $G^*$ will be the union of graphs $G, G', H_1$ and $H_2$, where $G$ will be the ``leftover'' from the previous iteration step, and $H_1$ and $H_2$ are graphs we set aside at the start to ensure that each iteration can be carried out successfully. We will cover $G\cup H_1\cup H_2$ entirely (see \ref{item:Phi1}). Roughly speaking the leftover of $G'$ from the current iteration step will play the role of $G$ in the next iteration step, which is why we aim to use it as little as possible in the current iteration (see \ref{item:Phi2}).

\begin{lemma}\label{lem: iteration}
Suppose $D,\Delta,n\in \N$ with  $1/n \ll \epsilon \ll  \delta_2 \ll \delta_1 \ll \gamma_* \ll  \gamma \ll \beta\ll \alpha, 1/D, 1/\Delta,p,d\leq 1$ 
such that $D\geq 6$ is even and $\Delta\geq 3$. 
Suppose $G, G', H_1, H_2$ are four edge-disjoint graphs on a set $V$ and $A,R \sub V$ satisfying the following:
\begin{enumerate}[label=(g\arabic*)]
\item\label{item:G1} $|V|=n$, $|A|=\gamma n$, $|R|= \epsilon n$, $A\cap R=\emptyset$, and $A$ is an independent set in $G \cup H_1 \cup H_2$,
\item\label{item:G2} $G$ is $(\beta,\alpha)$-quasi-random with $V(G)=V$ and $G'$ is $(\gamma^{1/3},p)$-quasi-random with $V(G')= A$,
\item\label{item:G3} $H_1$ is a $(4\epsilon,\delta_1)$-quasi-random bipartite graph with vertex partition $(A,V\setminus A)$, and
\item\label{item:G4} $H_2$ is a graph on $V$ with $\Delta(H_2)\leq 3\delta_2 n/2$ such that $d_{H_2}(u,v)\geq 2\delta_2^2 n/3$ for any two distinct vertices $u,v \in V$.
\end{enumerate}
Suppose $\cF$ is a collection of rooted forests so that each $F\in \cF$ consists of two components $(T_F^1,r^1_F), (T_F^2,r^2_F)$ and $\cF$ satisfies the following:
\begin{enumerate}[label=(f\arabic*)]
\item\label{item:T1} $|\cF| \geq  (1/2 + d)n $,
\item\label{item:T2} $|F|\leq (1- {4}/{D}) n$, $\Delta(F)\leq \Delta$, and $|T_F^1|, |T_F^2| \geq \gamma n$ for all $F\in \cF$,
\item\label{item:T3} $ e(\cF) = e(G)+e(H_1)+e(H_2) + (3\gamma_*^2 \pm \gamma_*^2 )n^2$, and
\item\label{item:T4} there is a function $\phi':\{r^c_F: F\in \cF, c\in [2]\}\to R$ such that $|\phi'^{-1}(v)| \leq \epsilon^{-2}$ for any $v\in R$ and $\phi'(r^1_F)\neq \phi'(r^2_F)$ for any $F\in \cF$.
\end{enumerate}
Then there exists a function $\phi$ which is consistent with $\phi'$ and which packs $\cF$ into $G\cup G'\cup H_1\cup H_2$ such that
\begin{enumerate}[label=($\Phi$\arabic*)]
\item\label{item:Phi1} $E(G)\cup E(H_1)\cup E(H_2) \sub E(\phi(\cF))$ and
\item\label{item:Phi2} for every $v\in A$, we have $d_{G'\cap \phi(\cF)}(v) \leq \gamma_*^{1/2}|A|$.
\end{enumerate}
\end{lemma}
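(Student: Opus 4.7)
The plan is to follow the strategy outlined in Section~2.1: partition $\cF$ into five subcollections, one large collection $\cF_{\mathrm{main}}$ that will be packed into cycle blow-ups and four cleanup collections that will absorb the various types of leftover edges. First I would choose an equitable partition $(U_1,\ldots,U_D)$ of $V\setminus(A\cup R)$ uniformly at random. A union bound based on Lemma~\ref{lem: chernoff} together with the quasi-randomness of $G$ from \ref{item:G2} yields a partition for which $d_{G,U_i}(v)=(\alpha\pm 2\beta)|U_i|$ for every $v\in V$ and $i\in[D]$, placing us in position to invoke Lemma~\ref{lem: decomp blow ups}. The sizes of the five subcollections are chosen so that the edge budget~\ref{item:T3} matches exactly: $\cF_{\mathrm{main}}$ will account for essentially $e(G)$ worth of edges, while the extra $(3\pm 1)\gamma_*^2 n^2$ forest edges will be absorbed into $G'$ during the cleaning steps.

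Second, for each $F\in\cF_{\mathrm{main}}$ I would use Proposition~\ref{prop: taking subtree collection} to trim off a small subforest, with the cut roots at distance at least $5$ from $\{r_F^1,r_F^2\}$, producing a two-component forest $F^{\mathrm{main}}$ whose size fits comfortably inside a single cycle blow-up. Apply Lemma~\ref{lem: decomp blow ups} to $G[V\setminus(A\cup R)]$ to obtain an exceptional set $V_0$ with $|V_0|\leq \epsilon' n$ and a family $\cC$ of super-regular cycle blow-ups that nearly decompose $E(G[V\setminus(A\cup R)])$. Applying Lemma~\ref{lem: blow up advanced} separately to each $C\in\cC$, with the bipartite edges of $G\cup H_1$ joining $R$ to $V(C)$ playing the role of the auxiliary graph ``$G'$'' in that lemma and using $\phi'$ as the pre-embedding of the roots, packs all of $\{F^{\mathrm{main}}:F\in\cF_{\mathrm{main}}\}$ while sending each root $r_F^c$ to $\phi'(r_F^c)\in R$.

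After the main step the uncovered edges consist of: a tiny leftover inside $V\setminus(A\cup R)$ from \ref{item:R3} of Lemma~\ref{lem: decomp blow ups}; essentially all of $H_2$; all of $H_1$; and the edges of $G$ incident with $A\cup V_0\cup R$. I would then apply the four cleaning lemmas in sequence using the reserved subcollections together with the trimmed subforests from Step~2: Lemma~\ref{lem: clear rl waste} to cover the edges incident to $V_0$ (with $H_2$ supplying the common-neighbourhood hypothesis~\ref{item:L11}); Lemma~\ref{lem: clear delta2} to cover the remaining $H_2$-edges; Lemma~\ref{lem: clear parity} to adjust parities of the bipartite leftover between $A$ and $V\setminus A$; and finally Lemma~\ref{lem: clear delta_1} to cover all remaining bipartite edges. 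The quasi-random graph $G'$ from \ref{item:G2} provides the embedding space inside $A$ required by each of the last three cleaning lemmas, and it remains quasi-random throughout by Proposition~\ref{prop: quasi-random subgraph} since its degree drops by at most $\gamma_*^{1/2}|A|$ per vertex, which also yields~\ref{item:Phi2}.

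The main obstacle will be bookkeeping. At each cleaning step one must verify the existence of compatible partial root functions $\tau'_F$ satisfying the bounded-preimage conditions, maintain the correct quasi-randomness of the shrinking ``ambient graph'', and match the edge budget of~$\cF$ precisely against the edges consumed. A secondary challenge is that Lemmas~\ref{lem: clear delta2}--\ref{lem: clear delta_1} take trees rather than two-component forests as input, so each forest used in those cleaning steps must be split at its two components and processed separately, with each component re-rooted at $\phi'(r_F^c)$. Once these details are handled, the function $\phi$ is assembled by taking the union of the embeddings produced by the main packing and the four cleaning steps; properties~\ref{item:Phi1} and~\ref{item:Phi2} then follow directly from what each step guarantees.
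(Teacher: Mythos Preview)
Your overall architecture is right and matches the paper: approximate packing into cycle blow-ups via Lemmas~\ref{lem: decomp blow ups} and~\ref{lem: blow up advanced}, followed by the four cleaning lemmas in exactly the order you list. However, there is a structural gap in how you set up the partition, and it breaks the cleaning phase.

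You propose to partition $V\setminus(A\cup R)$ and to apply Lemma~\ref{lem: decomp blow ups} to $G[V\setminus(A\cup R)]$. This leaves \emph{all} $G$-edges incident with $A$ uncovered after the main step. Since $G$ is $(\beta,\alpha)$-quasi-random on $V$ and $A$ is only independent (not isolated) in $G$, each $v\in A$ has $d_G(v)\approx\alpha n$ neighbours in $V\setminus A$. Thus the bipartite leftover $H$ between $A$ and $V\setminus A$ that you hand to Lemma~\ref{lem: clear delta_1} has $\Delta(H)\approx\alpha n$, whereas that lemma requires $\Delta(H)\le 2\delta_1 n$ with $\delta_1\ll\gamma\ll\alpha$. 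The same over-density issue hits Lemma~\ref{lem: clear delta2} at vertices of $R$, whose $G$-degree into $V\setminus A$ is also $\approx\alpha n$ rather than $O(\delta_2 n)$. So the cleaning lemmas cannot absorb what the main step leaves behind.

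The paper's fix is to partition $V\setminus R$ (so the $U_i$'s \emph{include} slices of $A$) and to apply Lemma~\ref{lem: decomp blow ups} to $G-R$. The cycle blow-ups then contain vertices of $A$, and the main packing already covers almost all $G$-edges at $A$; after Step~2 every vertex outside $V_0\cup R$ has leftover $G$-degree $O(\delta_2^2 n)$, which is what the cleaning lemmas need. To keep large and small subforests vertex-disjoint, the paper does \emph{not} put the small subforests into $A$: instead, for each $S\in\binom{[D]}{D-3}$ the large part of each $F\in\cF_S$ is packed into $U_S$, and its small subforest is embedded into the complementary three classes $U_{[D]\setminus S}$. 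The choice of exactly $D-3$ classes is what makes the final step work (one needs, for any leftover edge with endpoints in $U_i,U_{i'}$ and a neighbour in $U_{i^*}$, an $S$ avoiding all of $i,i',i^*$). A further detail you are missing is that the roots in $R$ cannot reach the blow-ups directly through $H_1$ (since $H_1$ is bipartite with parts $A$ and $V\setminus A$ and $R\subseteq V\setminus A$); the paper first embeds the neighbours of the roots into $A$ via $H_1$-edges, and only then connects to the blow-up via a second $H_1$-hop.
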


We structure the proof into several steps.
In the ``preparation step'', Step~\ref{step1}, 
we partition $\cF$ into several subsets. 
We also apply Lemma~\ref{lem: decomp blow ups} to obtain an approximate decomposition of $G$ into suitable cycle blow-ups.
Unfortunately, Lemma~\ref{lem: decomp blow ups} also returns a small exceptional set $V_0$ of vertices which is not part of the cycle blow-ups.
Furthermore,
we partition each forest $F\in \cF$ into a large subforest $\tilde{F}$ and a small subforest $F-V(\tilde{F})$.

In Step~\ref{step2}, 
we apply Lemma~\ref{lem: blow up advanced} to the cycle blow-ups obtained in Step~\ref{step1} 
to approximately cover the edges in $E(G)$ with the set of large subforests. 
Crucially, the density of the leftover of $G$ (i.e.~the uncovered part) is insignificant compared to the density of $H_2$.
In this step we will also use some edges of $H_1$ in order to satisfy the restrictions on the packing given by \ref{item:T4}.
In the following steps we cover the remaining edges with the small subforests.
In order to avoid an overlap between the embeddings of the small and large subforests, 
we equip $V$ with an equitable partition $(U_1,\ldots,U_D)$ and make sure that for each $F\in \cF$ the large subforest $\tilde{F}$ of $F$ uses at most $D-3$ parts $U_i$
while the remaining three parts are reserved for the small subforest $F-V(\tilde{F})$ of $F$.
In order to maintain a symmetric setting, 
we actually partition $\cF$ into $\binom{D}{3}$ subsets and carry out the above procedure for every choice of $D-3$ sets in $(U_1,\ldots,U_D)$.

In Step~\ref{step3},
we prepare a link between the large and the small subforests by embedding the roots of each small subforest $F-V(\tilde{F})$ 
(i.e.~those vertices of $F-V(\tilde{F})$ which attach to $\tilde{F}$) into some $U_i$ 
which is not used by its corresponding large subforest $\tilde{F}$.

In the remaining steps we make use of the small subforests in the  following way.
In Step~\ref{step4}, we use Lemma~\ref{lem: clear rl waste} to cover almost all edges incident to the exceptional set $V_0$.
In Step~\ref{step5},
we cover all the edges induced by $V\sm A$ via Lemma~\ref{lem: clear delta2}.
In Step~\ref{step6}, we make use of Lemma~\ref{lem: clear parity} to ensure that the number of uncovered edges incident to each vertex in $V\sm A$ is even.
In Step~\ref{step7}, we use Lemma~\ref{lem: clear delta_1} to cover all remaining edges between $A$ and $V\sm A$. In each of these four steps we make use of the fact that the leftover of $G$ forms an insignificant part of the total current leftover.

\begin{proof}[Proof of Lemma~\ref{lem: iteration}]
We start with the preparation step.

\step{Preparation}\label{step1}
In this step, we partition $G,G',H_1,H_2$, 
the underlying vertex sets, 
and $\cF$ into structures which are suitable for applications of the packing and embedding lemmas proved in Section~\ref{sec: trees} and \ref{sec:cleaning}.
We additionally choose $M,M'\in \N$ and a constant $\eta$ such that $1/n\ll 1/M \ll 1/M' \ll \epsilon\ll \eta \ll \delta_2$. Let $\hat{D}:=\binom{D}{3}$ and
let $\cI:= \binom{[D]}{D-3} =\{ S_1,\dots, S_{\hat{D}}\}$. Thus $|\cI|=\hat{D}$.

We select an equitable partition $(U_1,\dots, U_D)$ of $V\sm R$ satisfying the following:
\begin{enumerate}[label=(U\arabic*)]
\item\label{item:U1} $(U_1\cap A,\ldots,U_D\cap A)$ is an equitable partition of $A$, and
\item\label{item:U2} for every $J\in \{G,G',H_1,H_2\}$, every $u,v\in V$ and $i,j\in [D]$, we have 
\begin{itemize}
\item[(i)] $|d_{J,U_i}(u)- d_{J,U_j}(u)|\leq \epsilon n$, 
\item[(ii)] $|d_{J,U_i\cap A}(u)- d_{J,U_j\cap A}(u)|\leq \epsilon n$, and
\item[(iii)] $|d_{J,U_i}(u,v)- d_{J,U_j}(u,v)|\leq \epsilon n$.
\end{itemize}
\end{enumerate}
Such an equitable partition exists as an equitable partition of $V\setminus R$
chosen uniformly at random subject to \ref{item:U1} satisfies \ref{item:U2} with probability at least $1/2$ by Lemma~\ref{lem: chernoff}.%
\COMMENT{We take $U_1,\dots, U_D$ satisfying (U1) uniformly at random.
For any set $S$ of size at least $\epsilon n$, $$\mathbb{E}[|U_i\cap S|] = D^{-1}|S|,$$ and $\mathbb{P}[|U_i\cap S| = D^{-1}|S|\pm \epsilon n/2] \geq 1-(1-c)^n$. We take union bound for $S= N_{H'}(v)$ or $S=N_{H'}(u,v)$ or $N_{H'}(v)\cap A, N_{H'}(u,v)\cap A$ then we get this with probability at least $1- 16 D n^2(1-c)^n$.} 
For a set $S\subseteq [D]$, we define 
$$U_{S}:=\bigcup_{i\in S} U_i.$$
For each $\I\in [\hat{D}]$, let 
\begin{align}\label{eq:defAB}
	A_\I:= A\cap U_{[D]\setminus S_\I}\text{ and }B_\I:= (U_{[D]\setminus S_\I}\cup  R)\setminus A.
\end{align}
We make the following observation
(which follows from \ref{item:G3}, \ref{item:U1}, \ref{item:U2} and the fact that $|R|=\epsilon n$).
\begin{equation}\label{eq: H1 quasi-random}
\begin{minipage}[c]{0.9\textwidth}\em
For all $i\in [D]$ and $\I\in [\hat{D}]$,
the bipartite graphs $H_1[U_i]$, $H_1[U_i\cup R]$, $H_1[ A_\I\cup B_\I]$ and $H_1[A_\I\cup (B_\I\setminus R)]=H_1[U_{[D]\setminus S_I}]$ are $( \epsilon^{1/2},\delta_1)$-quasi-random.
\end{minipage}
\end{equation}
Observe that $G- R$ is $(2\beta,\alpha)$-quasi-random by \ref{item:G2}, and thus $(\beta^{1/7},\alpha/2)$-dense by Proposition~\ref{prop: quasi-random implies dense}. 
Thus we can apply Lemma~\ref{lem: decomp blow ups} with the following graphs and parameters. \newline

\noindent
{
\begin{tabular}{c|c|c|c|c|c|c|c|c|c|c}
object/parameter & $G- R$ & $\epsilon/(1- \epsilon)$ & $\delta_2^{-4}$ & $\beta^{1/7}$ & $\alpha/2$ & $\alpha$ & $D$ & $(U_1,\dots, U_D)$&
$M$ &$M'$\\ \hline
playing the role of & $G$ & $\epsilon$ & $t$ & $\beta$ & $\alpha$ & $d$ & $D$& $(U_1,\dots,U_D)$  &$M$&$M'$
\end{tabular}
}\newline \vspace{0.2cm}

\noindent
Therefore, 
there exist an exceptional set $V_0\sub V\sm R$, with 
\begin{align}\label{eq: V0 size}
|V_0|\leq 2\epsilon n,
\end{align}
a positive integer $\Gamma$ with $M'\leq \Gamma \leq M$, 
and collections of cycle blow-ups $\cC_S$ (one for each $S\in \cI$) such that the following properties hold, 
where we write $\cC:= \bigcup_{S\in \cI}\cC_S$:
\begin{enumerate}[label=(R$'$\arabic*)]
	\item\label{item:R'1} for every $S\in \cI$, 
	the set $\cC_{S} $ is a set of $(3\epsilon,\delta_2^4)$-super-regular cycle $(1- 2\epsilon)\frac{n}{\Gamma}$-blow-ups
	such that $C\sub G$ for each $C\in \cC_S$,
	the length of each $C\in \cC_S$ is odd and at least $(1- {7}/{(2D)})\Gamma$
	and such that $V(C)\sub U_S\sm V_0$ for each $C\in \cC_S$,
	\item\label{item:R'2a} all cycle blow-ups in $\cC$ are edge-disjoint from each other,
	\item\label{item:R'2} $e( \cC_S) = (1\pm \beta^{1/14})\hat{D}^{-1} e(G)$ for all $S\in \cI$,
	\item\label{item:R'3}  $d_G(v)-\sum_{C\in \cC}d_{C}(v)\leq 4\delta_2^2n$ for $v\in V\sm (V_0 \cup R)$.\COMMENT{
	$3\delta_2^2n+\epsilon n \leq 4\delta_2^2n$ instead of $3\delta_2^2n$, since we apply Lemma~\ref{lem: decomp blow ups} to $G-R$.}
	In particular, $ e(\cC)\geq e(G) - 2\delta_2^2 n^2$.
\end{enumerate}
\COMMENT{In order to obtain \ref{item:R'1} we use that $(1-\epsilon/(1-\epsilon))(1-\epsilon)n = (\frac{1-2\epsilon}{1-\epsilon})(1-\epsilon)n = (1-2\epsilon)n.$}

Because we split $G- R$ into $\hat{D}$ collections $\cC_S$ of graphs, 
we also seek a corresponding partition of $\cF$ into $\cF_{1},\dots, \cF_{\hat{D}}$ such that 
\begin{enumerate}[label=(F\arabic*)]
\item\label{item:F1} $|\cF_{\I}|\geq (1/2+d/2)\hat{D}^{-1}n$ for all $\I\in [\hat{D}]$,
\item\label{item:F2} $e(\cF_{\I}) = e(\cC_{S_\I}) + (3\gamma_*^2 \pm 2\gamma_*^2)\hat{D}^{-1} n^2$ for all $\I\in [\hat{D}]$.
\end{enumerate}
It is not difficult so see that such a partition exists.
Indeed, for all $\I\in [\hat{D}]$, 
assign $F\in \cF$ to $\cF_\I$ with probability $e(\cC_{S_\I})/e(\cC)$, independently of all other $F'\in \cF$.
Then straightforward applications of Theorem~\ref{Azuma} show that this random partition satisfies \ref{item:F1} and \ref{item:F2} with probability at least $1/2$.\COMMENT{
Fix some $c$ small enough.
Let $F_1,\dots, F_{|\cF|}$ be the forests and for each $j\in [|\cF|]$, let $x_j \in [\hat{D}]$ be a random variable such that $\mathbb{P}[x_j=\I] = e(\cC_{S_\I})/e(\cC)$ such that $x_1,\dots, x_{|\cF|}$ are independent random variables.
And we assign $F_j$ into $\cF_{x_j}$.
We fix $\I\in [\hat{D}]$ consider exposure martingales $X_j:= \mathbb{E}[e(\cF_\I)\mid x_1,\dots, x_{j-1}]$ and $Y_j:= \mathbb{E}[|\cF_\I|\mid x_1,\dots, x_{j-1}]$.
Note that
\begin{align*}
	e(\cF)=e(G)+e(H_1)+e(H_2)+(3\gamma_*^2\pm\gamma_*^2)n^2
	= e(G) +(3\gamma_*^2\pm \gamma_*^2 \pm \gamma_*^3)n^2
	=e(\cC)+(3\gamma_*^2\pm \gamma_*^2 \pm 2\gamma_*^3)n^2.
\end{align*}
Then 
\begin{align*}
	X_1 
	= e(\cC_{S_\I}) + (3\gamma_*^2\pm\gamma_*^2\pm 2\gamma_*^3)n^2\frac{e(\cC_{S_\I})}{e(\cC)}
	= e(\cC_{S_\I}) + (3\gamma_*^2\pm\gamma_*^2\pm 2\gamma_*^3)(1\pm \beta^{1/15})n^2/\hat{D}= e(\cC_{S_\I}) + (3\gamma_*^2\pm 3\gamma_*^2/2)n^2/\hat{D},\\
	Y_1= \frac{e(\cC_{S_\I})}{e(\cC)}|\cF| 
	=
	\frac{(1\pm \beta^{1/15})}{\hat{D}}|\cF| \stackrel{({\rm f}1)}{\geq} (1+ 2d/3)\hat{D}^{-1} n.
\end{align*}
Also note that $X_i$ is $n$-Lipschitz and $Y_i$ is $1$-Lipschitz. 
Thus by Theorem~\ref{Azuma}, and the fact that $|\cF|\leq \alpha n/\gamma$ by \ref{item:T2},\ref{item:T3}
$$\mathbb{P}[X_{|\cF|}  
= e(\cC_{S_\I})+ (3\gamma_*^2\pm 2\gamma_*^2)n^2/\hat{D}] 
\geq 1 - 2 e^{-\gamma_*^4 n^4/(8\hat{D}^2n^2|\cF|)} 
\geq 1- (1-c)^n.$$ 
Also $$\mathbb{P}[Y_{|\cF|} \geq (1+d/2)\hat{D}^{-1}n]\geq 1-(1-c)^n.$$
Thus taking union bound gives us the desired partition.
}

We will cover the edges of $G,H_1,H_2$ in several steps, mainly via applications of 
Lemma~\ref{lem: blow up advanced} and
Lemma~\ref{lem: clear rl waste}--\ref{lem: clear delta_1}.
For the applications of Lemma~\ref{lem: clear rl waste}--\ref{lem: clear delta_1} 
we need to reserve an appropriate collection of forests.
Accordingly, for every $\I\in [\hat{D}]$, 
we partition $\cF_\I$ into sets $\cF_{\I}^{\eta}, \cF_{\I}^{\delta_2}, \cF_{\I}^{\delta_1}, \cF_{\I}^{\p},\cF_{\I}^0$ 
such that  
\begin{align}\label{eq: cF sizes}
|\cF_{\I}^{\eta}| = 3\left\lceil \frac{n}{6\hat{D}} \right\rceil, \quad 
|\cF_{\I}^{\delta_2}|=\delta_2^{1/2} n , \quad
|\cF_{\I}^{\delta_1}|=|\cF_{\I}^{\p}|=\gamma_* n
\end{align} and let 
$$\cF_{\I}^{\gamma_*}:= \cF_{\I}^{\delta_2}\cup \cF_{\I}^{\delta_1}\cup \cF_{\I}^{\p} , 
\enspace \cF_{\I}^0:= \cF_\I \setminus (\cF_\I^\eta\cup \cF_\I^{\gamma_*}) \enspace \text{and}
\enspace \cF^{\xi}:= \bigcup_{\I=1}^{\hat{D}} \cF^{\xi}_{\I}\text{ for }\xi \in \{\eta,\delta_2,\delta_1,\p,\gamma_*,0\}.$$
Such a partition exists by \ref{item:F1} and since 
$$(2\gamma_*+ \delta_2^{1/2})n + 3\left\lceil \frac{n}{6\hat{D}}\right\rceil  \leq (1/2+d/2)\hat{D}^{-1}n.$$
Note that 
\begin{align}\label{eq:sizeFgamma}
	2\gamma_*n \leq |\cF_\I^{\gamma_*}|\leq 3\gamma_* n.
\end{align}
We will use 
subforests of the forests in $\cF^\eta$ to cover almost all the edges at the exceptional set $V_0\cup R$ in Step~\ref{step4},
subforests of the forests in $\cF^{\delta_2}$ to cover all remaining edges inside $V\sm A$ in Step~\ref{step5} (these edges will consist mainly of edges in $H_2$).
We will use subforests of the forests in $\cF^\eta \cup \cF^\p$ to resolve parity issues in Step~\ref{step6}, and
subforests of the forests in $\cF^{\delta_1}$ to cover all remaining edges between $A$ and $V\sm A$ in Step~\ref{step7} (these edges will consist mainly of edges in $H_1$).
The forests in $\cF^0$ are simply the forests which play no specific role in Step~\ref{step4}--\ref{step7}.

In Step~\ref{step2}, we will use Lemma~\ref{lem: blow up advanced} to embed the bulk of $\cF$ into the cycle blow-ups in $\cC$.
Accordingly, for every $F\in \cF$ we now define a ``large'' subforest $\tilde{F}$ of $F$ 
which is embedded by Lemma~\ref{lem: blow up advanced}.
(The remaining subforests are used to cover the remaining edges by applications of Lemma~\ref{lem: clear rl waste}--\ref{lem: clear delta_1}, as described above.)
For every $F\in \cF$, 
we will also define a set $X_F$ containing all those vertices of $\tilde{F}$ which attach to $F- V(\tilde{F})$.

More precisely, for each $\I\in [\hat{D}]$, 
we proceed as follows. 
For each $F\in \cF^0_{\I}$, we define $\tilde{F} := F$ and $X_F:=\emptyset$. 

We now construct $\tilde{F}$ and $X_F$ for all $F\in \cF_\I^{\gamma_*}$.
Let 
\begin{align}
	\beta_{\I}:= \frac{e(\cF_{\I})- (1-\delta_2^2)e(\cC_{S_{\I}})}{|\cF_{\I}^{\gamma_*}| n}
	\stackrel{\ref{item:F2}}{=}\frac{(3\gamma_*^2 \pm 5\gamma_*^2/2)\hat{D}^{-1}n^2}{|\cF^{\gamma_*}_{\I}|n}.
\end{align}
Then \eqref{eq:sizeFgamma} implies 
$2\Delta \gamma_*^2 \leq \beta_{\I} 
\leq \gamma_*.$  
Recall from \ref{item:T2} that each $F\in \cF$ consists of two components $(T_F^1,r_F^1),(T_F^2,r_F^2)$ of order at least $\gamma n$.
Thus for all $F\in \cF_{\I}^{\gamma_*}$ we may apply Proposition~\ref{prop: taking subtree collection} with $1,\beta_\I, \gamma$ playing the roles of $c,\beta,\alpha$ to choose a subtree $T_F^1(y_F)=:T^{\gamma_*}_F$ of $F$ such that $T_F^{\gamma_*}$ has distance at least $3$ from $r^1_F$,
\begin{align}\label{eq:sizeTgamma}
	\gamma_*^2 n \leq |T^{\gamma_*}_F| \leq  \Delta\gamma_* n,
\end{align}
and 
\begin{align}\label{eq: gamma_* def}
\sum_{F\in \cF_{{\I}}^{\gamma_*}} e(T^{\gamma_*}_F) = e(\cF_{{\I}})- (1-\delta_2^2)e(\cC_{S_\I}) \pm n. 
\end{align}
For each $F\in \cF^{\gamma_*}_\I$, we let $\tilde{F} := F - V(T^{\gamma_*}_F),$ 
and let $x_F$ be the unique vertex in $\tilde{F}$ which is adjacent to $y_F$ in $F$. 
We view $y_F$ as the root of $T^{\gamma_*}_F$ and
let $X_F:=\{x_F\}$.

Next we consider any forest $F\in \cF^{\eta}_\I$.
By Proposition~\ref{prop: taking subtree collection} (applied with $2, \eta, \gamma$ playing the roles of $c,\beta,\alpha$), for every $c\in [2]$ we can choose a subtree $T_F^c(y_F^c)$ of $T_F^c$
with 
\begin{align}\label{eq:sizeTFc}
	\eta n/(2\Delta)\leq |T_F^c(y_F^c)| \leq \Delta\eta n
\end{align}
such that 
the distance from $r^c_F$ to $y_F^c$ is at least $5$.
Again, we view $y^c_F$ as the root of $T_F^c(y_F^c)$.
Let $\tilde{F} := F - V(T_F^1(y_F^1) \cup T_F^2(y_F^2))$.
For each $c\in[2]$, let $x^c_F$ be the unique vertex in $\tilde{F}$ which is adjacent to $y^c_F$ in $F$ and define $X_F := \{x^1_F, x^2_F\}$.
In addition, for all $c\in [2]$ and $F\in\cF^{\eta}_\I$, 
we choose one leaf $\ell^c_F$ of $T_F^c(y_F^c)$ at distance at least $6$ from $y^c_F$ and
let $z^c_F$ be the neighbour of $\ell^c_F$.
Hence
\begin{equation}\label{eq:5indset}
\begin{minipage}[c]{ 0.8\textwidth}\em
$\{y_F^1,z_F^1,y_F^2,z_F^2\}$ is a 5-independent set in $F$.
\end{minipage}
\end{equation}
Embedding $\ell^c_{F}$ appropriately in a separate step will help us to change the parity of its image vertex in the leftover graph if required. 
For this it is essential that $\ell^{c}_{F}$ is in fact also a leaf of the original forests $F\in \cF^\eta$. 
This is also the reason why we need to remove two subtrees in this case (rather than one): 
In total we need to reserve $2|\cF^\eta|$ leaves in order to be able to deal with possible parity problems later on,
but if $F$ is a path, then we cannot find a single small subtree which contains both leaves of $F$.

Note that in all cases we have $|X_F|\leq 2$ and that $X_F\cap (N_F[r^1_F] \cup N_F[r^2_F])=\emptyset.$
Recall that $\cF_\I^{\gamma_*}=\cF_\I^{\delta_2}\cup \cF_\I^{\delta_1} \cup \cF_\I^{\p}$.
For all $\xi \in \{\delta_2, \delta_1, \p\}$ and $F\in \cF_\I^\xi$, let $T_F^\xi:= T_F^{\gamma_*}$.
For easier referencing, 
given $\xi\in\{\eta,\gamma_*,\delta_2,\delta_1,\p\}$ and $F'\sub F\in \cF_\I^\xi$,
we sometimes write $y_{F'}$ for $y_F$.
For example, if $\xi\in\{\delta_2,\delta_1,\p\}$ and $F'=T_F^\xi$,
then this allows us to directly refer to $y_F$ as $y_{F'}$.
Analogous conventions apply to $x_F,x^1_F, x^2_F,y_F^1,y_F^2,z_F^1,z_F^2,\ell_F^1,\ell_F^2$ accordingly.

We introduce the following families of forests for every ${\I}\in [\hat{D}]$. Recall that the first three families refer to the ``large'' subforests and the other families to the ``small'' subtrees and forests.
\begin{itemize}
\item $\tilde{\cF}^{\xi}_{{\I}} := \{\tilde{F}: F\in \cF^{\xi}_{\I}\}$ for $\xi \in \{\eta,\gamma_*,\delta_2,\delta_1,\p,0\}$, 
\item $\tilde{\cF}_{{\I}}:= \{\tilde{F}: F\in \cF_{\I}\}$,
\item $\tilde{\cF}:= \bigcup_{{\I}=1}^{\hat{D}} \tilde{\cF}_{{\I}}=\{\tilde{F}:F\in \cF\}$,
\item $\sT^{\xi}_{\I} :=\{T^\xi_F: F\in \cF^\xi_\I\}$ for $\xi\in \{\gamma_*,\delta_2,\delta_1,\p\}$, 
\item $\sF^{\eta}_{\I} :=\{(T_F^1(y_F^1)-\ell_F^1) \cup (T_F^2(y_F^2)-\ell_F^2): F\in \cF^\eta_{\I}\}$,
\item $\sT^{\xi}:= \bigcup_{{\I}=1}^{\hat{D}} \sT^{\xi}_{\I}$ for $\xi\in \{\gamma_*,\delta_2,\delta_1,\p\}$,
\item $\sF^{\eta}:= \bigcup_{{\I}=1}^{\hat{D}} \sF^{\eta}_\I$.
\end{itemize}
The set $\tilde{\cF}$
contains all forests which we will embed in Step~\ref{step2} using Lemma~\ref{lem: blow up advanced}. The forests in $\tilde{\cF}$ inherit their roots from their corresponding supergraphs in $\cF$.
As every component of $F\in \cF$ has at least $\gamma n$ vertices (by \ref{item:T2}) and $e(\cF)\leq n^2$ (by \ref{item:T3}),
we also conclude
\begin{align}\label{eq: cF' size}
|\tilde{\cF}|=|\cF| \leq \gamma^{-1} n.
\end{align}
For later reference we note that 
\begin{align}\label{eq: size of sF eta}
|\sF_\I^{\eta}| = 
|\cF_\I^{\eta}| \stackrel{(\ref{eq: cF sizes})}{=} 
3 \left\lceil \frac{n}{6\hat{D}}\right\rceil
=(1\pm \epsilon) \frac{n}{2\hat{D}}.
\end{align}
Moreover, \ref{item:T2}, \eqref{eq:sizeTgamma}, and \eqref{eq:sizeTFc} imply that
\begin{align}\notag
	\gamma n \leq |\tilde{F}| \leq (1-4/D)n \quad &\text{ for all }\tilde{F}\in \tilde{\cF},\\ \label{eq:forestssizes}
	\gamma_*^2 n \leq |T| \leq \Delta\gamma_*n \quad &\text{ for all }T\in \sT^{\xi} \text{ and } \xi\in \{\gamma_*,\delta_2,\delta_1,\p\},\\ \notag
	\eta n/(3\Delta) \leq |T| \leq \Delta\eta n \quad &\text{ for every component }T \text{ of each forest in }\sF^\eta.
\end{align}

By \eqref{eq: size of sF eta} and \eqref{eq:forestssizes},
for all $\I\in\hat{D}$, we have $e(\tilde{\cF}^{\eta}_{\I} ) \geq e(\cF^{\eta}_{\I}) - 2\Delta\eta n|\cF^\eta_\I|\geq e(\cF^{\eta}_{\I}) - \eta^{2/3}n^2.$ 
By \ref{item:G2} and \ref{item:R'2}, 
we conclude $e(\cC_{S_{\I}})\geq  (1- \beta^{1/14})(1-\beta)\hat{D}^{-1}\alpha n^2/2  \geq \beta n^2$.
Thus
\begin{align}\label{eq:tildeFsize}
e(\tilde{\cF}_{{\I}})
= \left(e(\cF_\I^{\gamma_*})- \sum_{F\in \cF_\I^{\gamma_*}}(e(T_F^{\gamma_*})+1) \right)+ (e(\cF_\I^{\eta})\pm \eta^{2/3}n^2)+e(\cF_\I^{0})
\stackrel{(\ref{eq: gamma_* def})}{=} (1-\delta_2^2 \pm \eta^{1/2})e(\cC_{S_{\I}}).
\end{align}

Next we need to decide which forests will be embedded into which cycle blow-up in $\cC$.
For this, for each ${\I}\in [\hat{D}]$ we proceed as follows. 
We write $\cC_{S_\I} =:\{ C_{{\I},1},\dots, C_{{\I},c'_\I}\}$. 
Note that by \ref{item:R'1} for every $i\in [c'_{\I}]$, 
\begin{align}\label{eq: number of edges in cCij}
\frac{\delta_2^4 n^2 }{3\Gamma} \leq e(C_{{\I},i}) \leq \frac{2\delta_2^4 n^2 }{\Gamma}.
\end{align}
Together with \ref{item:R'2a}, this implies that 
\begin{align}\label{eq:sizecC}
|\cC| \leq 2\delta_2^{-4} \Gamma.
\end{align}
Moreover, \eqref{eq:tildeFsize} and \eqref{eq: number of edges in cCij} imply that we can select a partition of $\tilde{\cF}_{{\I}}$ into $\tilde{\cF}_{{\I},1},\dots, \tilde{\cF}_{{\I},c'_{{\I}}}$ such that for all $i\in [c'_{I}]$
\begin{align}\label{eq: cFij cCij size}
e(\tilde{\cF}_{{\I},i}) = (1-\delta_2^2 \pm 2\eta^{1/2}) e(C_{{\I},i}).
\end{align}
Finally, \eqref{eq:forestssizes}, \eqref{eq: number of edges in cCij}, and \eqref{eq: cFij cCij size} allow us to conclude that 
for all ${\I}\in[\hat{D}]$ and $i\in [c_{\I}']$
\begin{align}\label{eq: F'ij size}
|\tilde{\cF}_{{\I},i}|\leq \frac{2\delta_2^4 n^2}{\Gamma} \cdot \frac{1}{\gamma n} \leq \frac{n}{\Gamma}.
\end{align}

\step{Approximate covering}\label{step2}
In this step, we find an approximate covering of the edges of $G$. 
To make the leftover sufficiently small, we will actually do this by finding an approximate covering of the cycle blow-ups $C_{I,i}$ defined in Step~1 (which themselves cover almost all edges of $G$). 
Recall that assumption \ref{item:T4} prescribes an embedding $\phi'$ of the roots of all $F\in \cF$
and thus of the roots of all $\tilde{F}\in \tilde{\cF}$.
Ideally, we want to pack the forests $\tilde{F}\in \tilde{\cF}_{{\I},i}$ without their roots into $C_{{\I},i}$
in such a way that the edges incident to their roots are present in $H_1$.
However, one difficulty is that for a root $r$ of $F$, the vertex $\phi'(r)$ might not have neighbours in $V(C_{{\I},i})$.
Therefore, we introduce an intermediate step.
We only pack the forests without their roots and without the neighbours of the roots into $C_{{\I},i}$
and embed the neighbours of the roots onto appropriate vertices to complete the packing.
We arrange this so that all edges incident to a neighbour of a root are mapped to edges in $H_1$.

More precisely,
recall that we denote the roots of each $F\in \cF$ by $r_F^1$ and $r_F^2$.
We will now extend the domain of the function $\phi'$
to 
$$\{p_1,\dots, p_m\}:=\{ x \in N_F(r^c_F): c\in [2], F\in \cF\}.$$
We will achieve this via Proposition~\ref{prop: sparse edge embedding}.
Note that by \eqref{eq: cF' size},
\begin{align}\label{eq:m}
	m\leq 2\Delta\gamma^{-1}n.
\end{align}
For each $i'\in [m]$, let $F_{i'}\in \cF$ be such that $p_{i'}\in F_{i'}$. 
Then $|\{i'\in [m]:F_{i'}=F\}|=d_{F}(r^1_F)+ d_{F}(r^2_F)$ for each $F\in \cF$.
Let $H^{\rm index}$ be the graph on $[m]$ such that $i'j' \in H^{\rm index}$ if $F_{i'}=F_{j'}$. 
Thus $\Delta(H^{\rm index})\leq 2\Delta$. 
For each $i'\in [m]$, let 
$$q_{i'}:= \{r^{1}_{F_{i'}}, r^{2}_{F_{i'}}\}\cap N_{F_{i'}}(p_{i'}) \enspace \text{and} \enspace u_{i'}:=\phi'(q_{i'}).$$ 
For each $v\in R$, we obtain $|\{i': v=u_{i'}\}|\leq \Delta |\{F\in \cF : v\in \phi'(\{r^1_F,r^2_F\}) \}|\leq \Delta \epsilon^{-2}$ by \ref{item:T4}.
Next, for each $i'\in[m]$, we define sets of vertices $W_{i'}'$ and $W_{i'}''$ which we want to exclude as possible images of $p_{i'}$.
Let ${\I}(i'), i(i')$ be the numbers defined by $\tilde{F_{i'}}\in \tilde{\cF}_{{\I}(i'),i(i')}$. 
We define
\begin{align*}
	W'_{i'}&:=N_{H_1}(u_{i'})\cap A \cap U_{S_{{\I}(i')}},\\
 \widehat{W}'_{i'}&:=N_{H_1}(u_{i'})\cap A \cap U_{[D]\setminus S_{{\I}(i')}}, \text{ and}\\
	W''_{i'}&:=\{ w \in \widehat{W}'_{i'}: d_{H_1,V(C_{{\I}(i'),i(i')})\sm A}(w)< \delta_1^2 n\}. 
\end{align*}
Note $u_{i'}\in R \subseteq V\setminus A$ by \ref{item:T4}. 
Thus \ref{item:U2} and \ref{item:G3} imply that
\begin{align}\label{eq:1}
	|\widehat{W}'_{i'}| \geq (\delta_1 -4\epsilon)|A| \cdot 3/D - \epsilon n \geq \delta_1^{2}n.
\end{align}
Theorem~\ref{thm: almost quasirandom} and \ref{item:G3} imply that
$H_1[A,V\setminus A]$ is $(\epsilon^{1/7},\delta_1)$-super-regular. 
Observe that $|V(C_{{\I(i')},i(i')})\setminus A|
\geq (1-7/(2D))\Gamma \cdot (1-2\epsilon) \frac{n}{\Gamma} -\gamma n
\geq n/3$ by \ref{item:R'1} and \ref{item:G1}. 
Together with Proposition~\ref{prop: reg smaller} this implies that the graph $H_1[\widehat{W}'_{i'},V(C_{I(i'),i(i')})\sm A]$ is $(\epsilon^{1/8},\delta_1)$-regular.
Thus $|W''_{i'}|\leq 2\epsilon^{1/8} n$, by Proposition~\ref{prop: reg right deg}, and therefore
\begin{align*}
d_{H_1,A}(u_{i'}) - |W'_{i'}\cup W''_{i'}| 
\geq |\widehat{W}'_{i'}|- 2\epsilon^{1/8}n 
\stackrel{\eqref{eq:m},\eqref{eq:1}}{\geq}  3\Delta\epsilon^{-2} + m/\epsilon^{-1} + \epsilon^{-1}.
\end{align*} 
Thus we can apply Proposition~\ref{prop: sparse edge embedding} with the following graphs and parameters. \newline
  
\noindent
{ 
\begin{tabular}{c|c|c|c|c|c|c|c|c}
object/parameter & $H_1$ & $H^{\rm index}$ & $\Delta\epsilon^{-2}$ & $A$ &   $m$ & $\epsilon^{-1} $  &  $W'_{i'}\cup W''_{i'}$ & $u_{i'}$
\\ \hline
playing the role of & $G$ & $H$ & $\Delta$ & $A$ &  $m$ & $s$ &$W_{i'}$ & $u_{i'}$
\end{tabular}
}\newline \vspace{0.2cm}

\noindent
We obtain $v_1,\dots, v_m$ so that 
\begin{enumerate}[label=($\alpha$\arabic*)]
	\item\label{item:alpha1} $v_{i'}\in (A \cap U_{[D]\setminus S_{{\I}(i')}})\sm W_{i'}''$,
	\item\label{item:alpha2} $u_{1}v_{1},\ldots,u_{m}v_{m}$ are distinct edges in $H_1$, and
	\item\label{item:alpha3} $|\{ i' : v=v_{i'}\}|\leq \epsilon^{-1}$ for all $v\in A$ and $v_{i'}\notin \{u_{j'},v_{j'}\}$ if $F_{i'}=F_{j'}$ for $i' \neq j'$.
\end{enumerate}
Let $E':=\{u_{i'}v_{i'}: i'\in [m]\}$ and $H'_1:=H_1- E'$. 
Thus 
\begin{align}\label{eq: H1-H'1 sparse}
\Delta(H_1-E(H'_1))=\Delta(E')\leq \epsilon^{-1}+ \Delta\epsilon^{-2} \leq 2\Delta\epsilon^{-2}.
\end{align}
Given a vertex $x\in F$,
recall that $N_F[x] = N_{F}(x)\cup \{x\}$. Moreover, we write $N^2_F[x]$ for the set of all vertices having distance at most $2$ from $x$ in $F$.
Now let $\phi'(p_{i'}):= v_{i'}$ for all $i'\in [m]$. 
Note \ref{item:alpha3} implies that $\phi'$ is injective on $N_F[r^1_F]\cup N_F[r^2_F]$ for any $F\in \cF$.
Also the domain of $\phi'$ is $\{ x \in N_F[r^c_F]: c\in [2], F\in \cF\}$ and for any $v\in V$ we obtain
\begin{align}\label{eq: phi'' size}
|\phi'^{-1}(v)|\leq \Delta(E') \stackrel{\eqref{eq: H1-H'1 sparse}}{\leq} 2\Delta\epsilon^{-2}.
\end{align}
For all $I\in [\hat{D}]$ and $i\in [c'_I]$, 
let $$R_{{\I},i}:= \{ \phi'(x) : x \in N_F(r^c_F) \text{ for some }\tilde{F}\in \tilde{\cF}_{{\I},i} \text{ and }c\in [2]\}.$$
Note that for all $v\in R_{I,i}$, 
there exists $i'\in [m]$ such that $v=v_{i'}$, $I=I(i')$, and $i=i(i')$.
Then 
\begin{align}\label{eq:RIi}
	R_{{\I},i}\sub A \cap U_{[D]\sm S_\I}
\end{align}
 by \ref{item:alpha1}.
Thus $R_{{\I},i}\cap V(C_{{\I},i})=\es$ by \ref{item:R'1}. Also for all $v\in R_{I,i}$, we have
\begin{align}\label{eq: A1 precondition}
d_{H'_1, V(C_{{\I},i})\sm A}(v) 
\stackrel{\eqref{eq: H1-H'1 sparse}}{\geq} d_{H_1, V(C_{{\I},i})\sm A}(v) - 2\Delta\epsilon^{-2}
\stackrel{(\alpha1)}{\geq} \delta_1^2 n/2. \end{align}

Recall that $X_F$ was defined together with $\tilde{F}$ in Step~1. For each $F \in \cF$, we let 
$$X'_{F}:= (N_F^2[r^1_F]\setminus N_F[r^1_F]) \cup (N_F^2[r^2_F]\setminus N_F[r^2_F])\cup X_F.$$
Thus $|X_F'|\leq 2\Delta^2$ (recall that $|X_F|\leq 2$).
Now we sequentially pack the forests in $\tilde{\cF}_{{\I},i}$ into $C_{{\I},i}\cup H_1$ according to the lexicographic order\footnote{We use $\leq$ to equip tuples of reals with a lexicographic ordering; that is, $({\I},i)\leq ({\I}',i')$ if ${\I}<{\I}'$ or ${\I}={\I}'$ and $i\leq i'$. 
For convenience, let $({\I},0):= ({\I}-1,c'_{{\I}-1})$ and $({\I},c'_{{\I}}+1):=({\I}+1,1)$.} 
on the set of tuples $\{({\I},i): I\in [\hat{D}], i\in [c_\I'] \} $. 

More precisely, for each tuple $(\I,i)$ in turn we will construct functions $\tau_{\tilde{F}}$ consistent with $\phi'$ which embed each $\tilde{F} \in \tilde{\cF}_{{\I},i}$ into $C_{{\I},i}\cup H_1$
such that the following properties hold:

\begin{enumerate}[label=(P\arabic*)$_{{\I},i}$]
\item\label{item:P1} $\Delta(C_{{\I},i}  - \bigcup_{\tilde{F}\in \tilde{\cF}_{{\I},i}} E(\tau_{\tilde{F}}(\tilde{F})))
\leq 20\delta_2^6 n/\Gamma$,
\item\label{item:P2} $\tau_{\tilde{F}}(\tilde{F} - (N_{F}[r^1_F]\cup N_{F}[r^2_F])) \subseteq C_{{\I},i}$ 
and thus $V(\tau_{\tilde{F}}(\tilde{F}-(N_{F}[r^1_F]\cup N_{F}[r^2_F])))\sub U_{S_\I}\sm V_0$
for every $\tilde{F} \in \tilde{\cF}_{{\I},i}$, 
\item\label{item:P3} for any vertex $v\in V$, we have 
$$\left|\left\{\tilde{F}  \in \bigcup_{({\I}',i')\leq ({\I},i)} \tilde{\cF} _{{\I}',i'}: v\in \tau_{\tilde{F} }(X'_{F})\right\}\right| 
\leq \frac{2n}{\Gamma},$$
\item\label{item:P4} $\tau_{\tilde{F}}(\tilde{F}-\{r_F^1,r_F^2\})\sub C_{{\I},i}\cup H_{{\I},i}'$,
where $E(\cF,{\I},i):= \bigcup_{({\I}',i')< ({\I},i)} \bigcup_{\tilde{F}\in \tilde{\cF}_{{\I}',i'}} E(\tau_{\tilde{F}}(\tilde{F}))$
and $H'_{{\I},i}:=  H'_1 - E(\cF,{\I},i)$.
Moreover, the graphs $\tau_{\tilde{F}}(\tilde{F})$ are pairwise edge-disjoint for different $\tilde{F}\in \tilde{\cF}_{\I,i}$.
\end{enumerate}
Assume that for some $(I,i)$ we have already constructed $\tau_{\tilde{F}}$ consistent with $\phi'$ for all $\tilde{F} \in \tilde{\cF}_{{\I}',i'}$ with $({\I}',i')< ({\I},i)$ such that (P1)$_{{\I}',i'}$--(P4)$_{{\I}',i'}$ hold.
Let $U'\sub V(C_{\I,i})$ be the set of vertices which lie in $\tau_{\tilde{F}}(X'_F)$ for at least $n/\Gamma$ forests $\tilde{F}\in \tilde{\cF}_{{\I}',i'}$ with $({\I}',i')< ({\I},i)$. 
Since $|X'_{F}|\leq 2\Delta^2$ for any $F\in \cF$, \eqref{eq: cF' size} implies that 
\begin{align}\label{eq: U' size}
|U'| \leq \frac{2 \Delta^2\gamma^{-1}n }{n/\Gamma} \leq \Gamma^{2}.
\end{align}
Let $\ell$ be the length of $C_{{\I},i}$. 
Hence, by \ref{item:R'1}, $\ell$ is odd and $\ell\geq (1- 7/(2D))\Gamma$. 
In particular, together with \ref{item:R'1} and \eqref{eq:forestssizes}, 
this implies $(1-\gamma^2)|C_{{\I},i}| \geq |\tilde{F}|$ for any $\tilde{F}\in \tilde{\cF}_{{\I},i}$. 

Let $\hat{F}:= \tilde{F}- \{r^1_{F}, r^2_{F}\}$ and let $\hat{\cF}_{{\I},i}:=\{\hat{F}: \tilde{F}\in \tilde{\cF}_{{\I},i}\}$.
We wish to apply Lemma~\ref{lem: blow up advanced} with the following graphs and parameters. \newline

\noindent
{
\begin{tabular}{c|c|c|c|c|c|c|c|c}
object/parameter & $C_{{\I},i}$& $\hat{\cF}_{{\I},i}$ & $H'_{{\I},i}[V(C_{I,i})\sm A, R_{I,i}]$ &  $\ell$ & $\delta_2^2/2$ &$\delta_2^4$ & $\delta_1^2/3$& $\gamma^2$  
\\ \hline
playing the role of & $G$ &$\cF$ &$G'$ &$\ell$ & $\alpha$ & $d$ & $d_0$ & $\eta$ 
\\ \hline \hline
object/parameter  & $\phi'$ & $R_{{\I},i}$ & $N_{F}(r^1_F)\cup N_{F}(r^2_F)$ & $U'$ & $X'_F$ & $\Delta$& $3\epsilon$ 
\\ \hline
playing the role of  & $\phi'$& $R$& $r(F)$ & $U'$ & $X'_F$ & $\Delta$ & $\epsilon$
\end{tabular}
}\newline \vspace{0.2cm}

\noindent
So we need to verify conditions \ref{item:A1}--\ref{item:A7} of Lemma~\ref{lem: blow up advanced}.
Condition \ref{item:A1} holds by \ref{item:R'1}.
For any $v\in R_{\I,i}$, by (P2)$_{\I',i'}$ and (P3)$_{\I',i'}$ for $(\I',i')<(\I,i)$,\COMMENT{Actually, subtracting $2n/\Gamma$ below is not necessary since 2. neighbours of roots $r_F^1,r_F^2$ will always be mapped to vertices in $V\sm A$
(since $R_{\I,i}\sub A$ and $H_1$ is bipartite) and so no vertex in $R_{\I,i}$ can ever be an image of such a second neighbour of a root.}
we obtain
$$d_{H'_{{\I},i}, V(C_{{\I},i})\sm A}(v) 
\geq d_{H'_1,V(C_{{\I},i})\sm A}(v) - \Delta |\phi'^{-1}(v)| - \frac{2n}{\Gamma}
\stackrel{\eqref{eq: phi'' size},\eqref{eq: A1 precondition}}{\geq} \frac{\delta_1^2 n}{3}
\geq \frac{\delta_1^2|C_{\I,i}|}{3},$$
which verifies \ref{item:A2}.
Condition \ref{item:A3} follows from \ref{item:T2}, \eqref{eq:forestssizes} 
as well as the fact that each $\tilde{F}$ has at most two components and $\hat{F}= \tilde{F} - \{r^1_{F}, r^2_{F}\}$, 
\ref{item:A4} follows from \eqref{eq: cFij cCij size}, 
\ref{item:A5} is trivial from the definition of $\hat{F}$ and $r^c_F$, and 
\ref{item:A6} follows from \eqref{eq: U' size} and the definition of $X'_F$.
Also \eqref{eq: phi'' size} implies that $|\phi'^{-1}(v)|\leq 2\Delta\epsilon^{-2} \leq \ell^{1/2}$ 
for each $v\in R_{\I,i}$,
and as remarked before \eqref{eq: phi'' size}, $\phi'$ is injective on 
$N_F(r^1_F)\cup N_F(r^2_F)$ for any $F\in \cF$, which verifies \ref{item:A7}. 
 
Thus Lemma~\ref{lem: blow up advanced} gives a function $\tau_{\hat{\cF}_{{\I},i}}$ which is consistent with $\phi'$ and which packs $\hat{\cF}_{{\I},i}$ into  $C_{{\I},i}\cup H'_{{\I},i}[V(C_{{\I},i})\sm A,R_{\I,i}]$ so that
\begin{enumerate}[label=(A$'$\arabic*)]
\item\label{item:B'1} $\Delta( C_{{\I},i}-E(\tau_{\hat{\cF}_{\I,i}}(\hat{\cF}_{\I,i})) ) \leq 5\delta_2^6 n/\ell$,
\item\label{item:B'2} $\tau_{\hat{\cF}_{\I,i}}(V(\hat{F})\setminus(N_{F}(r^1_F) \cup N_{F}(r^2_F) )) \cap R_{I,i} =\emptyset$ for every $\hat{F}\in \hat{\cF}_{\I,i}$, and
\item\label{item:B'3} $\tau_{\hat{\cF}_{\I,i}}(X'_F) \cap U' =\emptyset$ for every $\hat{F}\in \hat{\cF}_{\I,i}$.
\end{enumerate}
Define 
$\tau_{\hat{F}}:=\tau_{\hat{\cF}_{\I,i}}|_{\hat{F}}$
and  $\tau_{\tilde{F}}:=\tau_{\hat{F}}\cup \phi'|_{\{r^1_F,r^2_F\}}$.

Next we verify \ref{item:P1}--\ref{item:P4}.
Properties \ref{item:B'1} and \ref{item:B'2} directly imply \ref{item:P1} and \ref{item:P2}.
For any vertex $v\in U'$ and $\tilde{F}\in \tilde{\cF}_{\I,i}$, \ref{item:B'3} implies that $v\notin \tau_{\tilde{F}}(X'_{F})$.
If $v\in V\sm V(C_{\I,i})$,
then by \ref{item:P2} we also have that $v\notin\tau_{\tilde{F}}(X_F')$.
Thus for each $v\in U'\cup (V\sm V(C_{\I,i}))$, 
by (P3)$_{I',i'}$ with $(I',i')<(I,i)$, we have 
$$\left|\left\{\tilde{F}  \in \bigcup_{({\I}',i')\leq ({\I},i)} \tilde{\cF} _{{\I}',i'}: v\in \tau_{\tilde{F} }(X'_{F})\right\}\right| 
= \left|\left\{\tilde{F}  \in \bigcup_{({\I}',i') <({\I},i)} \tilde{\cF} _{{\I}',i'}: v\in \tau_{\tilde{F} }(X'_{F})\right\}\right|
\leq \frac{2n}{\Gamma}.$$
On the other hand, for each vertex $v \in V(C_{\I,i})\sm U'$, by definition of $U'$ we have
$$\left|\left\{\tilde{F}  \in \bigcup_{({\I}',i')\leq ({\I},i)} \tilde{\cF} _{{\I}',i'}: v\in \tau_{\tilde{F} }(X'_{F})\right\}\right| 
\leq  \frac{n}{\Gamma} + |\tilde{\cF}_{{\I},i}| 
\stackrel{\eqref{eq: F'ij size}}{\leq}\frac{2n}{\Gamma}.$$
So \ref{item:P3} holds.
Observe that \ref{item:P4} holds by construction (here we also use \ref{item:alpha2}, \ref{item:alpha3} and the fact that
$\tau_{\hat{\cF}_{{\I},i}}$ is consistent with $\phi'$).

Let $\tilde{\tau}:=\bigcup_{\tilde{F}\in \tilde{\cF}}\tau_{\tilde{F}}$.
Note the properties \ref{item:P2} and \ref{item:P4} imply that $\tilde{\tau}$ packs $\tilde{\cF}$ into $H_1 \cup \bigcup\cC$ and is consistent with $\phi'$.
Let us define the leftover graphs.
\begin{align}\label{eq: def G1 H1}
G^1:= G - E(\tilde{\tau}(\tilde{\cF})) \enspace \text{and} \enspace H^1_1:= H_1- E(\tilde{\tau}(\tilde{\cF})).
\end{align}
This with \ref{item:P1} and \ref{item:R'3} imply that for every $v\in V\sm (R\cup V_0)$,
we have
\begin{align}\label{eq: G^1 deg}
d_{G^1}(v)  
\leq \frac{20\delta_2^6 n}{\Gamma} \cdot |\cC|+ 4\delta_2^2 n  
\stackrel{(\ref{eq:sizecC})}{\leq} 50 \delta_2^2 n.
\end{align}\COMMENT{
Note that $d_{G^1}(v) \leq \sum_{(\I,i)} \Delta(C_{(\I,i)}- \bigcup_{\tilde{F}\in \tilde{\cF}_{(\I,i)}}E(\tau_{\tilde{F}}(\tilde{F}))) + d_{G}(v)-\sum_{C\in \cC}d_{C}(v) \leq\sum_{(\I,i)} \frac{20\delta_2^6 n}{\Gamma} + 4\delta_2^2 n $ }
Hence $\tilde{\tau}$ forms a (very efficient) approximate cover of the edges of $G$.
Note \ref{item:P2} implies that for any vertex $v\in V$ and $F\in \cF$, 
we can only have $d_{\tilde{\tau}(\tilde{F})\cap H_1}(v) >0$ 
if $v\in \tilde{\tau}(N^2_F[r_F^c])$ (for some $c\in[2]$) which in turn only holds
if $v \in \phi'( N_{F}[r^c_F] )$ or $v\in \tilde{\tau}(X'_F)$.
By \eqref{eq: phi'' size} the former holds for at most $2\Delta\epsilon^{-2}$ forests $F\in \cF$ and by \ref{item:P3} the latter holds for at most $2n/\Gamma$ forests $F\in \cF$. 
Since $\Delta(F)\leq \Delta$, it follows that
\begin{align}\label{eq: E0 deg}
 \Delta(H_1-E(H_1^1))
 =\Delta( E(\tilde{\tau}(\tilde{\cF}))\cap H_1) 
 \leq \Delta( 2\Delta\epsilon^{-2} + 2n/\Gamma) 
 \leq \epsilon n.
\end{align}
Thus $H^1_1$ inherits all the relevant properties of $H_1$.
We proceed with a few observations.
Recall that for all $F\in\cF^0$, 
we have $\tilde{F}=F$.
Thus $\tilde{\tau}$ actually packs $F$ into $H_1 \cup \bigcup\cC$.
On the other hand,
for all forests $F\in \cF^{\gamma_*} \cup\cF^\eta $,  
the function $\tilde{\tau}$ only packs a proper subgraph of $F$ into $H_1 \cup \bigcup\cC$.
In the next few steps we will extend this by packing the forests in 
$\sT^{\gamma_*}\cup \sF^{\eta}$.
As a preliminary step towards this, we embed the roots of these forests via a function $\tau'$ in Step~\ref{step3}.

\step{Embedding the roots of the forests in $\sT^{\gamma_*}\cup \sF^\eta$}\label{step3}
In this step, 
we define the function $\tau'$ which,
for every ${\I}\in [\hat{D}]$, 
embeds the roots $\{y^{c}_F : F\in \sF^{\eta}_{\I}, c\in [2]\} \cup \{y_F: F\in \sT^{\gamma_*}_{\I}\}$ into $A\cap U_i$ for some $i\notin S_\I$ 
(recall that their neighbours $x_F^c$ and $x_F$ have already been embedded in the previous step as they belong to $\tilde{F}$). 
We stress here that one key point is the following.
Given ${\I} \in [\hat{D}]$ and $F\in \cF_{\I}^{\gamma_*}$,
in Step~\ref{step2} the subforest $\tilde{F}$ of $F$ was embedded into $U_{S_{\I}}$ 
(apart from its roots $r_F^1,r_F^2$ which are mapped to $R$ and the neighbours of those roots which are embedded into $A\cap U_{[D]\setminus S_I}$ by \eqref{eq:RIi}),
while the forest $F-V(\tilde{F})\in \sT^{\gamma_*}_\I$ will be embedded into $U_{[D]\sm S_{\I}}$.
So after we have chosen a suitable image $\tau'(y_F)$ of the root $y_F$ of $F-V(\tilde{F})$ inside $U_{[D]\sm S_{\I}}$,
any embedding of $F-V(\tilde{F})$ into $U_{[D]\sm S_{\I}}$ which is consistent with $\tau'(y_F)$ and which avoids $\tilde{\tau}(N(r_F^1)\cup N(r_F^2))$
yields an embedding of $F$.
For $F\in \cF_{\I}^{\eta}$ the strategy is similar, but slightly more complicated.

Consider $F\in \cF$.
For any $F'\sub F$, we define
\begin{align}\label{eq:WF}
	W_{F'}:= \tilde{\tau}(N_{F}[r^1_F]\cup N_{F}[r^2_F])=\phi'(N_{F}[r^1_F]\cup N_{F}[r^2_F]).
\end{align}
\COMMENT{intentionally no primes on the right hand side}
(So for example, if $F\in \cF^{\gamma_*}$ and $T$ is the subtree of $F$ belonging to $\sT^{\gamma_*}$, 
then this allows us to refer directly to $W_F$ as $W_T$.)
Note that by \ref{item:T4}, \ref{item:P2} and \eqref{eq:RIi}, for all $F\in \cF_\I$, 
we have 
\begin{align}\label{eq:WF1}
	\tilde{\tau}(\tilde{F})\cap (U_{[D]\sm S_{\I}}\cap V_0)= W_F\sm \tilde{\tau}(\{r^1_F,r^2_F\})
	\enspace\text{ and }\enspace
	\tilde{\tau}(\tilde{F})\cap R = \tilde{\tau}(\{r_F^1,r_F^2\})\sub W_F.
\end{align}
As discussed above, we need to avoid $W_F$ when defining $\tau'$ in the current step.
Note 
\begin{align}\label{eq: WF size}
|W_F|\leq 2\Delta+2 \leq \Delta^2.
\end{align}
Furthermore, \eqref{eq: phi'' size} states that for all $v\in V$, 
\begin{align}\label{eq: not too many W_F}
|\{F\in \cF : v\in W_F )\}| \leq 2\Delta\epsilon^{-2} \leq \epsilon^{-3}.
\end{align}

Consider any ${\I}\in [\hat{D}]$.
Recall from \eqref{eq: size of sF eta} that $|\sF^{\eta}_{\I}|=|\cF^{\eta}_{\I}|=3\lceil n/(6\hat{D}) \rceil$.
Let $\{j,j',j''\}$ be such that $\{j,j',j''\}=[D]\sm S_{\I}$. 
Split the forests in $\sF^{\eta}_{\I}$ into three sets $\sF^{\eta}_{\I}(1),\sF^{\eta}_{\I}(2),\sF^{\eta}_{\I}(3)$ of size $\lceil n/(6\hat{D}) \rceil$
and define 
\begin{align*}
	j(F)=
	\left\{ \begin{array}{ll}
j &\text{ if } F\in\sF^{\eta}_{\I}(1),\\
j' &\text{ if } F\in\sF^{\eta}_{\I}(2),\\
j'' &\text{ if } F\in\sF^{\eta}_{\I}(3).\\
\end{array}\right.
\end{align*}
In a similar way we define $j(T)$ for the trees $T$ in the families $\sT^{\delta_1}_\I,\sT^{\delta_2}_\I,\sT^{\p}_\I$
(again, assigning each of the three possible values in $[D]\sm S_\I$ to a third of the trees in each of these families).
For each $i\in [D]$, let
\begin{itemize}
	\item $\sF^{i,\eta}:= \{ F \in \sF^{\eta}: j(F)=i\}$ and
	\item $\sT^{i,\xi}:= \{ T \in \sT^{\xi}: j(T)=i\}$ for all $\xi\in\{\gamma_*,\delta_1,\delta_2,\p\}$.
\end{itemize}
Thus \eqref{eq:WF1} implies the following:
 \begin{equation}\label{eq:tauWF}
 \begin{minipage}[c]{0.8\textwidth}\em
 For all $i\in[D]$ and $F\in \cF^{\eta}\cup \cF^{\gamma_*}$,
	let $F'$ be the unique subforest of $F$ belonging to $\sF^{\eta}\cup \sT^{\gamma_*}$.
	If $F'\in \sF^{i,\eta}\cup \sT^{i,\gamma_*}$, then
	$\tilde{\tau}(\tilde{F})\cap (U_i \cup V_0\cup R)
	= W_F$.
 \end{minipage}\ignorespacesafterend 
\end{equation}
Moreover, since $|R|,|V_0|\leq 2\epsilon n$,
\begin{align}\label{eq: cFi eta size}
|\sF^{i,\eta}| = \binom{D-1}{D-3} \left\lceil \frac{n}{6\hat{D}}\right\rceil = (1\pm 5D\epsilon) \frac{|U_{i}\cup V_0\cup R|}{2}.
\end{align}
For each $i\in [D]$ in turn, 
we will now use Proposition~\ref{prop: sparse edge embedding} to define a function $\tau'$ 
packing $\{x_Fy_F: F\in \sT^{i,\gamma_*}\}\cup \{x_F^1y_F^1,x_F^2y_F^2:F\in \sF^{i,\eta}\}$ into $G'\cup H_1^1$
such that $y_F,y_F^1,y_F^2\in A \cap U_{i}$ and $\tau'$ is consistent with $\tilde{\tau}$.
Hence assume that for some $i\in [D]$ we have already determined $\tau'$
for all $F\in \bigcup_{j=1}^{i-1}(\sF^{j,\eta}\cup \sT^{j,\gamma_*})$
such that
\begin{align}\label{eq:sizeEj}
	\Delta(E_1^{j})\leq \epsilon^2 n
\end{align}
for all $j<i$,
where
$$E_1^j:= 
\bigcup_{F\in \sF^{j,\eta}} \{ \tau'(x^c_F)\tau'(y^c_F): c\in [2]\}
\cup \bigcup_{F\in \sT^{j,\gamma_*}} \{ \tau'(x_F)\tau'(y_F)\}.
$$
Let $G^1_{i}:= (G' \cup H^1_1) - \bigcup_{j=1}^{i-1} E_1^j$. 
By \ref{item:G2}, \ref{item:G3}, \ref{item:U2}, \eqref{eq: E0 deg} and \eqref{eq:sizeEj}, for every $v\in V$,%
\COMMENT{Here, either $v\in A$ or $v\in V\setminus A$. 
In the former case, $d_{G^1_{i},U_{i}\cap A}(v) \geq d_{G',U_i\cap A}(v) - D\epsilon^2 n \geq p\gamma n/(2D)$. 
In the latter case, $d_{G^1_i,U_{i}\cap A}(v) \geq d_{H^1_1,U_i\cap A}(v) - D\epsilon^2 n \geq \delta_1\gamma n/(2D) - \epsilon n - D\epsilon^2 n \geq \delta_1^2 n$.} 
we have 
\begin{align}\label{eq:dG1U}
	d_{G^1_{i},U_{i}\cap A}(v)  \geq \delta_1\gamma n/(2D) - \epsilon n - D\epsilon^2 n \geq \delta_1^2 n.
\end{align}
Write $\{{F_1^{\eta}},\dots, F_{m_1}^{\eta}\}:=\sF^{i,\eta}$ and $\{T_{1}^{\gamma_*}, \dots, T^{\gamma_*}_{m_2}\}:=\sT^{i,\gamma_*}$. 
Let $u_j:= \tilde{\tau}(x^1_{F_j^\eta})$, $u_{m_1+j}=\tilde{\tau}(x^2_{F_j^\eta})$ for each $j\in [m_1]$ and 
$u_{2m_1+j}= \tilde{\tau}(x_{T_{j}^{\gamma_*}})$ for each $j\in [m_2]$. 
We define $W_j:=W_{m_1+j}:=W_{F_j^{\eta}}$ if $j\in [m_1]$ and 
$W_{2m_1+j}:=W_{T_{j}^{\gamma_*}}$ if $j\in [m_2]$.
By  \ref{item:P3} and the definition of $X_F$, we conclude that for any $v\in V$, we have 
\begin{align}\label{eq:settingedges}
	|\{j \in [2m_1+m_2]: v= u_j\}| =
|\{F\in \cF^{\gamma_*} : v=\tilde{\tau}(x_F)\}|+|\{F\in \cF^{\eta}: v\in \tilde{\tau}(\{x^1_F,x^2_F\})| 
 \leq 2n/\Gamma.
\end{align}
Let $H^{\rm index}_{i}$ be the graph on $[2m_1+m_2]$ such that $jj'\in H^{\rm index}$ if $j'= j+m_1$ and $j\leq m_1$. 
Note that for all $j\in [2m_1+m_2]$, we have
\begin{align*}
	d_{G^1_{i}, U_{i}\cap A}(u_j) - |W_j|-|V_0|
	\stackrel{\eqref{eq: V0 size},\eqref{eq: WF size},\eqref{eq:dG1U}}{\geq}
	 \delta_1^2 n -\Delta^2 - 2\epsilon n
	 \stackrel{\eqref{eq: size of sF eta}}{\geq}  6\Gamma^{-1} n + (2m_1+m_2)/\epsilon^{-1} + \epsilon^{-1}.
\end{align*}
Thus we can apply Proposition~\ref{prop: sparse edge embedding} with the following parameters and graphs.\newline
  
\noindent
{
\begin{tabular}{c|c|c|c|c|c|c|c|c}
object/parameter & $G^1_{i}$ & $H^{\rm index}_{i}$ & $2n/\Gamma$ & $A\cap U_{i}$ &   $2m_1+m_2$ & $\epsilon^{-1} $  &  $W_{j}\cup V_0$ & $u_j$
\\ \hline
playing the role of & $G$ & $H$ & $\Delta$ & $A$ &  $m$ & $s$ &$W_{j}$ & $u_j$
\end{tabular}
}\newline \vspace{0.2cm}

\noindent
We obtain a sequence of vertices $v_1,\dots, v_{2m_1+m_2}$ 
such that the following hold:
\begin{enumerate}[label=($\beta$\arabic*)]
	\item\label{item:beta1} $u_1v_1,\ldots,u_{2m_1+m_2}v_{2m_1+m_2}$ are distinct edges in $G^1_i$,
	\item\label{item:beta2} $v_j\notin \{u_{j'},v_{j'}\}$ whenever $|j-j'|=m_1$ and $j,j'\in [2m_1]$,
	\item\label{item:beta3} $v_j,v_{m_1+j}\in (U_i \cap A)\sm (W_{F_j^\eta}\cup V_0)$ for all $j\in [m_1]$ and 
	$v_{2m_1+j}\in (U_i \cap A)\sm (W_{T_j^{\gamma_*}}\cup V_0)$ for all $j\in [m_2]$, and
	\item\label{item:beta4} every vertex $v \in V$ satisfies $|\{j\in [2m_1+m_2]: v=v_j\}|\leq \epsilon^{-1}$. 
\end{enumerate}
We define $\tau'(y^1_{F_j^\eta}):=v_j, \tau'(y^2_{F_j^\eta}):= v_{m_1+j}$ for all $j\in [m_1]$ and 
$\tau'(y_{T_j^{\gamma_*}}):= v_{2m_1+j}$ for $j\in[m_2]$. 
Let 
$$E^{i}_1:= \{u_jv_j: j\in [2m_1+m_2]\}.$$
Then \eqref{eq:settingedges} and \ref{item:beta4} imply that $\Delta(E^{i}_1) \leq 2n/\Gamma + \epsilon^{-1} \leq \epsilon^2 n$ as required in \eqref{eq:sizeEj}. 
By repeating this procedure for every $i\in [D]$, we define $\tau'$ as desired.
We claim that $\tau'$ satisfies the following properties:
\begin{enumerate}[label=($\gamma$\arabic*)]
	\item \label{item:gamma0} for every $F\in \cF^\eta\cup \cF^{\gamma_*}$ and every $y\in \{y_F, y^1_F, y^2_F\}$,
	we have
	$\tau'(y)\notin \tilde{\tau}(\tilde{F})$;
	moreover,
	$\tau'(y_F^1)\neq \tau'(y_F^2)$ for each $F\in \cF^\eta$,
	\item \label{item:gamma1} for any $v\in V$,
	there are  at most $\epsilon^{-1}$ forests $F\in \sF^{\eta}\cup \sT^{\gamma_*}$ 
	such that $v=\tau'(y)$ for some $y \in \{y_F, y^1_F, y^2_F\}$,
	\item \label{item:gamma2} for all $j\in [D]$ and $F\in \sF^{j,\eta}\cup \sT^{j,\gamma_*}$, 
	every root $y$ of a component of $F$ satisfies $\tau'(y) \in (U_{j}\cap A)\sm (W_F\cup V_0)$, and
	\item \label{item:gamma3} if $\I\in [\hat{D}]$ and $F \in \sF_\I^\eta\cup \sT^{\gamma_*}_\I$, 
	then every root $y$ of a component of $F$ satisfies $\tau'(y)\in A_\I\sm W_F=(A\cap U_{[D]\sm S_{\I}})\sm W_F$.
	\item\label{item:gamma4} for all $T\in \sT^{\gamma_*}$, 
	we have that $\tilde{\tau}(x_{T})\tau'(y_{T})$ is an edge of $G'\cup H_1^1$ and 
	for all $F\in \sF^{\eta}$ and $c\in [2]$, 
	we have that $\tilde{\tau}(x^c_F)\tau'(y^c_F)$ is an edge of $G'\cup H_1^1$.
\end{enumerate}
Indeed,
\ref{item:gamma0} follows from \ref{item:beta2}, \ref{item:beta3} and \eqref{eq:tauWF},
\ref{item:gamma1} follows from \ref{item:beta4},
while \ref{item:gamma2} and \ref{item:gamma3} both follow from \ref{item:beta3} and \ref{item:gamma4} follows from \ref{item:beta1}.

Let $E_1:=\bigcup_{i=1}^{D} E_1^{i}$.
We conclude from \eqref{eq:sizeEj} that
\begin{align}\label{eq: Delta E1}
\Delta(E_1)\leq D\epsilon^2 n \leq \epsilon n.
\end{align}
Let 
\begin{align}\label{eq:G2}
	G'^2:= G'-E_1, \quad G^2:= G^1 \quad \text{and} \quad H^2_1:= H^1_1 -E_1.
\end{align}
Following on from \eqref{eq: def G1 H1}, these updates track the edges of $G',G$ and $H_1$ which are still available after this step.
We use \eqref{eq: H1 quasi-random}, \eqref{eq: E0 deg}, \eqref{eq: Delta E1} and Proposition~\ref{prop: quasi-random subgraph} to conclude the following.
\begin{equation}\label{eq: H12 quasi-random}
\begin{minipage}[c]{0.9\textwidth}\em
For all $j\in [D]$ and $I\in [\hat{D}]$,
the bipartite graphs $H_1^2[U_j]=H_1^2[U_j\cap A, U_j \sm A]$, $H_1^2[U_j\cup R]=H_1^2[U_j\cap A, (U_j \cup R) \sm A]$
and $H_1^2[A_\I\cup B_\I]$ are $( \epsilon^{1/3},\delta_1)$-quasi-random.
\end{minipage}
\end{equation}

\step{Covering almost all the edges incident to the exceptional set}\label{step4}
Property \eqref{eq: G^1 deg} shows that in Step~\ref{step2}, 
we have covered almost all edges of $G$ incident to vertices in $V\sm (V_0 \cup R)$.
In the following step,
we use the forests in $\sF^\eta$ to cover almost all edges incident to $V_0 \cup R$.
We will achieve this through several applications of Lemma~\ref{lem: clear rl waste}.

For each $i\in [D]$, we consider the graph  
$$G(i):= (G^2\cup H_2) [U_i\cup R\cup V_0] - E((G^2\cup H_2)[R\cup V_0]).$$ 
Observe that $G(1),\dots, G(D)$ are pairwise edge-disjoint. 
Note that by \ref{item:U2} and \ref{item:G4}, 
any two vertices $u,v\in U_i$ satisfy
\begin{align}\label{eq: G(i) codegree}
d_{G(i)}(u,v) \geq 3\delta_2^2 n/(5D).
\end{align}
Recall that in Step~\ref{step3} we partitioned $\sF^\eta$ into $\sF^{1,\eta},\ldots,\sF^{D,\eta}$.
For each $F\in \sF^{i,\eta}$, 
we choose two distinct vertices $q^1_F, q^2_F$ in $(U_i\cap A)\setminus (W_F \cup V_0 \cup \tau'(\{y^1_F, y^2_F\}))$ uniformly at random. 
Note that by \ref{item:U1} and \eqref{eq: WF size}, 
$$|(U_i\cap A)\setminus (W_F \cup V_0 \cup \tau'(\{y^1_F, y^2_F\}))| \stackrel{\eqref{eq: V0 size}}{\geq} D^{-1}\gamma n - 2\epsilon n - 2\Delta^2 \geq \gamma^2 n$$ 
and recall that $|\sF^{i,\eta}|$ is given by \eqref{eq: cFi eta size}.
Thus by straightforward applications of Lemma~\ref{lem: chernoff}, 
with probability at least $1/2$, for each $v\in U_i\cap A$, we have
\COMMENT{For a vertex $v$, $\mathbb{P}[v\in \{q^1_F,q^2_F\}]\leq  \frac{2}{\gamma^2 n}.$ 
Thus $\mathbb{E}[|\{F: v\in \{q^1_F,q^2_F\}\}|] \leq  \frac{2(1\pm \epsilon^{1/2})D^{-1}n/2}{\gamma^2 n}$. 
Then Chernoff gives us that
$\mathbb{P}[|\{F: v\in \{q^1_F,q^2_F\}\}|\leq  n^{2/3}] \geq 1 - c^{n^{4/3}/n}\geq 1- c^{n^{1/4}}$ for a constant $0<c<1$. 
Thus with probability at least $1- nc^{n^{1/4}}$, this happens for all $v\in A$ and all $i\in [D]$. } 
\begin{align}\label{eq: not too many q}
|\{ F\in \sF^{i,\eta}: v\in \{q^1_F,q^2_F\}\}| 
\leq n^{2/3}. 
\end{align}
Thus there exists a choice such that \eqref{eq: not too many q} holds for all vertices $v\in A$. 
Recall from Step~\ref{step1} that for all $F\in \sF^{i,\eta}$ and $c\in[2]$,
the vertex $z_F^c$ is the unique neighbour of the leaf $\ell^c_F $.
We define $\tau'(z^c_F):= q^c_F$ for all $F\in \sF^{i,\eta}$ and $c\in [2]$. 
For all $i\in [D]$, we now wish to apply Lemma~\ref{lem: clear rl waste} with the following graphs and parameters. \newline

\noindent
{
\begin{tabular}{c|c|c|c|c|c|c}
object/parameter & $G(i)$& $ \sF^{i,\eta} $& $|U_i\cup R\cup V_0|$ & $V_0\cup R$ & $5D \epsilon$ &$\eta$ 
\\ \hline
playing the role of & $G$ &$\cF$ & $n$ & $V_0$ & $\epsilon$ & $\eta$ 
\\ \hline 
\hline
object/parameter &  $\delta_2$& $3\Delta$ & $W_{F}$ & $\tau'|_{\{y^1_F, y^2_F, z^1_F, z^2_F\}}$ & $\{y^1_F,y^2_F\}$ & $\{z^1_F, z^2_F\}$  
\\ \hline
playing the role of & $\delta_2$ & $\Delta$ & $W_F$& $\tau'_F$& $\{y^1_F, y^2_F\}$& $\{z^1_F,z^2_F\}$
\end{tabular}
}\newline \vspace{0.2cm}

\noindent
Assumptions \ref{item:L11}--\ref{item:L15}  of Lemma~\ref{lem: clear rl waste} hold in the above set-up, 
as \eqref{eq: G(i) codegree} implies \ref{item:L11}, 
\eqref{eq:forestssizes} implies \ref{item:L12}, 
\eqref{eq:5indset}, \ref{item:gamma1}, \ref{item:gamma2}, and \eqref{eq: not too many q} imply \ref{item:L13}, 
\eqref{eq: WF size} implies \ref{item:L14}, and \eqref{eq: not too many W_F} implies \ref{item:L15}. 
Also \eqref{eq: cFi eta size} ensures that $\sF^{i,\eta}$ contains the appropriate number of forests. 

From Lemma~\ref{lem: clear rl waste}
we obtain a function $\tau^{\eta}_i$ packing $\sF^{i,\eta}$ into $G(i)$ which is consistent with $\tau'$ such that the following properties hold:
\begin{enumerate}[label=(Q1.\arabic*)$_i$]
\item\label{item:Q11i} $V(\tau_i^\eta(F))\sub U_i \cup R\cup V_0$ and $\tau^{\eta}_i(F)\cap W_F=\emptyset$ for every $F\in \sF^{i,\eta}$,
\item\label{item:Q12i}  $d_{\tau^{\eta}_i(\sF^{i,\eta})}(v)\leq \eta^{1/3}| U_i \cup R\cup V_0|$ for all $v\in U_i\setminus(V_0\cup R)$, and
\item\label{item:Q13i}  $d_{\tau^{\eta}_i(\sF^{i,\eta})}(v) \geq d_{G(i)}(v) - \epsilon^{1/3} | U_i \cup R\cup V_0|$ for every $v\in V_0\cup R$.
\end{enumerate}
For every $F'\in \sF^{i,\eta}$ and $F\in \cF^\eta$ such that 
$F'$ is the unique subforest of $F$ belonging to $\sF^\eta$,
\eqref{eq:tauWF} and \ref{item:Q11i} imply 
that $\tilde{\tau}(\tilde{F})\cap \tau^\eta_i(F')=\es$. Together with \ref{item:gamma4} this means that $\tilde{\tau}(\tilde{F}) \cup \tau^\eta_i(F')$ yields an embedding of $F - \{\ell^1_F, \ell^2_F\}$.

We apply Lemma~\ref{lem: clear rl waste} for every $i\in [D]$
and let $\tau^{\eta}:= \bigcup_{i=1}^{D} \tau^{\eta}_i$.
Thus $\tau^{\eta}$ packs $\sF^{\eta}$ into $G^2 \cup H_2$.
Moreover, 
the function $\tilde{\tau}\cup \tau^{\eta}$ packs $\{F-\{\ell_F^1,\ell_F^2\}:F\in \cF^{\eta}\}$ into $G\cup G'\cup H_1 \cup H_2$.
Let 
\begin{align}\label{eq:G3}
	G^{3}:= G^{2}-E(\tau^{\eta}(\sF^{\eta})),\enspace G'^{3}:= G'^2,
	\enspace H^3_1:= H^2_1 \enspace\text{and}\enspace H^3_2:= H_2 -E(\tau^{\eta}(\sF^{\eta})).
\end{align}
Following on from \eqref{eq:G2}, these updates track the edges which are still available after this step.
Let 
\begin{align}\label{eq:defG3}
	G^*_3:= G'^3\cup G^3\cup H^3_1\cup H^3_2.
\end{align}
Thus $G^*_3$ is the graph consisting of all the leftover edges.
By \ref{item:Q12i} for each $i\in [D]$, every $v\in V\sm (V_0\cup R)$ satisfies 
\begin{align}\label{eq: E2 deg}
d_{\tau^{\eta}(\sF^{\eta})}(v) \leq \sum_{i=1}^{D} \eta^{1/3} |U_i\cup R\cup V_0| \leq 2\eta^{1/3} n.
\end{align}
By \ref{item:Q13i} for each $i\in [D]$, every $v\in R\cup V_0$ satisfies  
\begin{align}\label{eq: G3 V0 density}
d_{G^{3}\cup H^3_2}(v) \leq  \sum_{i=1}^{D}\epsilon^{1/3} | U_i \cup R\cup V_0| + |R \cup V_0|\stackrel{\eqref{eq: V0 size}}{\leq} 2\epsilon^{1/3} n.
\end{align}
Inequality \eqref{eq: not too many q} implies that for every vertex $v \in V$, we have
\begin{align}\label{eq: not too many z}
 |\{ F \in \sF^\eta: v\in \tau^{\eta}(\{z^{1}_F,z^{2}_F\})\}| \leq n^{2/3}.
\end{align}

\step{Covering the remaining edges in  $V\sm A$}\label{step5}
Recall that $E(G^*_3)$ is precisely the total set of uncovered edges at this point.
Next we pack the trees in $\sT^{\delta_2}$ 
into $G^*_3$ so that they cover all edges in $G_3^*[V\sm A]$
and so that the packing is consistent with $\tau'$;
that is, $\tau'$ prescribes the image of the root $y_T$ of every tree $T\in\sT^{\delta_2}$. 
Lemma~\ref{lem: clear delta2} is the main tool for this step.

More precisely,
we will sequentially construct functions $\tau^{\delta_2}_{1},\ldots,\tau^{\delta_2}_{\hat{D}}$ 
which are consistent with $\tau'$ and such that $\tau^{\delta_2}_{{\I}}$ packs $\sT^{\delta_2}_{\I}$ into $G_3^*$.
For $I\in [\hat{D}]$, let $$G^*_{3,{\I}}:= G^*_{3}[A_{\I}\cup B_{\I}] - \bigcup_{{\I}'=1}^{{\I}-1} E(\tau^{\delta_2}_{{\I}'}(\sT_{{\I}'}^{\delta_2})).$$ 
(Recall from \eqref{eq:defAB} that $A_{\I}= A\cap U_{[D]\setminus S_{\I}}$ and $B_{\I}= (U_{[D]\setminus S_{\I}}\cup  R)\setminus A$.)
We will construct the functions $\tau_\I^{\delta_2}$ such that they satisfy the following properties:
\begin{enumerate}[label=(Q2.\arabic*)$_{\I}$]
\item\label{item:Q21i} $\tau_\I^{\delta_2}(\sT^{\delta_2})\sub G^*_{3,{\I}}$,
\item\label{item:Q22i} $E(G^*_{3,{\I}}[B_{\I}])\subseteq E(\tau^{\delta_2}_{\I}(\sT_{\I}^{\delta_2}))$, and
\item\label{item:Q23i} $W_T \cap \tau^{\delta_2}_{\I}(T)=\emptyset$ for all $T \in \sT_{\I}^{\delta_2}$. 
\end{enumerate}

Assume that for some $\I\in [\hat{D}]$ we have already defined $\tau^{\delta_2}_{1},\ldots,\tau^{\delta_2}_{{\I}-1}$
which are consistent with $\tau'$ and so that $\tau^{\delta_2}_{{\I}'}$ satisfies (Q2.1)$_{{\I}'}$--(Q2.3)$_{{\I}'}$ for each $\I'< \I$. 
Recall from \eqref{eq:G3} that $H_1^3=H_1^2$.
By \eqref{eq: cF sizes} and \eqref{eq: H12 quasi-random}, we obtain
for every vertex $v\in B_{\I}$ that 
\begin{eqnarray}\label{eq: min deg G3}
d_{G^*_{3,\I}[A_{\I},B_{\I}]}(v) 
&\geq& d_{H^3_1[A_{\I},B_{\I}]}(v) - \sum_{{\I}'< {\I}} \Delta|\cF^{\delta_2}_{{\I}'}| 
\geq \delta_1|A_{\I}|/2 - \hat{D} \Delta \delta_2^{1/2} n \geq \delta_1 |A_{\I}|/3.
 \end{eqnarray}
Note that $G'[A_{\I}] - E( G^*_{3,{\I}})$ consists of those edges of $G'[A_\I]$ which have been used by the packing so far.
More precisely, 
\eqref{eq:G2}, \eqref{eq:G3} and the fact that $G_3^*[A_\I]=G'^3[A_\I]$ (see \ref{item:G1}) together imply that 
$E(G'[A_{\I}] - E( G^*_{3,{\I}}))$ is the union of the edges in 
$G^*_3[A_{\I}]- E(G^*_{3,{\I}}[A_{\I}]) \sub \bigcup_{{\I}'=1}^{{\I}-1}\tau^{\delta_2}_{\I'}(\sT^{\delta_2}_{{\I}'})$ and of $E(G'[A_{\I}]-E(G'^3[A_{\I}])) \sub E_1$.
Thus \eqref{eq: cF sizes} and \eqref{eq: Delta E1} imply that 
$$\Delta(G'[A_{\I}] - E(G^*_{3,{\I}}))\leq \Delta \hat{D} \delta_2^{1/2} n +\epsilon n \leq \gamma^{2}|A_\I|.$$ 
In a similar way, 
it follows that for each $j\in [D]$, 
we have $\Delta(G'[A\cap U_j]-E(G^*_3[A\cap U_j]))\leq \epsilon n$ and that $\Delta(G'[A_I] - E(G^*_3[A_I]))\leq \epsilon n.$
Using \ref{item:G2}, \ref{item:U2} and Proposition~\ref{prop: quasi-random subgraph},
we conclude for each $j\in [D]$ that 
\begin{equation}\label{eq: G*3,i quasi-random}
\begin{minipage}[c]{0.8\textwidth}\em
$G^*_3[A_I]$, $G^*_{3,{\I}}[A_{\I}]$ and $G^*_{3}[A\cap U_j]$ are $(\gamma^{1/4},p)$-quasi-random.
\end{minipage}
\end{equation}
Note that for each $v\in B_{\I}\setminus (V_0\cup R)$, we have
\begin{eqnarray*}
d_{G^*_{3,{\I}},B_{\I}}(v) 
&\leq& d_{G^*_{3},B_{\I}}(v)
\stackrel{\text{\ref{item:G2},\ref{item:G3}}}{\leq} d_{G^3\cup H^3_2,B_{\I}}(v) 
\leq d_{G^1\cup H_2,B_{\I}}(v) \\
&\stackrel{\text{\eqref{eq: G^1 deg},\ref{item:G4}},\text{\ref{item:U2}}}{\leq}&
50\delta_2^2 n + \frac{5}{3}\delta_2|U_{[D]\sm S_{\I}}| +|R|\\
&\leq& 2\delta_2 |A_{\I}\cup B_{\I}|.
\end{eqnarray*}
Moreover, for each $v\in B_{\I}\cap(V_0\cup R)$, we have
\begin{eqnarray*}
d_{G^*_{3,{\I}},B_{\I}}(v) 
\leq d_{G^*_{3},B_{\I}}(v)
\leq d_{G^3\cup H^3_2}(v)
\stackrel{ \eqref{eq: G3 V0 density} }{\leq} 2\epsilon^{1/3} n 
\leq 2\delta_2|A_{\I}\cup B_{\I}|.
\end{eqnarray*}
Thus 
\begin{align}\label{eq: G*3i max deg}
\Delta(G^*_{3,{\I}}[B_{\I}])\leq 2\delta_2|A_{\I}\cup B_{\I}|.
\end{align}
We now wish to apply Lemma~\ref{lem: clear delta2} with the following graphs and parameters. \newline

\noindent
{
\begin{tabular}{c|c|c|c|c|c|c|c|c|c|c|c|c|c}
object/parameter & $G^*_{3,{\I}}$ & $\sT^{\delta_2}_{\I}$ & $A_{\I}$ & $|A_{\I}\cup B_{\I}|$ &$D\epsilon$ &$\delta_2$ & $\delta_1$ &  $\gamma$ &  $\Delta$   &  $p$ & $y_{T}$ & $\tau'|_{\{y_T\}}$ 
& $W_{T}$\\ \hline
playing the role of & $G$ & $\cT$ & $A$ & $n$ & $\epsilon$ & $\delta_2$ & $\delta_1$ &  $\gamma$ &  $\Delta$  & $p $ & $y_{T}$ & $\tau'_{T}$ & $ W_T$
\end{tabular}
}\newline \vspace{0.2cm}

\noindent
Observe that condition \ref{item:L21} of Lemma~\ref{lem: clear delta2} follows from \eqref{eq: G*3,i quasi-random},
\ref{item:L22} from \eqref{eq: G*3i max deg},
\ref{item:L23} from \eqref{eq: min deg G3},
\ref{item:L24} from \eqref{eq: cF sizes},
\ref{item:L25} from \eqref{eq:forestssizes}, \eqref{eq: WF size} and \ref{item:gamma3}, and
\ref{item:L26} from \eqref{eq: not too many W_F}. Thus Lemma~\ref{lem: clear delta2} gives us a function $\tau^{\delta_2}_{\I}$ which is consistent with $\tau'$,
packs $\sT_\I^{\delta_2}$ into $G_{3,{\I}}^*$, and satisfies \ref{item:Q21i}--\ref{item:Q23i}.

Assume now that
for all $\I\in [\hat{D}]$
we have defined such a function $\tau^{\delta_2}_{\I}$.
Let $\tau^{\delta_2} := \bigcup_{{\I}=1}^{\hat{D}} \tau^{\delta_2}_{\I}$.
Note that for every edge $uv \in E(G^*_3[V\setminus A])$, 
there exists (a smallest) ${\I}'\in [\hat{D}]$ such that $u,v \in B_{{\I}'}$. 
Thus (Q2.2)$_{{\I}'}$ implies that $uv \in \tau^{\delta_2}(\sT^{\delta_2})$
and so
\begin{align}\label{eq: delta 2 cleared}
E(G^*_3[V\setminus A]) \sub \tau^{\delta_2}(\sT^{\delta_2}).
\end{align}
Property \ref{item:Q23i} for all ${\I}\in [\hat{D}]$,
\eqref{eq:WF1}
 and the fact that $\tau^{\delta_2}(\sT_{\I}^{\delta_2})\subseteq U_{[D]\setminus S_{\I}}\cup R$
together ensure that for each $F\in \cF^{\delta_2}$, we have
$	\tau^{\delta_2}(F-V(\tilde{F}))\cap \tilde{\tau}(\tilde{F}) =\emptyset.$
Together with \ref{item:gamma4} this means that $\tilde{\tau}\cup \tau^{\delta_2}$ packs the forests in $\cF^{\delta_2}$ into $G\cup G'\cup H_1\cup H_2$.
We define
\begin{align}
	G^4&:=G^3-E(\tau^{\delta_2}(\sT^{\delta_2})), \quad \notag
	G'^4:=G'^3-E(\tau^{\delta_2}(\sT^{\delta_2})), \\ 
	H^4_1&:=H^3_1 - E(\tau^{\delta_2}(\sT^{\delta_2})) =H^2_1 - E(\tau^{\delta_2}(\sT^{\delta_2}))\quad \text{and} \quad \label{eq:G4}
	H^4_2:=H^3_2-E(\tau^{\delta_2}(\sT^{\delta_2})).
\end{align}
This follows on from the previous update in \eqref{eq:G3} and again tracks the current set(s) of leftover edges. To track the total set of leftover edges, 
let 
\begin{align}\label{eq:G*4}
G^*_4:= G^4\cup G'^4\cup H^4_1 \cup H^4_2=G_3^*-E(\tau^{\delta_2}(\sT^{\delta_2})).
\end{align}
Note that
\begin{align}\label{eq: Delta E3}
\Delta(\tau^{\delta_2}(\sT^{\delta_2}))
\leq \Delta |\sT^{\delta_2}|
\stackrel{(\ref{eq: cF sizes})}{\leq} \hat{D}\Delta\delta_2^{1/2} n.
\end{align}
Thus by \eqref{eq: G*3,i quasi-random}, \eqref{eq: Delta E3}, and Proposition~\ref{prop: quasi-random subgraph} 
for every ${\I}\in [\hat{D}]$ and $j\in [D]$, we obtain
\begin{equation}\label{eq: G*4 quasirandom1}
\begin{minipage}[c]{0.8\textwidth}\em
$G^*_{4}[A_{\I}]$ and $G^*_{4}[A\cap U_j]$ are $(\gamma^{1/5},p)$-quasi-random.
\end{minipage}
\end{equation}
By \ref{item:G2},  \ref{item:G4}, \eqref{eq: G^1 deg}, \eqref{eq: G3 V0 density}, 
we have
\begin{align*}
	\Delta((G^4\cup {G'}^4 \cup H_2^4)[U_j\cap A, (U_j\cup R)\setminus A])\leq \delta_{2}^{1/2}n.
\end{align*}
Thus together with \eqref{eq: H12 quasi-random}, \eqref{eq: Delta E3}, and Proposition~\ref{prop: quasi-random subgraph}, 
we obtain that
\begin{equation}\label{eq: G*4 quasirandom2}
\begin{minipage}[c]{0.8\textwidth}\em
$G^*_4[U_j\cap A, U_j\setminus A]$ and $G^*_4[U_j\cap A, (U_j\cup R)\setminus A]$ are $(\delta_2^{1/3}, \delta_1)$-quasi-random.
\end{minipage}
\end{equation}

\step{Resolving the parity}\label{step6}
Note that by \eqref{eq: delta 2 cleared} and \eqref{eq:G*4},
in the current leftover graph $G^*_4$ every edge is incident to a vertex in $A$. We would like to cover the edges of $G^*_4[A,V\setminus A]$ via Lemma~\ref{lem: clear delta_1}. For this,
we first need to ensure 
for each $v\in V\sm A$ that the number of edges incident to $v$ which have not been covered yet is even.
To achieve this, in this step we extend the current packing by defining 
$\tau^{\eta}(\ell^c_F)$ for all $c\in [2]$ and $F\in \cF^{\eta}$.
(Recall that by the end of Step~\ref{step4} we have already defined the packing of $\{F- \{\ell_F^1,\ell_F^2\}:F\in \cF^\eta\}$.)
This will already resolve most of the parity problems. 
Note that by \eqref{eq: size of sF eta}, the total number of leaves of the form $\ell^c_F$ is close to $n$, so we do have sufficiently many of them for this purpose.
We then define $\tau^\p$ which packs $\sT^{\p}$ into $G\cup G'\cup H_1\cup H_2$.
This then takes care of the remaining (comparatively small) set of parity problems.
Recall that for each $F\in \cF^\eta$,
the vertex $z_F^c$ denotes the neighbour of $\ell_F^c$ in $F$.

For all $i\in [D]$, let
$$V^{\rm odd}_i:=\{v\in U_i\setminus A: d_{G^*_4}(v) \text{ is odd}\}.$$ 
For any $v\in V^{\rm odd}_i$,
our aim is to find a tree $F\in \cF^\eta$ so that $v$ can play the role of $\ell_F^c$ in $F$.
This will ensure that the degree of $v$ in the resulting remaining subgraph of $G^*_4$ is even.
Recall that for all $i\in[D], c\in [2]$ and $F\in \sF^{i,\eta}$, 
as described in Step~\ref{step4}, 
$\tau^\eta(z_F^c)\in U_i\cap A$ is the image of $z_F^c$ in our current packing.

Consider the set 
$$U^{z}_i:= \{(u,F,c): u\in U_i\cap A, u= \tau^{\eta}(z^c_F), c\in [2], F\in \sF^{i,\eta}\}.$$
Thus $(1+\gamma^{1/2})|U_i\sm A| \geq  2|\sF^{i,\eta}|=|U^{z}_i| \geq (1- \epsilon^{1/2}) |U_i \setminus A|$ by \eqref{eq: cFi eta size}.
We define a bipartite graph $H^i$ with vertex partition $(U^{z}_i,U_i\setminus A )$ 
and edge set
$$E(H^i) := 
\{(u,F,c)v: uv \in E(G^*_4), c\in [2], v\notin \tau^{\eta}(F)\cup W_F, F\in \sF^{i,\eta}\}.$$
\COMMENT{Note that we don't have to avoid edges $(w,F,c)u$ such that $u\in W_F$ because $W_F\subseteq A\cup R$. Thus the term "$\cup W_F$" is technically not necessary here. Felix: I don't care, just include the set.} 
By \eqref{eq:forestssizes} and \eqref{eq: WF size}, 
$|F|+|W_F|\leq  \eta^{1/2} n$ for each $F\in \sF^{i,\eta}$. 
This together with \eqref{eq: G*4 quasirandom2} implies that 
for all $(u,F,c),(u',F',c')\in U_i^z$ with $u\neq u'$, we have
 \begin{align}\label{eq: auxiliary graph degree}
d_{H^i}((u,F,c)) &= (1\pm \delta_2^{1/3})\delta_1 |U_i\setminus A|\pm \eta^{1/2} n 
= (1\pm 2 \delta_2^{1/3})\delta_1|U_i\setminus A|, \\
\label{eq: auxiliary graph codegree}
d_{H^i}((u,F,c),(u',F',c')) &= (1\pm \delta_2^{1/3})\delta_1^2 |U_i\setminus A| \pm 2\eta^{1/2} n
=(1\pm 2 \delta_2^{1/3})\delta_1^{2}|U_i\setminus A|.
 \end{align}

By \eqref{eq: not too many z}, 
the number of pairs $(u,F,c),(u',F',c')$ with $u=u'$ is at most $2\delta_2^{1/3} |U^z_i|^2$.
Thus \eqref{eq: auxiliary graph codegree} holds for all but at most $2\delta_2^{1/3} |U^z_i|^2$ pairs $(u,F,c),(u',F',c')$ 
and hence Theorem~\ref{thm: almost quasirandom} implies that $H^i$ is $(\delta_2^{1/20},\delta_1)$-regular.
Now Proposition~\ref{prop: matching} in turn implies that $H^i[U^z_i,V^{\rm odd}_i]$ contains 
a matching $M_i$ of size at least $|V^{\rm odd}_i| - 2\delta_2^{1/20} n$.

Next we define $\tau^\eta(\ell^c_F)$ for all $F\in \sF^{i,\eta}$.
For each edge $(u,F,c)v$ in $M_i$,
let $\tau^\eta(\ell^c_F) := v$. 
We will now embed the remaining ``unused'' leaves $\ell_F^c$ inside $A$ using Proposition~\ref{prop: sparse edge embedding}.
In this way, they do not affect the parity of the vertices outside $A$.
Let $E'_4:= \{uv: (u,F,c)v \in M_i, i\in [D]\}$. 
Then 
\begin{align}\label{eq: E'4 deg}
\Delta(E'_4)\leq n^{2/3}
\end{align} 
by \eqref{eq: not too many z} and the fact that $d_{E'_4}(v)\leq 1$ for any vertex $v\in U_{[D]}\sm A$.
Note that
\begin{align*}
	G_4^*[A\cap U_i]=(G_4^*-E_4')[A\cap U_i].
\end{align*}
For each $i\in [D]$, let
\begin{align*}
	\sF^{\rm match}_i:= &\{ (F,c) : (u,F,c) \in M_i \text{ for some } u\in A\cap U_i\},\\
\sF^{\rm unmatch}_i:= &\{(F,c) : (F,c)\notin \cF^{\rm match}_i, F\in \sF^{i,\eta}, c\in [2] \}.
\end{align*}
Thus we have already defined $\tau^\eta(\ell^c_F)$ for all $(F,c)\in \cF^{\rm match}_i$ and $i\in [D]$. 
Let $\{ (F^i_1,c_1),\dots, (F^i_{m'_i},c_{m'_i})\}:= \sF^{\rm unmatch}_i$ and 
let $u_{i,j}:= \tau^\eta(z_{F^i_{j}}^{c_j})$ for all $j\in [m'_i]$.
Then 
\begin{align}\label{eq:sizesFmatch}
	m'_i= |\sF^{\rm unmatch}_i| \leq 2| \cF^{\eta}| \stackrel{(\ref{eq: cF sizes})}{\leq} 2n.
\end{align}
Let $H'^{\rm index}_i$ be the graph on $[m'_i]$ such that $jj'\in E(H'^{\rm index}_i)$ if $F^i_j= F^i_{j'}$. 
Then $\Delta(H'^{\rm index}_i)\leq 1$.
Let 
\begin{align}\label{eq: Wij}
W^i_j:=
\tau^{\eta}({F^i_j})\cup W_{F^i_j}. 
\end{align}
Note that by \eqref{eq:forestssizes} and \eqref{eq: WF size},
$|W^i_j|\leq 2\Delta\eta n +\Delta^2 \leq \eta^{1/2} n$. 
Since $G^{*}_4[A\cap U_i]$ is $(\gamma^{1/5},p)$-quasi-random, by \eqref{eq: G*4 quasirandom1}, 
and since $u_{i,j}\in A\cap U_i$,
this implies that
$$d_{G^{*}_4,A\cap U_i}(u_{i,j}) - |W^i_j| 
\geq \gamma^2 n - \eta^{1/2} n 
\stackrel{\eqref{eq:sizesFmatch}}{\geq} 3 n^{2/3} + m'_i/n^{2/3} + n^{2/3}.$$ 
We now apply Proposition~\ref{prop: sparse edge embedding} with the following parameters and graphs for each $i\in [D]$.
\newline

\noindent
{
\begin{tabular}{c|c|c|c|c|c|c|c|c}
object/parameter & $G^{*}_4[A\cap U_i]$ & $H'^{\rm index}_i$ & $ n^{2/3} $ & $A\cap U_i$ &   $m'_i$ & $n^{2/3} $  &  $W^i_j$ & $u_{i,j}$
\\ \hline
playing the role of & $G$ & $H$ & $\Delta$ & $A$ & $m$ & $s$ &$W_j$ & $u_j$
\end{tabular}
}\newline \vspace{0.2cm}

\noindent
(Condition (ii) of Proposition~\ref{prop: sparse edge embedding} follows from \eqref{eq: not too many z}.)
We obtain distinct edges $u_{i,1}v_{i,1},\ldots, u_{i,m_i'}v_{i,m_i'}$ in $G^{*}_4[A\cap U_i]$ 
such that for each $j\in[m_i']$, $v_{i,j}\notin W_j^i$, and for each $v \in A$, 
\begin{align}\label{eq: E''4 deg}
|\{(i,j): v=v_{i,j}\}|\leq n^{2/3}
\end{align}
and such that $v_{i,j}\neq v_{i,j'}$ if $F_j^i=F_{j'}^i$.
Since $A\cap U_i$ are disjoint for different $i\in [D]$, 
each application of Proposition~\ref{prop: sparse edge embedding} gives us distinct edges.

For each $i\in [D]$ and $j\in [m'_i]$, let $\tau^{\eta}(\ell^{c_j}_{F^i_j}) := v_{i,j}$.
Then $\tau^{\eta}(\ell^{c_j}_{F^i_j}) \notin \tau^{\eta}({F^i_j})$ by \eqref{eq: Wij}.
In addition, $\tau^{\eta}(\ell^{c_j}_{F^i_j})\neq \tau^{\eta}(\ell^{3-c_j}_{F^i_j})$.%
\COMMENT{If $(F^i_j,c)\in F^{\rm unmatch}_i, (F^i_j,3-c)\in F^{\rm match}_i$, then $\tau^\eta(\ell^{3-c}_{F^i_j})$ is not in $A$, so we have $\tau^\eta(\ell^{c}_{F^i_j})\neq \tau^\eta(\ell^{3-c}_{F^i_j})$ since one of them is in $A$ and the other one is not in $A$. 
Otherwise, $H'^{\rm index}_i$ will ensure $\tau^\eta(\ell^{c}_{F^i_j})\neq \tau^\eta(\ell^{3-c}_{F^i_j})$}
Altogether, this defines $\tau^\eta(\ell^c_F)$ for all $F\in \cF^{\eta}$.
Using \eqref{eq:tauWF} and \ref{item:Q11i}, it is easy to see that
$\tau^\eta\cup \tilde{\tau}$ now packs $\cF^\eta$ into $G\cup G'\cup H_1 \cup H_2$.

Let $E''_4:=\{u_{i,j}v_{i,j}: i\in [D], j\in m'_i\}$. 
Let $E_4:=E'_4\cup E''_4$, so \eqref{eq: not too many z}, \eqref{eq: E'4 deg} and \eqref{eq: E''4 deg} imply that
\begin{align}\label{eq: E4 deg}
\Delta(E_4)\leq 3n^{2/3}.
\end{align}
In what follows, we use Lemma~\ref{lem: clear parity} and the trees in $\sT^\p$ to adjust the parity of those vertices in $V\sm A$
which were not involved in the above matching approach (in particular, we now also adjust the parity of the vertices in $R$).
Let 
\begin{align}\label{eq:G5}
	G^*_5:= G^*_4 - E_4
\end{align}
 and
for each $i\in [D]$, let 
$$V^\p_i:= \left\{ \begin{array}{ll}
\{ u\in (U_i\cup R)\setminus A: d_{G^*_5}(u) \text{ is odd}\} &\text{ if }i=1,\\
\{u\in U_i \setminus A : d_{G^*_5}(u) \text{ is odd}\} &\text{ if }i\neq 1.
\end{array}\right.$$
As $|R|= \epsilon n$ and  $|V^{\rm odd}_i|-|M_i|\leq 2\delta^{1/20}_2 n$,
this implies that $|V^{\p}_{i}|\leq 3\delta_2^{1/20} n$.
Let $G^5_1:= G^*_5[A\cap U_1,(U_1\cup R)\sm A]$ and $G^5_i:= G^*_5[A\cap U_i,U_i\sm A]$ for $i\geq 2$. 
For each $i\in [D]$, we let $\{u'_{i,1},\dots, u'_{i,m''_i}\} := V^{\p}_{i}$.

Observe that \eqref{eq: G*4 quasirandom2} and \eqref{eq: E4 deg} imply $d_{G^5_i,A\cap U_i}(u'_{i,j}) 
{\geq} \delta_1^2 n \geq 3+ m''_i/n^{2/3} + n^{2/3}$.
Let $H$ consist of $m_i''$ isolated vertices.
We can apply Proposition~\ref{prop: sparse edge embedding} with the following parameters and graphs.
\newline

\noindent
{
\begin{tabular}{c|c|c|c|c|c|c|c|c}
object/parameter & $G^{5}_i$ & $H$ & $ 1 $ & $A\cap U_i$ &  $m''_i$ & $n^{2/3} $  &  $\emptyset$ & $u'_{i,j}$
\\ \hline
playing the role of & $G$ & $H$ & $\Delta$ & $A$ &  $m$ & $s$ &$W_i$ & $u_j$
\end{tabular}
}\newline \vspace{0.2cm}

\noindent
We obtain a set of distinct edges $E_5^i:= \{ u'_{i,j}v'_{i,j}: j\in [m''_i]\}$ in $G^5_i$ such that 
$\Delta(E_5^i)\leq n^{2/3}$  and $d_{E_5^i}(u)=1$ for all $u\in V^{\p}_{i}$.
Furthermore, 
\begin{align}\label{eq: E5i size}
|E_5^i|
= |V_i^\p|
\leq 3\delta_2^{1/20} n 
\leq \delta_1^2 n.
\end{align}
Now we wish to extend each $e\in E_5^i$ into a tree from $\sT^{i,\p}$ such that all other edges of this tree lie in $A$. (Note that it is not possible to
only use trees in $\sT^{i,{\rm par}}$ to correct the parities of all vertices in $V^{\rm odd}_i$ as $|V^{\rm odd}_i|$ may be much larger than $|\sT^{i,{\rm par}}|$ and each tree in $ \sT^{i,{\rm par}}$ might contain at most two vertices of odd degree. That is why we already have corrected the parities of vertices in $V^{\rm odd}_i\setminus V_i^{\rm par}$ by using~$E_4$.)
To achieve this,
for each $i\in [D]$, we apply Lemma~\ref{lem: clear parity} with the following graphs and parameters. 
Let $U'_1:= U_1\cup R$ and $U'_i:=U_i$ for $i\geq 2$. 
(Recall that $\tau'(y_T)$ was defined in Step~\ref{step3} and the domain of $\tau'$ on $T\in\sT^{i,{\rm par}}$ is exactly $\{y_T\}$.)\newline

\noindent
{
\begin{tabular}{c|c|c|c|c|c|c}
object/parameter & $G^*_5[A\cap U_i]\cup E_5^i$ & $\sT^{i,\p}$ & $\gamma $ & $U'_i$ &$\epsilon$ & $\delta_1$   \\ \hline
playing the role of & $G$ & $\cT$ & $\gamma$ & $V(G)$ & $\epsilon$ & $\delta_1$  
\\ \hline \hline
object/parameter &  $A\cap U_i$  & $p$   &  $\Delta$ & $y_{T}$ & $\tau'|_{\{y_T\}}$ & $W_{T}$  \\ \hline
playing the role of & $A$ & $p$& $\Delta$  & $y_{T} $ & $\tau'_{T}$ & $W_{T}$ 
\end{tabular}
}\newline \vspace{0.2cm}

\noindent
Note that \eqref{eq: G*4 quasirandom1} and \eqref{eq: E4 deg} together with Proposition~\ref{prop: quasi-random subgraph} imply condition \ref{item:L31} of Lemma~\ref{lem: clear parity}\COMMENT{$E_5^i$ removal is irrelevant here}, 
and \eqref{eq: delta 2 cleared} implies \ref{item:L32}, 
\eqref{eq: E5i size} and \eqref{eq: cF sizes} imply \ref{item:L33}, 
and \eqref{eq:forestssizes}, \eqref{eq: WF size} and \ref{item:gamma2} imply \ref{item:L34}. 
Condition \ref{item:L35} holds because of \eqref{eq: not too many W_F} and \ref{item:gamma1}. 
Thus Lemma~\ref{lem: clear parity} gives a function $\tau^\p_i$ packing $\sT^{i,\p}$ into $G_5^*[A\cap U_i]\cup E_5^i$ which is consistent with $\tau'$
and satisfies $E_5^i\sub E(\tau_i^\p(\sT^{i,\p}))$
and $\tau_i^\p(T)\cap W_T=\es$ for every $T\in \sT^{i,\p}$.
We let $\tau^\p:= \bigcup_{i=1}^{D}\tau^\p_i$. 
Note that
\begin{align}
\Delta(\tau^\p(\sT^\p))
&\leq \Delta |\sT^{\p}|\label{eq: E5 deg}
\stackrel{(\ref{eq: cF sizes})}{\leq} \hat{D}\Delta \gamma_*n.
\end{align} 
Moreover, using \eqref{eq:tauWF} and \ref{item:gamma4} it is easy to see that $\tilde{\tau}\cup \tau^{\p}$ packs $\cF^{\p}$ into $G\cup G'\cup H_1\cup H_2$.
Let 
\begin{align}\label{eq:G6}
	G^*_6:= G^*_5 - E(\tau^\p(\sT^\p))=G^*_4 - E_4 - E(\tau^\p(\sT^\p)).
\end{align}
After the updates in \eqref{eq:G*4} and \eqref{eq:G5}, $G^*_6$ consists of the current set of leftover edges in $G\cup G'\cup H_1\cup H_2$.
Note that by \eqref{eq: G*4 quasirandom1}, \eqref{eq: E4 deg}, \eqref{eq: E5 deg}, and Proposition~\ref{prop: quasi-random subgraph} for each ${\I}\in [\hat{D}]$,
\begin{equation}\label{eq: G*6 quasirandom}
\begin{minipage}[c]{0.8\textwidth}\em
$G^*_{6}[A_{\I}]$ is $(\gamma^{1/6},p)$-quasi-random.
\end{minipage}
\end{equation}
By construction every vertex in $V\sm A$ has even degree in $G_6^*$.

\step{Covering the edges between $A$ and $V\sm A$}\label{step7}
We finally complete the proof by covering the edges in $G_6^*[A,V\sm A]$ (i.e.~all remaining edges between $A$ and $V\sm A$) by the trees in $\sT^{\delta_1}$.
Again, we embed the trees sequentially according to the order $\sT^{\delta_1}_1,\dots, \sT^{\delta_1}_{\hat{D}}$. 
Assume that for some $\I\in [\hat{D}]$ we have already defined functions $\tau^{\delta_1}_{1},\ldots,\tau^{\delta_1}_{{\I-1}}$ such that 
for all $\I'<\I$, the function
$\tau^{\delta_1}_{\I'}$ packs $\sT^{\delta_1}_{{\I}'}$ into $G^6_{{\I'}}$, where
$$G^6_{\I'}:= G^*_6 - \bigcup_{{\I}''=1}^{{\I}'-1} E(\tau^{\delta_1}_{{\I}''}(\sT^{\delta_1}_{\I''})).$$
Recall from \eqref{eq:defAB} that $B_\I=(U_{[D]\sm S_\I}\cup R)\sm A$.
Note that \ref{item:G2} implies $G_I^6[A_I, B_I] \subseteq G^1\cup H_1\cup H_2$.
Thus \eqref{eq: G^1 deg}, \eqref{eq: G3 V0 density}, \ref{item:G3}, \ref{item:G4} and \ref{item:U2} imply that 
\begin{align}\label{eq: degree between}
\Delta(G^6_{\I}[A_{\I}, B_\I])\leq 2\delta_1 |A_\I\cup B_\I|.
\end{align}
In addition,  $G^6_{\I}[A_{\I}]$ is $(\gamma^{1/7},p)$-quasi-random by \eqref{eq: G*6 quasirandom}, Proposition~\ref{prop: quasi-random subgraph} and the fact that 
$$\Delta(\bigcup_{{\I}'=1}^{{\I}-1} \tau^{\delta_1}_{{\I'}}(\sT^{\delta_1}_{\I'}))
 \leq \Delta|\cF^{\delta_1}|
  \stackrel{\eqref{eq: cF sizes}}{\leq} \Delta \hat{D}\gamma_* n
  \leq \gamma n.$$ 
Now for each ${\I}\in [\hat{D}]$, 
we apply Lemma~\ref{lem: clear delta_1} with the following graphs and parameters. \newline

\noindent
{
\begin{tabular}{c|c|c|c|c|c|c|c|c|c|c|c}
object/parameter & $G^6_{\I}[A_\I\cup B_\I]$ & $\sT^{\delta_1}_{{\I}}$ & $A_\I$ & $\delta_1$ & $\gamma$ & $\Delta$ & $p$   & $y_{T}$ & $\tau'|_{\{y_T\}}$ & $W_{T}$ & $\epsilon$ \\ \hline
playing the role of & $G$ & $\cT$ & $A$ &$\delta_1$ & $\gamma$ & $\Delta$  & $p$ & $y_{T} $ & $\tau'_{T}$ & $W_{T}$  & $\epsilon$
\end{tabular}
}\newline \vspace{0.2cm}

\noindent
Observe that condition \ref{item:L41} of Lemma~\ref{lem: clear delta_1} holds since $G^6_{\I}[A_{\I}]$ is $(\gamma^{1/7},p)$-quasi-random,
\eqref{eq: delta 2 cleared} implies \ref{item:L42}, 
\eqref{eq: degree between} implies \ref{item:L43}, 
\eqref{eq: cF sizes} implies \ref{item:L44}, 
\eqref{eq:forestssizes}, \eqref{eq: WF size}, and \ref{item:gamma3} imply \ref{item:L45}, and 
\eqref{eq: not too many W_F} and \ref{item:gamma1} give us \ref{item:L46}.
Lemma~\ref{lem: clear delta_1} provides a function $\tau^{\delta_1}_{\I}$ packing $\sT^{\delta_1}_{\I}$ into $G_{\I}^6[A_\I\cup B_\I]$ 
which is consistent with $\tau'$ and satisfies
\begin{enumerate}[label=(Q4.\arabic*)$_{\I}$]
\item\label{item:Q41i} $d_{G^6_{\I+1},A_\I}(v) \leq 1$ for every $v\in B_\I$,
\item\label{item:Q42i} $W_{T}\cap \tau^{\delta_1}_{\I}(T) =\es$ for every $T\in \sT^{\delta_1}_{\I}$, and
\item\label{item:Q43i} $d_{G^6_{\I}, A}(v) - d_{G^6_{{\I}+1},A}(v)$ is even for all $v\in V\sm A$. 
\end{enumerate}

Indeed, to check \ref{item:Q43i} note that for all $v\in B_\I$
we have $d_{G^6_{\I}, A\sm A_\I}(v) = d_{G^6_{{\I}+1},A\sm A_\I}(v)$, 
that $d_{G^6_{\I}, A_\I}(v) - d_{G^6_{{\I}+1},A_\I}(v)$ is even by \ref{item:Q43} of Lemma~\ref{lem: clear delta_1},
and that for all $v\in (V\sm A)\sm B_I$ we have $d_{G^6_{\I}, A_\I}(v) = d_{G^6_{{\I}+1},A_\I}(v)$.

Let $\tau^{\delta_1}:= \bigcup_{{\I}=1}^{\hat{D}} \tau^{\delta_1}_{\I}$, then $\tau^{\delta_1}$
packs $\sT^{\delta_1}$ into $G_6^*$ and is consistent with $\tau'$. 
Moreover, recall that $A_I\cup B_I = U_{[D]\setminus S_I} \cup R$.
This together with \ref{item:gamma4}, \eqref{eq:WF1},
and \ref{item:Q42i} implies that
$\tilde{\tau}\cup \tau^{\delta_1}$ packs $\cF^{\delta_1}$ into $G\cup G'\cup H_1\cup H_2$.

Let 
\begin{align}\label{eq:G7}
	G^*_7:= G^*_6 - E(\tau^{\delta_1}(\sT^{\delta_1}))=G^6_{\hat{D}+1}.
\end{align}
After the previous update in \eqref{eq:G6}, this is the (final) set of edges left over by the packing we defined so far.
We claim that for all $v\in V\sm A$,
we have $d_{G^*_7}(v)=0$.
Assume for a contradiction that there is a vertex $v\in V\setminus A$, with $d_{G^*_7}(v)>0$. 
Thus $d_{G^*_7,A}(v)>0$ by \eqref{eq: delta 2 cleared}.
Moreover, $d_{G^*_7,A}(v)\geq 2$, since $v$ has even degree in $G_6^*=G^6_1$ and \ref{item:Q43i} holds for all ${\I}\in [\hat{D}]$. 
Let $i,i',i^{*}\in[D]$ be such that $v\in U_i \cup R$, and $v$ has a neighbour in $U_{i'}\cap A$ and another neighbour in $U_{i^*}\cap A$ in $G^*_7$. 
Consider ${\I}\in [\hat{D}]$ such that $S_{{\I}} \sub [D]\setminus\{i,i',i^{*}\}$. 
Then $d_{G^6_{\I+1},A_{\I}}(v)\geq d_{G^*_7,A_{\I}}(v) \geq 2$, which is a contradiction to \ref{item:Q41i}
and proves the claim. (This is the point where we use that $|S_{I}|=D-3$.)

We define $\phi:= \tilde{\tau}\cup\tau^{\eta} \cup \tau^{\delta_1} \cup \tau^{\delta_2}\cup \tau^{\p}$.
Thus $\phi$ packs $\cF$ into $G\cup G'\cup H_1\cup H_2$.
The above claim and \ref{item:G1} imply \ref{item:Phi1}.
Since the sequence of all updates of the leftover edges is given by
\eqref{eq: def G1 H1}, \eqref{eq:G2}, \eqref{eq:G3}, \eqref{eq:G4}, \eqref{eq:G*4}, \eqref{eq:G5}, \eqref{eq:G6}, and \eqref{eq:G7}, we have 
$E(G'\cap \phi(\cF))\sub E_1\cup E_4 \cup(\tau^{\delta_1} \cup \tau^{\delta_2} \cup \tau^{\p})(\sT^{\gamma_*})$.
As $|\sT^{\gamma_*}|\leq 3\hat{D}\gamma_* n$ by \eqref{eq:sizeFgamma}, 
we conclude  by \eqref{eq: Delta E1} and \eqref{eq: E4 deg} that each $v\in A$ satisfies
\begin{align*}
d_{G'\cap \phi(\cF)}(v)\leq
d_{E_1}(v) + d_{E_4}(v) + \Delta|\sT^{\gamma_*}| \leq \epsilon n + 3 n^{2/3} + 3\hat{D} \Delta\gamma_* n  \leq \gamma_*^{1/2}|A|.
\end{align*}
This implies \ref{item:Phi2}. 
Note that here we make crucial use of the fact that when we packed the collection of forests which contain many trees (i.e.~$\tilde{\cF}$ and $\sF^{\eta}$) we did not use any edges of $G'$.
\end{proof}

Recall that the aim of Step~2 was to find a near-optimal packing of trees which cover most of the edges of $G$. Since $G$ is quasi-random one could directly apply the results of \cite{KKOT16} to achieve this. In other words, it would seem much more straightforward to apply Theorem~\ref{thm: any graph packing} from \cite{KKOT16} directly to obtain such a packing, rather than proving and applying the results in Section~\ref{sec: trees}--\ref{sec:iteration}, which are based on more technical results from \cite{KKOT16} and on Szemer\'{e}di's regularity lemma. However, the seemingly more straighforward approach would lead to the leftover density after Step~2 being too large (compared to that of $H_1$ and $H_2$) for the remaining steps to be feasible.

\section{Orientations with regular outdegree}
\label{sec:orientation}

The following lemma states that every quasi-random graph has an ``out-regular'' orientation if the average degree is an even integer.
We will need Lemma~\ref{lem: orientation} at the very end of the proof of our main theorem 
where we need to embed only a single leaf for each tree in a given collection of trees.
It will turn out that such an ``out-regular'' orientation of the remaining uncovered graph $G$ will give rise to a valid embedding.
We use $\bar{d}(G)$ to denote the average degree of a graph $G$.

\begin{lemma}\label{lem: orientation}
Suppose $n, \bar{d} \in \N$ and $1/n \ll \beta \ll p \leq 1$.
If $G$ is a $(\beta,p)$-quasi-random graph on $n$ vertices
such that $\bar{d}(G)=2\bar{d}$, then $G$ has an orientation such that every vertex has exactly $\bar{d}$ outneighbours.
\end{lemma}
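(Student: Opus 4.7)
The plan is to recast the problem as verifying a Hakimi-type degree-sequence inequality on $G$, and to check that inequality using the quasi-randomness of $G$.

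The first step is purely formal: by Hakimi's theorem (a standard max-flow application on the network with source $s$, sink $t$, a node $v_e$ for each edge $e$ carrying unit capacities $s\to v_e$ and $v_e\to w_u$, $v_e\to w_v$ for $e=uv$, and a node $w_u$ for each vertex with capacity $\bar d$ on $w_u\to t$), $G$ admits an orientation with all outdegrees equal to $\bar d$ if and only if $\sum_v\bar d=e(G)$ (automatic from $\bar d(G)=2\bar d$) and
\[
e_G(T)\leq |T|\bar d \quad\text{for every }T\subseteq V(G).
\]

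The substantive task is to verify this inequality. From the identity $2e_G(T)=\sum_{v\in T}d(v)-e_G(T,V\setminus T)$, it is equivalent to $e_G(T,V\setminus T)\geq \sum_{v\in T}(d(v)-2\bar d)$. Quasi-randomness gives $d(v)=(1\pm\beta)pn$ and, on averaging, $2\bar d=(1\pm\beta)pn$; hence $|d(v)-2\bar d|\leq 2\beta pn$ for every $v$, and combined with $\sum_{v\in V}(d(v)-2\bar d)=0$, the right-hand side above is at most $2\beta pn\cdot \min(|T|,|V\setminus T|)$.

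The proof is finished by a case split on $m:=\min(|T|,|V\setminus T|)$. When $m\geq \beta^{1/6}n$, Proposition~\ref{prop: quasi-random implies dense} gives
\[
e_G(T,V\setminus T)\geq (p-\beta^{1/6})|T||V\setminus T|\geq (p-\beta^{1/6})\cdot m\cdot n/2,
\]
which comfortably exceeds $2\beta pn\cdot m$ since $\beta\ll p$. When $|T|<\beta^{1/6}n$, the bound $e_G(T)\leq \binom{|T|}{2}<|T|\bar d$ is immediate from $\bar d\geq (1-\beta)pn/2$. The remaining range $|V\setminus T|<\beta^{1/6}n$ is handled by writing $e_G(T)=e(G)-e_G(T,V\setminus T)-e_G(V\setminus T)$ and using $\sum_{v\in V\setminus T}d(v)\geq |V\setminus T|(1-\beta)pn$ together with $e_G(V\setminus T)\leq \binom{|V\setminus T|}{2}$. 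The only mildly delicate point is making the density bound in the first case dominate the $2\beta pn$ slack on the right-hand side, which is precisely what forces the use of Proposition~\ref{prop: quasi-random implies dense} rather than just the degree information, and dictates the threshold $\beta^{1/6}n$ in the case split.
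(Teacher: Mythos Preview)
Your argument is correct and takes a genuinely different route from the paper's proof. The paper proceeds constructively: it iteratively peels off subgraphs $G_{j-1}-E(G_j)$ each admitting an orientation with all outdegrees~$1$ (built from a Hamilton cycle in a robust expander together with short paths linking maximum- to minimum-degree vertices), until the remaining graph $G_s$ is regular and hence Eulerian; the final orientation is assembled from an Eulerian orientation of $G_s$ and the outdegree-$1$ orientations of the layers. This relies on Theorems~\ref{thm: expander ham} and the robust-expansion machinery already developed in Section~\ref{sec: approx cycle decomp}.

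Your approach is more elementary and considerably shorter: you reduce to the Hakimi criterion $e(G[T])\le |T|\bar d$ and verify it directly from quasi-randomness, invoking only Proposition~\ref{prop: quasi-random implies dense} for the middle range of $|T|$. This avoids robust expansion and Hamiltonicity entirely. The paper's route has the advantage of being self-contained within the toolkit already assembled for Section~\ref{sec: approx cycle decomp} (and of yielding the orientation by an explicit process), whereas yours imports a classical flow/Hall-type fact but gives a cleaner and more transparent proof of the lemma in isolation.
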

\begin{proof}
We start by introducing for any graph $H$ a function $Z(H)$ which measures the distance of the degree sequence of $H$ to a $\bar{d}(H)$-regular degree sequence.
We define $$Z(H):=\sum_{v\in V(H)}|d(v)-\bar{d}(H)|.$$
Since $G$ is $(\beta,p)$-quasi-random, we have $\Delta(G)- \delta(G)\leq 2p\beta n$ and thus $Z(G)\leq 2p\beta n^2$. 
We iteratively construct a sequence of graphs $G=G_0\supseteq G_1\supseteq\ldots\supseteq G_s$ (for some $s\in \N\cup \{0\}$) such that 
$G_s$ is regular and for all $j\in [s]$ the following hold:
\begin{enumerate}[label=(D\arabic*)$_j$]

	\item\label{item:D1}  $\bar{d}(G_{j})= 2(\bar{d}-j)$,
	\item\label{item:D3}  $\Delta(G_{j})-\delta(G_{j})\leq \Delta(G_{j-1})-\delta(G_{j-1})$ and
	$Z(G_{j})\leq Z(G_{j-1})$,
	\item\label{item:D4}  $\Delta(G_{j})-\delta(G_{j})\leq \Delta(G_{j-1})-\delta(G_{j-1})-1$ or
	$Z(G_{j}) = Z(G_{j-1}) -p^2 n/2$,
	\item\label{item:D5}  $G_{j-1}- E(G_{j})$ has an orientation such that every vertex has outdegree $1$, and
	\item\label{item:D6}  $\Delta(G_{j-1}- E(G_{j}))\leq 3$.
\end{enumerate}
Having defined such a sequence $G=G_0\supseteq G_1\supseteq\ldots\supseteq G_s$,
by (D1)$_s$ the graph $G_s$ is Eulerian and so has an Eulerian circuit $W$.
By orienting $W$ consistently and orienting edges in $G_{j-1}-E(G_{j})$ as in \ref{item:D5} for each $j\in [s]$, this leads to the desired orientation of $G$.

Note that (D1)$_{0}$ trivially holds and we view (D2)$_0$--(D5)$_0$ as being vacuously true.
Suppose that for some $i\geq 0$ we have already defined graphs $G_0,\ldots,G_{i}$ such that \ref{item:D1}--\ref{item:D6} hold for all $j\leq i$.
By \ref{item:D3} and \ref{item:D4} for every $j\leq i$, 
we conclude  that 
\begin{align}\label{eq:orient}
	i\leq (p^2n/2)^{-1}Z(G)+ (\Delta(G)-\delta(G)) \leq 4\beta p^{-1}n+ 2p\beta n \leq 6\beta p^{-1}n.
\end{align}
If $G_i$ is regular, then set $s:=i$. Suppose next that $G_i$ is not regular. We show how to define $G_{i+1}$.

The fact that \ref{item:D6} holds for all $j\leq i$,
together with
\eqref{eq:orient} implies that $d_{G_i}(v) \geq d_{G}(v)-3i \geq d_{G}(v) - 18 \beta p^{-1} n$. 
Thus $G_i$ is $(\beta^{1/2},p)$-quasi-random by Proposition~\ref{prop: quasi-random subgraph},
and hence $(\beta^{1/12},p/2)$-dense, by Proposition~\ref{prop: quasi-random implies dense}.

Let $U$ be the set of vertices of maximum degree in $G_i$ and let $V$ be the set of vertices of minimum degree in $G_i$. 
Let $t:=\min\{|U|,|V|, p^2 n/4\}$.
Let $U':=\{u_1,\ldots,u_t\}\subseteq U$ and $V':=\{v_1,\ldots,v_t\}\subseteq V$.
Because $G_i$ is $(\beta^{1/2},p)$-quasi-random and thus $d_{G_i}(u,v)\geq (1-\beta^{1/2})p^2 n \geq 3t$ for all $u,v\in V(G_i)$, 
there is a set $W':=\{w_1,\ldots,w_t\}\subseteq V(G)\sm (U'\cup V')$ such that $P_j:=u_jw_jv_j$ is a path in $G_i$ for all $j\in [t]$.

As $G_i$ is $(\beta^{1/12},p/2)$-dense,
by Proposition~\ref{prop: dense induced sub}, the graph $G':=G_i-(V'\cup W')$ is a robust $(p\beta^{1/12}/2,4\beta^{1/12})$-expander 
and $\delta(G')\geq (1-\beta^{1/2})pn - 2t \geq pn/3$. 
Thus, by Theorem~\ref{thm: expander ham}, $G'$ has a Hamilton cycle $C$.
Let $G_{i+1} := G_i - E(C) - \bigcup_{j=1}^{t} E(P_j)$.
It is easy to see that $G_{i+1}$ satisfies (D1)$_{i+1}$,  (D4)$_{i+1}$ and (D5)$_{i+1}$.

As, by (D1)$_{i}$, 
the average degree of $G_i$ is an integer, we have $\delta(G_{i})+1\leq \bar{d}(G_i)\leq \Delta(G_i)-1$. 
Moreover, (D1)$_{i+1}$ implies $\overline{d}(G_{i+1})= \overline{d}(G_i)-2$. 
It is easy to check that together with our construction of $G_{i+1}$ this implies that $|d_{G_i}(v)-\bar{d}(G_i)|\geq |d_{G_{i+1}}(v)-\bar{d}(G_{i+1})|$ for all $v\in V(G)$, 
and hence (D2)$_{i+1}$ follows.

If $t=\min\{|U|,|V|\}$,
then $\Delta(G_{i+1})-\delta(G_{i+1})\leq \Delta(G_i)-\delta(G_i)-1$ holds
and if $t=p^2n/4$,
then $Z(G_{i+1})=Z(G_i)- p^2 n/2$.
Thus we have (D3)$_{i+1}$. 
This completes the construction of $G_{i+1}$ satisfying (D1)$_{i+1}$--(D5)$_{i+1}$.
\end{proof}

\section{Proof of Theorem~\ref{thm: main result}}
\label{sec:final}

In this section we prove Theorem~\ref{thm: main result}.
In Section~\ref{sec:9.1}
we first state and prove Theorem~\ref{thm: main result step1} (which is weaker than Theorem~~\ref{thm: main result}).
We extend Theorem~\ref{thm: main result step1} to Theorem~\ref{thm: main result} in Section~\ref{sec:9.2}
by combining it with a result from~\cite{KKOT16}.

\subsection{Optimal tree packing}\label{sec:9.1}

\begin{theorem}\label{thm: main result step1}
For all $\Delta \in \N$ and $\delta>0$, 
there exist $N\in \N$ and $\epsilon>0$ such that
for all $n\geq N$ the following holds.
Suppose $G$ is an $(\epsilon,p)$-quasi-random graph on $n$ vertices and $\cT$ a set of trees satisfying 
\begin{enumerate}[label=(\roman*)]
	\item\label{item:MT11} $\Delta(T)\leq \Delta$ and $\delta n \leq |T|\leq (1- \delta)n$ for all $T\in \cT$,
	\item\label{item:MT13} $|\cT|\geq (1/2+ \delta)n$, and
	\item\label{item:MT14} $e(\cT)=e(G)$.
\end{enumerate}
Then $\cT$ packs into $G$.
\end{theorem}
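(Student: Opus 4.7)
The plan is to follow the iterative absorption outline of Section~2, using Lemma~\ref{lem: iteration} as the engine of each step and Lemma~\ref{lem: orientation} to carry out the final absorption. First I would fix a hierarchy of parameters
$$1/n \ll \epsilon \ll 1/\Lambda \ll \gamma_{\Lambda} \ll \gamma_{\Lambda-1} \ll \dots \ll \gamma_1 \ll \delta, 1/\Delta, p,$$
with $\gamma_\Lambda \geq n^{-2/3}$, and then pick a random nested sequence $V(G) = A_0 \supseteq A_1 \supseteq \dots \supseteq A_\Lambda$ with $|A_i| = \gamma_i n$. Standard concentration together with Theorem~\ref{thm: almost quasirandom} shows that for every $i$ the graph $G[A_i]$ is $(\epsilon^{1/2}, p)$-quasi-random, so Proposition~\ref{prop: quasi-random implies dense} gives density at least $p/2$ throughout.

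Before the iteration starts, I would set up the absorption structure. Choose $\cT^* \subseteq \cT$ of size $m$ slightly less than $\binom{|A_\Lambda|}{2}$ (taking trees of intermediate size so that each has many leaves far from their root positions). From every $T^* \in \cT^*$ reserve one leaf $\ell_{T^*}$ together with its neighbour $z_{T^*}$: the plan is to embed $T^* - \ell_{T^*}$ during the main iteration so that $\tau(z_{T^*}) \in A_\Lambda$ but no other vertex of $T^* - \ell_{T^*}$ is embedded into $A_\Lambda$. During the final step each $\ell_{T^*}$ will then be mapped to a suitable neighbour of $\tau(z_{T^*})$ in the leftover graph on $A_\Lambda$.

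The iteration itself is where Lemma~\ref{lem: iteration} does the heavy lifting. At step $i$ all previously uncovered edges lie in $G[A_i]$, and I would partition the yet-to-be-embedded trees and subforests into two classes: those of order at most $(1-4/D)|A_i|$, which fit inside $A_i$ and will be embedded this round; and the large ones, which must be split via Proposition~\ref{prop: tree 2 parts} and Proposition~\ref{prop: taking subtree collection} into a piece to be embedded now (inside $A_i \setminus A_{i+1}$) and a subforest $F^*$ to be embedded in later iterations. The attachment vertices of these $F^*$ are pinned to $A_{i+1}$ via the partial embedding $\phi'$ that is fed into Lemma~\ref{lem: iteration} at the next iteration, so the split embeddings reassemble coherently. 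Inside $G[A_i]$ I would also reserve two small helper subgraphs $H_1, H_2$ (one quasi-random bipartite between $A_i \setminus A_{i+1}$ and $A_{i+1}$, one with strong codegree between the remaining vertices) obtained by random sparsification as in Proposition~\ref{prop: part  eps reg} and the proof of Lemma~\ref{lem: rl}, and I would apply Lemma~\ref{lem: iteration} with $V := A_i$, $A := A_{i+1}$, $R$ the set of previously-pinned attachment images, $G[A_i \setminus A_{i+1}] - E(H_1 \cup H_2)$, $G[A_{i+1}]$, $H_1$, $H_2$ playing the roles of $G, G', H_1, H_2$. Property \ref{item:Phi1} covers every edge outside $G[A_{i+1}]$, while property \ref{item:Phi2} guarantees that the new quasi-random leftover $G[A_{i+1}]$ loses at most a $\gamma_*^{1/2}$-fraction of the degree at each vertex, so it remains quasi-random for the next iteration by Proposition~\ref{prop: quasi-random subgraph}.

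After $\Lambda$ iterations everything is embedded except the reserved leaves $\ell_{T^*}$, and the leftover graph $G^\star$ lies inside $A_\Lambda$ with exactly $m = |\cT^*|$ edges; a degree count across the iterations shows that $G^\star$ is still quasi-random of some density $p'$. By adjusting $\cT^*$ (and, if needed, swapping one reserved tree at the end) I arrange that the average degree $2\bar d$ of $G^\star$ is an even integer, so Lemma~\ref{lem: orientation} yields an orientation in which every $v \in A_\Lambda$ has exactly $\bar d$ outneighbours in $G^\star$. The embedding $\tau(z_{T^*})$ is distributed during the iteration so that the number of $T^* \in \cT^*$ with $\tau(z_{T^*}) = v$ is also $\bar d$ for every $v \in A_\Lambda$ (achievable by spreading the targets uniformly when embedding the $z_{T^*}$ via $\phi'$). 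A bijective assignment of the outedges at each $v$ to the corresponding trees $T^*$ then determines $\tau(\ell_{T^*})$ and completes the decomposition. The hard part is the bookkeeping: ensuring that the split subforests reattach consistently across iterations (which is why $\phi'$ is part of the interface of Lemma~\ref{lem: iteration}), and ensuring that the final counts at $A_\Lambda$ match up so that the orientation-based absorption actually goes through.
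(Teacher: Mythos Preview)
Your outline captures the high-level iterative absorption picture, but there are two structural gaps that would prevent the argument from going through as written.

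First, the parameter set-up is inconsistent. Placing $1/\Lambda$ in the hierarchy forces $\Lambda$ to be a constant, so all the $\gamma_i$ are constants bounded away from $0$ and $|A_\Lambda|=\gamma_\Lambda n$ is linear in $n$; the side condition ``$\gamma_\Lambda\ge n^{-2/3}$'' is then vacuous and never drives $|A_\Lambda|$ down to $n^{1/3}$. More importantly, a cascade $\gamma_{\Lambda}\ll\gamma_{\Lambda-1}\ll\cdots$ would force the auxiliary constants of Lemma~\ref{lem: iteration} (in particular $\gamma_*$) to shrink at every step, and after boundedly many steps they hit $0$. The paper avoids this by using a \emph{single} ratio $\gamma$ and a $\gamma$-vortex $A_0\supseteq\cdots\supseteq A_\Lambda$ with $|A_i|=\lfloor\gamma^i n\rfloor$, so that $\Lambda=\Theta(\log n)$ while the constants $\epsilon,\delta_2,\delta_1,\gamma_*,\gamma,\beta$ in Lemma~\ref{lem: iteration} stay \emph{fixed} across all iterations. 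This also means the ``$G$'' fed into Lemma~\ref{lem: iteration} at step $t$ must live on all of $A_t$ (with $A_{t+1}$ independent in it), not only on $A_t\setminus A_{t+1}$ as you wrote.

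Second, two mechanisms you leave implicit are in fact essential and do not come for free. Lemma~\ref{lem: iteration} requires every $F\in\cF$ to have exactly two components with roots in $R$; the paper obtains this by first deleting one carefully chosen edge $e_T$ from each tree $T\in\cT\setminus\cT^*$ and pre-embedding these edges into the root set $R_0$, so that the objects entering the iteration are two-rooted forests rather than trees. And the absorbing trees $\cT^*$ cannot be carried through the iteration with $z_{T^*}$ pinned into $A_\Lambda$: once $|A_i|<\delta n$ no tree of $\cT$ fits inside $A_i$, and the interface of Lemma~\ref{lem: iteration} maps roots into $R$, which is disjoint from $A$. Instead the paper embeds all of $T^*-\ell_{T^*}$ \emph{before} the iteration, greedily into $(A_0\setminus A_1)\cup\{v^*\}$ with $\phi(z_{T^*})=v^*\in A_\Lambda$; this is feasible precisely because $|\cT^*|\approx e(G[A_\Lambda])\le n^{2/3}$, so the preparation step perturbs degrees by at most $\Delta n^{2/3}$. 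Only then does the iteration run on $\cT\setminus\cT^*$, and the final orientation step via Lemma~\ref{lem: orientation} works because the pre-assigned images $v^*$ were spread so that each vertex of $A_\Lambda$ receives exactly $\bar d$ of the $z_{T^*}$.
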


Note that the conditions of the theorem imply that $p\geq \delta$.

We will use an iterative approach for the proof of Theorem~\ref{thm: main result step1}.
As explained earlier,
in each iteration step,
we will apply Lemma~\ref{lem: iteration} to cover most of the current leftover graph with forests.
To this end, we introduce a vortex of a quasi-random graph 
(a similar notion was already introduced in~\cite{GKLMO16} to underpin the iterative absorption process carried out there).
Suppose $G$ is an $(\epsilon,p)$-quasi-random graph on $n$ vertices and $1/n\ll\epsilon\ll\gamma\ll p$.
A $\gamma$-\emph{vortex} of $G$ is a collection of vertex sets $A_0,\ldots, A_\Lambda, R_0,\ldots, R_{\Lambda-1}$
such that
\begin{enumerate}[label=(V\arabic*)]
	\item\label{item:O1} $A_\Lambda \sub \ldots \sub A_0=V(G)$,
	\item\label{item:O2} $|A_{i}|=\lfloor\gamma^in\rfloor=:n_i$,
	\item\label{item:O3} $n_\Lambda\leq n^{1/3}$ and $\Lambda$ is minimal with respect to this,
	\item\label{item:O4} $R_i\sub A_i\sm A_{i+1}$ and $|R_i|=\lfloor\epsilon n_i\rfloor$ for every $i \in\{0,\ldots, \Lambda-1\}$, 
	\item\label{item:O5} $d_{G,A_i}(u)= (1\pm 3\epsilon/2)p|A_i|$ and $d_{G,A_i}(u,v)= (1\pm 3\epsilon/2)p^2|A_i|$
	for all $i\in \{0,\ldots, \Lambda\}$ and  all distinct $u,v\in V(G)$, and
	\item\label{item:O6} $d_{G,R_i}(u)= (1\pm 2\epsilon)p|R_i|$ and $d_{G,R_i}(u,v)= (1\pm 2\epsilon)p^2|R_i|$
	for every $i\in \{0,\ldots, \Lambda-1\}$ and all distinct $u,v\in V(G)$.
\end{enumerate}

Is is not difficult to see that $G$ has a $\gamma$-vortex,
as the following random process produces a $\gamma$-vortex with probability, say, at least $1/2$.
Let $\Lambda$ and $n_i$ be defined as in \ref{item:O2} and \ref{item:O3}.
Consider a random partition $(U_0,\ldots,U_\Lambda)$
such that $|U_\Lambda|=n_\Lambda$ and 
$|U_i|=n_i-n_{i+1}$ for every $i \in\{0,\ldots, \Lambda-1\}$.
Let $A_i:=\bigcup_{j=i}^\Lambda U_j$. 
Then \ref{item:O1}--\ref{item:O3} hold.
Lemma~\ref{lem: chernoff} shows that \ref{item:O5} holds with probability at least $1/2$.
Now it is easy to construct a $\gamma$-vortex of $G$, as we only need to find suitable sets $R_i$.
Indeed, random sets $R_0,\ldots, R_{\Lambda-1}$ where $R_i\sub U_i$ and $|R_i|=\lfloor\epsilon n_i\rfloor$ have the desired properties with probability at least $1/2$, by Lemma~\ref{lem: chernoff}.
In particular, such sets exist and \ref{item:O6} holds.

We make the following observations, which follow directly from the definition of a vortex:
\begin{itemize}
	\item for each $i\in [\Lambda]$, the bipartite graph $G[A_{i-1}\sm A_i,A_i]$ is $(2\epsilon, p)$-quasi-random, and
	\item $G[A_{i-1}] -E(G[A_i])$ is $(\gamma^{3/4} ,p )$-quasi-random by Proposition~\ref{prop: quasi-random subgraph}. 
\end{itemize}

When applying Lemma~\ref{lem: iteration} in the proof of Theorem~\ref{thm: main result step1}, 
$A_{i+1}$ will play the role of $A$ and $R_i$ the role of $R$.

\begin{proof}[Proof of Theorem~\ref{thm: main result step1}]
First, we choose the following constants:
$$0<1/N \ll \epsilon \ll \delta_2 \ll \delta_1 \ll \gamma_*\ll \gamma \ll \beta \ll\delta,1/\Delta.$$
In view of the statement of Theorem~\ref{thm: main result step1}, 
we may assume $1/\Delta \leq \delta/2$ by increasing the value of $\Delta$ if necessary. Let $G$ be a graph satisfying the conditions of Theorem~\ref{thm: main result step1}.
As observed above, $p\geq \delta$ and $G$ contains a $\gamma$-vortex $A_0,\ldots, A_\Lambda, R_0,\ldots, R_{\Lambda-1}$.

Note that for all distinct $i,j\in \{0,\ldots,\Lambda-1\}$, 
the graphs $G[A_i] - E(G[A_{i+1}])$ and $G[A_j] - E(G[A_{j+1}])$ are edge-disjoint.
For every $i\in \{0,\ldots,\Lambda-1\}$, 
we decompose $G[A_i] - E(G[A_{i+1}])$ into three spanning  subgraphs $G_i, H^i_1, H^i_2$ such that 
\begin{enumerate}[label=(H\arabic*)$_i$]
\item\label{item:H1} $H^i_1$ is a $(4\epsilon,\delta_1)$-quasi-random bipartite graph with vertex partition $(A_{i}\setminus A_{i+1}, A_{i+1})$,
\item\label{item:H2} $d_{H^i_2}(u)\leq 3\delta_2 n_{i}/2$ and $d_{H^i_2}(u,v) \geq 2\delta_2^2 n_{i}/3$ for any two vertices $u,v\in A_{i}$,
\item\label{item:H3} $G_i$ is $(\gamma^{1/2},p)$-quasi-random and if $i\leq \Lambda-2$, then $d_{G_i,R_{i+1}}(v)\geq p|R_{i+1}|/2$ for any vertex $v \in A_i\setminus A_{i+1}$, and in addition, if $i=0$, then $d_{G_0,R_{0}}(v)\geq p\epsilon n/2$ for all $v\in V(G)$, and
\item\label{item:H4} $A_{i+1}$ is an independent set in $H_1^i\cup H^i_2\cup G_i$.
\end{enumerate}
It is not difficult to see that such a decomposition exists.
Indeed,
if we assign any edge in $G[A_{i}\setminus A_{i+1},A_{i+1}]$
to $H^i_1$ with probability $\delta_1/p$,
to $H^i_2$ with probability $\delta_2/p$, and
to $G_i$ with probability $1-(\delta_1+\delta_2)/p$
and any edge in $G[A_i\sm A_{i+1}]$
to $H^i_2$ with probability $\delta_2/p$, and
to $G_i$ with probability $1-\delta_2/p$,
then we obtain graphs
satisfying these conditions, with probability at least $1/2$.

Let $G_{\Lambda}:=G[A_{\Lambda}]$.
Before we proceed with an inductive argument, we carry out the following preparation step. \newline

\noindent {\bf Preparation step.}
We will first remove a leaf from those trees which will play a role in the final absorbing step and
embed the remainder of these trees into $G_0$ in a suitable way.
For this, let $t_{\Lambda}$ be an integer such that 
\begin{align}\label{eq:edgesGlast}
	t_{\Lambda} n_{\Lambda} = e(G[A_{\Lambda}])- 3\gamma_*^2 n_{\Lambda-1}^2 \pm n_{\Lambda},
\end{align}
and consider an arbitrary collection of trees 
\begin{align}\label{eq:laststeptrees}
	\cT^*:= \{ T^*_1,\dots, T^*_{t_{\Lambda}n_{\Lambda}}\} \subseteq \cT.
\end{align}

Let $\ell^*_i$ be a leaf of $T^*_i$ and $z^*_i$ be the unique neighbour of $\ell^*_i$ in $T^*_i$. 
For each $i\in [t_{\Lambda}n_{\Lambda}]$, let $T_{*,i}:= T^*_i - \ell^*_i$ and $\cT_*:=\{ T_{*,1},\dots, T_{*,t_{\Lambda}n_{\Lambda}}\} $.
Let $v^*_1,\dots, v^*_{t_{\Lambda}n_{\Lambda}}$ be a sequence of not necessarily distinct vertices such that 
every vertex in $A_{\Lambda}$ appears in the sequence exactly $t_{\Lambda}$ times. 
We sequentially construct functions $\phi_{T_{*,i}}$ which pack $T_{*,i}$ into $G_0[(A_0\setminus A_1)\cup \{v^*_i\}]$ such that $\phi_{T_{*,i}}(z^*_i)= v^*_i$.  
(The embedding of $\ell_1^*,\ldots,\ell_{t_\Lambda n_\Lambda}^*$ will be deferred until the final step.)
Assume that for some $i\in [t_\Lambda n_\Lambda]$ we already have constructed $\phi_{T_{*,1}},\dots, \phi_{T_{*,i-1}}$, and let 
$$G(i):= G_0[ (A_0\setminus A_1) \cup \{v^*_i\}] - \bigcup_{j=1}^{i-1}E(\phi_{T_{*,j}}(T_{*,j})).$$
Note that $|T_{*,i}|\leq (1- \delta)n\leq (1-\gamma)n+1 = |(A_0\setminus A_1) \cup\{v^*_i\}|$.
For any $v\in V(G(i))$, 
we have $d_{G_0}(v) - d_{G(i)}(v) \leq |A_{1}| + \Delta t_{\Lambda}n_{\Lambda} \leq 2\gamma n$ 
and hence $G(i)$ is $(\gamma^{2/5},p)$-quasi-random by (H3)$_0$ and Proposition~\ref{prop: quasi-random subgraph}. 
Thus we can apply Lemma~\ref{blow up with pre-embedding} to obtain $\phi_{T_{*,i}}$ such that 
\begin{align}\label{eq:treeslast}
\phi_{T_{*,i}}(z^*_{i}) = v^*_i \text{ and }\phi_{T_{*,i}}(T_{*,i}) \subseteq (A_0\setminus A_1)\cup \{v^*_i\}. 
\end{align}
Repeating this process for every $i\in [t_{\Lambda}n_{\Lambda}]$
gives rise to functions $\{\phi_{T_{*,i}}\}_{i\in [t_{\Lambda}n_{\Lambda}]}$. 
We define $\phi_{-1}:=\phi_{T_{*,1}}\cup \ldots \cup \phi_{T_{*,t_{\Lambda}n_{\Lambda}}}$ and
\begin{align}\label{eq:treeslast2}
	G_{*,0}:=G_0 - E(\phi_{-1}(\cT_*)).
\end{align}
Then $G_{*,0}$ is $(\gamma^{1/3},p)$-quasi-random as
\begin{align}\label{eq:degG*0}
	 d_{G_0}(v)- d_{G_{*,0}}(v) \leq \Delta t_{\Lambda}n_{\Lambda} \leq n^{3/4}
\end{align}
for all $v\in V(G_{*,0})$. 
Similarly, $G_{*,0}[A_0\setminus A_1]$ is also $(\gamma^{1/3},p)$-quasi-random. 

For each tree $T\in \cT\sm\cT^*$, 
we select an edge $e_T = r^{1}_{T} r^{2}_{T}$ such that 
$T-e_T$ consists of two components of size at least $\delta n/(2\Delta)\geq n/\Delta^2$ (it is easy to see that such an edge exists).
We also select $|\cT\sm\cT^*|$ arbitrary edges $\{e'_T : T\in \cT\sm \cT^*\}$ in $G_{*,0}[R_0]$ 
such that $\Delta( \{e'_1,\dots, e'_{|\cT\sm\cT^*|}\})\leq \epsilon^{-2}$. 
By \eqref{eq:degG*0} and (H3)$_{0}$ we can greedily select these edges.%
\COMMENT{Use the edges incident to a vertex $u$ as long as $u$ is incident to at most $\epsilon^{-2}$ selected edges.
Call the others 'bad'.
Note that $|\cT\sm \cT^*| \leq |\cT| \leq n/\delta$.
So $|bad|\leq (n/\delta)/\epsilon^{-2}\leq \epsilon^{3/2}n=\epsilon^{1/2}|R_0|$. 

So since $\delta(G_{0,*}[R_0])\geq p \epsilon n/2 - n^{3/4}\geq p|R_0|/3$ by (H3)$_0$ and \eqref{eq:degG*0},
we can find the required edges.
}
Next we define $\phi'_{-1}$ by $\phi'_{-1}(e_T) :=e'_T$ for all $T\in \cT\setminus \cT^*$.
We also extend $\phi_{-1}$ be setting $\phi_{-1}(e_T):=\phi_{-1}'(e_T)$.
Let $F^0_T:= T-e_T$.
We view the endvertices of $e_T$ as the roots of $F_T^0$.
Define 
\begin{align}\label{eq:G0}
	\cF^0:=\{F^0_T: T\in \cT\setminus \cT^*\}, \quad
	G'_0:= G_{*,0} - \{e'_T: T\in \cT\sm \cT^*\}\enspace \text{ and } \enspace
	G_0'':=G_0-E(G_0'). 
\end{align}
We point out that $G_0'$ is $(\beta^2,p)$-quasi-random.

We say a function $\psi$ \emph{packs} a family $\cH$ of graphs \emph{perfectly} into a graph $H$
if $\psi$ packs $\cH$ and every edge of $H$ is covered.
Thus
\begin{align}\label{eq:phi-1}
	\phi_{-1} \text{ packs } \cT_*\cup \{e_T:T\in \cT\sm \cT^*\} \text{ perfectly into } G_0''.
\end{align}
Thus $G_0''$ consists of the edges of $G_0$ that have been covered in this step and $G_0'$ denotes the (uncovered) leftover of $G_0$.
Moreover, assumption (iii) of Theorem~\ref{thm: main result step1} implies that 
\begin{eqnarray*}
	e(\cF^0)
	&= &e(\cT)-e(\cT^*)- |\cT\sm \cT^*|\\
	&= & (e(G_0)+e(H^0_1) + e(H^0_2)+ e(G[A_1]))- (e(\cT_*)+ t_{\Lambda}n_{\Lambda})- |\cT\sm \cT^*|\\
	&\stackrel{\eqref{eq:G0},\eqref{eq:phi-1}}{=} &e(G'_0)+ e(H^0_1) + e(H^0_2) + e(G[A_1])- t_{\Lambda}n_{\Lambda}.
\end{eqnarray*}
This completes the Preparation step.

\medskip

We proceed with an inductive approach. 
We will define a packing of $\cF^0$ step by step.
In the $t$th step we will be given a collection $\cF^t$ consisting of those subforests of the forests $\cF^0$ 
which (apart from their roots) are not embedded yet.
For each $F\in \cF^t$ we will embed either a part of $F$ or the entire forest $F$
in such a way that all the edges of $G-E(G_0'')$ incident to a vertex in $A_t\sm A_{t+1}$ are covered.
The key tool for this is Lemma~\ref{lem: iteration}.
In the $(t+1)$th step
we will then continue with the collection $\cF^{t+1}$ consisting of all those subforests which are still not embedded yet.
We will denote by $\phi_t$ the packing of the subforests defined by the end of Step~$t$.
The roots of each $F\in\cF^{t+1}$ will already be embedded into $R_{t+1}\sub A_{t+1}\sm A_{t+2}$ by $\phi_{t}$
(but as described above, no other vertex of $F$ will be embedded yet).

More precisely,
for each $0\leq t\leq \Lambda$,
we will define a packing $\phi_{t-1}$
and a collection $\cF^t$ of rooted forests  
so that the following hold:
\begin{enumerate}[label=(E\arabic*)$_t$]
	\item\label{item:E1} There is a $(\beta^2,p)$-quasi-random spanning subgraph $G'_t$ of $G_t$ 
	and if $t\leq \Lambda-2$, then $G'_t$
	satisfies $d_{G'_t,R_{t+1}}(v) \geq p|R_{t+1}|/5$ for any vertex $v\in A_{t}\setminus A_{t+1}$.
	\item\label{item:E2} If $t=\Lambda$, then $\cF^{t}=\es$,
	and if $t\leq \Lambda-1$, 
	then $\cF^{t}$ consists of rooted subforests of the forests in $\cF^0$ and $|\cF^t|\geq (1/2+1/\Delta)n_t$.
	Moreover, each $F\in \cF^t$ satisfies $|F|\leq (1-1/\Delta)n_t$ and consists of two components $(K_F^1,r_F^1)$, $(K_F^2,r_F^2)$
	with $|K_F^1|,|K_F^2|\geq p\Delta^{-3}n_t/5$.	
	\item\label{item:E3} For each $F\in \cF^t$, no vertex of $F-\{r_F^1,r_F^2\}$ is embedded by $\phi_{t-1}$
	and $\phi_{t-1}(r_F^1),\phi_{t-1}(r_F^2)\in R_t$.
	Let $\phi_{t-1}':\{r_F^c:c\in [2],F\in \cF^t\}\to R_t$ be defined by $\phi_{t-1}'(r_F^c):=\phi_{t-1}(r_F^c)$.
	Then $|\phi_{t-1}'^{-1}(v)|\leq \epsilon^{-2}$ for every $v\in R_{t}$.	
	\item\label{item:E4} For each $F\in \cF^t$, 
	let $F^\ori$ be the unique forest in $\cF^0$ with $F\sub F^\ori$.
	Let $\cF^{0,t}$ be the collection of all those $F'\in \cF^0$ for which $\cF^t$
	does not contain a subforest of $F'$.
	If $t\geq 1$,
	then $\phi_{t-1}$ is consistent with $\phi_{t-2}$
	and packs $\cF^{0,t}\cup \{F^\ori-(V(F)\sm \{r_F^1,r_F^2\}):F\in \cF^t\}$ perfectly into
	\begin{align*}
		(G_t-E(G'_t))\cup \bigcup_{i=1}^{t-1} G_i \cup \bigcup_{i=0}^{t-1} (H_1^i\cup H_2^i)\cup G_0'
		= (G_t-E(G_t'))\cup (G-E(G[A_t]\cup G_0'')).
	\end{align*}
	Moreover, for each $F\in \cF^t$ we have $A_t\cap \phi_{t-1}(F^{\ori}-V(F))=\es$.
	\item\label{item:E5} If $t\leq \Lambda-1$, then 
	$e(\cF^t)= e(G'_t)+ e(H^t_1) + e(H^t_2) + e(G[A_{t+1}]) - t_{\Lambda}n_{\Lambda}$.
\end{enumerate}
Note that (E1)$_0$--(E5)$_0$ hold. 
(To check (E2)$_0$,
note that $|\cT^*|=t_{\Lambda}n_{\Lambda}\leq n^{3/4}$ and recall that $1/\Delta\leq \delta/2$.)
Hence suppose that $0\leq t\leq \Lambda-1$ and that we have defined $\phi_{t-1}$ and $\cF^t$ satisfying \ref{item:E1}--\ref{item:E5}.
Our aim of Step~$t$ is to show that there is a function $\phi_t$ consistent with $\phi_{t-1}$  and for all $F\in \cF^t$
there is a subforest $F^*\sub F$ so that  
(E1)$_{t+1}$--(E5)$_{t+1}$ hold,
where $\cF^{t+1}:=\{F-V(F^*):F\in \cF^t\}$.
In particular,
for each $F\in \cF^t$,
the function $\phi_t$ will embed $F^*$ as well as the two roots of $F-V(F^*)$ into $G_t'\cup H_1^t\cup H_2^t\cup G_{t+1}$,
but will embed no other vertices of $F-V(F^*)$.
For some of the $F\in \cF^t$ we will have $F=F^*$.
Those forests $F$ will be embedded in Step $t.2$,
while the part $F^*$ of all those $F$ with $F\neq F^*$ will be embedded in Step $t.1$.

Note that $\cF^{0,t}$ consists of those forests in $\cF^0$ which have been completely embedded prior to Step~$t$.1.
Also we remark that $G_t'$ is the subgraph of $G_t$ which has not been covered prior to Step~$t$.1.
The graphs $G_t'$, $H_1^t$, and $H_2^t$ will be covered entirely in Step~$t$.

\bigskip

\noindent {\bf Step $t$.1.}
If $t = \Lambda -1$, let $\cF:= \cF^t$, $\cF^{\Lambda}:=\es$ and $G^{\Lambda-1}:=G_{\Lambda-1}'$.
Then \eqref{eq:edgesGlast} and (E5)$_{\Lambda-1}$ imply
\begin{align}\label{eq:edgeslast}
e(\cF)
=e(\cF^{\Lambda-1})
=e(G^{\Lambda-1})+e(H_1^{\Lambda-1})+e(H_2^{\Lambda-1})+3\gamma_*^2 n_{\Lambda-1}^2 \pm n_{\Lambda}.
\end{align}
We proceed to Step $t$.2.

From now on we assume that 
$t< \Lambda-1$.
In what follows we prepare the set $\cF^{t+1}$ for the next iteration step.
For this, we choose an arbitrary sub-collection $\cF'\subseteq \cF^t$ of $\Delta^2 \gamma n_{t}=\Delta^2n_{t+1}$ forests and 
let $\cF := \cF^t \setminus \cF'$. 
Then 
\begin{align}\label{eq:sizeF1}
	|\cF|\geq (1/2+ 1/\Delta)n_t - \Delta^2 \gamma n_{t} \geq (1/2 + 1/(2\Delta)) n_t. 
\end{align}

The forests in $\cF$ will be embedded in {Step $t$.2.}~via Lemma~\ref{lem: iteration},
and $\cF^{t+1}$ will consist of subforests of the forests in $\cF'$.
The set $\cF^*$ of remaining subforests of the forests in $\cF'$ will be embedded greedily in the current Step $t$.1.
In particular, we will have $\cF^{0,t+1}=\cF^{0,t}\cup \cF$,
where $\cF^{0,t}$ is as defined in \ref{item:E4}.
Since $t<\Lambda-1$, 
\ref{item:O5} and \eqref{eq:edgesGlast} give us
\begin{align*}
 e(G[A_{t+1}]) - t_{\Lambda} n_{\Lambda} = (1\pm \gamma^{1/2}){pn_{t+1}^2}/{2} - t_{\Lambda} n_{\Lambda}= (1\pm \gamma^{1/3}) p n_{t+1}^2/2.
\end{align*}
Thus 
$$\beta':=\frac{ e(G[A_{t+1}]) - t_{\Lambda}n_{\Lambda} - 3\gamma_*^2 n_t^2}{n_t|\cF'|} 
= (1\pm \gamma^{1/4})\frac{p  n_{t+1}}{2\Delta^2n_t}
=(1\pm \gamma^{1/4})\frac{\gamma p}{2\Delta^{2}}.
$$
By Proposition~\ref{prop: taking subtree collection} with
and $\beta'$, $p\Delta^{-3}/5$, $n_t$ playing the roles of $\beta$, $\alpha$, $n$,\COMMENT{
So the previous display shows $\beta'\ll \alpha$.}
we can find a subforest $\tilde{F}$ of each $F\in \cF'$ such that 
\begin{enumerate}[label=(P\arabic*)]
	\item\label{item:P1'} both $\tilde{F}$ and $F^*:=F- V(\tilde{F})$ consist of two components,
	\item\label{item:P2'} each component of $\tilde{F}$ has size between $\Delta^{-3} p n_{t+1}/5$ and $\Delta^{-1}pn_{t+1}$, and
	\item\label{item:P3'}  $\tilde{F}$ does not contain $r^{1}_{F}$ or $r^{2}_{F}$.
	\item\label{item:P4'} Moreover, let $\cF^{t+1}:=\{ \tilde{F} : F\in \cF'\}$. Then
$e(\cF^{t+1})=\sum_{F \in \cF'} e(\tilde{F}) = e(G[A_{t+1}]) - t_{\Lambda}n_{\Lambda} - 3\gamma_* n_t^2 \pm n_t$.
\end{enumerate}
Note that \ref{item:E2} implies that for all $F\in \cF'$, we have
\begin{align}\label{eq:sizeF*}
	|F^*|\leq |F| \leq (1- 1/\Delta)n_t.
\end{align}
For each $F\in \cF'$ and $c\in [2]$, 
let $r^{c}_{\tilde{F}}$  be the unique vertex of $\tilde{F}$ in the component of $F$ containing $r_F^c$  
which is adjacent to a vertex in $V(F^{*})$ in $F$.
Note that \ref{item:P2'} implies that $\cF^{t+1}$ satisfies (E2)$_{t+1}$ with $r_{\tilde{F}}^c$ playing the role of $r_{F}^c$.
For every $c\in [2]$, let $x^{c}_{F}$ be the unique neighbour of $r_{\tilde{F}}^c$ in $V(F^{*})$ in $F$. 

Let $\{F_1,\dots, F_{m}\}:= \cF'$
and $\cF^*:=\{F^*:F\in \cF'\}=\{F_1^*,\ldots,F_m^*\}$.
Note that 
\begin{align}\label{eq:sizeF'}
	e(\cF')=e(\cF^*)+e(\cF^{t+1})+2m.
\end{align}
Recall from \ref{item:E2} that $r_{F_i}^1$ and $r_{F_i}^2$ are the roots of $F_i$ and thus,
by \ref{item:P3'}, the roots of $F_i^*$.
In what follows,
for all $i\in [m]$, we construct embeddings $\phi_{F_i^*}$  satisfying
the following:
\begin{enumerate}[label=(Q\arabic*)$_{i}$]
	\item\label{item:Q1i} $\phi_{F_1^*}\cup \dots \cup \phi_{F_{i}^*}$ packs $\{F_1^*,\dots, F_{i}^*\}$ into $G'_t[A_t\setminus A_{t+1}]$ 
	such that $\phi_{F_j^*}$ is consistent with $\phi_{t-1}'$ for all $j\in [i]$, and
	\item\label{item:Q2i} $p(v,i) \leq \epsilon^{-2}$ for any $v\in A_t\sm A_{t+1}$, where $p(v,i):=|\{ j\in[ i]: v\in \phi_{F_j^*}(\{x^{1}_{F_j^*},x^{2}_{F_j^*}\})\}|.$ 
	\item\label{item:Q3i} $\phi_{F_i^*}(F_i^*)\cap R_t = \{\phi_{t-1}'(\{r_{F_i}^1,r_{F_i}^2\})\}$.
\end{enumerate}
Assume for all $j<i$ we have constructed $\phi_j$ satisfying (Q1)$_j$--(Q3)$_j$.
Now we construct $\phi_{F_i}$.
Recall that 
\begin{align}\label{eq:sizeF'2}
	m=|\cF'|=\Delta^2 \gamma n_t=\Delta^2 n_{t+1}.
\end{align}
Let 
$$B:=\{ v \in A_t\sm (A_{t+1}\cup R_t) : p(v,i-1) > \epsilon^{-2}-1\}.$$ 
Hence $|B|\leq |\cF'|/\epsilon^{-1} =\epsilon\Delta^2 \gamma n_t \leq \epsilon n_t$. 
Define
$$G'_t(i):= G'_t[A_t\setminus (A_{t+1}\cup B \cup (R_t\sm \{\phi_{t-1}'(\{r_{F_i}^1,r_{F_i}^2\}) \}))]  - \bigcup_{j=1}^{i-1} E( \phi_{F_j^*}(F_j^*)).$$ 
Note that 
\begin{align}\label{eq:maxdeg}
	\Delta( \bigcup_{j=1}^{i-1} E( \phi_{F_j^*}(F_j^*))) 
	\leq \Delta m 
	= \Delta^3 \gamma n_t.
\end{align}
Thus $G'_t(i)$ is $(\beta^{3/2},p)$-quasi-random by \ref{item:E1} and Proposition~\ref{prop: quasi-random subgraph}.
In addition, 
\ref{item:O2}, \ref{item:O4} and \eqref{eq:sizeF*} imply 
$|G'_t(i)|\geq (1-\gamma-2\epsilon) n_{t} \geq |F_i^*|$.
So we can apply Lemma~\ref{blow up with pre-embedding} (with $\{r_{F_i}^1,r_{F_i}^2\}$ playing the role of $I$) to obtain a function $\phi_{F_i^*}$ packing $F_i^*$ into $G'_t(i)$ 
which is consistent with $\phi'_{t-1}$.
Thus \ref{item:Q1i} and \ref{item:Q3i} hold,
and \ref{item:Q2i} follows from the definition of $B$ and \ref{item:E3}.

By repeating this procedure for every $i\in [m]$, 
we obtain a collection of functions $\{\phi_{F^*_i}\}_{i\in [m]}$ satisfying \ref{item:Q1i}--\ref{item:Q3i} for every $i\in [m]$.
We let $\phi^*_t:=\phi_{F_1^*}\cup \dots \cup \phi_{F_{m}^*}$.
Let $G^t_*:= G'_t - E( \phi^*_t(\cF^*))$
and observe that $G^t_*$ is $(\beta^{3/2},p)$-quasi-random
by 
\eqref{eq:maxdeg}, \ref{item:E1} and Proposition~\ref{prop: quasi-random subgraph}.

Now for each $F\in \cF'$, we will embed the roots $r_{\tilde{F}}^1$ and $r_{\tilde{F}}^2$ of $\tilde{F}$ so that their embedding satisfies (E3)$_{t+1}$.
Let $u_j:= \phi^*_t(x^1_{F_j})$ and $u_{m+j}:= \phi^*_t(x^{2}_{F_j})$ for each $j\in [m]$. 
Note that by (Q2)$_m$ no vertex occurs more than $\epsilon^{-2}$ times in the list $u_1,\dots, u_{2m}$.
Since $\phi_{F^*_j}(F^*_j) \cap R_{t+1} \subseteq \phi_{F^*_j}(F^*_j) \cap A_{t+1} = \emptyset$ for all $j\in [m]$,  
\ref{item:E1} implies that 
for each vertex $u_j$ we have 
$d_{G^t_{*},R_{t+1}}(u_j)
= d_{G_t',R_{t+1}}(u_j)
\geq p\epsilon n_{t+1}/5 
\geq 3\epsilon^{-2} + 2m/\epsilon^{-2}  +\epsilon^{-2}$.

Let $H^{\rm ind}$ be the graph on $[2m]$ such that $ij\in E(H^{\rm ind})$ if $i=j+m$.
Hence we can apply Proposition~\ref{prop: sparse edge embedding} with the following parameters and graphs. \newline
  
\noindent
{
\begin{tabular}{c|c|c|c|c|c|c|c|c}
object/parameter & $G^t_{*}$ & $H^{\rm ind}$ & $ \epsilon^{-2} $ & $R_{t+1}$ &  $2m$ & $\epsilon^{-2} $  & $\emptyset$ & $u_{j}$
\\ \hline
playing the role of & $G$ & $H$ & $\Delta$ & $A$ &  $m$ & $s$  &$W_j$ & $u_j$ 
\end{tabular}
}\newline \vspace{0.2cm}

\noindent
From Proposition~\ref{prop: sparse edge embedding}, 
we obtain $v_1,\ldots,v_{2m}\in R_{t+1}$ such that $E:=\{u_iv_i:i\in[2m]\}$ is a collection of distinct edges in $G^t_{*}$,
and we have $v_i\neq v_{i+m}$ for every $i\in [m]$.
Moreover, for any $v\in R_{t+1}$, we have $|\{i\in[2m]: v=v_i\}|\leq \epsilon^{-2}$. 
Thus $\Delta(E)\leq \epsilon^{-2}$. 
We extend $\phi_t^*$ and define a new function $\phi_t'$ by setting for all $i \in [m]$
\begin{align}\label{eq:defphit*}
	\phi_t^*(r^{1}_{\tilde{F_i}})=\phi'_t(r^{1}_{\tilde{F_i}}) := v_i \text{ and }
	\phi_t^*(r^{2}_{\tilde{F_i}})=\phi'_t(r^{2}_{\tilde{F_i}}) := v_{m+i}.
\end{align}
Thus $|\phi'^{-1}_t(v)|\leq \epsilon^{-2}$ for every $v\in R_{t+1}$. 
As for each $i\in [m]$ we will not embed any vertex of $\tilde{F_i}-\{r^{1}_{\tilde{F_i}},r^{2}_{\tilde{F_i}}\}$ in Step~$t$,
this will imply that (E3)$_{t+1}$ holds.
Moreover,
note that for every $i\in [m]$
\begin{align}\label{eq:At+1F_i}
	A_{t+1}\cap \phi^*_t(F_i-V(\tilde{F_i}))
	\stackrel{\text{\ref{item:P1'}}}{=}A_{t+1}\cap \phi^*_t(F_i^*)
	\stackrel{\text{\ref{item:Q1i}}}{=}\es.
\end{align}
Let 
$$G^{t} := G^t_{*}-E.$$ 
Since $G^t_*$ is $(\beta^{3/2},p)$-quasi-random and $\Delta(E)\leq \epsilon^{-2}$, 
\begin{align}\label{eq:Gtqr}
	G^{t} \text{ is } (\beta,p)\text{-quasi-random.}
\end{align}
Note that $G^t$ is the current leftover of the graph $G_t'$ given in \ref{item:E1} and $e(G_t')=e(G^t)+e(\cF^*)+2m$.
Hence
\begin{eqnarray}
e(\cF) &=& e(\cF^{t})  - e(\cF')  \nonumber \\
&\stackrel{\text{\ref{item:E5}}}{=}&  e(G'_t)+ e(H^t_1) + e(H^t_2)+ e(G[A_{t+1}]) - t_{\Lambda}n_{\Lambda} - e(\cF') \nonumber \\
&=&  e(G^t)+ e(\cF^*) + 2m + e(H^t_1) + e(H^t_2)+ e(G[A_{t+1}]) - t_{\Lambda}n_{\Lambda} - e(\cF') \nonumber \\
&\stackrel{\eqref{eq:sizeF'},\text{\ref{item:P4'}}}{=}& 
e(G^t)+ e(H^t_1) + e(H^t_2)+  3\gamma_*n_t^2 \pm n_t.\label{eq:sizeF2}
\end{eqnarray}
We proceed to Step $t$.2., where we pack $\cF$ into $G^t$ so that the packing is consistent with $\phi'_{t-1}$.\newline

\noindent {\bf Step $t$.2.}
Let $\hat{G}_{t+1}$ be a $(\gamma^{1/3},p/2)$-quasi-random subgraph of the graph $G_{t+1}$ (described in (H3)$_{t+1}$)
such that,
if $t\leq \Lambda-3$, then for any vertex $v \in A_{t+1}\setminus A_{t+2}$, we have 
\begin{align}\label{eq:deg Rt}
	d_{G_{t+1}-E(\hat{G}_{t+1}),R_{t+2}}(v)\geq p|R_{t+2}|/5. 
\end{align}
Such a subgraph of $G_{t+1}$ exists as a random subgraph chosen by including  every edge of $G_{t+1}$ independently with probability $1/2$ has these properties with probability at least $1/2$, by (H3)$_{t+1}$.

Now we apply Lemma~\ref{lem: iteration} with the following parameters and graphs. \newline
  
\noindent
{
\begin{tabular}{c|c|c|c|c|c|c|c|c|c|c}
object/parameter & $G^t$ & $\hat{G}_{t+1}$ & $H^t_1$ & $H^t_2$ & $A_{t+1}$ & $R_t$ & $\cF$ & $n_t$ & $\epsilon$ & $\delta_1$
\\ \hline
playing the role of & $G$ & $G'$ & $H_1$ & $H_2$ & $A$ & $R$ & $\cF$ & $n$ & $\epsilon$ & $\delta_1$
\\ \hline 
\hline
object/parameter & $\delta_2$ & $\gamma_*$ & $\gamma$ & $\beta$ & $p$ & $ 10\Delta$ & $\Delta$ & $p/2$ & $1/(2\Delta)$ & $\phi'_{t-1}$ 
\\ \hline
playing the role of & $\delta_2$ & $\gamma_*$ & $\gamma$ & $\beta$ & $\alpha$ & $D$ & $\Delta$ & $p$ & $d$ & $\phi'$ 
\end{tabular}
}\newline \vspace{0.2cm}

Observe that 
\ref{item:G1} holds by \ref{item:O2}, \ref{item:O4} and (H4)$_t$,
\ref{item:G2} holds by \eqref{eq:Gtqr} and our choice of $\hat{G}_{t+1}$,
\ref{item:G3} holds by (H1)$_t$, and
\ref{item:G4} holds by (H2)$_t$.
Furthermore, 
\ref{item:T1} holds by \eqref{eq:sizeF1} if $t<\Lambda-1$, or (E2)$_{\Lambda-1}$ if $t=\Lambda-1$,
\ref{item:T2} holds by \ref{item:E2},
\ref{item:T3} holds by \eqref{eq:sizeF2} or \eqref{eq:edgeslast}, and
\ref{item:T4} holds by \ref{item:E3}.

We obtain a function $\hat{\phi}_{t}$ packing $\cF$ into $G^t\cup \hat{G}_{t+1}\cup H^t_1\cup H^t_2$ consistent with $\phi'_{t-1}$ 
such that 
\begin{enumerate}[label=$(\Phi\arabic*)_t$]
	\item\label{item:Phi1t} $E(G^t)\cup E(H^t_1)\cup E(H^t_2) \subseteq E(\hat{\phi}_{t}(\cF))$, and
	\item\label{item:Phi2t} for all $v\in A_{t+1}$, we have $d_{\hat{G}_{t+1}\cap \hat{\phi}_t(\cF)}(v)\leq \gamma_*^{1/2}|A_{t+1}|$.
\end{enumerate}

Let $\phi_t:=\phi_t^*\cup \hat{\phi}_t\cup \phi_{t-1}$.
By (H3)$_{t+1}$ and \ref{item:Phi2t}, 
we conclude that $G'_{t+1}:= G_{t+1} -  E(\hat{\phi}_{t}(\cF))$ is $(\beta^2,p)$-quasi-random.
Thus (E1)$_{t+1}$ holds, by \eqref{eq:deg Rt}.
We have already observed that (E2)$_{t+1}$ and (E3)$_{t+1}$ hold.
Property (E4)$_{t+1}$ follows from \eqref{eq:At+1F_i}, \ref{item:Phi1t} and \ref{item:E4}.
In particular,
note that in Step~$t$ we have now covered the (previous) leftover $G_t'$ of $G_t$ which was not covered prior to Step~$t$.
The graphs $H_1^t$ and $H_2^t$ are covered entirely in Step~$t$ and $G_{t+1}'$ is the (new) leftover of $G_{t+1}$,
which will be covered in Step~$t+1$.
To check (E5)$_{t+1}$,
note that (E4)$_{t+1}$ implies that 
\begin{align}\label{eq:diffFt}
	e(\cF^t)-e(\cF^{t+1})=e(G_t')+e(H_1^t)+e(H_2^t)+e(G_{t+1})-e(G_{t+1}').
\end{align}
Thus if $t<\Lambda-1$, then
\begin{eqnarray*}
e(\cF^{t+1})
&=& e(\cF^{t})-(e(\cF^{t})-e(\cF^{t+1}))\\
&\stackrel{\text{\ref{item:E5},\eqref{eq:diffFt}}}{=}& e(G[A_{t+1}]) -e(G_{t+1})+e(G_{t+1}')- t_{\Lambda}n_{\Lambda}
\\
&=&e(G_{t+1}')+ e(H_1^{t+1})+e(H_2^{t+1})+e(G[A_{t+2}])-t_{\Lambda}n_{\Lambda}.
\end{eqnarray*}
This verifies (E5)$_{t+1}$.

If $t=\Lambda-1$, then we proceed to the Final step. Otherwise we proceed to Step $(t+1)$.1.
\newline

\noindent {\bf Final step}.
Recall that $G_{\Lambda}=G[A_\Lambda]$.
Since $\cF^{\Lambda}=\es$,
properties (E4)$_1$--(E4)$_\Lambda$ imply that $\phi_{\Lambda-1}$ is consistent with $\phi_{-1}$ and packs $\cF^{0,\Lambda}=\cF^0$ perfectly into
\begin{align*}
	(G_\Lambda-E(G_\Lambda'))\cup \bigcup_{i=1}^{\Lambda-1}G_i \cup \bigcup_{i=0}^{\Lambda-1}(H_1^i\cup H_2^i)\cup G_0'
	=G-(E(G_\Lambda')\cup E(G_0'')).
\end{align*}
Together with \eqref{eq:phi-1}, this implies that $\phi_{\Lambda-1}\cup \phi_{-1}$ packs
$\cT_{*}\cup (\cT\sm \cT^*)$ perfectly into $G-E(G'_\Lambda)$.
In particular,
the current set of leftover edges is given by $G_\Lambda'$.
Recall that the trees in $\cT_*$ were obtained from those in $\cT^*=\{T_1^*,\ldots,T^*_{t_\Lambda n_\Lambda}\}$ by deleting the edges $z^*_i\ell_i^*$ for all $i\in [t_\Lambda n_\Lambda]$.
In particular,
\begin{align}\label{eq:edgeslast1}
	e(G_{\Lambda}')
	=e(G)-e(\cT_*\cup (\cT\sm \cT^*))
	=e(\cT)-(e(\cT^*)-t_\Lambda n_\Lambda)- e(\cT\sm \cT^*)
	=t_\Lambda n_\Lambda.
\end{align}
We will now extend our current packing to a packing of $\cT$ into $G$ by embedding all these edges $z^*_i\ell_i^*$.

Since $|G_\Lambda'|=|A_{\Lambda}|=n_\Lambda$, by \ref{item:O2},
\eqref{eq:edgeslast1} implies that $\overline{d}(G'_{\Lambda}) = 2t_{\Lambda}$. 
Recall that $G'_{\Lambda}$ is $(\beta^2,p)$-quasi-random, by (E1)$_\Lambda$.
Thus by Lemma~\ref{lem: orientation}, 
$G'_{\Lambda}$ has an orientation $D$ such that every vertex in $V(G_\Lambda')=A_{\Lambda}$ has exactly $t_{\Lambda}$ out-neighbours.
For a vertex $v\in V(G_\Lambda')$, we denote by $N_D^+(v)$ the set of out-neighbours of $v$.
By \eqref{eq:treeslast} and our choice of the vertices $v_i^*$,
for each $v\in A_{\Lambda}$, we have $|\{i\in [t_\Lambda n_\Lambda]: \phi_{-1}(z^*_i) = v\}|=t_{\Lambda}=|N^+_{D}(v)|$.
Thus there is a bijection  $\tau_v:\{\ell_i^*: \phi_{-1}(z^*_i) = v\}\to N^+_{D}(v)$.
Let 
$$\phi_{-1}(\ell^*_i):= \tau_{\phi_{-1}(z^*_i)}( \ell^*_i) .$$
Then $E(G_{\Lambda}') = \{ \phi_{-1}(z^*_i) \phi_{-1}(\ell^*_i) : i\in [t_{\Lambda}n_{\Lambda}]\}$ and 
this completes the partial embedding of $T^*$ to a complete one for every $T^*\in\cT^*$ 
(indeed, recall that $T_{*,i}=T_i^*-\ell^*_i$ and $\phi_{-1}(T_{*,i})\cap A_{\Lambda}=\{\phi_{-1}(z^*_i)\}$ by \eqref{eq:treeslast},
thus $\tau_{\phi_{-1}(z^*_i)}(\ell^*_i)\notin \phi_{-1}(T_{*,i})$).

Now, by construction, $\phi_{\Lambda-1}\cup \phi_{-1}$ packs $\cT$ into $G$.
\end{proof}

\subsection{Proof of Theorem~\ref{thm: main result} and its consequences}\label{sec:9.2}

To prove Theorem~\ref{thm: main result} we need Theorem~1.2 from \cite{KKOT16}. 
It allows us to pack a suitable collection of bounded degree graphs into a quasi-random graph $G$, and 
guarantees a near-optimal packing, i.e.~almost all edges of $G$ are used by the packing.

\begin{theorem} \label{thm: any graph packing}
Suppose $n,\Delta \in \N$ with $1/n \ll \epsilon \ll p, \alpha, 1/\Delta\leq 1$.
Suppose $\cH$ is a collection of graphs on $n$ vertices with $\Delta(H)\leq\Delta$ for all $H\in \cH$ and
$(1-2\alpha) \binom{n}{2}p \leq e(\cH) \leq (1-\alpha) \binom{n}{2}p$. Suppose that $G$ is an $(\epsilon,p)$-quasi-random graph on $n$ vertices.
Then there exists a function $\phi$ which packs $\cH$ into $G$ such that $\Delta(G - E(\phi(\cH)))\leq4\alpha p n$.
\end{theorem}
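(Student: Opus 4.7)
The plan is to combine Szemer\'edi's regularity lemma with the blow-up lemma for approximate decompositions (Theorem~\ref{thm: blow up}) to reduce the problem to a setting where the host graph is a union of super-regular cycle blow-ups and the family $\cH$ has been distributed consistently with this structure. First I would apply Lemma~\ref{lem: rl} to $G$ with a suitable multiplicity parameter $t \gg 1/\alpha$ to obtain a partition $(V_0,V_1,\ldots,V_k)$ together with a reduced multigraph $R_t$. Since $G$ is $(\epsilon,p)$-quasi-random, $R_t$ is almost $pt$-regular, $(\beta, tp/2)$-dense, and hence (after a small parity correction as in Proposition~\ref{prop: making Eulerian} and an initial removal of $O(\alpha p)$ fraction of the edges to create the desired leftover) decomposes via Theorem~\ref{thm: decomp regular} into a collection of Hamilton cycles with negligible remainder. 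Each Hamilton cycle lifts, via Proposition~\ref{prop: making super reg}, to an $(\epsilon', 1/t)$-super-regular cycle $|V_1|$-blow-up inside $G$, so that $G$ is decomposed (up to $4\alpha pn$ leftover degree) into roughly $pt$ super-regular cycle blow-ups on $k$ clusters.

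Next I would distribute $\cH$ among these cycle blow-ups. For each $H \in \cH$ I would use the Hajnal--Szemer\'edi theorem to find an equitable $(\Delta+1)$-coloring of $V(H)$, giving a ``local'' assignment that I then lift to a balanced assignment of $V(H)$ to the $k$ clusters of a cycle blow-up by a random walk on the cycle, as in the proof of Lemma~\ref{lem: tree in a cycle}. A concentration argument via Azuma's inequality (Theorem~\ref{Azuma}) ensures that, with positive probability, each cluster receives approximately $|H|/k$ vertices and each blown-up edge of the cycle receives approximately $e(H)/k$ edges of $H$. After randomly partitioning $\cH$ into sub-collections $\cH_1,\ldots, \cH_s$ (one per Hamilton cycle) with matching total edge counts, I would further group the graphs in each $\cH_i$ into internally regular packets via Lemma~\ref{packing into regular}, and then apply Theorem~\ref{thm: blow up} to pack each such packet into the corresponding super-regular cycle blow-up in $G$. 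Taking the union of these packings yields the desired function $\phi$.

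The main obstacle I expect is the distribution step: simultaneously ensuring that the sub-collections $\cH_i$ have the correct total edge counts, that the random vertex assignment of each $H \in \cH_i$ is balanced across both clusters and blown-up edges, and that the resulting assignments can be grouped into internally $k$-regular subgraphs of $C(\ell, |V_1|)$ as required by Theorem~\ref{thm: blow up}. Unlike the tree case (Lemma~\ref{lem: tree in a cycle}), where the tree structure allows a neat breadth-first random walk embedding, general bounded degree graphs can be highly connected, so the assignment needs to exploit the equitable coloring above together with random rotations along the cycle. This is essentially the content of the main technical result of~\cite{KKOT16}, and once it is in place, the leftover bound $\Delta(G - E(\phi(\cH))) \leq 4\alpha pn$ follows by tracking, for every vertex $v \in V(G)$, the contributions from each stage: the Hamilton decomposition leftover, the exceptional set $V_0$, and the $(\epsilon,d)$-regular to $(\epsilon,d)$-super-regular correction, each of which can be absorbed into $4\alpha pn$ by choosing the hierarchy of constants appropriately.
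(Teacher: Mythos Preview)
The paper does not prove Theorem~\ref{thm: any graph packing}; it is quoted verbatim as Theorem~1.2 of~\cite{KKOT16} and used as a black box in Section~\ref{sec:9.2}. So there is no in-paper proof to compare against.

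That said, your sketch is a reasonable reconstruction of the architecture of~\cite{KKOT16}, and you correctly identify where the genuine work lies. The regularity lemma plus Hamilton decomposition of the reduced multigraph plus Theorem~\ref{thm: blow up} is indeed the skeleton. But the step you flag as ``the main obstacle'' is not a technicality you can dispatch with the tools stated in this paper: Lemma~\ref{lem: tree in a cycle} and Lemma~\ref{packing into regular} are written for trees (or forests), and the random walk argument in Lemma~\ref{lem: tree in a cycle} genuinely relies on each vertex having a unique predecessor in a BFS ordering. For an arbitrary bounded degree graph $H$, an equitable $(\Delta+1)$-colouring gives independent classes, but it does not give a homomorphism into a cycle, and ``random rotations along the cycle'' of colour classes will not in general produce a copy of $H$ inside $C(\ell,n/\ell)$ at all, let alone one balanced across blown-up edges. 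Producing such an assignment for general $H$ is precisely the content of the main technical work in~\cite{KKOT16} (their Lemma~7.1 and the surrounding machinery), which you effectively invoke at the end of your second paragraph. So your proposal is not wrong, but it is circular: the one step that carries the weight is deferred back to~\cite{KKOT16}, which is exactly what the present paper does by citing the theorem.
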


\begin{proof}[Proof of Theorem~\ref{thm: main result}]
We choose $\epsilon, \epsilon'$ and $N$ such that
\begin{align*}
	1/N\ll \epsilon \ll \epsilon'\ll \delta,1/\Delta.
\end{align*}
Let $p'$ be a real number such that $e(\cH)= p'\binom{n}{2}$.
Note that by assumptions \ref{item:MT2}--\ref{item:MT4} of Theorem~\ref{thm: main result},
we have $p'\leq p -\delta$.
Let $G'$ be a subgraph of $G$ such that 
\begin{enumerate}[label=(Z\arabic*)]
\item\label{item:Z1} $G - E(G')$ is $(\epsilon^{1/2},p-p'-\epsilon'^2)$-quasi-random, and
\item\label{item:Z2} $G'$ is $(\epsilon^{1/2}, p'+\epsilon'^2)$ quasi-random.
\end{enumerate}
Note that $G'$ exists, as a random subgraph of $G$ obtained by independently including each edge with probability $(p'+\epsilon'^2)/p$ has these properties with probability at least $1/2$, by Lemma~\ref{lem: chernoff}.

Since $\epsilon \ll \epsilon',1/\Delta$,
we can use Theorem~\ref{thm: any graph packing} to find a function $\phi_{\cH}$ packing $\cH$ into $G'$ such that 
\begin{align}\label{eq:split}
	\Delta(G' - E(\phi_{\cH}(\cH)))\leq 4 \cdot \frac{\epsilon'^2}{p'+\epsilon'^2} \cdot (p'+\epsilon'^2) n = 4\epsilon'^2 n.
\end{align}
\COMMENT{Note that $G'$ is $(\epsilon^{1/2},p'+\epsilon'^2)$ quasi-random and $e(\cH)= p'\binom{n}{2} = (1- \epsilon'^2/(p'+\epsilon'^2))(p'+\epsilon'^2)\binom{n}{2}$. 
Thus   $\epsilon'^2/(p'+\epsilon'^2)$ is playing the role of $\alpha$ and 
$p'+\epsilon'^2$ is playing the role of $p$. 
Thus we get 
$4 (\epsilon'^2/(p'+\epsilon'^2)) (p'+\epsilon'^2) n=4\epsilon'^2 n$.}
Define $G_1:= G-E(\phi_{\cH}(\cH))$.
Then
$G_1$ is $(\epsilon',p-p')$-quasi-random  by Proposition~\ref{prop: quasi-random subgraph}, \ref{item:Z1}, 
and \eqref{eq:split}.
Note that $e(G_1)=e(\cT)$.
Now we apply Theorem~\ref{thm: main result step1} to $G_1$ and $\cT$ to complete the proof.
\end{proof}

Next we deduce Corollary~\ref{cor: trees into quasi-random} from Theorem~\ref{thm: main result}.

\begin{proof}[Proof of Corollary~\ref{cor: trees into quasi-random}]
We may assume $\Delta\geq 2$ and $\alpha < 1/3$. 
Let $N\in \N$ and $\epsilon>0$ be the constants given by Theorem~\ref{thm: main result} for the parameters $\Delta$ and $\delta:=p_0\alpha/2$.
We may assume that $\epsilon\ll \alpha,p_0$. 
We add arbitrary trees on at most $(1-\alpha)pn$ vertices with maximum degree at most $\Delta$ to $\cT$
to obtain a set of trees $\cT'$ so that $e(\cT') = e(G)$.
For two trees $T,T' \in \cT'$ of order at most $\delta n$, 
we choose leaves $\ell$ of $T$ and $\ell'$ of $T'$,
delete both $T$ and $T'$ from $\cT'$ and add the tree obtained from $T$ and $T'$ by identifying $\ell$ and $\ell'$. 
Then every tree in $\cT'$ still has maximum degree at most $\Delta$ and its order is at most $\max\{ 2\delta n, (1-\alpha)pn\} \leq (1-\alpha)pn$,
since $\alpha < 1/3$ and $p_0\leq p$. 
By repeating this process, 
we may assume that every tree in $\cT'$ but at most one has at least $\delta n$ vertices and at most $(1-\alpha)pn\leq (1-\delta)n$ vertices. 
Let $\cT''\sub \cT'$ consist of all trees having at least $\delta n$ vertices.
Let $\cH:=\cT'\sm \cT''$.
Then $|\cH|\leq 1$.\COMMENT{Since 
$$|\cT''| \geq \frac{e(G)-e(\cH)}{(1-\alpha)pn} 
\geq \frac{(p-\epsilon)\binom{n}{2}-\delta n}{(1-\alpha)pn} 
\geq \frac{(1-\alpha^2)p}{(1-\alpha)p}\cdot \frac{n}{2}
\geq \left(\frac{1}{2}+\delta\right) n,$$}
Note that $\cT''$ and $\cH$ satisfy conditions (i)--(iv) of Theorem~\ref{thm: main result}.
Thus $\cT''\cup \cH$ packs into $G$, 
and a packing of $\cT'=\cT''\cup \cH$ into $G$ naturally gives a packing of $\cT$ into $G$.
\end{proof}

Finally we deduce Theorem~\ref{thm:glboundeddegree} from Theorem~\ref{thm: main result}.

\begin{proof}[Proof of Theorem~\ref{thm:glboundeddegree}]
Let $N$ and $\epsilon$ be 
such that the statement of Theorem~\ref{thm: main result} holds with 
$\Delta$, $1/10$, $5\epsilon$ playing the roles of $\Delta$, $\delta$, $\epsilon$.
We may assume that $\epsilon\leq 1/100$.
Next we iteratively pack the trees $T_1,\ldots,T_{\epsilon n}$ into $G_0:=K_n$
in such a way that we cover less than $2\epsilon n$ edges incident to each vertex.

Let $t\in \{0,\ldots, \epsilon n-1\}$.
Suppose we have already packed $T_1,\ldots,T_t$ into $G$ via a function $\tau_t$
such that $\delta(G_t)\geq (1-2\epsilon)n$, where
$G_t:=G-\bigcup_{i=1}^{t}E(\tau_t(T_i))$.
Let $X_t\sub V(G)$ be the set of all vertices with degree less than $(1-\epsilon)n$ in $G_t$.
Observe that $|X_t|\leq 2\sum_{i=1}^{\epsilon n}e(T_i)/(\epsilon n)\leq \epsilon n$.
Thus we can choose an arbitrary embedding of $T_{t+1}$ into $G_t- X_t$ (as $\delta(G_t- X_t)\geq |T_{t+1}|$, such an embedding exists).
Then $\delta(G_{t+1})\geq (1-2\epsilon)n$. This shows that we can pack $T_1,\dots, T_{\epsilon n}$ such that $\delta(G_{\epsilon n}) \geq  (1-2\epsilon )n$.

In particular, $G_{\epsilon n}$ is $(5\epsilon,1)$-quasi-random.
Hence we can apply Theorem~\ref{thm: main result} 
with $\cT:=\{T_{n/10},\ldots,T_{9n/10}\}$, $\cH:=\{T_{\epsilon n+1}, \dots, T_{n/10-1}, T_{9n/10+1},\dots, T_{n}\}$, $\delta=1/10$,
and obtain a packing of $\cH\cup \cT$ into $G_{\epsilon n}$, which completes the proof.
\end{proof}

\section{Acknowledgements}

We are grateful to the referee for a careful reading of the manuscript.

\bibliographystyle{amsplain}

\providecommand{\bysame}{\leavevmode\hbox to3em{\hrulefill}\thinspace}
\providecommand{\MR}{\relax\ifhmode\unskip\space\fi MR }
\providecommand{\MRhref}[2]{%
  \href{http://www.ams.org/mathscinet-getitem?mr=#1}{#2}
}
\providecommand{\href}[2]{#2}

\end{document}